\newtheorem{dummy}{dummy}[section]
\newtheorem{lemma}[dummy]{Lemma}
\newtheorem{theorem}[dummy]{Theorem}
\newtheorem*{theorem*}{Theorem}
\newtheorem{corollary}[dummy]{Corollary}
\newtheorem{proposition}[dummy]{Proposition}
\theoremstyle{definition}
\newtheorem{definition}[dummy]{Definition}
\newtheorem{example}[dummy]{Example}
\newtheorem{remark}[dummy]{Remark}
\newcommand{\bA}{\mathbb{A}}
\newcommand{\bC}{\mathbb{C}}
\newcommand{\bG}{\mathbb{G}}
\newcommand{\bN}{\mathbb{N}}
\newcommand{\bR}{\mathbb{R}}
\newcommand{\bZ}{\mathbb{Z}}
\newcommand{\cC}{\mathcal{C}}
\newcommand{\cD}{\mathcal{D}}
\newcommand{\cE}{\mathcal{E}}
\newcommand{\cG}{\mathcal{G}}
\newcommand{\cH}{\mathcal{H}}
\newcommand{\cI}{\mathcal{I}}
\newcommand{\cJ}{\mathcal{J}}
\newcommand{\cO}{\mathcal{O}}
\newcommand{\cR}{\mathcal{R}}
\newcommand{\cS}{\mathcal{S}}
\newcommand{\cT}{\mathcal{T}}
\newcommand{\cU}{\mathcal{U}}
\newcommand{\cV}{\mathcal{V}}
\newcommand{\cX}{\mathcal{X}}
\newcommand{\cY}{\mathcal{Y}}
\newcommand{\sC}{\mathscr{C}}
\newcommand{\sD}{\mathscr{D}}
\newcommand{\sE}{\mathscr{E}}
\newcommand{\Aff}{\mathbf{Aff}^{\mathit{LFT}}_{\mathbb{C}}}
\newcommand{\Affk}{\mathbf{Aff}^{\mathit{LFT}}_{k}}
\newcommand{\As}{\mathbf{AlgSp}^{\mathit{LFT}}_{\mathbb{C}}}
\newcommand{\TopC}{\Top_\mathbb{C}}
\newcommand{\TopCs}{\TopC^s}
\newcommand{\Spec}{\mathrm{Spec}\,}
\newcommand{\Sh}{\operatorname{Sh}}
\newcommand{\Hom}{\mathrm{Hom}}
\newcommand{\Div}{\mathrm{Div}}
\renewcommand{\log}{{log}}
\newcommand{\topst}{{top}}
\newcommand{\gp}{\mathrm{gp}}
\newcommand{\an}{{an}}
\newcommand{\class}{\mathrm{B}}
\newcommand{\rank}{\mathrm{rank}}
\newcommand{\et}{\acute{e}t}
\newcommand*{\defeq}{\mathrel{\vcenter{\baselineskip0.5ex \lineskiplimit0pt
                     \hbox{\scriptsize.}\hbox{\scriptsize.}}}%
                     =}
\def\Top{\mathbf{Top}}
\def\Sch{\mathbf{Sch}}
\def\Algst{\mathbf{A}\!\mathfrak{lgSt}^{LFT}_{\mathbb{C}}}
\def\Topst{\mathfrak{TopSt}}
\renewcommand{\i}{\infty}
\def\iGpd{\operatorname{Gpd}_\i}
\def\LiGpd{\widehat{\operatorname{Gpd}}_\i}
\def\Pro{\operatorname{Pro}}
\def\Ind{\operatorname{Ind}}
\def\Fun{\operatorname{Fun}}
\DeclareMathOperator{\Psh}{Psh}
\def\Pshi{\operatorname{Psh}_\i}
\def\LPshi{\widehat{\operatorname{Psh}}_\i}
\def\Shi{\Sh_\i}
\def\Hshi{\mathbb{H}\mathrm{ypSh}_\i}
\def\colim{\underrightarrow{\mathrm{colim}\vspace{0.5pt}}\mspace{4mu}}
\newcommand{\hl}{\operatorname{holim}}
\newcommand{\holim}        {\underset{{\longleftarrow\!\!\!-\!\!\!-\!\!\!-\!\!\!-\!\!\!-\!\!\!-} } \hl  \:}
\renewcommand{\lim}{\varprojlim\mspace{3mu}}
\def\blank{\mspace{3mu}\cdot\mspace{3mu}}
\def\sfc{\mathcal{S}^{fc}}
\def\Profs{\operatorname{Prof}\left(\cS\right)}
\def\Prof{\operatorname{Prof}}
\def\Lan{\operatorname{Lan}}
\def\Top{\mathbf{Top}}
\def\Set{\mathit{Set}}
\def\Pip{\widehat{\Pi}_\i}
\def\longlongrightarrow{-\!\!\!-\!\!\!-\!\!\!-\!\!\!-\!\!\!-\!\!\!\longrightarrow}
\def\longlonglongrightarrow{-\!\!\!-\!\!\!-\!\!\!-\!\!\!-\!\!\!-\!\!\!\longlongrightarrow}
\newcommand*{\longlonghookrightarrow}{\ensuremath{\lhook\joinrel\relbar\joinrel\relbar\joinrel\rightarrow}}
\def\rrrarrow{\hspace{.05cm}\mbox{\,\put(0,-3){$\rightarrow$}\put(0,1){$\rightarrow$}\put(0,5){$\rightarrow$}\hspace{.45cm}}}
\begin{document}


\title[\resizebox{6in}{!}{KN-spaces, infinite root stacks, and the profinite homotopy type of log schemes}]
{Kato-Nakayama spaces, infinite root stacks, and the profinite homotopy type of log schemes}


\author{David Carchedi}
\address{Department of Mathematical Sciences\\
George Mason University\\
4400 University Drive, MS:  3F2\\
Exploratory Hall\\
Fairfax, Virginia  22030\\
USA}
\email{davidcarchedi@gmail.com}
\author{Sarah Scherotzke}
\address{Mathematisches Institut\\
Endenicher Allee 60\\
53115 Bonn\\
Germany}
\email{sarah@math.uni-bonn.de}
\author{Nicol\`o Sibilla}
\address{Department of Mathematics\\
University of British Columbia\\
1984 Mathematics Road\\
Vancouver, BC, V6T 1Z2\\
Canada}
\email{sibilla@math.ubc.ca}
\author{Mattia Talpo}
\address{Department of Mathematics\\
University of British Columbia\\
1984 Mathematics Road\\
Vancouver, BC, V6T 1Z2\\
Canada}
\email{mtalpo@math.ubc.ca}

\keywords{log scheme, Kato-Nakayama space, root stack, profinite spaces, infinity category, \'etale homotopy type, topological stack}
\subjclass[2010]{MSC Primary: 14F35, 55P60; Secondary: 55U35}
\maketitle

\begin{abstract}
For a log scheme locally of finite type over $\mathbb{C}$, a natural candidate for its profinite homotopy type is the profinite completion of its Kato-Nakayama space \cite{KN}. Alternatively, one may consider the profinite homotopy type of the underlying topological stack of its infinite root stack \cite{TV}. Finally, for a log scheme not necessarily over $\mathbb{C}$, another natural candidate is the profinite \'etale homotopy type of its infinite root stack. We prove that, for a fine saturated log scheme locally of finite type over $\mathbb{C},$ these three notions agree. In particular, we construct a comparison map from the Kato-Nakayama space to the underlying topological stack of the infinite root stack, and prove that it induces an equivalence on profinite completions. In light of these results, we define the profinite homotopy type of a general fine saturated log scheme as the profinite \'{e}tale homotopy type of its infinite root stack.

\end{abstract}


\setcounter{tocdepth}{1}

\tableofcontents



\section{Introduction}

Log schemes are an enlargement of the category of schemes due to Fontaine, Illusie and Kato \cite{kato}. The resulting variant of algebraic geometry, ``logarithmic geometry'', has applications in a variety of contexts ranging from moduli theory to arithmetic and enumerative geometry (see \cite{abramovich} for a recent survey).

In the past years there have been several attempts to capture the ``log'' aspect of these objects and translate it into a more familiar terrain. In the complex analytic case, Kato and Nakayama introduced in \cite{KN} a topological space $X_\log$ (where $X$ is a log analytic space), which may be interpreted as the ``underlying topological space'' of $X$, and over which, in some cases, one can write a comparison between logarithmic de Rham cohomology and ordinary singular cohomology. In a different direction, for a log scheme $X,$ Kato 
introduced two sites, the Kummer-flat site $X_{Kfl}$ and the Kummer-\'{e}tale site $X_{Ket}$, that are analogous to the small fppf and \'{e}tale site of a scheme, and were used later by Hagihara and Nizio{\l} \cite{hagihara, Ni1} to study the K-theory of log schemes.

Recently in \cite{TV}, the fourth author together with Vistoli introduced and studied a third incarnation of the ``log aspect'' of a log structure, namely the \emph{infinite root stack} $\sqrt[\infty]{X}$, and used it to reinterpret Kato's Kummer sites and link them to parabolic sheaves on $X$. This stack is defined as the limit of an inverse system of algebraic stacks $\sqrt[\infty]{X}=\varprojlim_n \sqrt[n]{X}$, parameterizing $n$-th roots of the log structure of $X$.

The infinite root stack can be thought of as an ``algebraic incarnation'' of the Kato-Nakayama space: if $X$ is a log scheme locally of finite type over $\bC$, both $X_\log$ and $\sqrt[\infty]{X}$ have a map to $X$. The fiber of $X_\log\to X_\an$ over a point $x\in X_\an$ is homeomorphic to $(S^1)^r$, where $r$ is the rank of the log structure at $x$. For all $n,$ the reduced fiber of $\sqrt[n]{X}\to X$ over the corresponding closed point of $X$ is equivalent to the classifying stack $\class \left(\mathbb{Z}/{n\mathbb{Z}}\right)^r$ (for the same $r$). Regarding the infinite root stack not as the limit $\varprojlim_n \sqrt[n]{X},$ but instead as the diagram of stacks $$n \mapsto \sqrt[n]{X},$$ i.e. as a pro-object or ``formal limit,'' yields then that the reduced fiber of $\sqrt[\infty]{X}\to X$ is the diagram of stacks, $$n \mapsto \class \left(\mathbb{Z}/{n\mathbb{Z}}\right)^r,$$ which regarded as a pro-object is simply $\class \widehat{\mathbb{Z}}^r\simeq \widehat{\class{\mathbb{Z}}^r}$, the profinite completion of $\left(S^1\right)^r.$


In this paper we formalize this analogy and prove a comparison result between the profinite completions of $X_\log$ and $\sqrt[\infty]{X}$ for a fine saturated log scheme $X$ locally of finite type over $\bC$. Furthermore, we put this result in a wider circle of ideas, centered around the concept of the \emph{profinite homotopy type} of a log scheme.

Our approach relies in a crucial way on a careful reworking of the foundations of the theory of topological stacks and profinite completions within the framework of  $\infty$-categories \cite{htt}. This allows us to have greater technical control than earlier and more limited treatments, and  plays an important role in the proof of our main result.  
In the second half on the paper we construct a comparison map between $X_\log$ and $\sqrt[\infty]{X}$ and show that it is induces an equivalence between their profinite completions. The proof  involves an analysis of the local geometry of log schemes, and a local-to-global argument which reduces the statement to a local computation. Next, we review the main ideas in the paper in greater detail. 

\subsection{Topological stacks and profinite completions of homotopy types}
The first ingredient that we need in order to compare $X_\log$ and $\sqrt[\infty]{X}$ is the notion of a \emph{topological stack} \cite{No1} associated with an algebraic stack. This is an extension of the analytification functor defined on schemes and algebraic spaces, that equips algebraic stacks with a topological counterpart, and allows one for example to talk about their homotopy type. Given an algebraic stack $\mathcal{X}$ locally of finite type over $\bC$, let us denote by $\mathcal{X}_{top}$ its ``underlying topological stack''. This formalism allows us to carry over $\sqrt[\infty]{X}$ to the topological world, where $X_\log$ lives. 

The second ingredient we need is a functorial way of associating to a topological stack its homotopy type. Although this is in principle accomplished in \cite{No2} and \cite{No3}, the construction is a bit complicated and it is difficult to notice the nice formal properties this functor has from the construction. We instead construct a functor $\Pi_\i$ associating to a topological stack $\cX$ its \emph{fundamental $\i$-groupoid}. The source of this functor is a suitable $\i$-category of higher stacks on topological spaces, and the target is the $\i$-category $\cS$ of spaces. Using the language and machinery of $\i$-categories makes the construction and functoriality of $\Pi_\i$ entirely transparent; it is the unique colimit preserving functor which sends each space $T$ to its weak homotopy type.

The third ingredient we need is a way of associating to a space its profinite completion. Combining this with the functor $\Pi_\i$ gives a way of associating to a topological stack a profinite homotopy type. The notion of profinite completion of homotopy types is originally due to Artin and Mazur \cite{ArtinMazur}. Profinite homotopy types have since played many important roles in mathematics, perhaps most famously in relation to the Adams conjecture from algebraic topology \cite{Quillen,sullivan,Friedlander2}. A more modern exposition using model categories is given in \cite{Qu,Qu2,Isak1}; however the notion of profinite completion is a bit complicated in this framework. Finally, Lurie briefly introduces an $\i$-categorical model for profinite homotopy types in \cite{dagxiii}, which has recently been shown to be equivalent to Quick's model in \cite{prohomotopy} (and also to a special case of Isaksen's). The advantage of Lurie's framework is that the definition of profinite spaces and the notion of profinite completion become very simple. A \emph{$\pi$-finite space} is a space $X$ with finitely many connected components, and finitely many homotopy groups, all of whom are finite, and a \emph{profinite space} is simply a pro-object in the $\i$-category of $\pi$-finite spaces. The profinite completion functor $$\widehat{\left(\blank\right)}:\cS \to \Profs$$ from the $\i$-category of spaces to the $\i$-category of profinite spaces preserves colimits, and composing this functor with $\Pi_\i$ gives a colimit preserving functor $\Pip$ which assigns a topological stack its profinite homotopy type. This property is used in an essential way in the proof of our main theorem. Using this machinery, we are able to derive some non-trivial properties of profinite spaces that are used in a crucial way to prove our main result; in particular we show that profinite spaces can be glued along hypercovers (Lemma \ref{lem:hyperprof}).

\subsection{The comparison map and the equivalence of profinite completions}

Our main result states:

\begin{theorem*}[{see Theorem \ref{thm:main}}]\label{thrm:main}
Let $X$ be a fine saturated log scheme locally of finite type over $\bC$. 
Then there is a canonical map of pro-topological stacks 
$$
\Phi_X: X_{log} \rightarrow \sqrt[\infty]{X}_{top} 
$$
that induces an equivalence upon profinite completion
$$\Pip\left(X_{\log}\right)  \stackrel{\sim}{\longlongrightarrow} \Pip\left(\sqrt[\infty]{X}_{top} \right).$$
\end{theorem*}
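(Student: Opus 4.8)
The plan is to separate the argument into three stages: the construction of the comparison map, a local-to-global d\'evissage, and a purely local (indeed fiberwise) computation.

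\emph{Constructing $\Phi_X$.} Zariski- or \'etale-locally on $X$ one may choose a chart $P\to\cO_X$ by a fine and saturated monoid $P$, i.e. a strict morphism $X\to\bA_P\defeq\Spec\bC[P]$. Both constructions in play are compatible with such strict base change: $\sqrt[n]{X}\simeq X\times_{\bA_P}\sqrt[n]{\bA_P}$ (Talpo--Vistoli) and $X_{\log}\simeq X_{\an}\times_{(\bA_P)_{\an}}(\bA_P)_{\log}$ (Kato--Nakayama, Ogus), so it suffices to produce, functorially in $n$ and in $P$, a map $(\bA_P)_{\log}\to\sqrt[n]{\bA_P}_{top}$. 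This I would extract from the universal property of $\sqrt[n]{\bA_P}$, which classifies $n$-th roots of the tautological symmetric monoidal functor $P\to\Div$: on the Kato--Nakayama space the argument ($S^1$) coordinates furnish a canonical such root. Gluing the local maps yields the desired natural map of pro-topological stacks $\Phi_X\colon X_{\log}\to\sqrt[\infty]{X}_{top}$, and the whole problem becomes showing it is a $\Pip$-equivalence.

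\emph{Local-to-global.} Since $\Pip$ preserves colimits and, by Lemma~\ref{lem:hyperprof}, profinite spaces can be glued along hypercovers, it is enough to check that $\Phi_U$ induces an equivalence on $\Pip$ as $U$ runs through the terms of a hypercover of $X$ by opens that admit charts, compatibly with the simplicial structure maps. Concretely, choosing an open cover of $X$ by chart-admitting affines and forming the associated hypercover, one has that $X_{\log}$ and $\sqrt[\infty]{X}_{top}$ are the colimits of the corresponding local pieces and that $\Phi_X$ is the colimit of the $\Phi_U$; applying the colimit-preserving functor $\Pip$ then reduces the theorem to the case in which $X$ is affine and admits a chart.

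\emph{The local computation.} By the compatibility with strict base change, $\Phi_X$ for a chart $X\to\bA_P$ is pulled back from $\Phi_{\bA_P}$. Stratifying $\bA_P$ by torus orbits --- equivalently, stratifying $X$ by the rank $r$ of the stalks of the characteristic sheaf $\overline{M}_X$ --- one reduces to the situation where $\overline{M}_X$ is locally constant of rank $r$. There $X_{\log}\to X_{\an}$ is a fiber bundle with fiber $(S^1)^r$, while $\sqrt[\infty]{X}_{top}\to X_{\an}$ is, as a pro-object, a bundle whose reduced fiber is $n\mapsto\class(\bZ/n\bZ)^r$, and $\Phi_X$ restricts on fibers to the canonical map
$$
(S^1)^r\longrightarrow\class\widehat{\bZ}^r\simeq\widehat{(S^1)^r}
$$
exhibiting $\class\widehat{\bZ}^r$ --- i.e. the pro-object $n\mapsto\class(\bZ/n\bZ)^r$ --- as the profinite completion of $(S^1)^r$; in particular this is a $\Pip$-equivalence. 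A descent argument over $X_{\an}$, once more using colimit-preservation of $\Pip$ together with the hypercover-gluing of profinite spaces, then promotes this fiberwise equivalence to an equivalence of the total objects.

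\emph{Main obstacle.} The hard part will be precisely this last upgrading, from the essentially tautological fiberwise statement to the global equivalence. A priori $\Pip$ commutes neither with the formation of the fibers of $X_{\log}\to X_{\an}$ and $\sqrt[\infty]{X}_{top}\to X_{\an}$, nor with pullback along a chart $X\to\bA_P$; controlling this is exactly what forces the careful $\i$-categorical treatment of topological stacks and of profinite completion, the hypercover-gluing property of profinite spaces (Lemma~\ref{lem:hyperprof}), and a sufficiently explicit description of the local geometry of log schemes telling us how the $(S^1)^r$-bundles and their root-stack analogues are patched over $X_{\an}$.
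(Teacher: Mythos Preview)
Your three-stage strategy --- construct $\Phi_X$ from local charts, reduce to a local statement via a hypercover and Lemma~\ref{lem:hyperprof}, then compute fiberwise --- is exactly the paper's. The construction of $\Phi_X$ in the paper is slightly different from what you sketch (rather than invoking the moduli description of $\sqrt[n]{\bA_P}$, one uses the quotient presentation $\sqrt[n]{X}\simeq[X_n/\mu_n(P)]$ and observes that $(X_n)_{\log}\to X_{\log}$ is already a $\mu_n(P)_{\an}$-torsor with an equivariant map to $(X_n)_{\an}$), but this is a matter of taste.

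The genuine gap is in your local computation. Hypercovering by chart-admitting affines reduces you to such an $X$, but for that $X$ the problem is still global: $\Pip$ does not commute with the strict base change along $X\to\bA_P$, and the rank stratification gives \emph{locally closed} pieces, not opens, so Lemma~\ref{lem:hyperprof} does not let you ``descend'' the fiberwise equivalence over the strata. You correctly flag this as the main obstacle, but the device you are missing is the following: one does not hypercover by arbitrary chart-admitting opens at all. Instead, the paper constructs (Lemma~\ref{lem: open cover}, Proposition~\ref{prop:retraction-kn}) a basis of contractible opens $V\subset X_{\an}$, each with a distinguished center $x\in V$, such that the inclusions of the fibers
\[
(X_{\log})_x \hookrightarrow X_{\log}|_V,\qquad (\mathbb{G}(n)_i)_x \hookrightarrow \mathbb{G}(n)_i|_V
\]
are weak homotopy equivalences for every $n$ and $i$ simultaneously; here $\mathbb{G}(n)$ is the action-groupoid presentation of $\sqrt[n]{X}_{top}|_V$. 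This is obtained by choosing a triangulation refining the rank stratification and taking $V$ inside the open star of a simplex, then peeling off strata one at a time via a deformation-retraction lemma for stratified fibrations. Hypercovering $X_{\an}$ by \emph{these} opens makes the ``local'' case literally a single-fiber computation, namely $\widehat{B(\bZ^k)}\simeq B(\widehat{\bZ}^k)\simeq\varprojlim_n B((\bZ/n\bZ)^k)$, and Lemma~\ref{lem:hyperprof} then finishes the job. In short: the stratification is used not to decompose the problem, but to manufacture the right open cover.
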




This theorem makes precise the idea that the infinite root stack is an algebraic incarnation of the Kato-Nakayama space, and that it completely captures the ``profinite homotopy type'' (\`{a} la Artin-Mazur) of the corresponding log scheme.

The construction of the comparison map $\Phi_X$ is first performed \'{e}tale locally on $X$ where there is a global chart for the log structure, and then globalized by descent. The local construction uses the quotient stack description of the root stacks, that reduces the problem of finding a map to constructing a (topological) torsor on $X_\log$ with an equivariant map to a certain space. 

This permits the construction of $\Phi_X$ as a canonical morphism of pro-topological stacks over $X_{an}$: 
$$
\xymatrix{
X_{log} \ar[rr]^{\Phi_X} \ar[dr]_-{\pi_{log}} & & \sqrt[\infty]{X}_{top}  \ar[dl]^-{\pi_\infty} \\ 
& X_{an}. & }
$$
The jump patterns of the fibers of $\pi_{log}$ and $\pi_\infty$ reflect the way in which the rank of the 
log structure varies over $X_{an}$. More formally, the log structure  defines a canonical stratification on $X_{an}$ called the ``rank stratification'', which makes $X_{log}$ and $\sqrt[\infty]{X}_{top}$ into stratified fibrations. After profinite completion, 
the fibers of $\pi_{log}$ and $\pi_\infty$ on each stratum become equivalent: indeed they are equivalent 
respectively 
to real tori of dimension $n$, and to the (pro-)classifying stacks $\class \widehat{\mathbb{Z}}^n$. The fact that the fibers of $\pi_{log}$ and $\pi_\infty$ are profinite homotopy equivalent was in fact our initial intuition as to why the main result should be true. 
Extracting from this fiber-wise statement a proof that 
$\Phi_X$ induces an equivalence of profinite homotopy types requires a local-to-global argument  that makes full 
use of the $\infty$-categorical framework developed in the first half of the paper.


The Kato-Nakayama space models the topology of log schemes, but its applicability is limited to schemes over the complex numbers. Our results suggest 
 that  the infinite root stack encodes all the topological information of log schemes (or at least its profinite completion) in a way that is exempt from this limitation. More precisely, if $X$ is a log scheme locally of finite type over $\bC$, there are three natural candidates for its ``profinite homotopy type'': the profinite completion of the Kato-Nakayama space $X_\log$, the profinite \'{e}tale homotopy type of $\sqrt[\infty]{X}$ and the profinite completion of the (pro-)topological stack $\sqrt[\infty]{X}_\topst$. Theorem \ref{thm:main} and Theorem \ref{3hot} (proved in \cite{etalehomotopy}) 
imply that these three constructions give the same result. 
This justifies the definition of the profinite homotopy type for a log scheme $X$, even outside of the complex case, as the profinite \'{e}tale homotopy type of its infinite root stack $\sqrt[\infty]{X}$.

Another possible approach to this would be to define the homotopy type of a log scheme via Kato's Kummer-\'etale topos (see \cite{kato2}). As proved in \cite[Section 6.2]{TV}, this topos is equivalent to an appropriately defined small \'etale topos of the infinite root stack. It is not immediate, however, to link the resulting profinite homotopy time and the one that we define in the present paper. We plan to address this point in future work.
 

We believe that our results hold in the framework of \emph{log analytic spaces} as well. Even though root stacks of those have not been considered anywhere yet, the construction and results about them that we use in the present paper should carry through without difficulty, using some notion of ``analytic stacks'' instead of algebraic ones.

In recent unpublished work,  Howell and Vologodsky give a definition of the motive of a log schemes inside  Voevodsky's triangulated category of motives. Based on our results we 
expect that infinite root stacks should provide   
an alternative encoding of the motive of log schemes, or a profinite approximation of it. It is an interesting question to explore possible connections between these two viewpoints. 

\subsection*{Description of content}

The paper is structured as follows. 



In the first two sections we develop the framework necessary to associate profinite homotopy types to (pro-)algebraic and topological stacks. Along the way, in Section \ref{subsec.KN} we prove an interesting result (Theorem \ref{thm:KN homotopy}) which expresses the homotopy type of the Kato-Nakayama space of a log scheme as the classifying space of a natural category.



As a first step towards the main theorem, we construct in Section \ref{section: KN to IRS} (Proposition \ref{kntoirs}) a canonical map of pro-topological stacks 
\begin{equation}\label{eq:map}
\Phi_X\colon X_\log\to \sqrt[\infty]{X}_{\topst}
\end{equation}
by exploiting the local quotient stack presentations of the root stacks $\sqrt[n]{X}$, and gluing the resulting maps.

Section \ref{section:topology} contains results about the topology of the Kato-Nakayama space and the topological infinite root stack that we use in an essential way in the proof of our main result.

In Section \ref{section: the equivalence}, we give the proof of Theorem \ref{thm:main}:  we show that the canonical map (\ref{eq:map}) 
induces an equivalence after profinite completion. The proof is based on a local-to-global analysis: we use a suitable hypercover $U^\bullet$ of $X_{an}$ constructed in Section \ref{section:topology} to reduce the question to the restriction of the map $\Phi_X$ to each element of this hypercover. We then use the results about the topology of the Kato-Nakayama space and the topological infinite root stack proven in the same section to reduce to showing that the map induces a profinite homotopy equivalence along fibers. This concludes the proof. 

Finally, in Section \ref{section:prohottype} we make some remarks about the definition of the profinite homotopy type of a general log scheme.

In Appendix \ref{preliminaries}, we gather definitions and facts that we use throughout the paper about log schemes, the analytification functor, the Kato-Nakayama space, root stacks, and topological stacks. 
In particular, in (\ref{section: Stratification}), we carefully construct the ``rank stratification'' of $X$ (and $X_\an$), over which the characteristic monoid $\overline{M}$ of the log structure is locally constant.


\subsection*{Acknowledgements}

All of the authors would like to thank their respective home institutions for their support.

We are also happy to thank Kai Behrend, Thomas Goodwillie, Marc Hoyois, Jacob Lurie, Thomas Nikolaus, Behrang Noohi, Gereon Quick, Angelo Vistoli, and Kirsten Wickelgren for useful conversations. 

We are grateful to the anonymous referee for a careful reading and useful comments, in particular for pointing out the short proof of Proposition \ref{torsor}.


\subsection*{Notations and conventions}

We will always work over a field $k$, which will almost always be the complex numbers $\bC$. In particular all our log schemes will be fine and saturated, and locally of finite type over $\bC$, unless otherwise stated. 


If $P$ is a monoid we denote by $P^\gp$ the associated group. Our monoids will typically be integral, finitely generated, saturated and sharp (hence torsion-free). A monoid $P$ with these properties has a distinguished ``generating set'', consisting of all its indecomposable elements. This gives a presentation of any such monoid $P$ through generators and relations.


If $F$ is a sheaf of sets on the small \'{e}tale site of a scheme, its ``stalks'' will always be stalks on geometric points.

By an $\i$-category, we mean a quasicategory or inner-Kan complex. These are a model for $\left(\i,1\right)$-categories. We will follow very closely the notational conventions and terminology from \cite{htt}, and refer the reader to the index and notational index in op. cit. One slight deviation from the notational conventions just mentioned that will be made is that for $C$ and $D$ objects of an $\i$-category $\sC,$ we will denote by $\Hom_{\sC}\left(C,D\right)$ the space of morphisms from $C$ to $D$ in $\sC,$ rather than using the notation $\mbox{Map}_{\sC}\left(C,D\right),$ in order to highlight the analogy with classical category theory. A very brief heuristic introduction to $\i$-categories can be found in Appendix A of \cite{hafdave}. See also \cite{groth}.


\section{Profinite homotopy types}\label{section: profinite}

In this section we will introduce the $\i$-categorical model for profinite spaces that we will use in this article. This $\i$-category is introduced in \cite[Section 3.6]{dagxiii}; a profinite space will succinctly be a pro-object in the $\i$-category of $\pi$-finite spaces. This notion is equivalent to the notion of profinite space introduced by Quick in \cite{Qu,Qu2} (see \cite{prohomotopy}), but the machinery and language of $\i$-categories is much more convenient to work with. Most importantly, the notion of profinite completion becomes completely transparent in this set up, and it is left-adjoint to the canonical inclusion of profinite spaces into pro-spaces, and hence in particular preserves all colimits. We use this fact in an essential way in the proof of our main result, and we do not know how to prove the analogous fact about profinite completion in any other formalism.

We start first by reviewing the notion of ind-objects and pro-objects.

We will interchangeably use the notation $\cS$ and $\iGpd$ for the $\i$-category of spaces, and the $\i$-category of $\i$-groupoids. These two $\i$-categories are one and the same, and we will use the different notations solely to emphasize in what way we are viewing the objects.

Recall that for $\sD$ a small category, the category of ind-objects is essentially the category obtained from $\sD$ by freely adjoining formal filtered colimits. This construction carries over for $\i$-categories. Moreover, if $\sD$ is an essentially small $\i$-category, the $\i$-category of \textbf{ind-objects} in $\sD,$ $\Ind\left(\sD\right),$ admits a canonical functor $$j:\sD \to \Ind\left(\sD\right)$$satisfying the following universal property:\\

For every $\i$-category $\sE$ which admits small filtered colimits, composition with $j$ induces an equivalence of $\i$-categories

$$\Fun_{\mathit{filt.}}\left(\Ind\left(\sD\right),\sE\right) \to \Fun\left(\sD,\sE\right),$$
where $\Fun_{\mathit{filt.}}\left(\Ind\left(\sD\right), \sE \right)$ denotes the $\i$-category of all functors $\Ind\left(\sD\right) \to \sE$ which preserve filtered colimits.

A more concrete description of the $\i$-category $\Ind\left(\sD\right)$ is as follows. First, recall the following proposition:

\begin{proposition}[{\cite[Corollary 5.3.5.4]{htt}}]
Denote by $\Pshi\left(\sD\right)$ the $\i$-category of $\i$-presheaves on $\sD,$ that is, the functor category $$\Fun\left(\sD^{op},\iGpd\right).$$ Let $\sD$ be an essentially small $\i$-category and let $F:\sD^{op} \to \iGpd$ be an $\i$-presheaf. Then the following conditions are equivalent:
\begin{itemize}
\item[i)] The associated right fibration
$$\int_\sD F \to \sD$$
classified by $F$ has $\int_\sD F$ a filtered $\i$-category.\\
\item[ii)] There exists a small filtered $\i$-category $\cJ$ and a functor $$f:\cJ \to \sD$$ such that $F$ is the colimit of the composite
$$\cJ \stackrel{f}{\longrightarrow} \sD \stackrel{y}{\hookrightarrow} \Pshi\left(\sD\right)$$ (where $y$ denotes the Yoneda embedding).
\end{itemize}
and if $\sD$ has finite colimits, $i)$ and $ii)$ are equivalent to:
\begin{itemize}
\item[iii)] $F$ is left exact (i.e. preserves finite limits).
\end{itemize}
\end{proposition}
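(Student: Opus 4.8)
The plan is to deduce the three equivalences from two standard facts about presheaf $\infty$-categories: the density theorem, which says that every $F\in\Pshi(\sD)$ is canonically the colimit of the diagram of representables $\int_\sD F\to\sD\to\Pshi(\sD)$ obtained by composing the projection $\pi$ with the Yoneda embedding $y$; and the fact that, $\Pshi(\sD)$ being an $\infty$-topos, colimits in it are universal, so that --- through the straightening/unstraightening equivalence --- the formation of the $\infty$-category of elements is compatible with colimits. I will establish $(i)\Leftrightarrow(ii)$ in general, and $(ii)\Rightarrow(iii)\Rightarrow(i)$ under the additional hypothesis that $\sD$ admits finite colimits.

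For $(i)\Rightarrow(ii)$ I take $\cJ\defeq\int_\sD F$, which is small and, by hypothesis, filtered, together with $f\defeq\pi\colon\cJ\to\sD$; the density theorem then exhibits $F\simeq\colim(y\circ f)$, which is exactly $(ii)$. For $(ii)\Rightarrow(i)$ I write $\int_\sD F\simeq\sD\times_{\Pshi(\sD)}\Pshi(\sD)_{/F}$, the comma $\infty$-category of objects of $\sD$ lying over $F$; given $F\simeq\colim_{j\in\cJ}y(f(j))$ with $\cJ$ small filtered, universality of colimits rewrites this as $\colim_{j\in\cJ}\bigl(\sD\times_{\Pshi(\sD)}\Pshi(\sD)_{/y(f(j))}\bigr)$, which by the Yoneda lemma is $\colim_{j\in\cJ}\sD_{/f(j)}$. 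Each $\sD_{/f(j)}$ has a terminal object and is hence filtered, $\cJ$ is filtered, and a filtered colimit of filtered $\infty$-categories is filtered --- a finite simplicial set is a compact object of $\mathrm{Cat}_\infty$, so any finite diagram in $\colim_j\sD_{/f(j)}$ factors through some $\sD_{/f(j)}$, where it admits a cocone. Therefore $\int_\sD F$ is filtered.

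Assume now that $\sD$ has finite colimits. For $(ii)\Rightarrow(iii)$: each representable $y(d)=\Hom_\sD(-,d)\colon\sD^{op}\to\iGpd$ carries colimits in $\sD$ to limits, hence is left exact, and since filtered colimits commute with finite limits in $\iGpd$, the filtered colimit $F\simeq\colim_j y(f(j))$ is again left exact. For $(iii)\Rightarrow(i)$: assuming $F$ left exact, I verify that $\cE\defeq\int_\sD F$ is filtered by checking that for every map $p\colon K\to\cE$ out of a finite simplicial set the coslice $\cE_{p/}$ is nonempty. The composite $K\to\cE\to\sD$ admits a colimit cocone $K^{\triangleright}\to\sD$ because $\sD$ has finite colimits; since $F$ preserves the corresponding finite limit of $\sD^{op}$, the point of that limit determined by $p$ lifts to a point over the cone vertex, producing a cocone $K^{\triangleright}\to\cE$ above it. (In low dimensions: $\cE\neq\emptyset$ because the initial object $\emptyset_\sD$ of $\sD$ is terminal in $\sD^{op}$, so $F(\emptyset_\sD)$ is contractible; two objects $(c,x),(d,y)$ admit an upper bound via $F(c\sqcup d)\simeq F(c)\times F(d)$; a parallel pair is coequalized via the equalizer $F(e)\to F(d)\rightrightarrows F(c)$ arising from a coequalizer $c\rightrightarrows d\to e$ in $\sD$.) This establishes $(i)$, and composing with $(i)\Rightarrow(ii)$ also yields $(iii)\Rightarrow(ii)$, closing the loop.

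The soft steps are $(i)\Rightarrow(ii)$ and $(ii)\Rightarrow(iii)$, which use only density and the exactness of filtered colimits in $\iGpd$. The main obstacle is $(iii)\Rightarrow(i)$: one cannot simply invoke binary coproducts and coequalizers, but must run the cocone-lifting argument uniformly over all finite simplicial sets $K$, keeping track of the higher coherence data and using that $F$ preserves every finite limit of $\sD^{op}$ matched to a finite colimit of $\sD$ --- organizing this is precisely what the filteredness criteria for $\infty$-categories accomplish. A secondary point needing care is the identification, in $(ii)\Rightarrow(i)$, of $\int_\sD F$ with $\colim_j\sD_{/f(j)}$ via universality of colimits in $\Pshi(\sD)$; this is routine once the straightening/unstraightening formalism is available.
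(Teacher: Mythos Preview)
The paper does not give its own proof of this proposition: it is stated with a citation to \cite[Corollary 5.3.5.4]{htt} and used as a black box. So there is no ``paper's proof'' to compare against; your write-up is essentially a sketch of Lurie's argument (density for $(i)\Rightarrow(ii)$, commutation of filtered colimits with finite limits in $\iGpd$ for $(ii)\Rightarrow(iii)$, and a cocone-lifting argument for $(iii)\Rightarrow(i)$), and in outline it is correct.

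One point worth tightening is your $(ii)\Rightarrow(i)$. The identification $\int_\sD F \simeq \colim_{j}\sD_{/f(j)}$ in $\mathrm{Cat}_\infty$ is true but is not an immediate consequence of ``universality of colimits in $\Pshi(\sD)$'': descent in an $\infty$-topos gives $\Pshi(\sD)_{/F}\simeq \lim_j \Pshi(\sD)_{/y(f(j))}$, a \emph{limit}, not a colimit, and pulling back along the Yoneda embedding does not directly turn that into the colimit you want. The clean justification is that the unstraightening functor $\Pshi(\sD)\to(\mathrm{Cat}_\infty)_{/\sD}$ preserves colimits (as a left adjoint) and the forgetful $(\mathrm{Cat}_\infty)_{/\sD}\to\mathrm{Cat}_\infty$ does too; composing gives that $F\mapsto\int_\sD F$ is colimit-preserving, whence the desired formula. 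Your subsequent step---finite simplicial sets are compact in $\mathrm{Cat}_\infty$, so a filtered colimit of filtered $\infty$-categories is filtered---is fine. Alternatively, Lurie bypasses this by appealing directly to a lemma (HTT~5.3.5.3) showing that for $\cJ$ filtered and $f:\cJ\to\sD$ the relevant overcategory is filtered, which avoids the colimit-preservation detour.
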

The $\i$-category $\Ind\left(\sD\right)$ may be described as the full subcategory of $\Pshi\left(\sD\right)$ satisfying the equivalent conditions $i)$ and $ii)$ (or $iii)$ if $\sD$ has finite colimits). In particular, this implies that $j$ is full and faithful, since it is a restriction of the Yoneda embedding. In a nutshell $\Pshi\left(\sD\right)$ is the $\i$-category obtained from $\sD$ by freely adjoining formal colimits, and $ii)$ above states that $\Ind\left(\sD\right)$ is the full subcategory thereof on those formal colimits of objects in $\sD$ which are filtered colimits.

The notion of a pro-object is dual to that of an ind-object; it is a formal cofiltered limit. By definition, the $\i$-category of \textbf{pro-objects} of an essentially small $\i$-category $\sD$ is $$\Pro\left(\sD\right)\defeq \Ind\left(\sD^{op}\right)^{op}.$$ If $\sD$ has small limits, we see that $\Pro\left(\sD\right)$ can be described as the full subcategory of $\Fun\left(\sD,\iGpd\right)^{op}$ on those functors $$F:\sD \to \iGpd$$ such that $F$ preserves finite limits. Since this definition makes sense even when $\sD$ is not essentially small, we make the following definition, due to Lurie:

\begin{definition}
If $\sE$ is any accessible $\i$-category with finite limits, then we define the $\i$-category of \textbf{pro-objects} of $\sE,$ $\Pro\left(\sE\right),$ to be the full subcategory of $\Fun\left(\sE,\iGpd\right)^{op}$ on those functors $F:\sE \to \iGpd$ which are accessible and preserve finite limits.
\end{definition}

\begin{remark}\label{rmk:univpro}
If $\sE$ is any accessible $\i$-category and $E$ is an object of $\sE,$ then the functor $$\Hom\left(E,\blank\right):\sE \to \iGpd$$ co-represented by $E$ is accessible and preserves all limits. This induces a fully faithful functor $$\sE \stackrel{j}{\hookrightarrow} \Pro\left(\sE\right).$$

The functor $j$ satisfies the following universal property:\\
If $\cD$ is any $\i$-category admitting small cofiltered limits, then composition with $j$ induces an equivalence of $\i$-categories

\begin{equation}\label{eq:pro}
\Fun_{\mathit{co-filt.}}\left(\Pro\left(\sE\right),\sD\right) \to \Fun\left(\sE,\sD\right),
\end{equation}
where $\Fun_{\mathit{co-filt.}}\left(\Pro\left(\sE\right),\sD\right)$ is the full subcategory of $\Fun\left(\Pro\left(\sE\right),\sD\right)$ spanned by those functors which preserve small cofiltered limits, see \cite[Proposition 3.1.6]{dagxiii}.
\end{remark}

\begin{remark}
If $\sC$ is any (not necessarily accessible) $\i$-category, there always exists an $\i$-category $\Pro\left(\sC\right)$ satisfying the universal property (\ref{eq:pro}). This is a special case of \cite[Proposition 5.3.6.2]{htt}.
\end{remark}

\begin{remark}\label{rmk:universe}
Let $\sE$ be any accessible $\i$-category which is not necessarily essentially small. Let $\cU$ be the Grothendieck universe of small sets and let $\cV$ be a Grothendieck universe such that $\cU \in \cV,$ so that we may regard $\cV$ as the Grothendieck universe of large sets. Let $\widehat{\operatorname{Gpd}}_\i$ denote the $\i$-category of $\i$-groupoids in the universe $\cV.$ By the proof of \cite[Proposition 3.1.6]{dagxiii}, it follows that the essential image of the composition
$$\Pro\left(\sE\right) \hookrightarrow \Fun\left(\sE,\iGpd\right)^{op} \hookrightarrow \Fun\left(\sE,\widehat{\operatorname{Gpd}}_\i\right)^{op}$$ consists of those functors $F:\sE \to \widehat{\operatorname{Gpd}}_\i$ for which there exists a small filtered $\i$-category $\cJ$ and a functor $$f:\cJ \to \sE^{op}$$ such that $F$ is the colimit of the composite
$$\cJ \stackrel{f}{\longrightarrow} \sE^{op} \hookrightarrow \Fun\left(\sE,\widehat{\operatorname{Gpd}}_\i\right).$$
\end{remark}

\begin{remark}\label{rmk:mappingpro}
In light of Remark \ref{rmk:universe}, any object $X$ of $\Pro\left(\sE\right),$ for $\sE$ an accessible $\i$-category, can be written as a cofiltered limit of a diagram of the form $$F:\cI \to \sE \stackrel{j}{\hookrightarrow} \Pro\left(\sE\right),$$ or in more informal notation, $$X = \underset{i \in \cI} \lim X_i.$$ Unwinding the definitions, we see that if $Y= \underset{j \in \cJ} \lim Y_j$ is another such object of $\Pro\left(\sE\right),$ then the usual formula for the morphism space holds:
$$\Hom_{\Pro\left(\sE\right)}\left(X,Y\right) \simeq \underset{j \in \cJ} \lim \underset{i \in \cI} \colim \Hom_{\sE}\left(X_i,Y_j\right).$$ 
Now suppose that $\sE$ has a terminal object $1.$ Then
$$\Hom_{\Pro\left(\sE\right)}\left(X,j\left(1\right)\right) \simeq \underset{i \in \cI} \colim \Hom_{\sE}\left(X_i,1\right).$$ Notice that each space $\Hom_{\sE}\left(X_i,1\right)$ is contractible since $1$ is terminal, and since $\left(-2\right)$-truncated objects (i.e. terminal objects) are closed under filtered colimits in $\cS$ by \cite[Corollary 5.5.7.4]{htt}, it follows that $\Hom_{\Pro\left(\sE\right)}\left(X,j\left(1\right)\right)$ itself is a contractible space, and hence we conclude that $j\left(1\right)$ is a terminal object.
\end{remark}

\begin{example}
Let $\sE=\cS$ be the $\i$-category of spaces. Then the $\i$-category of \textbf{pro-spaces}, $\Pro\left(\cS\right),$ can be identified with the opposite category of functors $F:\cS \to \cS$ such that $F$ is accessible and left exact. Notice that any space $X$ gives rise to a pro-space $$\Hom\left(X,\blank\right):\cS \to \cS$$ which moreover preserves all limits. Moreover if $F:\cS \to \cS$ is \emph{any} functor which preserves all limits, then by the Adjoint Functor theorem for $\i$-categories (\cite[Corollary 5.5.2.9]{htt}), $F$ must have a right adjoint $G$, and is moreover accessible by Proposition 5.4.7.7 of op. cit. This then implies that
\begin{eqnarray*}
\Hom\left(G\left(*\right),X\right) &\simeq& \Hom\left(*,F\left(X\right)\right)\\
&\simeq& F\left(X\right).
\end{eqnarray*}
Hence $F\simeq j\left(G\left(*\right)\right).$ We conclude that the essential image of $$j:\cS \hookrightarrow \Pro\left(\cS\right)$$ is precisely those $\i$-functors $\cS \to \cS$ which preserve all small limits.
\end{example}

\begin{proposition}
The functor $$T:\Pro\left(\cS\right) \stackrel{\Hom\left(j\left(*\right),\blank\right)}{\longlonglongrightarrow} \cS$$ is right adjoint to the canonical inclusion $j:\cS \to \Pro\left(\cS\right).$
\end{proposition}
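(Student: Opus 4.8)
The plan is to establish $j\dashv T$ by exhibiting a unit and then invoking the standard criterion for $\i$-categorical adjunctions. I would rely on three inputs: (a) $j\colon\cS\hookrightarrow\Pro(\cS)$ is fully faithful (Remark \ref{rmk:univpro}), so $\Hom_{\cS}(X,W)\to\Hom_{\Pro(\cS)}(jX,jW)$ is an equivalence, naturally in $X,W$; (b) every object $Y$ of $\Pro(\cS)$ is a small cofiltered limit $Y\simeq\varprojlim_{i\in\cI}jY_i$ with $Y_i\in\cS$, by Remark \ref{rmk:universe} and Remark \ref{rmk:mappingpro}; and (c) any corepresentable functor preserves all limits that exist in its source. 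From (a) and the canonical equivalence $\Hom_{\cS}(*,\blank)\simeq\id_{\cS}$ we get a natural equivalence
$$
u\colon\ \id_{\cS}\ \simeq\ \Hom_{\cS}(*,\blank)\ \stackrel{\sim}{\longrightarrow}\ \Hom_{\Pro(\cS)}(j(*),j(\blank))\ =\ T\circ j,
$$
the middle arrow being induced by $j$. Applying $T=\Hom_{\Pro(\cS)}(j(*),\blank)$ to the presentation in (b) and using (c) twice yields $T(Y)\simeq\varprojlim_i T(jY_i)\simeq\varprojlim_i Y_i$; this limit is small, so $T$ indeed lands in $\cS$.

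To conclude I would apply the criterion for $\i$-categorical adjunctions \cite[Section 5.2.2]{htt}: it is enough to show that for all $X\in\cS$ and $Y\in\Pro(\cS)$ the composite
$$
\Hom_{\Pro(\cS)}(jX,Y)\ \stackrel{T}{\longrightarrow}\ \Hom_{\cS}\bigl(T(jX),T(Y)\bigr)\ \stackrel{u_X^{*}}{\longrightarrow}\ \Hom_{\cS}(X,T(Y))
$$
is an equivalence. Regarded as functors of $Y$, both sides carry small cofiltered limits to limits — the source by (b) together with corepresentability of $\Hom_{\Pro(\cS)}(jX,\blank)$, the target because $T$ and $\Hom_{\cS}(X,\blank)$ preserve limits — and the displayed map respects these identifications, so it suffices to treat $Y=jW$ with $W\in\cS$. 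In that case, unwinding the definition of $u$ and using its naturality together with (a), the composite is identified with the full-faithfulness equivalence $\Hom_{\cS}(X,W)\stackrel{\sim}{\longrightarrow}\Hom_{\Pro(\cS)}(jX,jW)$ inverted and postcomposed with the equivalence $(u_W)_{*}$, hence is an equivalence. This shows $j\dashv T$, i.e.\ $T$ is right adjoint to $j$.

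The real content is the formula $T(Y)\simeq\varprojlim_i Y_i$ for $Y\simeq\varprojlim_i jY_i$, which is nothing but the defining universal property of $\Pro$ dualized and evaluated at the point. The step I expect to need the most care is the reduction to $Y=jW$: one must check that the two limit-preserving descriptions of the two sides — both as $\varprojlim_i$ of $\Hom_{\cS}(X,Y_i)$ — are natural in $Y$ and are intertwined by the displayed map, so that the desired equivalence can genuinely be tested on the image of $j$. This is routine manipulation of limit-preserving functors and $\i$-categorical coherences, with no essential difficulty, but it is where attention is required.
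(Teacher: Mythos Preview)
Your argument is correct. You build the unit $u\colon \id_\cS \simeq T\circ j$ from full faithfulness of $j$, then verify the hom-set criterion by reducing along cofiltered limits to the case $Y=jW$, where full faithfulness again finishes the job. The only point needing care, as you note, is the compatibility of the reduction with the displayed map, and that is indeed routine.

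The paper's proof takes a different, more abstract route: it first argues that $\Pro(\cS)$ is cocomplete and that $j$ preserves small colimits (via the large Yoneda embedding), then uses that $\cS\simeq\Pshi(1)$ is the free cocompletion of a point to identify $j$ with the left Kan extension $\Lan_{y_1}(t)$ of the functor $t\colon 1\to\Pro(\cS)$ picking out $j(*)$. The Yoneda lemma then immediately gives $\Hom(j(*),\blank)$ as the right adjoint. This avoids any direct manipulation of cofiltered presentations and instead leverages the universal property of $\cS$ itself. Your approach, by contrast, is essentially the specialization to $\sC=\cS$ of the proof the paper gives for the more general Proposition~\ref{prop:limpro} (which treats any accessible $\sC$ with small filtered limits): construct $T$ via the universal property of $\Pro$, compute it on cofiltered presentations, and verify the adjunction by hand on representables. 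Your method is more elementary and generalizes directly; the paper's is slicker but specific to $\cS$.
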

\begin{proof}
By Remark \ref{rmk:universe}, $\Pro\left(\cS\right)^{op}$ may be identified with a subcategory of $\Fun\left(\cS,\widehat{\operatorname{Gpd}}_\i\right)$ of large $\i$-co-presheaves, and since limits commute with limits, this subcategory is stable under small limits. Note that this implies that $\Pro\left(\cS\right)$ is cocomplete. Since the Yoneda embedding into large $\i$-presheaves $$\cS^{op} \stackrel{y}{\hookrightarrow} \LPshi\left(\cS\right)$$ preserves small limits, it follows that $$j:\cS \hookrightarrow \Pro\left(\cS\right)$$ preserves small colimits. Since $\cS\simeq \Pshi\left(1\right),$ where $1$ is the terminal $\i$-category, and since $\Pro\left(\cS\right)$ is cocomplete, one has by \cite[Theorem 5.1.5.6]{htt} that $j\simeq \operatorname{Lan}_{y_1} \left(t\right)$ where $y_1$ is the Yoneda embedding $1 \to \cS$ and $t:1 \to \Pro\left(\cS\right)$ is the functor picking out the object $j\left(*\right).$ It follows immediately from the Yoneda lemma that $\Hom\left(j\left(*\right),\blank\right)$ is right adjoint to $\operatorname{Lan}_{y_1}\left(t\right).$
\end{proof}

\begin{remark}
Let $P:\cS \to \cS$ be a pro-space. By \cite[Proposition 5.4.6.6]{htt}, since $P$ is accessible, it follows that the associated left fibration $$\int_\cS P$$ is accessible, and hence has a small cofinal subcategory $$r:\cC_P \hookrightarrow \int_\cS P,$$ and $P$ may be identified with the limit of the composite $$\cC_P \stackrel{r}{\hookrightarrow} \int_\cS P \stackrel{\pi_P}{\longlongrightarrow} \cS \stackrel{j}{\hookrightarrow} \Pro\left(\cS\right).$$ We claim that $$T\left(P\right)\simeq \lim \pi_P \circ r.$$
Indeed:
\begin{eqnarray*}
T\left(P\right) &=& \Hom\left(j\left(*\right),P\right)\\
&\simeq& \Hom\left(j\left(*\right),\lim j \circ \pi_P \circ r\right)\\
&\simeq& \lim \Hom\left(j\left(*\right), j\circ \pi_P \circ r\right)\\
&\simeq& \lim \Hom\left(*,\pi_P \circ r\right)\\
&\simeq& \lim \pi_P \circ r.
\end{eqnarray*}
By the same proof, if one has $P$ presented as a cofiltered limit $P = \lim j\left(X_\alpha\right)$ of spaces, then $T\left(P\right) \simeq \lim X_\alpha.$ In fact, this holds more generally by the following proposition:
\end{remark}

\begin{proposition}\label{prop:limpro}
Let $\sC$ be an accessible $\i$-category which admits small filtered limits. Then the canonical inclusion $$j:\sC \hookrightarrow \Pro\left(\sC\right)$$ has a right adjoint $T$ and if $F:\cI \to \sC$ is a cofiltered diagram corresponding an object in $\Pro\left(\sC\right),$ then $T\left(F\right)= \lim F.$
\end{proposition}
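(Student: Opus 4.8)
The plan is to reduce the statement to the universal property of pro-objects and the structure of $\Pro(\sC)$ as a cocomplete $\i$-category, following the template of the previous two proofs. First I would establish that $j\colon \sC \hookrightarrow \Pro(\sC)$ preserves small colimits and that $\Pro(\sC)$ is cocomplete, exactly as in the proof of the preceding proposition: via Remark \ref{rmk:universe} one identifies $\Pro(\sC)^{op}$ with a subcategory of large $\i$-copresheaves on $\sC$ stable under small limits, and the Yoneda embedding $\sC^{op}\hookrightarrow\LPshi(\sC)$ preserves small limits. Then the Adjoint Functor Theorem for $\i$-categories (\cite[Corollary 5.5.2.9]{htt}), applied to the colimit-preserving functor $j$ with cocomplete source and target, produces a right adjoint $T$; alternatively one can argue as before that $j\simeq\Lan_{y}(t)$ along a suitable Yoneda embedding and obtain $T$ together with its co-represented description $T\simeq\Hom_{\Pro(\sC)}(j(\blank),\blank)$ restricted appropriately — the key point being only that $T$ exists.

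Next I would identify $T(F)$ when $F\colon\cI\to\sC$ is cofiltered, i.e. corresponds to an object $\lim_{i} j(F(i))$ of $\Pro(\sC)$. Using the hypothesis that $\sC$ admits small filtered limits (hence small cofiltered limits, since $\cI^{op}$ is filtered), the limit $\lim F$ genuinely exists in $\sC$. The claim is that $j(\lim F)$, not merely some formal limit, realizes $\lim_i j(F(i))$ in $\Pro(\sC)$ — but that is false in general; rather, what I want is the adjoint identification. So the actual computation is: for any object $Z$ of $\sC$,
\begin{align*}
\Hom_{\sC}\bigl(Z, T(F)\bigr) &\simeq \Hom_{\Pro(\sC)}\bigl(j(Z), F\bigr) \\
&\simeq \lim_{i\in\cI}\, \colim_{\text{(trivial)}}\, \Hom_{\sC}\bigl(Z, F(i)\bigr) \\
&\simeq \lim_{i\in\cI}\, \Hom_{\sC}\bigl(Z, F(i)\bigr) \\
&\simeq \Hom_{\sC}\bigl(Z,\, \lim_{i\in\cI} F(i)\bigr),
\end{align*}
where the second line is the standard mapping-space formula recalled in Remark \ref{rmk:mappingpro} (the filtered colimit there degenerates because the source $j(Z)$ is corepresentable, i.e. a "constant" pro-object indexed by the point), and the last line is the universal property of the limit in $\sC$, which exists precisely by hypothesis. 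Since $Z$ was arbitrary, the Yoneda lemma in $\sC$ gives $T(F)\simeq\lim_{i\in\cI} F(i)$.

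The main obstacle, and the step requiring care, is the second isomorphism above: justifying that the pro-mapping-space formula of Remark \ref{rmk:mappingpro} collapses correctly when the source is $j(Z)$. One must check that $j(Z)$ is presented by the constant diagram on the terminal indexing $\i$-category, so that the outer limit over that category and the inner filtered colimit both disappear, leaving $\lim_{i\in\cI}\Hom_{\sC}(Z, F(i))$; this is where accessibility of $\sC$ enters, ensuring $\Pro(\sC)$ is well-defined and Remark \ref{rmk:mappingpro} applies. A secondary subtlety is making sure the diagram $F$ does correspond to an honest object of $\Pro(\sC)$ — i.e. that $\cI$ may be taken small and cofiltered, which is part of the hypothesis — and that "cofiltered diagram corresponding to an object in $\Pro(\sC)$" is read in the sense of Remark \ref{rmk:universe}. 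Once these bookkeeping points are settled, the computation is formal and the proof concludes.
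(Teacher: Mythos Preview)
Your computation identifying $T(F)\simeq \lim F$ via the mapping-space formula is correct and is essentially what the paper does (in the reverse logical order). The gap is in your construction of $T$ itself. You invoke the Adjoint Functor Theorem \cite[Corollary 5.5.2.9]{htt}, but that result requires the source $\i$-category to be \emph{presentable}, and here $\sC$ is only assumed accessible with small filtered limits --- it need not be cocomplete, let alone presentable. Your fallback, ``argue as before that $j\simeq\Lan_y(t)$,'' was specific to the case $\sC=\cS\simeq\Pshi(1)$ and does not generalize.

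The paper instead builds $T$ directly from the universal property of $\Pro(\sC)$ (Remark \ref{rmk:univpro}): composition with $j$ gives an equivalence $\Fun_{\mathit{co\text{-}filt.}}(\Pro(\sC),\sC)\simeq\Fun(\sC,\sC)$, so one takes $T$ to be the (essentially unique) cofiltered-limit-preserving functor with $T\circ j\simeq \id_\sC$. This immediately yields $T(\lim_i j(X_i))\simeq\lim_i X_i$, and \emph{then} one verifies the adjunction by exactly your chain of equivalences. In effect, the paper reverses your order: first produce $T$ and its values, then check it is right adjoint to $j$ --- and this order is forced precisely because the AFT is unavailable under the stated hypotheses.
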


\begin{proof}
By Remark \ref{rmk:universe}, composition with $$j:\sC \hookrightarrow \Pro\left(\sC\right)$$ induces an equivalence of $\i$-categories
$$\Fun_{\mathit{co-filt.}}\left(\Pro\left(\sC\right),\sC\right) \to \Fun\left(\sC,\sC\right),$$ so we can find a functor $T:\Pro\left(\sC\right) \to \sC$ and an equivalence $$\eta:id_\sC \stackrel{\sim}{\longrightarrow} T \circ j.$$ Let $Z$ be an arbitrary object of $\Pro\left(\sC\right),$ then we can write $Z = \underset{i \in \cI} \lim j\left(X_i\right).$ First note that since $\eta$ is an equivalence and $T$ preserves cofiltered limits (by definition), we have that for such a $Z,$ $$T\left(Z\right) \simeq \underset{i \in \cI} \lim X_i.$$ This shows that $T$ has the desired properties on pro-objects. Let us now show that $T$ is a right adjoint to $j$. Let $C$ be an object of $\sC,$ then we have
\begin{eqnarray*}
\Hom_\sC\left(D,T\left(Z\right)\right) &\simeq& \Hom_\sC\left(D,\underset{i \in \cI} \lim X_i\right)\\
&\simeq& \underset{i \in \cI} \lim \Hom_\sC\left(D,X_i\right),
\end{eqnarray*}
and since $j$ is fully faithful, we have for each $i$ $$\Hom_\sC\left(D,X_i\right)\simeq \Hom_{\Prof\left(\sC\right)}\left(j\left(D\right),j\left(X_i\right)\right).$$ It follows then that
\begin{eqnarray*}
\Hom_\sC\left(D,T\left(Z\right)\right) &\simeq& \underset{i \in \cI} \lim \Hom_{\Prof\left(\sC\right)}\left(j\left(D\right),j\left(X_i\right)\right)\\
&\simeq& \Hom_{\Prof\left(\sC\right)}\left(j\left(D\right),\underset{i \in \cI} \lim j\left(X_i\right)\right)\\
&=& \Hom_{\Prof\left(\sC\right)}\left(j\left(D\right),Z\right).
\end{eqnarray*}
\end{proof}

\begin{definition}
A space $X$ in $\cS$ is \textbf{$\pi$-finite} if all of its homotopy groups are finite, it has only finitely many non-trivial homotopy groups, and finitely many connected components.
\end{definition}

\begin{definition}
Let $\sfc$ denote the full subcategory of the $\i$-category $\cS$ on the $\pi$-finite spaces. $\sfc$ is essentially small and idempotent complete (and hence accessible). The $\i$-category of \textbf{profinite spaces} is defined to be the $\i$-category $$\Profs \defeq \Pro\left(\sfc\right).$$
\end{definition}

\begin{proposition}\label{prop:pitruncated}
Let $V$ be a $\pi$-finite space. Note that $V$ is $n$-truncated for some $n,$ since it has only finitely many homotopy groups. The associated profinite space $j\left(V\right)$ is also $n$-truncated.
\end{proposition}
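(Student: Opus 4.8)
The plan is to reduce the statement to the stability of $n$-truncated spaces under filtered colimits, via the mapping-space formula for pro-categories. First I would fix $n$: since $V$ is $\pi$-finite it has finitely many connected components and, on each, only finitely many nontrivial homotopy groups, so there is an integer $n$ — for instance the largest degree in which some $\pi_i(V,v)$ is nonzero — for which $V$ is $n$-truncated as an object of $\cS$. Recall that an object $Z$ of an $\i$-category is $n$-truncated precisely when the mapping space $\Hom(W,Z)$ is an $n$-truncated space for every object $W$ (\cite[\S 5.5.6]{htt}); so the goal becomes showing that $\Hom_{\Profs}\left(W, j(V)\right)$ is $n$-truncated for every profinite space $W$.

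Next I would invoke Remark \ref{rmk:mappingpro} for the accessible $\i$-category $\sfc$: write $W$ as a cofiltered limit $W = \underset{i\in\cI}{\lim} j(W_i)$ of $\pi$-finite spaces $W_i$. Since $j(V)$ is the pro-object attached to a single space — so the outer index category in the mapping-space formula of Remark \ref{rmk:mappingpro} is trivial, exactly as in the computation of $\Hom_{\Pro(\sE)}\left(X, j(1)\right)$ carried out there — that formula collapses to the filtered colimit
$$
\Hom_{\Profs}\left(W, j(V)\right) \;\simeq\; \underset{i\in\cI}{\colim} \Hom_{\sfc}\left(W_i, V\right) \;\simeq\; \underset{i\in\cI}{\colim} \Hom_{\cS}\left(W_i, V\right),
$$
the last identification holding because $\sfc$ is a full subcategory of $\cS$.

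Finally, each $\Hom_{\cS}\left(W_i, V\right)$ is $n$-truncated because $V$ is $n$-truncated in $\cS$, and a filtered colimit of $n$-truncated spaces is again $n$-truncated — this is the same stability already used in Remark \ref{rmk:mappingpro} for $(-2)$-truncated objects, and it holds for all $n$ because the homotopy groups $\pi_i$ commute with filtered colimits in $\cS$ (the spheres being compact), cf. \cite[Corollary 5.5.7.4]{htt}. Hence $\Hom_{\Profs}\left(W, j(V)\right)$ is $n$-truncated for every $W$, and therefore $j(V)$ is $n$-truncated. I do not expect a genuine obstacle here; the only points requiring care are verifying that the hypotheses of Remark \ref{rmk:mappingpro} apply — which they do, $\sfc$ being essentially small, idempotent complete and hence accessible — and that the colimit that appears is genuinely filtered, both of which are immediate.
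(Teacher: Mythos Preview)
Your proposal is correct and follows essentially the same approach as the paper: write an arbitrary profinite space as a cofiltered limit, apply the mapping-space formula of Remark~\ref{rmk:mappingpro} to express $\Hom_{\Profs}\left(W,j(V)\right)$ as a filtered colimit of $n$-truncated spaces, and invoke \cite[Corollary 5.5.7.4]{htt}. The paper's proof is terser but identical in substance.
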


\begin{proof}
Let $X=\underset{i \in \cI} \lim X_i$ be a profinite space. Then by Remark \ref{rmk:mappingpro}, we have that
$$\Hom_{\Profs}\left(X,j\left(V\right)\right) \simeq \underset{i \in \cI} \colim \Hom_{\cS^{fc}}\left(X_i,V\right).$$ Each space $\Hom_{\cS^{fc}}\left(X_i,V\right)$ is $n$-truncated since $V$ is, and $n$-truncated spaces are stable under filtered colimits by \cite[Corollary 5.5.7.4]{htt}, so it follows that  $\Hom_{\Profs}\left(X,j\left(V\right)\right)$ is also $n$-truncated.
\end{proof}

\begin{remark}\label{rmk:fun}
The assignment $\sC \mapsto \Pro\left(\sC\right)$ is functorial among accessible $\i$-categories with finite limits. Given a functor $f:\sC \to \sD,$ the composite $$\sC \stackrel{f}{\longrightarrow} \sD \stackrel{j}{\hookrightarrow}\Pro\left(\sD\right)$$ corresponds to an object of the $\i$-category $\Fun\left(\sC,\Pro\left(\sD\right)\right),$ which by Remark \ref{rmk:univpro} is equivalent to the $\i$-category $\Fun_{\mathit{co-filt.}}\left(\Pro\left(\sC\right),\Pro\left(\sD\right)\right).$ Hence, one gets an induced functor $$\Pro\left(f\right):\Pro\left(\sC\right) \to \Pro\left(\sD\right)$$ which preserves cofiltered limits. Moreover, $\Pro\left(f\right)$ is fully faithful if $f$ is. If $f$ happens to be accessible and left exact, then there is an induced functor in the opposite direction, given by
\begin{eqnarray*}
f^*:\Pro\left(\sD\right) &\to& \Pro\left(\sC\right)\\
\left(\sD \stackrel{F}{\longrightarrow} \iGpd\right) &\mapsto& \left(\sC \stackrel{f}{\longrightarrow}\sD \stackrel{F}{\longrightarrow} \iGpd\right),
\end{eqnarray*}
and $f^*$ is left adjoint to $\Pro\left(f\right).$ See \cite[Remark 3.1.7]{dagxiii} (but note there is a typo, since $f^*$ is in fact a left adjoint, not a right adjoint).
\end{remark}

\begin{example}
The canonical inclusion $i:\sfc \hookrightarrow \cS$ induces a fully faithful embedding 
$$\Pro\left(i\right):\Prof\left(\cS\right) \hookrightarrow \Pro\left(\cS\right)$$
of profinite spaces into pro-spaces. Moreover, $i$ is accessible and preserves finite limits, hence the above functor has a left adjoint $$i^*:\Pro\left(\cS\right) \to \Prof\left(\cS\right).$$ This functor sends a pro-space $P$ to its \textbf{profinite completion}.
\end{example}

\begin{definition}
We denote by $\widehat{\left(\blank\right)}$ the composite
$$\cS \stackrel{j}{\hookrightarrow} \Pro\left(\cS\right) \stackrel{i^*}{\longrightarrow} \Prof\left(\cS\right)$$ and call it the \textbf{profinite completion functor}. Concretely, if
$X$ is a space in $\cS,$ then $\widehat{X}$ corresponds to the composite $$\sfc \stackrel{i}{\hookrightarrow} \cS \stackrel{\Hom\left(X,\blank\right)}{\longlonglongrightarrow} \cS.$$ This functor has a right adjoint given by the composite $$\Prof\left(\cS\right) \stackrel{\Pro\left(i\right)}{\longlonghookrightarrow} \Pro\left(\cS\right) \stackrel{T}{\longrightarrow} \cS.$$ We will denote this right adjoint simply by $U$.
\end{definition}

\begin{remark}
We will sometimes abuse notation and denote the profinite completion of a pro-space $Y$ also by $\widehat{Y}$ rather than $i^*Y,$ when no confusion will arise.
\end{remark}

\subsection{The relationship with profinite groups}

In this subsection, we will touch briefly upon the relationship between profinite groups and profinite spaces. Recall the notion of profinite completion of a group. A profinite group is a pro-object in the category of finite groups. Equivalently, a profinite group is a group object in profinite sets, see \cite[Proposition 3.2.12]{dagxiii}. 



Denote by $i:FinGp \hookrightarrow Gp$ the fully faithful inclusion of the category of finite groups into the category of groups. The composite
$$Gp \hookrightarrow \Pro\left(Gp\right) \stackrel{i^*}{\longlongrightarrow} \Pro\left(FinGp\right)\simeq Gp\left(\Pro\left(FinSet\right)\right)$$ is the functor assigning a group its profinite completion. We also denote this functor by $\widehat{\left(\blank\right)}$ when no confusion will arise. Recall that the profinite completion of a group has a classical concrete description as follows: Let $G$ be a group, then its profinite completion is the limit $\underset{N} \lim j\left(G/N\right),$ where $N$ ranges over all the finite index normal subgroups of $G.$
Similarly, denote by $i_{ab}:FinAbGp \hookrightarrow AbGp$ the fully faithful inclusion of the category of finite abelian groups into the category of abelian groups. By the analogous construction to the above, there is an induced profinite completion functor
$$\widehat{\left(\blank\right)}_{ab}:AbGp \to \Pro\left(FinAbGp\right).$$ It can be described classically by the same formula as in the non-abelian case. If $$\phi:AbGp \hookrightarrow Gp$$ is the canonical inclusion of abelian groups into groups, it follows that the following diagram commutes up to canonical natural equivalence:
$$\xymatrix@C=2.5cm{AbGp \ar[r]^-{\widehat{\left(\blank\right)}_{ab}} \ar[d]_-{\phi} & \Pro\left(FinAbGp\right) \ar[d]^-{\Pro\left(\phi\right)}\\Gp \ar[r]^-{\widehat{\left(\blank\right)}} & \Pro\left(FinGp\right).}$$ By \cite[Proposition 3.2.14]{dagxiii}, there is a canonical equivalence of categories $$\Pro\left(FinAbGp\right) \simeq AbGp\left(\Pro\left(FinSet\right)\right),$$ between the category of pro-objects in finite abelian groups and the category of abelian group objects in profinite sets. In particular, finite coproducts (direct sums) in $\Pro\left(FinAbGp\right)$ coincide with finite products. Since $\widehat{\left(\blank\right)}_{ab}$ is a left adjoint, it preserves direct sums, and by Remark \ref{rmk:fun}, $\Pro\left(\phi\right)$ is a right adjoint (since $\phi$ preserves finite limits), so $\Pro\left(\phi\right)$ preserves products. It follows that the composite $$\widehat{\left(\blank\right)} \circ \phi:AbGp \to \Pro\left(FinGp\right)$$ preserves finite products.

\begin{corollary}\label{cor:finprods}
Let $k$ be a non-negative integer. Then there is a canonical isomorphism of profinite groups $$\widehat{\mathbb{Z}^k} \cong \widehat{\mathbb{Z}}^k.$$
\end{corollary}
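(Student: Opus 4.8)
The plan is to read the corollary off directly from the statement established in the paragraph immediately preceding it, namely that the composite $\widehat{\left(\blank\right)} \circ \phi \colon AbGp \to \Pro\left(FinGp\right)$ preserves finite products. Since finite products and finite coproducts agree in $AbGp$, for every $k \geq 0$ the group $\mathbb{Z}^k$ is the $k$-fold product $\prod_{i=1}^{k}\mathbb{Z}$ formed in $AbGp$ (the empty product being the trivial group when $k=0$). Applying the finite-product-preserving functor $\widehat{\left(\blank\right)} \circ \phi$ then yields a canonical isomorphism of profinite groups
$$\widehat{\mathbb{Z}^k} \;\cong\; \prod_{i=1}^{k} \widehat{\mathbb{Z}},$$
where the right-hand product is taken in $\Pro\left(FinGp\right)$; this product is precisely what is meant by $\widehat{\mathbb{Z}}^k$.

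For concreteness I would then unwind the abstract product on the right and check it is the expected one. Under the equivalence $\Pro\left(FinGp\right) \simeq Gp\left(\Pro\left(FinSet\right)\right)$ the forgetful functor to $\Pro\left(FinSet\right)$ creates limits, so the underlying profinite set of $\prod_{i=1}^{k}\widehat{\mathbb{Z}}$ is the product of the $\widehat{\mathbb{Z}}$ computed in profinite sets, that is $\underset{n_1,\dots,n_k}{\lim}\; \mathbb{Z}/n_1\mathbb{Z}\times\cdots\times\mathbb{Z}/n_k\mathbb{Z}$. On the other hand, by the classical description recalled above, $\widehat{\mathbb{Z}^k} = \underset{N}{\lim}\; \mathbb{Z}^k/N$ with $N$ ranging over the finite-index subgroups of $\mathbb{Z}^k$, and the subgroups of the form $n_1\mathbb{Z}\times\cdots\times n_k\mathbb{Z}$ are cofinal among these (any finite-index $N$ contains $m\mathbb{Z}^k$ for $m=[\mathbb{Z}^k:N]$); the two cofiltered limits therefore coincide, compatibly with the group structures.

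I do not anticipate a genuine obstacle, since the corollary is a formal consequence of the discussion preceding it. The one point worth flagging — and the reason the argument must detour through the abelian completion $\widehat{\left(\blank\right)}_{ab}$ rather than working inside $Gp$ directly — is that $\widehat{\left(\blank\right)}\colon Gp \to \Pro\left(FinGp\right)$, being a left adjoint, preserves finite \emph{coproducts} (free products), not finite products; what saves the day is that finite direct sums coincide with finite products in $\Pro\left(FinAbGp\right)$ and that $\Pro\left(\phi\right)$ is a right adjoint, both of which are already available.
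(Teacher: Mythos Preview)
Your proposal is correct and matches the paper's approach exactly: the paper gives no separate proof for this corollary, treating it as an immediate consequence of the fact (established in the preceding paragraph) that $\widehat{\left(\blank\right)} \circ \phi$ preserves finite products. Your additional unwinding of the product via cofinality of the subgroups $n_1\mathbb{Z}\times\cdots\times n_k\mathbb{Z}$ is extra detail the paper omits, but it is correct and harmless.
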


We now note a recent result which compares the $\i$-categorical model for profinite spaces just presented with the model categorical approach developed by Quick in \cite{Qu,Qu2}:

\begin{theorem}[{\cite[Corollary 7.4.6]{prohomotopy}}]\label{thm:prohom}
The $\i$-category associated to the model category presented in \cite{Qu,Qu2} is equivalent to $\Profs.$
\end{theorem}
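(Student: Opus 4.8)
Since the statement is quoted verbatim from \cite[Corollary 7.4.6]{prohomotopy}, strictly speaking there is nothing to prove here; what follows is only a sketch of the strategy underlying that comparison. Write $\mathcal{Q}$ for the $\i$-category obtained from Quick's model category of simplicial profinite sets by inverting its weak equivalences, and let $c\colon \cS \to \mathcal{Q}$ denote the induced completion functor. The plan is to realize $\mathcal{Q}$ as $\Pro\left(\sfc\right)$ by checking the universal property recorded in Remark~\ref{rmk:univpro}: since $\mathcal{Q}$ admits small cofiltered limits, any functor $\sfc \to \mathcal{Q}$ extends uniquely to a cofiltered-limit-preserving functor $\Pro\left(\sfc\right) \to \mathcal{Q}$, and it then suffices to verify that (i) the chosen functor $\sfc \to \mathcal{Q}$ is fully faithful with cocompact image, and (ii) every object of $\mathcal{Q}$ is a small cofiltered limit of objects in that image.

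For the embedding of $\sfc$ I would take the restriction of $c$ to $\pi$-finite spaces. Quick's weak equivalences are controlled by continuous cohomology with finite local coefficients, and on a $\pi$-finite space continuous cohomology agrees with ordinary cohomology; a Postnikov-tower induction on the target then shows $c|_{\sfc}$ is fully faithful. A parallel induction --- building a genuine $\pi$-finite space one homotopy group and $k$-invariant at a time --- identifies its essential image with the Quick-fibrant objects that are $n$-truncated for some $n$ and have all homotopy ``groups'' honestly finite. This exhibits $\sfc$ as the full subcategory of ``$\pi$-finite objects'' of $\mathcal{Q}$.

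For (ii), a Postnikov-tower argument --- at each stage passing to the finite quotients of the relevant profinite homotopy group --- expresses every object of $\mathcal{Q}$ as a small cofiltered homotopy limit of $\pi$-finite spaces, i.e.\ of objects of $\sfc$. It then remains to verify the cocompactness claim in (i), i.e.\ the mapping-space formula: for $V \in \sfc$ and $X = \underset{i} \lim X_i$ a cofiltered presentation as above, $\Hom_{\mathcal{Q}}\left(X,V\right) \simeq \underset{i} \colim \Hom_{\mathcal{Q}}\left(X_i,V\right)$. Granting this, combining it with the argument of Remark~\ref{rmk:mappingpro} shows the extension $\Pro\left(\sfc\right) \to \mathcal{Q}$ is fully faithful, hence an equivalence by (ii), and by construction it is compatible with the completion functors out of $\cS$.

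The crux --- and the only genuinely non-formal step --- is this cocompactness statement. One cannot simply invoke the mapping-space formula of Remark~\ref{rmk:mappingpro}, because $\mathcal{Q}$ is modelled by \emph{simplicial} profinite sets rather than by $\Pro$ of finite simplicial sets, and the two constructions do not agree level by level. Instead I would descend to Quick's model structure and exploit its concrete features --- that its generating (trivial) cofibrations are built from finite simplicial sets, and that cofiltered limits of profinite sets are controlled by their finite quotients --- to show that mapping out of a genuine cofiltered homotopy limit into a $\pi$-finite target is computed by the corresponding filtered colimit of mapping spaces. Everything else is bookkeeping with Postnikov towers and the universal property of $\Pro$ recalled above.
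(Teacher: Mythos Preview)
You correctly identify at the outset that the paper does not prove this statement: it is stated as a citation of \cite[Corollary 7.4.6]{prohomotopy}, and immediately afterwards the authors write ``The details of Quick's model category need not concern us here, but we cite the above theorem in order to freely use results of \cite{Qu,Qu2} about profinite spaces.'' There is therefore no proof in the paper to compare against, and your opening remark already settles the matter.

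As for the sketch you offer, it is a plausible high-level outline, but it does not match the actual argument in \cite{prohomotopy}. Barnea--Harpaz--Horel do not verify the universal property of $\Pro\left(\sfc\right)$ directly in $\mathcal{Q}$ as you propose. Instead, they develop a general comparison machinery: they place a model structure on $\Pro\left(\cS\right)$ itself (their ``$\omega$-small pro-categories'' framework), identify Quick's model category with a left Bousfield localization of this model structure, and then compare the resulting localization with $\Pro\left(\sfc\right)$ via an identification of the local objects. In particular, the cocompactness step you flag as the crux is not handled in your suggested way (descending to generating cofibrations in Quick's model), but rather falls out of their general theory of pro-categories and the characterization of $\pi$-finite spaces as precisely the cocompact objects. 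Your Postnikov-tower heuristics for (i) and (ii) are in the right spirit, but the actual argument is packaged differently and more abstractly.
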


The details of Quick's model category need not concern us here, but we cite the above theorem in order to freely use results of \cite{Qu,Qu2} about profinite spaces.

\begin{proposition}\label{prop:profiniteemergency}
Let $k$ be a non-negative integer. There is a canonical equivalence of profinite spaces
$$\widehat{B\left(\mathbb{Z}^k\right)} \simeq B\left(\widehat{\mathbb{Z}}^k\right).$$
\end{proposition}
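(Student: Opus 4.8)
The plan is to use the fact that $B\colon \mathrm{AbGp} \to \cS$ (the classifying space functor, which sends $A$ to the Eilenberg--MacLane space $K(A,1)$) fits into the adjunction framework and interacts well with profinite completion, reducing the statement to Corollary \ref{cor:finprods}. First I would observe that $B(\mathbb{Z}^k)$ is the torus $(S^1)^k$, so its profinite completion $\widehat{B(\mathbb{Z}^k)}$ is computed by the composite $\sfc \stackrel{i}{\hookrightarrow} \cS \stackrel{\Hom(B(\mathbb{Z}^k),\blank)}{\longrightarrow} \cS$. On the other side, $B(\widehat{\mathbb{Z}}^k)$ is a priori a pro-(space), namely the cofiltered limit $\underset{N}{\lim}\, j\bigl(B((\mathbb{Z}/N)^k)\bigr)$ over finite-index subgroups $N$ — and each $B((\mathbb{Z}/N)^k)$ is a $\pi$-finite space (it is a $K(\pi,1)$ with $\pi$ finite), so this limit lives in $\Profs$.

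The key step is to identify these two pro-objects. I would argue that profinite completion of spaces commutes with the functor $B$ in the following precise sense: for an abelian group $A$, $\widehat{B(A)} \simeq \underset{N}{\lim}\, j\bigl(B(A/N)\bigr)$ where $N$ runs over finite-index subgroups. This can be seen by noting that $B$ preserves filtered colimits and finite products and that, for $A$ a finitely generated abelian group, $B(A)$ has a finite Postnikov/CW model so that mapping out of it into a $\pi$-finite space only sees $A$ modulo finite-index subgroups; more structurally, the left adjoint $\widehat{(\blank)}$ preserves the colimit presentations and one checks on corepresentable functors that $\Hom_{\Profs}(\widehat{B(A)}, j(V)) \simeq \Hom_{\cS}(B(A), V) \simeq \underset{N}{\colim}\, \Hom_{\cS}(B(A/N), V)$ for every $\pi$-finite $V$, the last equivalence because $V$ is $n$-truncated and nilpotent with finite homotopy groups so maps from $B(A)$ factor through a finite quotient. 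Then for $A = \mathbb{Z}^k$ we get $\widehat{B(\mathbb{Z}^k)} \simeq \underset{N}{\lim}\, j\bigl(B(\mathbb{Z}^k/N)\bigr) = B(\widehat{\mathbb{Z}^k})$ (the latter interpreted as the evident pro-space), and Corollary \ref{cor:finprods} gives $\widehat{\mathbb{Z}^k} \cong \widehat{\mathbb{Z}}^k$, hence $B(\widehat{\mathbb{Z}^k}) \simeq B(\widehat{\mathbb{Z}}^k)$.

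Alternatively, and perhaps more cleanly given the machinery already set up, I would invoke Theorem \ref{thm:prohom} to pass to Quick's model category, where the analogous statement — that $\widehat{B(\mathbb{Z}^k)}$ and $B(\widehat{\mathbb{Z}}^k)$ agree — is a known computation of the profinite completion of a $K(\pi,1)$ (profinite completion of an Eilenberg--MacLane space on a finitely generated group is the Eilenberg--MacLane object on the profinite completion of the group, when the group is good in Serre's sense, which finitely generated abelian groups certainly are). Combining this with Corollary \ref{cor:finprods} finishes the argument.

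The main obstacle I expect is justifying the interchange of $B$ with $\widehat{(\blank)}$ rigorously in the $\infty$-categorical setting — specifically, verifying that $\Hom_{\cS}(B(\mathbb{Z}^k), V) \simeq \underset{N}{\colim}\, \Hom_{\cS}(B(\mathbb{Z}^k/N), V)$ for $V$ $\pi$-finite, which amounts to the \emph{goodness} of finitely generated abelian groups (that the map on cohomology with finite coefficients from the profinite group to the discrete group is an isomorphism) together with a nilpotence/obstruction-theory argument to upgrade cohomology isomorphisms to an equivalence of mapping spaces. For abelian groups this is elementary — $H^*((\mathbb{Z}/N)^k;\, \mathbb{Z}/m)$ stabilizes appropriately — but it should be spelled out or cited. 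Everything else is bookkeeping with the adjunctions already established in this section.
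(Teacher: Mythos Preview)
Your proposal is correct, and your ``alternative'' approach is exactly the paper's proof: the paper observes that $\mathbb{Z}^k$ is good in the sense of Serre, invokes Quick's result \cite[Proposition 3.6]{Qu2} together with Theorem~\ref{thm:prohom} to conclude $\widehat{B(\mathbb{Z}^k)} \simeq B(\widehat{\mathbb{Z}^k})$, and then applies Corollary~\ref{cor:finprods}. Your first approach amounts to reproving goodness of $\mathbb{Z}^k$ by hand via an obstruction-theory argument on mapping spaces into $\pi$-finite $V$; this would work but is strictly more effort than citing the known result, and you correctly identify the interchange $\Hom_{\cS}(B(\mathbb{Z}^k),V) \simeq \colim_N \Hom_{\cS}(B(\mathbb{Z}^k/N),V)$ as the nontrivial step.
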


\begin{proof}
Since $\mathbb{Z}^k$ is a finitely generated free abelian group, it is \emph{good} in the sense of Serre in \cite{Serre}. It follows from \cite[Proposition 3.6]{Qu2} and Theorem \ref{thm:prohom} that the canonical map $$\widehat{B\left(\mathbb{Z}^k\right)} \to B\left(\widehat{\mathbb{Z}^k}\right)$$ is an equivalence of profinite spaces. The result now follows from Corollary \ref{cor:finprods}.
\end{proof}

The following lemma will be used in an essential way several times in this paper:

\begin{lemma}\label{lem:truncatedcosimplicial}
Let $f:\Delta \to \sC$ be a cosimplicial diagram and suppose that $\sC$ is an $\left(n+1,1\right)$-category, i.e. an $\i$-category whose mapping spaces are all $n$-truncated. Then, provided both limits exist, the canonical map
$$\lim f \to \lim \left(f|_{\Delta{\le n}}\right)$$ is an equivalence.
\end{lemma}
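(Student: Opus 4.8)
The statement is a purely $\infty$-categorical fact: when the target category has $n$-truncated mapping spaces, a limit over $\Delta$ only sees the truncation $\Delta_{\le n}$. The plan is to reduce to the case $\sC = \iGpd$ (or a presheaf category thereof) and then invoke the Bousfield--Kan type comparison of a totalization with its partial totalizations, together with connectivity estimates. Concretely, I would first reduce the statement to the case of spaces: since $\sC$ is an $(n+1,1)$-category, for any object $c \in \sC$ the corepresentable functor $\Hom_\sC(c,\blank):\sC \to \iGpd$ lands in $n$-truncated spaces, preserves all limits, and is jointly conservative as $c$ ranges over objects of $\sC$; applying it commutes with both limits in the statement, so it suffices to prove the claim when $\sC$ is the $\i$-category $\iGpd_{\le n}$ of $n$-truncated spaces, with $f$ a cosimplicial $n$-truncated space.

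Next, for a cosimplicial space $g : \Delta \to \iGpd$ one has the standard description of the totalization as the limit, and the tower of partial totalizations $\Tot g \to \cdots \to \Tot_{m+1} g \to \Tot_m g \to \cdots$, where $\Tot_m g = \lim(g|_{\Delta_{\le m}})$. The key input is the fiber sequence computing the difference between consecutive stages: the fiber of $\Tot_{m} g \to \Tot_{m-1} g$ over a point is (a loop space of) the total fiber of the $m$-cube built from the $m$-th matching object, equivalently it is $\Omega^m$ of the fiber of $g([m])$ over the appropriate matching object, hence is $m$-fold looped. Since $g$ takes values in $n$-truncated spaces, each such iterated loop space is $(n-m)$-truncated, which is to say it is contractible (or at least "$\infty$-connected enough") once $m > n$. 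Therefore the maps $\Tot_{m} g \to \Tot_{m-1} g$ are equivalences for all $m > n$, and the tower stabilizes: $\Tot g = \lim_m \Tot_m g \simeq \Tot_n g = \lim(g|_{\Delta_{\le n}})$, which is exactly the assertion. One should phrase the connectivity step carefully using \cite[Corollary 5.5.7.4]{htt} and the behavior of truncation under limits; the cleanest route is to note that $\Tot_m g$ and $\Tot_{m-1} g$ have the same homotopy sheaves in the relevant range by the matching-object fiber sequence, using that $g([m])$ and its matching object $M^{m}g$ are $n$-truncated so the relevant relative homotopy vanishes above degree $n$.

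Alternatively — and this may be the slicker writeup to include — one can argue directly: the inclusion $\Delta_{\le n} \hookrightarrow \Delta$ is, after passing to nerves and using that $n$-truncated spaces form an $(n+1,1)$-category, \emph{cofinal} for the purpose of computing limits of $n$-truncated diagrams. That is, limits of $\Delta$-indexed diagrams valued in $\iGpd_{\le n}$ are computed by their restriction to $\Delta_{\le n}$; this is essentially the statement that the restriction functor $\Fun(\Delta, \iGpd_{\le n}) \to \Fun(\Delta_{\le n}, \iGpd_{\le n})$ has a fully faithful right Kan extension, equivalently that the $n$-truncated $\i$-topos $\Psh(\Delta_{\le n})_{\le n}$ sees no difference. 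I would formulate this via the coskeleton: for an $n$-truncated cosimplicial space $g$, the canonical map $g \to \mathrm{cosk}_n(g|_{\Delta_{\le n}})$ is an equivalence (both sides agree on $\Delta_{\le n}$, and on higher simplices $g([m])$ is $n$-truncated hence already equals the matching object in the relevant precise sense — this needs the Reedy/matching-object analysis again), and $\lim$ of a right-coskeletal diagram agrees with $\lim$ of its restriction to $\Delta_{\le n}$ by the adjunction $(\mathrm{sk}_n \dashv \mathrm{cosk}_n)$ on (co)simplicial objects.

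\textbf{Main obstacle.} The crux is the connectivity bookkeeping in the matching-object fiber sequences: one must correctly identify the fiber of $\Tot_m g \to \Tot_{m-1} g$ as an $m$-fold loop space of a fiber of a map between $n$-truncated spaces, and then argue this is $(n-m)$-truncated and hence terminal for $m>n$. Getting the indexing and the "pointed connectivity" hypotheses right (the fiber sequence lives over an arbitrary basepoint of $\Tot_{m-1}g$, so one needs the statement uniformly in basepoints, which is fine since truncatedness is a fiberwise/basepoint-independent condition) is the only subtle point; everything else is formal manipulation with (co)simplicial objects and the universal properties of $\Pro$ and truncation already recorded above.
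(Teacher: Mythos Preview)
Your proposal is correct and follows essentially the same approach as the paper: reduce to $n$-truncated spaces via corepresentables, then use the $\operatorname{Tot}$ tower and the identification of the fiber of $\operatorname{Tot}_m \to \operatorname{Tot}_{m-1}$ as an $m$-fold loop space of something $n$-truncated, concluding that the tower stabilizes at stage $n$. The paper carries this out in the model-categorical setting of compactly generated spaces (citing \cite{tot} for the fiber identification), whereas you phrase it $\infty$-categorically, but the argument is the same; your alternative coskeleton route is not pursued in the paper.
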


\begin{proof}

Let $\sC$ be an arbitrary $\left(\infty,n+1\right)$-category. Notice that for any diagram $f:\Delta \to \sC,$ and any object $C$ of $\sC,$ we have
$$\Hom\left(C,\underset{k \in \Delta} \lim f(k)\right) \simeq \underset{k \in \Delta} \lim \Hom\left(C, f(k)\right),$$
and since $\sC$ is an $\left(\infty,n+1\right)$-category, each $\Hom\left(C, f(k)\right)$ is an $n$-truncated space. Therefore the general case follows from the case when $\sC$ is the full subcategory $\cS^{\le n}$ of $\cS$ on the $n$-truncated spaces.
By \cite[Theorem 4.2.4.1]{htt}, to prove the lemma for the special case $\sC=\cS^{\le n},$ it suffices to prove the corresponding statement about homotopy limits in the Quillen model structure on the category of compactly generated spaces $\mathbf{CG}$, since the associated $\i$-category is $\cS.$

Suppose that $$X^\bullet:\Delta \to \mathbf{CG}$$ is a cosimplicial space which is fibrant with respect to the projective model structure on $\Fun\left(\Delta,\mathbf{CG}\right)$ (with respect to the Quillen model structure on $\mathbf{CG}$), i.e., the diagram $X^\bullet$ consists entirely of Serre fibrations. Then the homotopy limit of $X^\bullet$ may be computed as $\operatorname{Tot}\left(X\right),$ and moreover, $\operatorname{Tot}\left(X\right)$ can be written as the (homotopy) limit of a tower of fibrations
$$\ldots \to \operatorname{Tot}\left(X\right)_k \to \operatorname{Tot}\left(X\right)_{k-1}\to \ldots \to \operatorname{Tot}\left(X\right)_1\to \operatorname{Tot}\left(X\right)_0=X,$$ where each $\operatorname{Tot}\left(X\right)_k$ is a model for the homotopy limit of $X|_{\Delta_{\le k}}.$ Moreover, the (homotopy) fiber of each map $$\operatorname{Tot}\left(X\right)_k \to \operatorname{Tot}\left(X\right)_{k-1}$$ is homotopy equivalent to the $k$-fold loop space $\Omega^k\left(M^k\left(X^\bullet\right)\right),$ where 

$$M^k X^\bullet = \underset{{{ [k+1]} \twoheadrightarrow {[j]}\atop j \leq k}} \lim X^j$$
is the $k$-th matching object of $X^\bullet$ (see e.g. the introduction of \cite{tot}).

Now, let us assume that each $X_k$ is $n$-truncated. Then as $X^\bullet$ is fibrant, the diagram involved in the limit above consists entirely of fibrations, so the limit is a homotopy limit, hence each matching object is also $n$-truncated (since $n$-truncated objects are stable under limits in $\cS$ by \cite[Proposition 5.5.6.5]{htt}). It follows then that each homotopy fiber $$\operatorname{Tot}\left(X\right)_k \to \operatorname{Tot}\left(X\right)_{k-1}$$ is weakly contractible for $k > n,$ and hence the natural map $$\holim X^\bullet= \operatorname{Tot}\left(X\right) \to \operatorname{Tot}\left(X\right)_n=\holim X^\bullet|_{\Delta_{\le n}}$$ is a weak homotopy equivalence.
\end{proof}

\begin{proposition}\label{prop:infinity-stone}
Let $\underset{i \in \cI} \lim G_i$ be a pro-object in the category of finite groups, or equivalently, a group object in $\Pro\left(\mathit{FinSet}\right).$ Consider the profinite space $$B\left(\underset{i} \lim G_i\right)\defeq  \colim\left[\ldots \underset{i} \lim G_i \times \underset{i} \lim G_i \rrrarrow \underset{i} \lim G_i  \rightrightarrows *\right],$$ where the colimit is computed in $\Profs,$ and $*$ denotes the terminal profinite space. In more detail, the diagram whose colimit is being taken is the simplicial diagram which is the \v{C}ech nerve of the unique map $\underset{i} \lim G_i \to *$ in $\Profs.$ Consider for each $i$ the object in $\cS$ $$B\left(G_i\right)\defeq \colim\left[\ldots G_i \times G_i \rrrarrow G_i  \rightrightarrows *\right],$$ i.e. the colimit in $\cS$ of the \v{C}ech nerve of $G_i.$ Then these spaces assemble into a profinite space $\underset{i} \lim B\left(G_i\right),$ and we have a canonical equivalence
$$B\left(\underset{i} \lim G_i\right) \simeq \underset{i} \lim B\left(G_i\right)$$ in $\Profs.$
\end{proposition}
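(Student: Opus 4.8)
The plan is to construct the canonical comparison morphism and prove it is an equivalence in $\Profs$ by testing it against $\pi$-finite spaces, where the problem collapses to the fact that in $\cS$ a \emph{finite} limit commutes with filtered colimits.

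First I would set up the map. Since each $G_i$ is a finite group, $B\left(G_i\right)$ is a $K\left(G_i,1\right)$: it is connected, has $\pi_1\cong G_i$ finite, and no higher homotopy, so each $B\left(G_i\right)$ is $\pi$-finite (in fact $1$-truncated). Hence $i\mapsto B\left(G_i\right)$ is a cofiltered diagram in $\sfc$ and determines an object of $\Profs$, which is what the statement denotes $\underset{i}{\lim}\,B\left(G_i\right)$. The projections $\underset{i}{\lim}\,G_i\to G_j$ are morphisms of group objects in $\Profs$, so they induce a compatible family $B\left(\underset{i}{\lim}\,G_i\right)\to B\left(G_j\right)$ and hence a canonical morphism $$\Phi\colon B\left(\underset{i}{\lim}\,G_i\right)\longrightarrow \underset{i}{\lim}\,B\left(G_i\right)$$ in $\Profs$; this is the map to be shown an equivalence.

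Next I would reduce to a statement about spaces. Recall that $\Profs=\Pro\left(\sfc\right)$ is a full subcategory of $\Fun\left(\sfc,\cS\right)^{op}$ in which an object $X$ is the functor $V\mapsto\Hom_{\Profs}\left(X,j\left(V\right)\right)$; consequently a morphism in $\Profs$ is an equivalence if and only if $\Hom_{\Profs}\left(-,j\left(V\right)\right)$ carries it to an equivalence for every $V\in\sfc$. Fix such a $V$. Using that $\Hom_{\Profs}\left(-,j\left(V\right)\right)$ turns colimits into limits, that $B$ is defined as the colimit of the bar simplicial object, that $\left(\underset{i}{\lim}\,G_i\right)^{n}\simeq\underset{i}{\lim}\,G_i^{n}$ (finite products of pro-objects, by cofinality of the diagonal), and Remark \ref{rmk:mappingpro} for maps out of a pro-object into $j\left(V\right)$, one finds $$\Hom_{\Profs}\left(B\left(\underset{i}{\lim}\,G_i\right),j\left(V\right)\right)\simeq \underset{\left[n\right]\in\Delta}{\lim}\ \underset{i}{\colim}\ \Hom_{\cS}\left(G_i^{n},V\right),\qquad \Hom_{\Profs}\left(\underset{i}{\lim}\,B\left(G_i\right),j\left(V\right)\right)\simeq \underset{i}{\colim}\ \underset{\left[n\right]\in\Delta}{\lim}\ \Hom_{\cS}\left(G_i^{n},V\right),$$ where the colimits over $i$ are filtered, and $\Phi^{*}$ is the canonical interchange morphism from the second expression to the first.

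It then remains to see that this interchange morphism is an equivalence, which is the crux of the argument. As $V$ is $\pi$-finite it is $m$-truncated for some $m$, so each $\Hom_{\cS}\left(G_i^{n},V\right)$, being a finite power of $V$, is $m$-truncated, and hence so is each filtered colimit $\underset{i}{\colim}\,\Hom_{\cS}\left(G_i^{n},V\right)$ by \cite[Corollary 5.5.7.4]{htt}. Applying Lemma \ref{lem:truncatedcosimplicial} to these $m$-truncated cosimplicial spaces, the canonical maps $\underset{\left[n\right]\in\Delta}{\lim}\to\underset{\Delta_{\le m}}{\lim}$ are equivalences, so the cosimplicial limits may be replaced by the \emph{finite} limits over $\Delta_{\le m}$; since finite limits commute with filtered colimits in $\cS$, $$\underset{\left[n\right]\in\Delta}{\lim}\,\underset{i}{\colim}\,\Hom_{\cS}\left(G_i^{n},V\right)\simeq \underset{\Delta_{\le m}}{\lim}\,\underset{i}{\colim}\,\Hom_{\cS}\left(G_i^{\bullet},V\right)\simeq \underset{i}{\colim}\,\underset{\Delta_{\le m}}{\lim}\,\Hom_{\cS}\left(G_i^{\bullet},V\right)\simeq \underset{i}{\colim}\,\underset{\left[n\right]\in\Delta}{\lim}\,\Hom_{\cS}\left(G_i^{n},V\right),$$ and since $\Phi^{*}$ is the canonical interchange morphism this exhibits it as an equivalence. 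The only genuinely non-formal point is exactly this commutation of a cosimplicial limit with a filtered colimit, which is false for general cosimplicial spaces; $\pi$-finiteness of $V$, through Lemma \ref{lem:truncatedcosimplicial}, is precisely what makes the relevant limit effectively finite and lets the argument go through.
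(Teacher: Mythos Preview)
Your proposal is correct and follows essentially the same route as the paper's own proof: reduce to testing against $j\left(V\right)$ for $V\in\sfc$, express both sides as a cosimplicial limit interacting with a filtered colimit, invoke Lemma~\ref{lem:truncatedcosimplicial} (using that $V$ is $m$-truncated) to replace the cosimplicial limit by the finite limit over $\Delta_{\le m}$, and then commute the finite limit past the filtered colimit. The only cosmetic difference is that you set up the comparison map $\Phi$ explicitly and phrase the computation as identifying the canonical interchange morphism, whereas the paper simply computes both $\Hom$-spaces and matches them; the key inputs and the logical structure are identical.
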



\begin{proof}
It suffices to prove that for each $\pi$-finite space $V,$ we have an equivalence $$\Hom_{\Profs}\left(B\left(\underset{i} \lim G_i\right),j\left(V\right)\right) \simeq \Hom_{\Profs}\left(\underset{i} \lim B\left(G_i\right),j\left(V\right)\right).$$ Recall that by Proposition \ref{prop:pitruncated} $j\left(V\right)$ is $n$-truncated for some $n.$ As such, we have

\begin{eqnarray*}
\Hom_{\Profs}\left(B\left(\underset{i} \lim G_i\right),j\left(V\right)\right) &\simeq& \Hom_{\Profs}\left(\underset{\Delta^{op}}\colim N\left(\underset{i} \lim G_i\right),j\left(V\right)\right)\\
&\simeq& \underset{\Delta}\lim  \Hom_{\Profs}\left(N\left(\underset{i} \lim G_i\right),j\left(V\right)\right)\\
&\simeq& \underset{\Delta_{\le n}} \lim \Hom_{\Profs}\left(N\left(\underset{i} \lim G_i\right),j\left(V\right)\right),
\end{eqnarray*}
that last equivalence following from Lemma \ref{lem:truncatedcosimplicial}. Expanding this out we get
$$\resizebox{6in}{!}{$\underset{\Delta_{\le n}} \lim \left[\Hom_{\Profs}\left(1,j\left(V\right)\right) \rightrightarrows \Hom_{\Profs}\left(\underset{i} \lim G_i,j\left(V\right)\right) \rrrarrow \Hom_{\Profs}\left(\left(\underset{i} \lim G_i\right)^2,j\left(V\right)\right) \ldots \Hom_{\Profs}\left(\left(\underset{i} \lim G_i\right)^n,j\left(V\right)\right)\right]$}$$
which is equivalent to
$$\resizebox{6in}{!}{$\underset{\Delta_{\le n}} \lim \left[\Hom_{\cS}\left(*,V\right) \rightrightarrows \underset{i} \colim  \Hom_{\cS}\left(G_i,j\left(V\right)\right) \rrrarrow \underset{i} \colim \Hom_{\cS}\left(G_i^2,j\left(V\right)\right) \ldots \underset{i} \colim  \Hom_{\cS}\left(G_i^n,j\left(V\right)\right)\right]$}$$
and since by \cite[Proposition 5.3.3.3]{htt} finite limits commute with filtered colimits in $\cS,$ we get 
$$\resizebox{6in}{!}{$  \underset{i} \colim  \underset{\Delta_{\le n}} \lim \left[\Hom_{\cS}\left(*,V\right) \rightrightarrows\Hom_{\cS}\left(G_i,V\right) \rrrarrow \Hom_{\cS}\left(G_i^2,j\left(V\right)\right) \ldots\Hom_{\cS}\left(G_i^n,V\right)\right].$}$$
Now since $j\left(V\right)$ is $n$-truncated by Proposition \ref{prop:pitruncated}, it follows from Lemma \ref{lem:truncatedcosimplicial} that we can rewrite this as
$$\resizebox{6in}{!}{$  \underset{i} \colim  \underset{\Delta} \lim \left[\Hom_{\cS}\left(*,V\right) \rightrightarrows\Hom_{\cS}\left(G_i,V\right) \rrrarrow \Hom_{\cS}\left(G_i^2,j\left(V\right)\right) \ldots\Hom_{\cS}\left(G_i^n,V\right) \ldots\right]$}$$
which is equivalent to
\begin{eqnarray*}
\underset{i} \colim \Hom_{\cS}\left( \underset{\Delta^{op}} \colim N\left(G_i\right),V\right) &\simeq& \underset{i} \colim \Hom_{\cS}\left( B\left(G_i\right),V\right)\\
&\simeq& \Hom_{\Profs}\left(\underset{i} \lim B\left(G_i\right),j\left(V\right)\right). {\qedhere}
\end{eqnarray*}
\end{proof}


\section{The homotopy type of topological stacks}\label{sec:homotopy}
In this section we use the formalism of $\i$-categories to produce two important constructions necessary for our paper. Firstly, we extend the construction of analytification, which sends a complex variety to its underlying topological space with the complex analytic topology, to a colimit-preserving functor 
$$\left(\blank\right)_{top}:\Shi\left(\Aff,\mbox{\'et}\right) \to \Hshi\left(\TopC\right)$$ from the $\i$-category of $\i$-sheaves over the \'etale site of affine schemes of finite type over $\mathbb{C}$ to the $\i$-category of hypersheaves over an appropriate topological site. This functor, in particular, sends an Artin stack locally of finite type over $\mathbb{C}$ to its underlying topological stack in the sense of Noohi \cite{No1}. Using this functor, one associates to the infinite root stack $\sqrt[\i]{X}$ of a log-scheme a pro-topological stack $\sqrt[\i]{X}_{top}.$ In Section \ref{section: KN to IRS}, we produce a map 
\begin{equation}\label{eq:Knmap}
X_{log} \to \sqrt[\i]{X}_{top}
\end{equation}
from the Kato-Nakayama space to the underlying (pro-)topological stack of the infinite root stack. The main result of the paper is that this map is a profinite homotopy equivalence, but to make sense of such a statement, one first needs to associate to each of these objects a (pro-)homotopy type, in a functorial way. To achieve this, the second construction we produce is a colimit-preserving functor $$\Pi_\i:\Hshi\left(\TopC\right) \to \cS$$ which sends every topological space $X$ to its underlying homotopy type, and sends every topological stack to its homotopy type in the sense of Noohi \cite{No2}. Using this construction and the map (\ref{eq:Knmap}), one has an induced map in $\Pro\left(\cS\right)$ $$\Pi_\i\left(X_{log}\right) \to \Pi_\i\left(\sqrt[\i]{X}_{top}\right)$$ which we prove in Section \ref{section: the equivalence} to become an equivalence after applying the profinite completion functor, i.e. the map (\ref{eq:Knmap}) is a profinite homotopy equivalence.

\subsection{The underlying topological stack of an algebraic stack}
Let $\Top$ be the category of topological spaces and let $\TopCs$ denote the full subcategory of $\Top$ of all contractible and locally contractible spaces which are homeomorphic to a subspace of $\mathbb{R}^n$ for some $n.$  Note that $\TopCs$ is essentially small. Denote by $\TopC$ the following subcategory of topological spaces:

\begin{itemize}
\item[$\bullet$] A topological space $T$ is in $\TopC$ if $T$ has an open cover $\left(U_\alpha \hookrightarrow T\right)_\alpha$ such that each $U_\alpha$ is an object of $\TopCs.$
\end{itemize}


We use the subscript $\mathbb{C}$ to highlight the fact that $\TopC$ 
will serve as the target of the analytification functor from the category of algebraic spaces over $\mathbb{C}$. Note that the objects of $\TopC$ are closed under taking open subspaces. As such, it makes sense to equip $\TopC$ with the Grothendieck topology generated by open covers. Denote by $\Hshi\left(\TopC\right)$ the $\i$-topos of hypersheaves on $\TopC,$ i.e. the hypercompletion of the $\i$-topos of $\i$-sheaves. There is also a natural structure of a Grothendieck site on $\TopCs$ as follows:
\begin{itemize}
\item[$\bullet$] Let $T$ be a space in $\TopCs.$ A covering family of $T$ consists of an open cover $\left(U_\alpha \hookrightarrow T\right)$ such that each $U_\alpha$ is in $\TopCs$.
\end{itemize}
Note that every open cover of $T$ can be refined by such a cover. We denote the associated $\i$-topos of hypersheaves by $\Hshi\left(\TopCs\right).$ By the Comparison Lemma of \cite{SGA4} III, we have that restriction along the canonical inclusion $$\TopCs \hookrightarrow \TopC$$ induces an equivalence between their respective categories of sheaves of sets. It then follows from \cite[Theorem 5]{Jardine} and \cite[Proposition 6.5.2.14 ]{htt} that this lifts to an equivalence $$\Hshi\left(\TopC\right) \stackrel{\sim}{\longrightarrow} \Hshi\left(\TopCs\right),$$ and in particular, $\Hshi\left(\TopC\right)$ is an $\i$-topos (which is not a priori clear for sites which are not essentially small).

Denote by $\Aff$ the category of affine schemes of finite type over $\mathbb{C}.$ Note that it is a small category with finite limits. Denote by $$\left(\blank\right)_{an}:\Aff \to \Top$$ the functor associating to such an affine scheme its space of $\mathbb{C}$-points, equipped with the analytic topology. The above functor preserves finite limits, and is the restriction of a functor defined for all algebraic spaces locally of finite type over $\mathbb{C},$ see \cite[p. 12]{toen-vaquie}. Note also that if $V$ is a scheme which is separated and locally of finite type, 
then $V_{an}$ is locally (over any affine) a triangulated space by \cite{Lo}, so in particular $V_{an}$ is locally contractible. Also observe that $V_{an}$ is locally cut-out of $\mathbb{C}^n$ by polynomials, so it follows that $V_{an}$ is in $\TopC.$ 
Consequently $\left(\blank\right)_{an}$ restricts to a functor $$\left(\blank\right)_{an}:\Aff \to \TopC,$$ which preserves finite limits.

Note that the category $\Aff$ can be equipped with the Grothendieck topology generated by \'etale covering families. Denote the associated $\i$-topos of $\i$-sheaves on this site by $\Shi\left(\Aff,\mbox{\'et}\right).$


%

The following theorem is an extension of \cite[Proposition 20.2]{No1}:

\begin{theorem}\label{thm:analification}
The functor $$\left(\blank\right)_{an}:\Aff \to \TopC$$ lifts to a left exact colimit preserving functor $$\left(\blank\right)_{top}:\Shi\left(\Aff,\mbox{\'et}\right) \to \Hshi\left(\TopC\right).$$
\end{theorem}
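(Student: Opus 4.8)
The plan is to use the universal property of $\i$-sheaf $\i$-topoi as it applies to presentations by sites, following the classical strategy of \cite[Proposition 20.2]{No1} but in the $\i$-categorical setting. Recall that $\Shi\left(\Aff,\etm\right)$ is, by \cite[Proposition 6.2.3.20]{htt} and the general theory of $\i$-topoi, the localization of the $\i$-category $\Pshi\left(\Aff\right)$ of $\i$-presheaves at the local equivalences for the \'etale topology, and that $\Pshi\left(\Aff\right)$ is the free cocompletion of $\Aff$; consequently, a colimit-preserving functor out of $\Shi\left(\Aff,\etm\right)$ to a presentable $\i$-category $\sE$ is the same data as a colimit-preserving functor out of $\Pshi\left(\Aff\right)$ which sends \'etale covering sieves to equivalences, which in turn is the same data as an arbitrary functor $\Aff \to \sE$ with the property that it sends \'etale hypercovers (equivalently, \v{C}ech nerves of \'etale covers, since the target here is hypercomplete) to colimit diagrams.

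First I would form the composite $\Aff \xrightarrow{\left(\blank\right)_{an}} \TopC \xrightarrow{y} \Hshi\left(\TopC\right)$, where $y$ denotes the (hyper)sheafified Yoneda embedding, sending a space $T$ to the hypersheaf it represents. By \cite[Theorem 5.1.5.6]{htt}, left Kan extension along the Yoneda embedding $\Aff \hookrightarrow \Pshi\left(\Aff\right)$ produces a colimit-preserving functor $L\colon \Pshi\left(\Aff\right) \to \Hshi\left(\TopC\right)$ extending this composite, and $L$ is automatically left exact provided $\left(\blank\right)_{an}$ is left exact and the Grothendieck topologies are compatible — this is where I would invoke that $\left(\blank\right)_{an}\colon\Aff\to\TopC$ preserves finite limits (already noted in the excerpt) together with the fact that $\Hshi\left(\TopC\right)$ is an $\i$-topos and finite limits in it are computed objectwise on a site. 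Next I would check that $L$ inverts the local equivalences for the \'etale topology: since $\Hshi\left(\TopC\right)$ is hypercomplete it suffices to check that $L$ sends the \v{C}ech nerve of an \'etale cover $\coprod U_i \to V$ in $\Aff$ to a colimit diagram, i.e. that $\coprod \left(U_i\right)_{an} \to V_{an}$ is a ``universal effective epimorphism of hypersheaves'' and that $L$ commutes with the relevant geometric realization. The point is that an \'etale morphism of $\bC$-schemes analytifies to a local homeomorphism, so an \'etale cover analytifies to an open-cover-like surjection of topological spaces, which is an effective epimorphism in $\Hshi\left(\TopC\right)$ by descent for hypersheaves; and the \v{C}ech nerve is carried to the \v{C}ech nerve because $\left(\blank\right)_{an}$ preserves the fibre products involved. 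By the universal property of localization (\cite[Proposition 5.5.4.20]{htt}), $L$ then factors uniquely through a colimit-preserving, left exact functor $\left(\blank\right)_{top}\colon \Shi\left(\Aff,\etm\right) \to \Hshi\left(\TopC\right)$.

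The main obstacle, I expect, is the verification that $\left(\blank\right)_{top}$ is left exact — preservation of colimits is essentially formal from the Kan extension and localization machinery, but left exactness requires knowing that localizing $\Pshi\left(\Aff\right)$ at \'etale-local equivalences is compatible with the finite-limit structure transported by $L$, i.e. that the induced topology on the ``syntactic'' side matches up. Concretely one must show the \v{C}ech descent condition is preserved under the adjunction and that the sheafification on both sides interacts correctly; the cleanest route is to observe that $L$ restricted to $\Shi\left(\Aff,\etm\right)$ already lands in $\Hshi\left(\TopC\right)$ because analytification sends \'etale covers to open covers, hence sheaves for the \'etale topology to sheaves for the open-cover topology, and then left exactness of the restricted functor follows because both $\i$-topoi have finite limits computed via their inclusions into presheaves and $L$ is built from the left exact functor $\left(\blank\right)_{an}$. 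A secondary technical point worth handling carefully is the passage between the small site $\TopCs$ and the large site $\TopC$, but this is already dealt with in the excerpt via the Comparison Lemma, so I would simply cite that equivalence and work with whichever presentation is convenient.
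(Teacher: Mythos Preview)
Your proposal is correct and identifies the essential mechanism: $\left(\blank\right)_{an}$ is left exact and carries \'etale covers to covers in $\TopC$, so it induces a left exact colimit-preserving functor on sheaf $\i$-categories. The paper's proof, however, is considerably shorter: after noting that \'etale maps analytify to local homeomorphisms and that local homeomorphisms generate the same Grothendieck topology on $\TopC$ as open covers, it simply invokes \cite[Proposition 6.2.3.20]{htt}. That proposition packages precisely the Kan-extension-and-descent argument you spell out by hand, and in particular yields left exactness of the induced functor for free once the site-level functor is left exact and cover-preserving. So your concern that left exactness is ``the main obstacle'' is misplaced --- it is automatic from the cited result, and the only genuine verification is that analytified \'etale covers are covering families for the open-cover topology on $\TopC$. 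Your phrase ``open-cover-like surjection'' glosses over exactly this step; the paper is more precise, observing that any cover by local homeomorphisms can be refined by an open cover, so the two generate the same topology. The remark about $\TopCs$ versus $\TopC$ is not needed here.
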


\begin{proof}
Note that the image under $\left(\blank\right)_{an}$ of an \'etale map is a local homeomorphism. Also note that if $$S \to T$$ is a local homeomorphism and $T$ is in $\TopC,$ so is $S.$ Furthermore, since the inclusion of any open subspace of a topological space is a local homeomorphism, and since any cover by local homeomorphisms can be refined by a cover by open subspaces, it follows that open covers and local homeomorphisms generate the same Grothendieck topology on $\TopC.$ It follows that any $\i$-sheaf on $\TopC$, so in particular any hypersheaf, satisfies descent with respect to covers by local homeomorphisms. The result now follows from \cite[Proposition 6.2.3.20]{htt}.
\end{proof}

\begin{remark}
Denote by $Y$ the Yoneda embedding $$Y:\TopC \hookrightarrow \Hshi\left(\TopC\right)$$ and denote by $y$ the Yoneda embedding $$y:\Aff \hookrightarrow \Shi\left(\Aff,\mbox{\'et}\right).$$ Explicitly, $\left(\blank\right)_{top}$ is the left Kan extension of $Y \circ \left(\blank\right)_{an}$ along $y,$ $$\Lan_y\left[Y\circ \left(\blank\right)_{an}\right]:\Shi\left(\Aff,\mbox{\'et}\right) \to \Hshi\left(\TopC\right),$$ or more concretely, it is the unique colimit preserving functor such that for a representable $y\left(X\right),$ i.e. an affine scheme, $$y\left(X\right)_{top}\cong Y\left(X_{an}\right).$$
\end{remark}

By the proof of Theorem \ref{thm:analification}, we see that given any hypersheaf $F$ on $\TopC,$ the functor $$F \circ \left(\blank\right)_{an}$$ is an $\i$-sheaf on $\left(\Aff,\mbox{\'et}\right),$ i.e. we have a well-defined functor
$$\left(\blank\right)_{an}^{*}:\Hshi\left(\TopC\right) \to \Shi\left(\Aff,\mbox{\'et}\right).$$

\begin{proposition}
The functor $\left(\blank\right)_{top}$ is left adjoint to $\left(\blank\right)_{an}^{*}$.
\end{proposition}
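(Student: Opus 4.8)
The plan is to exhibit the adjunction by directly verifying the defining natural equivalence of mapping spaces, using the concrete description of $\left(\blank\right)_{top}$ as a left Kan extension together with the fact that both $\Shi\left(\Aff,\etm\right)$ and $\Hshi\left(\TopC\right)$ are $\i$-topoi generated under colimits by their representables. First I would reduce to the case of representables: since $\left(\blank\right)_{top}$ preserves all colimits (Theorem \ref{thm:analification}) and every $\i$-sheaf $G$ on $\left(\Aff,\etm\right)$ is a colimit of representables $y\left(X\right)$, for $\alpha$ running over some diagram, we have $G_{top}\simeq \colim_\alpha y\left(X_\alpha\right)_{top}\simeq \colim_\alpha Y\left(\left(X_\alpha\right)_{an}\right)$. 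Hence for any hypersheaf $F$ on $\TopC$,
\[
\Hom_{\Hshi\left(\TopC\right)}\left(G_{top},F\right)\simeq \lim_\alpha \Hom_{\Hshi\left(\TopC\right)}\left(Y\left(\left(X_\alpha\right)_{an}\right),F\right),
\]
while on the other side $\Hom_{\Shi\left(\Aff,\etm\right)}\left(G,F\circ\left(\blank\right)_{an}\right)\simeq \lim_\alpha \Hom_{\Shi\left(\Aff,\etm\right)}\left(y\left(X_\alpha\right),F\circ\left(\blank\right)_{an}\right)$. So it suffices to produce a natural equivalence when $G=y\left(X\right)$ is representable.

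For the representable case I would invoke the Yoneda lemma on both sides. On the algebraic side, $\Hom_{\Shi\left(\Aff,\etm\right)}\left(y\left(X\right),F\circ\left(\blank\right)_{an}\right)\simeq \left(F\circ\left(\blank\right)_{an}\right)\left(X\right)=F\left(X_{an}\right)$. On the topological side, $\Hom_{\Hshi\left(\TopC\right)}\left(Y\left(X_{an}\right),F\right)\simeq F\left(X_{an}\right)$, again by Yoneda, using that $X_{an}\in\TopC$ so that $Y\left(X_{an}\right)$ makes sense and is a hypersheaf (representables are sheaves, and on this site representables are even hypersheaves since $\TopC$ has enough nice covers — alternatively one works in $\Hshi\left(\TopCs\right)$ via the equivalence recalled in the excerpt). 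Both identifications are natural in $X$ and in $F$, so they splice together into the required equivalence
\[
\Hom_{\Hshi\left(\TopC\right)}\left(G_{top},F\right)\simeq \Hom_{\Shi\left(\Aff,\etm\right)}\left(G,F\circ\left(\blank\right)_{an}\right),
\]
natural in $G$ and $F$, which is exactly the statement that $\left(\blank\right)_{top}\dashv \left(\blank\right)_{an}^{*}$.

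Alternatively — and this is probably the cleanest formulation to write up — one can appeal to the universal property of left Kan extension along a fully faithful functor into an $\i$-topos: $\left(\blank\right)_{top}=\Lan_y\left[Y\circ\left(\blank\right)_{an}\right]$ is, by \cite[Theorem 5.1.5.6]{htt} (or the cited \cite[Proposition 6.2.3.20]{htt} used in Theorem \ref{thm:analification}), left adjoint to the functor $\Hshi\left(\TopC\right)\to\Shi\left(\Aff,\etm\right)$ given by $F\mapsto \Hom_{\Hshi\left(\TopC\right)}\left(Y\left(\left(\blank\right)_{an}\right),F\right)\simeq F\circ\left(\blank\right)_{an}$, and the latter is precisely $\left(\blank\right)_{an}^{*}$ as defined just above. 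The only genuine point needing care — the main (minor) obstacle — is making sure the right adjoint really is $F\mapsto F\circ\left(\blank\right)_{an}$ on the nose and lands in $\Shi\left(\Aff,\etm\right)$ rather than merely in presheaves; but this is exactly the content of the remark preceding the statement, where it is observed via the proof of Theorem \ref{thm:analification} that $F\circ\left(\blank\right)_{an}$ satisfies \'etale descent because $\left(\blank\right)_{an}$ carries \'etale covers to covers by local homeomorphisms and hypersheaves satisfy descent along those. So the proof is essentially a bookkeeping exercise assembling these already-established facts.
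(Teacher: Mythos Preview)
Your proposal is correct and follows essentially the same approach as the paper. The paper first invokes the Adjoint Functor Theorem (\cite[Corollary 5.5.2.9]{htt}) to get an abstract right adjoint $R$ and then computes $R(F)(X)\simeq F(X_{an})$ via the same Yoneda chain you wrote down; your direct verification of the hom-equivalence on representables and your alternative Kan-extension argument are equivalent repackagings of this same computation.
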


\begin{proof}
Since $\left(\blank\right)_{top}$ is colimit preserving, it follows from \cite[Corollary 5.5.2.9]{htt} that it has a right adjoint. Let us denote the right adjoint by $R.$ By the Yoneda lemma, we have that if $F$ is a hypersheaf $F$ on $\TopC,$ then $R\left(F\right)$ is the $\i$-sheaf on $\left(\Aff,\mbox{\'et}\right)$ such that if $X$ is an affine scheme,
\begin{eqnarray*}
R\left(F\right)\left(X\right) &\simeq& \Hom\left(y\left(X\right),R\left(F\right)\right)\\
&\simeq& \Hom\left(\left(y\left(X\right)\right)_{top},F\right)\\
&\simeq& \Hom\left(Y\left(X_{an}\right),F\right)\\
&\simeq& F\left(X_{an}\right).
\end{eqnarray*}
\end{proof}

\begin{remark}\label{rmk:geometric morphism}
The adjoint pair $\left(\blank\right)_{top} \dashv \left(\blank\right)_{an}^{*}$ assemble into a geometric morphism of $\i$-topoi $$f:\Hshi\left(\TopC\right) \to \Shi\left(\Aff,\mbox{\'et}\right),$$ with direct image functor $$f_*=\left(\blank\right)_{an}^{*}$$ and inverse image functor $$f^*=\left(\blank\right)_{top}.$$
\end{remark}

\begin{lemma}
Let $\As$ denote the category of algebraic spaces locally of finite type over $\mathbb{C}.$ Equip $\As$ with the \'etale topology. Then restriction along the canonical inclusion $$\Aff \hookrightarrow \As$$ induces an equivalence of $\i$-categories
$$\Shi\left(\As,\mbox{\'et}\right) \stackrel{\sim}{\longrightarrow} \Shi\left(\Aff,\mbox{\'et}\right).$$
\end{lemma}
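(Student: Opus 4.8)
The plan is to apply the Comparison Lemma of \cite{SGA4} III in the same way it was used earlier in the excerpt for the sites $\TopCs \hookrightarrow \TopC$, but now for the inclusion $\Aff \hookrightarrow \As$. First I would verify the hypotheses of the Comparison Lemma: the inclusion functor $\iota\colon \Aff \hookrightarrow \As$ is fully faithful, it is continuous and cocontinuous for the \'etale topologies (an \'etale cover of an affine scheme by affine schemes is an \'etale cover in $\As$, and conversely any \'etale cover in $\As$ of an affine scheme can be refined by one consisting of affine schemes, since algebraic spaces locally of finite type admit \'etale covers by affine schemes), and every object of $\As$ admits a covering family with members in $\Aff$ — indeed by definition an algebraic space locally of finite type over $\bC$ has an \'etale atlas which is a scheme, and any scheme has an \'etale (in fact Zariski) cover by affine schemes of finite type. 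Hence $\Aff$ is a \emph{dense} subsite of $\As$ in the sense required, and the Comparison Lemma gives an equivalence between the categories of sheaves of sets.

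Next I would promote this to an equivalence of $\i$-topoi of $\i$-sheaves, exactly as in the argument already given in the excerpt for $\Hshi(\TopC)\simeq\Hshi(\TopCs)$: invoke \cite[Theorem 5]{Jardine} together with \cite[Proposition 6.5.2.14]{htt} to see that an equivalence of the underlying $1$-topoi of sheaves of sets lifts to an equivalence of the associated $\i$-categories of $\i$-sheaves. Since restriction along $\iota$ is the functor in question, and it is compatible with the covering sieves on both sides, the induced functor $\Shi(\As,\etm) \to \Shi(\Aff,\etm)$ is the desired equivalence. One should note that $\As$ is not essentially small, but this is not an obstruction: since $\Aff$ \emph{is} essentially small and the restriction functor is an equivalence, it follows a posteriori that $\Shi(\As,\etm)$ is an $\i$-topos, just as was observed for $\TopC$ in the excerpt.

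The main obstacle — really the only non-formal point — is checking that $\Aff$ is a dense subsite of $\As$, i.e. that the \'etale topology on $\As$ is ``generated'' by covers coming from $\Aff$ in the precise sense the Comparison Lemma requires, and that restriction along $\iota$ sends covering sieves to covering sieves and reflects them appropriately. This is where one uses that every algebraic space locally of finite type over $\bC$ has a scheme atlas, and every such scheme has an affine \'etale cover of finite type; one also needs that fiber products of affine schemes of finite type are again affine of finite type (so $\Aff$ is closed under the relevant pullbacks and has finite limits, as already noted in the excerpt), so that the Cech nerves of such covers stay within $\Aff$. Once density is in hand, everything else is a direct transcription of the already-executed $\TopCs/\TopC$ argument, so I would keep that part brief and simply cite the same results.
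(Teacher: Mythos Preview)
Your overall strategy---apply the Comparison Lemma of SGA4 to obtain an equivalence of $1$-topoi of sheaves of sets, then lift to $\infty$-categories---is exactly the paper's approach, and your verification of the density hypotheses is fine. However, there is a genuine gap in the lifting step. You propose to invoke \cite[Theorem 5]{Jardine} together with \cite[Proposition 6.5.2.14]{htt}, mimicking the earlier $\TopCs \hookrightarrow \TopC$ argument. But that argument concerned $\Hshi$ (hypersheaves), whereas the present lemma is about $\Shi$ (the $\infty$-topos of $\infty$-sheaves, \emph{not} its hypercompletion). Jardine's local model structure presents the hypercomplete $\infty$-topos, and \cite[Proposition 6.5.2.14]{htt} identifies its underlying $\infty$-category with $\Hshi$, not $\Shi$. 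So your argument as written would establish $\Hshi(\As,\etm)\simeq\Hshi(\Aff,\etm)$, which is not the statement claimed.

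The paper instead observes that both sites $\Aff$ and $\As$ have finite limits, and then cites \cite[Proposition 6.4.5.4]{htt}, which says (roughly) that for a site with finite limits the $\infty$-topos of sheaves is $1$-localic, hence completely determined by its underlying $1$-topos of sheaves of sets. Since the Comparison Lemma already gives an equivalence of $1$-topoi, the equivalence of $\Shi$'s follows directly. You should replace the Jardine/6.5.2.14 citation with this one; the rest of your argument is correct.
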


\begin{proof}
The inclusion satisfies the conditions of the Comparison Lemma of \cite{SGA4} III, so we have an induced equivalence $$\Sh\left(\As,\mbox{\'et}\right) \stackrel{\sim}{\longrightarrow} \Sh\left(\Aff,\mbox{\'et}\right)$$ between sheaves of sets. Since both sites have finite limits, the result now follows from \cite[Proposition 6.4.5.4]{htt}.
\end{proof}

\begin{proposition}\label{prop:alspacan}
Let $X$ be any algebraic space locally of finite type over $\mathbb{C}.$ Then $X_{top} \cong X_{an}.$
\end{proposition}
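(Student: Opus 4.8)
The plan is to reduce the statement for a general algebraic space to the already-known case of affine schemes (which is essentially the content of the defining Kan extension formula for $\left(\blank\right)_{top}$) by exhibiting $X_{an}$ as the colimit in $\Hshi\left(\TopC\right)$ of the analytifications of an \'etale atlas, and matching this with the corresponding colimit presentation of $X$ in $\Shi\left(\Aff,\mbox{\'et}\right)$. Concretely, first I would choose an \'etale surjection $U \to X$ with $U$ a disjoint union of affine schemes of finite type over $\mathbb{C}$, and form its \v{C}ech nerve $U^\bullet$; each $U^n$ is again (a disjoint union of) affine schemes since $\Aff$ has finite limits and $X$ is locally of finite type. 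Since the \'etale topology is subcanonical and $U \to X$ is an \'etale cover, $X \simeq \colim_{\Delta^{op}} y\left(U^\bullet\right)$ in $\Shi\left(\Aff,\mbox{\'et}\right)$.

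Next I would apply the colimit-preserving functor $\left(\blank\right)_{top}$ to this presentation, obtaining
$$X_{top} \simeq \underset{\Delta^{op}}{\colim}\, \left(U^\bullet\right)_{top} \simeq \underset{\Delta^{op}}{\colim}\, Y\left(U^\bullet_{an}\right),$$
using the defining property $y\left(Z\right)_{top} \cong Y\left(Z_{an}\right)$ on representables together with the fact that $\left(\blank\right)_{an}$ preserves finite limits (so the simplicial object $U^\bullet_{an}$ is exactly the \v{C}ech nerve of $U_{an} \to X_{an}$ in topological spaces). On the other hand, the map $U_{an} \to X_{an}$ is a local homeomorphism and hence an open cover in $\TopC$ up to refinement, so $X_{an}$, viewed via the Yoneda embedding $Y$ in $\Hshi\left(\TopC\right)$, is the colimit of the \v{C}ech nerve of this cover — here one uses that hypersheaves on $\TopC$ satisfy descent along local homeomorphisms, exactly as established in the proof of Theorem \ref{thm:analification}. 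Comparing the two colimit expressions gives $X_{top} \simeq Y\left(X_{an}\right)$, i.e. the stated identification.

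The main obstacle, and the point requiring care rather than routine bookkeeping, is the descent step for $X_{an}$ in $\Hshi\left(\TopC\right)$: one must check that $U_{an} \to X_{an}$ genuinely furnishes an effective epimorphism (a hypercover) in the $\i$-topos $\Hshi\left(\TopC\right)$, so that the \v{C}ech nerve colimit recovers $Y\left(X_{an}\right)$ and not merely a sheafification of it. This is where the choice of the site $\TopC$ and the fact that analytifications of separated schemes locally of finite type are locally contractible (hence genuinely in $\TopC$, with local homeomorphisms between such spaces again landing in $\TopC$) is used; the argument in the proof of Theorem \ref{thm:analification} that open covers and local homeomorphisms generate the same topology is precisely what makes the \v{C}ech descent valid here. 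A secondary minor point is independence of the choice of atlas $U$, which follows formally since any two \'etale atlases admit a common refinement and all the colimits in sight are computed the same way.
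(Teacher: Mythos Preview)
Your descent strategy is correct in outline and is a more concrete route than the paper's, but there is a real gap: the claim that each \v{C}ech level $U^n = U \times_X \cdots \times_X U$ is a disjoint union of affine schemes is not justified and is generally false. These fiber products are taken over the algebraic space $X$, so the fact that $\Aff$ has finite limits is irrelevant; representability of the diagonal of $X$ guarantees only that each $U^n$ is a \emph{scheme}, and it is affine precisely when $X$ has affine diagonal. Since the identity $y\left(Z\right)_{top} \cong Y\left(Z_{an}\right)$ you invoke is at this point known only for affine $Z$, the second equivalence in
\[
X_{top} \;\simeq\; \underset{\Delta^{op}}{\colim}\, \left(U^n\right)_{top} \;\simeq\; \underset{\Delta^{op}}{\colim}\, Y\!\left(\left(U^n\right)_{an}\right)
\]
is circular as written. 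The repair is a short bootstrap: first run your argument for separated schemes (where a Zariski \v{C}ech nerve by affines does stay affine levelwise), then for arbitrary schemes (cover by separated opens; the \v{C}ech levels are separated schemes), and finally for algebraic spaces (\'etale cover by a scheme; the \v{C}ech levels are schemes). Alternatively, replace the \v{C}ech nerve by a split hypercover by affines and use that $\left(\blank\right)_{top}$, being left exact and colimit-preserving, carries hypercovers to hypercovers, while the target $\Hshi\left(\TopC\right)$ is hypercomplete by construction.

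The paper's proof is genuinely different: rather than resolving $X$, it passes to a larger universe $\cV \ni \cU$ and left-Kan-extends $Y\circ\left(\blank\right)_{an}$ from the site $\As$ of \emph{all} algebraic spaces locally of finite type, where $X$ is itself representable and the extension tautologically sends $y\left(X\right)$ to $Y\left(X_{an}\right)$. One then checks that this large-universe functor agrees with $\left(\blank\right)_{top}$ because both are $\cU$-small-colimit-preserving out of $\Shi\left(\Aff,\mbox{\'et}\right)$ and agree on affine representables. Your approach trades the universe bookkeeping for an inductive descent; the paper's trades the induction for an abstract uniqueness-of-Kan-extensions argument.
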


\begin{proof}
Let $\cU$ be the Grothendieck universe of small sets and let $\cV$ be the Grothendieck universe of large sets with $\cU \in \cV.$ Denote by $\LiGpd$ the $\i$-category of large $\i$-groupoids, and denote by $\widehat{\Hshi}\left(\TopC\right)$ the $\i$-category of hypersheaves on $\TopC$ with values in $\LiGpd,$ and similarly let $\widehat{\Shi}\left(\As,\mbox{\'et}\right)$ denote the $\i$-category of sheaves on the \'etale site of algebraic spaces with values in $\LiGpd$. Then by same proof as Theorem \ref{thm:analification}, by left Kan extension there is a $\mathcal{V}$-small colimit preserving functor $$L:\widehat{\Shi}\left(\As,\mbox{\'et}\right) \to \widehat{\Hshi}\left(\TopC\right)$$ such that for all representable  sheaves $y\left(P\right)$ on $\left(\As,\mbox{\'et}\right),$ $$L\left(y\left(P\right)\right)\cong Y\left(P_{an}\right).$$ By \cite[Remark 6.3.5.17]{htt}, both inclusions $$\Hshi\left(\TopC\right) \hookrightarrow \widehat{\Hshi}\left(\TopC\right)$$ and $$\Shi\left(\Aff,\mbox{\'et}\right)\hookrightarrow \widehat{\Shi}\left(\Aff,\mbox{\'et}\right)$$ preserve $\cU$-small colimits. Hence both composites
$$\Shi\left(\Aff,\mbox{\'et}\right)\hookrightarrow \widehat{\Shi}\left(\Aff,\mbox{\'et}\right)\simeq  \widehat{\Shi}\left(\As,\mbox{\'et}\right) \stackrel{L}{\longrightarrow} \widehat{\Hshi}\left(\TopC\right)$$ and $$\Shi\left(\Aff,\mbox{\'et}\right) \stackrel{\left(\blank\right)_{top}}{\longlongrightarrow} \Hshi\left(\TopC\right) \hookrightarrow \widehat{\Hshi}\left(\TopC\right)$$ are  $\mathcal{cU}$-small colimit preserving, and agree up to equivalence on every representable $y\left(X\right)$, for $X$ an affine scheme. It follows from \cite[Theorem 5.1.5.6]{htt} that both compositions must in fact be equivalent. However, the inclusion $$\Shi\left(\Aff,\mbox{\'et}\right)\hookrightarrow \widehat{\Shi}\left(\Aff,\mbox{\'et}\right)\simeq  \widehat{\Shi}\left(\As,\mbox{\'et}\right)$$ carries an algebraic space $P$ to its representable sheaf $y\left(P\right)$. The result now follows.
\end{proof}

The following lemma follows immediately from the fact that $\left(\blank\right)_{top}$ preserves finite limits:

\begin{lemma}
Let $\cG$ be a groupoid object in sheaves of sets on the \'etale site $\left(\Aff,\mbox{\'et}\right)$. Then applying $\left(\blank\right)_{top}$ level-wise produces a groupoid object in sheaves of sets on $\TopC$, denoted by $\cG_{top}$. Moreover, if the original groupoid $\cG$ is a groupoid object in algebraic spaces, then $\cG_{top}$ is degree-wise representable, i.e. a topological groupoid.
\end{lemma}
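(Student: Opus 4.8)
The statement to prove is the final lemma:

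\medskip

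\textbf{Lemma.} \emph{Let $\cG$ be a groupoid object in sheaves of sets on the \'etale site $(\Aff,\etm)$. Then applying $(\blank)_{top}$ level-wise produces a groupoid object in sheaves of sets on $\TopC$, denoted $\cG_{top}$. Moreover, if the original groupoid $\cG$ is a groupoid object in algebraic spaces, then $\cG_{top}$ is degree-wise representable, i.e.\ a topological groupoid.}

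\medskip

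The plan is to exploit the fact, established in Theorem~\ref{thm:analification}, that $(\blank)_{top}$ is left exact. First I would recall that a groupoid object in an $\i$-category $\cC$ with finite limits is, by \cite[Definition 6.1.2.7]{htt}, a simplicial object $\cG_\bullet\colon\Delta^{op}\to\cC$ satisfying the Segal-type (groupoid) conditions, which assert that certain canonical maps built from the face and degeneracy maps are equivalences; crucially, each such condition only involves finite limits in $\cC$ (the relevant limits are finite products, or more precisely iterated fiber products over $\cG_0$). Since $(\blank)_{top}\colon\Shi(\Aff,\etm)\to\Hshi(\TopC)$ preserves finite limits, post-composing $\cG_\bullet$ with $(\blank)_{top}$ yields a simplicial object $\cG_{top,\bullet}$ in $\Hshi(\TopC)$ for which all of these canonical maps remain equivalences — a left-exact functor carries a limit diagram to a limit diagram, hence carries an equivalence between objects defined as finite limits to an equivalence. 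This gives that $\cG_{top}$ is a groupoid object. A small point to check is that the groupoid conditions are genuinely expressible purely in terms of finite limits; this is exactly the content of the characterization of groupoid objects via the maps indexed by partitions $[n]=S\cup S'$ with $S\cap S'$ a single point, each of which is a finite-limit comparison map.

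One must also verify the first assertion carefully: that $\cG_{top}$ is a groupoid object in \emph{sheaves of sets}, not merely in the $\i$-topos. Here I would use that $\cG$ is $0$-truncated level-wise (it is a groupoid object in ordinary sheaves of sets), and that $(\blank)_{an}$ sends an affine scheme to a topological space, so each $(\cG_n)_{top}$ is a $0$-truncated object of $\Hshi(\TopC)$, i.e.\ an ordinary sheaf of sets — indeed $(\blank)_{top}$ restricted to discrete (set-valued) sheaves agrees with the classical left Kan extension / analytification on $1$-topoi, as in \cite[Proposition 20.2]{No1}. Since the $1$-category of sheaves of sets on $\TopC$ also has finite limits and the inclusion into $\Hshi(\TopC)$ preserves them, one concludes $\cG_{top,\bullet}$ is a groupoid object internal to $\Sh(\TopC)$.

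For the final clause, suppose $\cG$ is a groupoid object in algebraic spaces locally of finite type over $\bC$. Then each $\cG_n$ is the representable sheaf $y(P_n)$ of an algebraic space $P_n$. By Proposition~\ref{prop:alspacan}, $y(P_n)_{top}\cong Y((P_n)_{an})$, i.e.\ $(\cG_n)_{top}$ is the representable sheaf on $\TopC$ associated to the topological space $(P_n)_{an}$. Hence $\cG_{top}$ is degree-wise representable by topological spaces, i.e.\ it \emph{is} a topological groupoid (with the face/degeneracy maps being the analytifications of the structure maps of $\cG$, which are morphisms of algebraic spaces).

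I expect the main obstacle to be purely bookkeeping rather than conceptual: namely, pinning down precisely which finite-limit conditions characterize groupoid objects (so as to invoke left-exactness cleanly) and confirming that level-wise $0$-truncatedness is preserved, so that the output lands in \emph{sheaves of sets} and not just in the ambient $\i$-topos. Both are routine given the cited results, which is why the paper states the lemma follows ``immediately'' from left-exactness of $(\blank)_{top}$.
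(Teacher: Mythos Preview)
Your proposal is correct and follows exactly the same approach as the paper, which simply states that the lemma ``follows immediately from the fact that $\left(\blank\right)_{top}$ preserves finite limits.'' Your write-up is a faithful unpacking of that one line: left-exactness preserves the finite-limit conditions defining a groupoid object, and Proposition~\ref{prop:alspacan} handles degree-wise representability in the algebraic-space case.
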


\begin{proposition}\label{prop:gpdobjs}
Let $\cG$ be a groupoid object in sheaves of sets on the \'etale site $\left(\Aff,\mbox{\'et}\right).$ Denote by $\left[\cG\right]$ its associated stack of torsors, and denote by $\left[\cG_{top}\right]$ the stack of groupoids on $\TopC$ associated to $\cG_{top},$ i.e. stack on $\TopC$ of principal $\cG_{top}$-bundles. Then $\left[\cG\right]_{top} \simeq \left[\cG_{top}\right].$
\end{proposition}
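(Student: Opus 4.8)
The plan is to realize both $\left[\cG\right]_{top}$ and $\left[\cG_{top}\right]$ as colimits over $\Delta^{op}$ of groupoid objects, and then to use that $\left(\blank\right)_{top}$ is left exact (Theorem \ref{thm:analification}) and colimit preserving. Write $|\cH|$ for the colimit of a simplicial object $\cH$. I would start from the fact that in any $\i$-topos every groupoid object is effective (one of the $\i$-categorical Giraud axioms, \cite[Section 6.1]{htt}): applied to $\cG$ in $\Shi\left(\Aff,\mbox{\'et}\right)$ this says that $\cG_0 \to |\cG|$ is an effective epimorphism whose \v{C}ech nerve is $\cG$. Since $\cG_0$ and $\cG_1$ are $0$-truncated, $|\cG|$ is $1$-truncated, and it is canonically identified with the stack $\left[\cG\right]$ of $\cG$-torsors --- by definition a $\cG$-torsor is an object with a $\cG$-action that is, locally for the \'etale topology, isomorphic to $\cG_0$ with its translation action, which is exactly the condition of arising by base change from $\cG_0 \to |\cG|$. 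Thus $\left[\cG\right] \simeq |\cG|$ in $\Shi\left(\Aff,\mbox{\'et}\right)$, and the identical argument run in the $\i$-topos $\Hshi\left(\TopC\right)$ gives $\left[\cG_{top}\right] \simeq |\cG_{top}|$; here the colimit may be formed in $\Hshi\left(\TopC\right)$ because the object $\left[\cG_{top}\right]$ is $1$-truncated, hence hypercomplete (\cite{htt}).

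Next I would invoke the lemma immediately preceding this proposition: applying $\left(\blank\right)_{top}$ level-wise to $\cG$ produces the groupoid object $\cG_{top}$, which uses left exactness of $\left(\blank\right)_{top}$ so that it respects the iterated fiber products of $\cG_1$ over $\cG_0$ out of which the levels of $\cG$ are built. Since $\left(\blank\right)_{top}$ also preserves all colimits, in particular geometric realizations, it carries $|\cG|$ to $|\cG_{top}|$. Combining this with the previous paragraph,
$$\left[\cG\right]_{top} \simeq \left(|\cG|\right)_{top} \simeq |\cG_{top}| \simeq \left[\cG_{top}\right],$$
which is the claim.

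The step I expect to be the main obstacle is the identification in the first paragraph of the $\i$-categorical quotient $|\cG|$ of a groupoid object with the classical $1$-stack of torsors, on both sides of the picture. On the algebraic side this is the familiar statement that the quotient stack $\left[\cG_0/\cG_1\right]$ classifies $\cG$-torsors. On the topological side one must check that a principal $\cG_{top}$-bundle, in the sense used to define $\left[\cG_{top}\right]$, is the same thing as an object locally trivial for the topology of $\TopC$ carrying a compatible $\cG_{top}$-action --- this local triviality being precisely what makes $\cG_{top,0} \to |\cG_{top}|$ an effective epimorphism with \v{C}ech nerve $\cG_{top}$. The remaining inputs (left exactness and colimit preservation of $\left(\blank\right)_{top}$, and the innocuous comparison of colimits in $\Shi\left(\TopC\right)$ and $\Hshi\left(\TopC\right)$ for $1$-truncated objects) are already in hand.
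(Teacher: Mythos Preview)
Your proposal is correct and follows essentially the same strategy as the paper: both sides are realized as geometric realizations of the nerve, and then colimit preservation of $\left(\blank\right)_{top}$ (together with left exactness, via the preceding lemma) finishes the job. The only difference is in how the identification $\left[\cG\right]\simeq |\cG|$ is justified: the paper does this concretely, by observing that $\left[\cG\right]$ is the stackification of the presheaf of groupoids $X\mapsto \cG(X)$ and then computing the colimit of $N(\cG)$ objectwise in $\Pshi(\Aff)$ via the diagonal of a bisimplicial set, whereas you invoke effectiveness of groupoid objects in an $\i$-topos and the description of torsors as objects arising by base change from $\cG_0\to |\cG|$. Your route is slightly more abstract but equally valid; the paper's route has the advantage of sidestepping the torsor-versus-quotient identification that you flag as the main obstacle.
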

\begin{proof}
The stack $\left[\cG\right]$ is the stackification of the presheaf of groupoids $\tilde y\left(\cG\right)$ which sends an affine scheme $X$ to the groupoid $\cG\left(X\right).$ Denote by $N\left(\cG\right)$ the simplicial presheaf which is the nerve of this presheaf of groupoids. Consider the diagram $$\Delta^{op} \stackrel{N\left(\cG\right)}{\longlongrightarrow} \Psh\left(\Aff,\mathit{Set}\right) \hookrightarrow \Psh\left(\Aff,\iGpd\right).$$ We claim that the colimit of the above functor is $\tilde y\left(\cG\right).$ Since colimits are computed object-wise, it suffices to show that if $\cH$ is any discrete groupoid, then $N\left(\cH\right)$ is the homotopy colimit of the diagram $$\Delta^{op} \stackrel{N\left(\cH\right)}{\longlongrightarrow} \mathit{Set} \hookrightarrow \mathit{Set}^{{\Delta}^{op}},$$ which follows easily from the well-known fact that the homotopy colimit of a simplicial diagram of simplicial sets can be computed by taking the diagonal. It follows then that $\left[\cG\right]$ is the colimit of the diagram $$\Delta^{op} \stackrel{N\left(\cG\right)}{\longlongrightarrow} \Sh\left(\Aff,\mbox{\'et}\right) \hookrightarrow \Shi\left(\Aff,\mbox{\'et}\right),$$ since $\i$-sheafification preserves colimits, as it is a left adjoint. By the same argument, we have that $\left[\cG_{top}\right]$ is the colimit of the diagram 
$$\Delta^{op} \stackrel{N\left(\cG_{top}\right)}{\longlongrightarrow} \Sh\left(\TopC\right) \hookrightarrow \Shi\left(\TopC\right).$$ Notice that for all $n$ we have $$N\left(\cG_{top}\right)_n=\left(N\left(\cG\right)_n\right)_{top}.$$ The result now follows from the fact that $\left(\blank\right)_{top}$ preserves colimits.
\end{proof}

\begin{definition}
A \textbf{topological stack} is a stack on $\TopC$ of the form $\left[\cG\right]$ for $\cG$ a groupoid object in $\TopC.$ Denote the associated $\left(2,1\right)$-category of topological stacks by $\mathfrak{TopSt}.$
\end{definition}

\begin{remark}\label{rmk: loc stack}
In the literature, typically there is no restriction on a topological stack to come from a topological groupoid which is locally contractible, and such a stack is represented by its functor of points on the Grothendieck site of all topological spaces. However, the $\left(2,1\right)$-category of topological stacks in the sense we defined above embeds fully faithfully into the larger $\left(2,1\right)$-category of all topological stacks in the classical sense.
\end{remark}

\begin{corollary}\label{cor:algstacktotopstack}
The functor $$\left(\blank\right)_{top}:\Shi\left(\Aff,\mbox{\'et}\right) \to \Hshi\left(\TopC\right)$$ restricts to a left exact functor $$\left(\blank\right)_{top}:\mathbf{A}\!\mathfrak{lgSt}^{LFT}_{\mathbb{C}} \to \mathfrak{TopSt}$$ from Artin stacks locally of finite type over $\mathbb{C}$ to topological stacks.
\end{corollary}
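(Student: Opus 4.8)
The plan is to reduce everything to three facts already established: that $\left(\blank\right)_{top}$ is left exact and colimit preserving (Theorem \ref{thm:analification}); that it sends an algebraic space locally of finite type over $\bC$ to a representable object of $\TopC$ via $X\mapsto X_{an}$ (Proposition \ref{prop:alspacan}); and that it commutes with passage to the quotient stack of a groupoid object (Proposition \ref{prop:gpdobjs} and the lemma preceding it).

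First I would present a given Artin stack $\cX$ locally of finite type over $\bC$, regarded as a $1$-truncated object of $\Shi\left(\Aff,\mbox{\'et}\right)$, as such a quotient. Choose a smooth atlas $U\to \cX$ with $U$ a disjoint union of affine schemes of finite type over $\bC$. A smooth surjection admits \'etale-local sections, hence is an effective epimorphism, so $\cX$ is the geometric realization of the \v{C}ech nerve of $U\to\cX$; that is, $\cX\simeq [\cG]$ for the groupoid object $\cG=\left(U\times_\cX U\rightrightarrows U\right)$. Each term $\cG_n=U\times_\cX\cdots\times_\cX U$ is an algebraic space locally of finite type over $\bC$, since the diagonal of an Artin stack is representable by algebraic spaces and $U$, $\cX$ are locally of finite type over $\bC$. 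Hence $\cG$ is a groupoid object in algebraic spaces locally of finite type over $\bC$.

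Next I would apply $\left(\blank\right)_{top}$ levelwise. By the lemma preceding Proposition \ref{prop:gpdobjs} — using Proposition \ref{prop:alspacan} to identify each $\left(\cG_n\right)_{top}$ with $\left(\cG_n\right)_{an}$, which lies in $\TopC$ — the resulting simplicial object $\cG_{top}$ is a topological groupoid, i.e. a groupoid object in $\TopC$. Proposition \ref{prop:gpdobjs} then yields $\cX_{top}\simeq [\cG]_{top}\simeq [\cG_{top}]$, which is a topological stack by the definition above, so $\left(\blank\right)_{top}$ does carry $\Algst$ into $\Topst$. That the restriction is left exact is then immediate from Theorem \ref{thm:analification}, since finite $2$-fibre products of Artin stacks, respectively of topological stacks, are again of the same kind and are computed as finite limits in the ambient $\infty$-topoi.

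The one place where genuine care is needed — and the step I would regard as the main obstacle — is the compatibility between the $\left(2,1\right)$-categorical and the $\infty$-categorical pictures. One must check that the ``stack of torsors'' $[\cG]$ occurring in Proposition \ref{prop:gpdobjs} really does coincide with $\cX$ (equivalently, that the \v{C}ech nerve of a smooth atlas has the expected colimit), and, on the topological side, that $[\cG_{top}]$ is $1$-truncated, so that it is a topological stack in the sense of the definition above rather than a higher topological stack — this last point being automatic because left exact functors preserve $n$-truncatedness. Everything else is a direct appeal to the results already proved.
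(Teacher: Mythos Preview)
Your argument is correct and is precisely the reasoning the paper intends: the corollary is stated without proof because it is meant to follow immediately from the lemma preceding Proposition~\ref{prop:gpdobjs} (a groupoid in algebraic spaces goes to a topological groupoid), Proposition~\ref{prop:gpdobjs} itself ($[\cG]_{top}\simeq[\cG_{top}]$), Proposition~\ref{prop:alspacan} (to land in $\TopC$), and the left exactness from Theorem~\ref{thm:analification}. Your write-up simply makes explicit the choice of smooth atlas and the verification that the \v{C}ech nerve consists of algebraic spaces locally of finite type, together with the $1$-truncatedness check---all of which the paper leaves to the reader.
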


Up to the identification mentioned in Remark \ref{rmk: loc stack}, the construction in the above corollary agrees with that of Noohi in \cite[Section 20]{No1}.

\subsection{The fundamental infinity-groupoid of a stack} \mbox{}


The following proposition will allow us to talk about homotopy types of topological stacks.

\begin{proposition}\label{prop:minipi}
There is a colimit preserving functor $$\overline{L}:\Hshi\left(\TopC^s\right) \to \cS$$ sending every representable sheaf $y\left(T\right)$ for $T$ a space in $\TopC^s$ to its weak homotopy type.
\end{proposition}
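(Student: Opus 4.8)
The plan is to realize $\overline{L}$ as a left Kan extension along the Yoneda embedding. Since $\TopC^s$ is essentially small, $\Hshi(\TopC^s)$ is the hypercompletion of the $\i$-topos $\Shi(\TopC^s)$ of $\i$-sheaves on the site $\TopC^s$ (with the topology generated by covers by objects of $\TopC^s$). The presheaf $\i$-topos $\Pshi(\TopC^s)=\Fun((\TopC^s)^{op},\iGpd)$ is freely generated under colimits by the representables, so by \cite[Theorem 5.1.5.6]{htt} the functor $T\mapsto \Sing(T)$ (the weak homotopy type, equivalently the singular complex viewed as an object of $\cS$) extends uniquely to a colimit-preserving functor $L^{pre}:\Pshi(\TopC^s)\to\cS$, namely $\Lan_y(\Sing)$. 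First I would check that $L^{pre}$ sends local equivalences (the maps inverted by sheafification) to equivalences, and moreover sends $\infty$-connective maps (those inverted by hypercompletion) to equivalences; granting this, $L^{pre}$ descends to the localization, giving the desired colimit-preserving $\overline{L}:\Hshi(\TopC^s)\to\cS$ with $\overline{L}(y(T))\simeq\Sing(T)$.

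The key point is the descent statement: for a cover $(U_\alpha\hookrightarrow T)$ in $\TopC^s$ with \v{C}ech nerve $U^\bullet$, the map $\colim_{[n]\in\Delta^{op}}\Sing(U_n)\to\Sing(T)$ is an equivalence in $\cS$. This follows because each object of $\TopC^s$ is locally contractible (indeed locally triangulable, being a subspace of $\bR^n$ cut out appropriately — cf. the discussion of \cite{Lo} earlier), so every open cover can be refined to a good cover, and for a good cover the homotopy colimit of the \v{C}ech nerve computes $\Sing(T)$ by the classical nerve theorem / Mayer--Vietoris-type argument (the generalized Seifert--van Kampen / Dugger--Isaksen hypercover theorem for topological spaces). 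Since I only need sheaves, not merely hypersheaves, to be sent to colimit-preserving data, it suffices to verify descent along \v{C}ech nerves of covers; hypercompleteness is then automatic on the target side since $\cS$ itself is hypercomplete, so $L^{pre}$ automatically factors through $\Hshi(\TopC^s)$. Concretely: $L^{pre}$ preserves colimits and sends covering \v{C}ech nerves to colimit diagrams, hence by the universal property of the sheaf topos it factors as $\Shi(\TopC^s)\to\cS$, and since $\cS$ is hypercomplete this factors further through $\Hshi(\TopC^s)$.

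I expect the main obstacle to be the rigorous verification that $L^{pre}$ inverts the covering sieves, i.e.\ the nerve theorem in the form needed here: one must produce a natural equivalence $\colim \Sing(U^\bullet)\xrightarrow{\sim}\Sing(T)$, functorially in the cover, for $T$ and all $U_\alpha$ in $\TopC^s$. The cleanest route is to invoke the known result that the $\i$-category $\cS$ of spaces is equivalent to the hypercompletion of $\i$-sheaves on the site of (locally contractible) topological spaces with the open-cover topology — or equivalently to cite \cite[Theorem 5]{Jardine} together with \cite[Proposition 6.5.2.14]{htt} as is done earlier in the paper for the comparison $\Hshi(\TopC)\simeq\Hshi(\TopC^s)$ — and then observe that under this identification the inclusion of $\TopC^s$ followed by $\Sing$ is exactly the restriction of the canonical colimit-preserving "underlying homotopy type" functor. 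Once descent is in hand, colimit-preservation of $\overline{L}$ and the formula $\overline{L}(y(T))\simeq\Sing(T)$ are immediate from the construction as a left Kan extension that descends to the localization.
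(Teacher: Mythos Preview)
Your overall strategy---left Kan extend $T\mapsto\Sing(T)$ along Yoneda and then descend through the localization---is sound and is essentially dual to the paper's argument. The paper instead works with the right adjoint: it observes that $\Lan_Y\pi$ has right adjoint $R_\pi(Z)\colon T\mapsto\Hom(\pi(T),Z)$, and checks directly that $R_\pi(Z)$ is a hypersheaf by invoking \cite[Theorem 1.3]{duggerisaksen} (for \emph{hypercovers}, not just \v Cech covers). Once the right adjoint lands in $\Hshi(\TopC^s)$, the adjunction restricts and $\overline{L}$ is automatically a left adjoint, hence colimit-preserving. Both routes need the same input---Dugger--Isaksen hyperdescent---but the right-adjoint packaging avoids having to identify the generating local equivalences explicitly.

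There is, however, a genuine gap in your shortcut. You write that it suffices to check \v Cech descent because ``$\cS$ itself is hypercomplete, so $L^{pre}$ automatically factors through $\Hshi(\TopC^s)$.'' This inference is not valid. Factoring a colimit-preserving $F\colon\Shi(\TopC^s)\to\cS$ through hypercompletion requires $F$ to invert $\infty$-connective maps; hypercompleteness of the target only guarantees this if $F$ \emph{preserves} $\infty$-connectivity, which would follow from left exactness---but $\Pi_\i$ is not left exact (the homotopy type of a pullback of spaces is not the pullback of homotopy types). So you cannot upgrade \v Cech descent to hyperdescent for free. The fix is easy and you already name it: apply Dugger--Isaksen directly for hypercovers, i.e.\ show that for any hypercover $U^\bullet\to T$ in $\TopC^s$ one has $\colim_{\Delta^{op}}\Sing(U^\bullet)\simeq\Sing(T)$. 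With that in hand, $L^{pre}$ inverts the maps that hypersheafification inverts, and your argument goes through.
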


\begin{proof}
The proof is essentially the same as \cite[Proposition 3.1]{hafdave}. By Lemma 3.1 in op. cit., there is a functor $$\TopC^s \hookrightarrow \Top \stackrel{h}{\longrightarrow} \cS$$ assigning to each space $T$ its associated weak homotopy type. Denote this functor by $\pi$. Since $\TopC^s$ is essentially small, by left Kan extension there is a colimit preserving functor $$\Lan_Y \pi:\Pshi\left(\TopC^s\right) \to \cS$$ sending every representable presheaf $Y\left(T\right)$ to the underlying weak homotopy type of $T$. It follows from the Yoneda lemma that this functor has a right adjoint $R_\pi$ which sends an $\i$-groupoid $Z$ to the $\i$-presheaf $$R_\pi\left(Z\right):T \mapsto \Hom\left(\pi\left(T\right),Z\right).$$ We claim that $R_\pi\left(Z\right)$ is a hypersheaf. To see this, it suffices to observe that if $$U^{\bullet}:\Delta^{op} \to \TopC^s/T$$ is a hypercover of $T$ with respect to the coverage of contractible open coverings, then the colimit of the composite $$\Delta^{op} \stackrel{U^\bullet}{\longlongrightarrow} \TopC^s/T \to \TopC^s \stackrel{\pi}{\longrightarrow} \cS$$ is $\pi\left(T\right),$ which follows from \cite[Theorem 1.3]{duggerisaksen}. We thus have that $\Lan_y \pi$ and $R_\pi$ restrict to an adjunction $$\overline{L} \dashv \Delta$$ between $\Hshi\left(\TopC^s\right)$ and $\cS,$ so in particular, $\overline{L}$ preserves colimits.
\end{proof}

\begin{corollary}\label{cor:constsheaf}
Let $\cG$ be an $\i$-groupoid. Denote by $\Delta\left(\cG\right)$ the constant presheaf on $\TopC^s.$ Then $\Delta\left(\cG\right)$ is a hypersheaf.
\end{corollary}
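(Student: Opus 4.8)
The plan is to read this off directly from the proof of Proposition \ref{prop:minipi}. In that proof we constructed the right adjoint $R_\pi\colon \cS \to \Pshi\left(\TopC^s\right)$ to $\Lan_Y\pi$, given on objects by $R_\pi\left(Z\right)\colon T \mapsto \Hom\left(\pi\left(T\right),Z\right)$, where $\pi\colon \TopC^s \to \cS$ sends a space to its weak homotopy type; the essential content of that argument was that $R_\pi\left(Z\right)$ is always a hypersheaf, so that $R_\pi$ restricts to the right adjoint of $\overline{L}$. I would simply specialize this to the point that $\TopC^s$ consists of contractible spaces.

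First I would observe that, by definition of $\TopC^s$, every object $T$ is contractible, so $\pi\left(T\right)$ is a terminal object of $\cS$. Hence for every $T$ there is an equivalence $R_\pi\left(\cG\right)\left(T\right) = \Hom\left(\pi\left(T\right),\cG\right) \simeq \Hom\left(*,\cG\right) \simeq \cG$, and moreover any morphism $f\colon T\to S$ in $\TopC^s$ is sent by $\pi$ to an equivalence of terminal objects, so the induced transition map $R_\pi\left(\cG\right)\left(S\right)\to R_\pi\left(\cG\right)\left(T\right)$ is an equivalence. It follows that $R_\pi\left(\cG\right)$ is equivalent, as an object of $\Pshi\left(\TopC^s\right)$, to the constant presheaf $\Delta\left(\cG\right)$; equivalently, the right adjoint denoted $\Delta$ in the proof of Proposition \ref{prop:minipi} really is ``take the constant presheaf''.

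Since $R_\pi\left(\cG\right)$ is a hypersheaf and the property of being a hypersheaf is invariant under equivalence in $\Pshi\left(\TopC^s\right)$, I would conclude that $\Delta\left(\cG\right)$ is a hypersheaf. There is no genuine obstacle here; the only point requiring a moment's care is to note that the identification $R_\pi\left(\cG\right)\simeq \Delta\left(\cG\right)$ is an equivalence of presheaves and not merely a pointwise equivalence, which is immediate because $\pi$ carries every morphism of $\TopC^s$ to an equivalence between contractible spaces.
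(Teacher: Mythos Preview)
Your argument is correct and is essentially the same as the paper's: both identify $R_\pi(\cG)$ with the constant presheaf using that every object of $\TopC^s$ is contractible, and then invoke the fact from Proposition~\ref{prop:minipi} that $R_\pi(\cG)$ is a hypersheaf. If anything, you are slightly more careful in explicitly noting that the transition maps are equivalences, so that the identification is an equivalence of presheaves and not merely a pointwise one; the paper leaves this implicit.
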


\begin{proof}
Following the proof of the above theorem, we have that $R_\pi\left(\cG\right)$ is a hypersheaf. Moreover, for each space $T$ in $\TopC^s,$ we have that
\begin{eqnarray*}
R_\pi\left(\cG\right)\left(T\right) &\simeq& \Hom\left(Y\left(T\right),R_\pi\left(\cG\right)\right)\\
&\simeq& \Hom\left(\overline{L}\left(Y\left(T\right)\right),\cG\right)\\
&\simeq& \Hom\left(*,\cG\right)\\
&\simeq& \cG,
\end{eqnarray*}
since each such $T$ is in fact contractible.
\end{proof}

\begin{remark}\label{rmk:delta}
The $\i$-category $\cS$ of spaces is the terminal $\i$-topos. In particular, if $\sC$ is any $\i$-category equipped with a Grothendieck topology, then the unique geometric morphism $$\Shi\left(\sC\right) \to \cS$$ has as direct image functor the global sections functor $$\Gamma:\Shi\left(\sC\right) \to \cS$$ defined by $\Gamma\left(F\right)=\Hom\left(1,F\right),$ which is the same as $F\left(1\right)$ if $\sC$ has a terminal object. The inverse image functor is given by $$\Delta:\cS \to \Shi\left(\sC\right)$$ and it sends an $\i$-groupoid $\cG$ to the sheafification of the constant presheaf with value $\cG$. Similarly, the unique geometric morphism $$\Hshi\left(\sC\right) \to \cS$$ has its direct image functor  $\Gamma$ given by the same construction as for $\i$-sheaves, and the inverse image functor $\Delta$ assigns an $\i$-groupoid $\cG$ the hypersheafification of the constant presheaf with values $\cG$. In either case we have $\Delta \dashv \Gamma.$ In particular, Corollary \ref{cor:constsheaf} implies that for the $\i$-topos $\Hshi\left(\TopC^s\right)$ we have a triple of adjunctions: $$\overline{L} \dashv \Delta \dashv \Gamma.$$ Although we will not prove it here, there is in fact a further right adjoint to $\Gamma,$ $coDisc \vdash \Gamma$ and moreover the quadruple $$\overline{L} \dashv \Delta \dashv \Gamma \dashv coDisc$$ exhibits $\Hshi\left(\TopC^s\right)$ as a \emph{cohesive $\i$-topos} in the sense of \cite{schreiber}. 
\end{remark}

\begin{proposition}\label{prop:bigpi}
The composite $$\Hshi\left(\TopC\right) \stackrel{\sim}{\longrightarrow} \Hshi\left(\TopC^s\right) \stackrel{\overline{L}}{\longrightarrow} \cS$$ is colimit preserving and sends a representable sheaf $Y\left(X\right)$, for $X$ in $\TopC$, to its underlying weak homotopy type.
\end{proposition}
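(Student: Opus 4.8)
The plan is to reduce the statement to Proposition \ref{prop:minipi} by transporting everything across the equivalence $\Hshi(\TopC) \simeq \Hshi(\TopC^s)$ established earlier in this section. Recall that that equivalence is induced by restriction along the inclusion $\iota\colon \TopC^s \hookrightarrow \TopC$, and $\overline{L}$ is already known (Proposition \ref{prop:minipi}) to be colimit preserving. So the composite is a composite of colimit-preserving functors, hence colimit preserving; the only real content is identifying its value on a representable $Y(X)$ for $X \in \TopC$ (which need not lie in $\TopC^s$).

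First I would record that the equivalence $\Hshi(\TopC) \xrightarrow{\sim} \Hshi(\TopC^s)$ sends the representable hypersheaf $Y(X)$ to the hypersheafification of the presheaf $T \mapsto \Hom_{\TopC}(T, X)$ on $\TopC^s$ — equivalently, to the hypersheaf on $\TopC^s$ represented by $X$ in the sense of its restricted functor of points. Next, since every $X$ in $\TopC$ admits, by definition, an open cover $(U_\alpha \hookrightarrow X)_\alpha$ with each $U_\alpha \in \TopC^s$, the \v{C}ech nerve of $\coprod_\alpha U_\alpha \to X$ is a hypercover of $X$ by objects of $\TopC^s$ (its terms are iterated fibre products $U_{\alpha_0} \times_X \cdots \times_X U_{\alpha_n}$, which are open subspaces of the $U_{\alpha_i}$ and hence again in $\TopC^s$, since $\TopC^s$ — being closed under open subspaces — contains them). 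In the hypercomplete $\i$-topos $\Hshi(\TopC^s)$, $Y(X)$ is therefore the colimit of this \v{C}ech diagram of representables $Y(U_{\alpha_0 \cdots \alpha_n})$.

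Then I would apply $\overline{L}$: it preserves colimits, so $\overline{L}(Y(X))$ is the colimit in $\cS$ of the simplicial diagram $[n] \mapsto \coprod \overline{L}(Y(U_{\alpha_0 \cdots \alpha_n})) = \coprod \pi(U_{\alpha_0 \cdots \alpha_n})$, where $\pi$ denotes the weak homotopy type functor. But each $U_{\alpha_0 \cdots \alpha_n}$ is an open subspace of a locally contractible space, so $X$ together with this cover satisfies the hypotheses of the result of Dugger–Isaksen (\cite[Theorem 1.3]{duggerisaksen}) used in the proof of Proposition \ref{prop:minipi}: the homotopy colimit of the \v{C}ech diagram of weak homotopy types of the pieces recovers $\pi(X)$. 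Hence $\overline{L}(Y(X)) \simeq \pi(X)$, the underlying weak homotopy type of $X$, as claimed.

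The main obstacle I anticipate is the bookkeeping in the first two steps: one must be careful that the equivalence $\Hshi(\TopC)\simeq\Hshi(\TopC^s)$, which a priori is defined via the Comparison Lemma at the level of sheaves of sets and then lifted, actually identifies $Y(X)$ with the $X$-represented hypersheaf on the smaller site, and that the \v{C}ech nerve computed in $\Hshi(\TopC^s)$ agrees with the one computed in $\Hshi(\TopC)$ under this equivalence — this is where hypercompleteness and the fact that the inclusion of sites is continuous and cocontinuous get used. Once the representable on $\TopC$ is exhibited as a colimit of representables on $\TopC^s$, the rest is a formal consequence of colimit preservation of $\overline{L}$ together with the Dugger–Isaksen input already invoked in Proposition \ref{prop:minipi}, so no new geometric ideas are needed.
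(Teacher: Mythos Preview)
Your overall strategy is sound and, once corrected, gives a genuinely different proof from the paper's. The paper does not compute $\overline{L}(Y(X))$ by presenting $Y(X)$ as a colimit of representables from $\TopC^s$; instead it enlarges the universe, reruns the proof of Proposition~\ref{prop:minipi} with $\TopC$ (now $\cV$-small) in place of $\TopC^s$ to get a colimit-preserving functor $\mathbb{L}\colon \widehat{\Hshi}(\TopC) \to \widehat{\cS}$ sending each $Y(X)$ to its weak homotopy type, and then argues as in Proposition~\ref{prop:alspacan} that the composite in the statement and $\mathbb{L}$ must agree since both preserve small colimits and agree on representables from the small site $\TopC^s$. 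Your approach is more hands-on and avoids the universe manoeuvre; the paper's approach avoids having to explicitly resolve $Y(X)$.

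However, there is a genuine gap in your argument as written. You assert that the \v{C}ech nerve of $\coprod_\alpha U_\alpha \to X$ has terms in $\TopC^s$ because ``$\TopC^s$ is closed under open subspaces.'' It is not: objects of $\TopC^s$ are required to be \emph{contractible}, and an open subspace of a contractible space need not be contractible (e.g.\ an annulus inside a disk). So the iterated intersections $U_{\alpha_0} \cap \cdots \cap U_{\alpha_n}$ are in $\TopC$ (which \emph{is} closed under open subspaces) but typically not in $\TopC^s$, and you cannot evaluate $\overline{L}$ on them by Proposition~\ref{prop:minipi}. As stated, the argument is circular: to know what $\overline{L}$ does on $Y(U_{\alpha_0 \cdots \alpha_n})$ you would need exactly the proposition you are proving.

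The fix is to replace the \v{C}ech nerve by a genuine hypercover $U^\bullet \to X$ whose terms are (coproducts of) objects of $\TopC^s$. Such a hypercover exists because every object of $\TopC$ is locally contractible, so at each stage one can refine by contractible opens; a split hypercover of this type can be built by the usual inductive construction. Then $Y(X) \simeq \colim_{\Delta^{op}} Y(U^\bullet)$ in the hypercomplete $\i$-topos, $\overline{L}$ carries this to $\colim_{\Delta^{op}} \pi(U^\bullet)$, and Dugger--Isaksen \cite[Theorem~1.3]{duggerisaksen} (which is stated for hypercovers, not just \v{C}ech covers) identifies this with $\pi(X)$. With that modification your argument goes through.
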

\begin{proof} 
By \cite[Lemma 3.1]{hafdave}, there is a functor $$\TopC \hookrightarrow \Top \stackrel{h}{\longrightarrow} \cS$$ assigning to each space $X$ its associated weak homotopy type. Denote this functor by $\Pi$. By exactly the same proof as Proposition \ref{prop:minipi}, by using that $\TopC$ is $\cV$-small, with $\cV$ the Grothendieck universe of large sets, we construct a colimit preserving functor $$\mathbb{L}:\widehat{\Hshi}\left(\TopC\right) \to \widehat{\cS},$$ where $\widehat{\cS}$ is the $\i$-category of large spaces (or large $\i$-groupoids), which sends every representable sheaf $Y\left(X\right)$ to its underlying weak homotopy type. The rest of the proof is analogous to that of Proposition \ref{prop:alspacan}.
\end{proof}

\begin{definition}
We denote the composite from Proposition \ref{prop:bigpi} by $$\Pi_\i:\Hshi\left(\TopC\right) \to \cS.$$ For $F$ a hypersheaf on $\TopC,$ we call $\Pi_\i\left(F\right)$ its \textbf{fundamental $\i$-groupoid}. 
\end{definition}

\begin{remark}
In light of Remark \ref{rmk:delta}, we have that $\Pi_\i \dashv \Delta \dashv \Gamma,$ where $\Gamma$ is global sections, and $\Delta$ assigns an $\i$-groupoid $\cG$ the hypersheafification of the constant presheaf with value $\cG$. In particular, we have a formula for $\Delta\left(\cG\right),$ namely, if $X$ is a space in $\TopC,$ $$\Delta\left(\cG\right)\left(X\right)\simeq \Hom\left(\Pi_\i\left(X\right),\cG\right),$$ that is, the space of maps from the homotopy type of $X$ to $\cG.$
\end{remark}

The following proposition may be seen as an extension of the results of \cite{No2}:

\begin{proposition}\label{prop:fat}
Let $\cG$ be a groupoid object in $\TopC$ and denote by $\left[\cG\right]$ denote the associated stack of groupoids on $\TopC$, i.e. the stack of principal $\cG$-bundles. Then $\Pi_\i\left(\left[\cG\right]\right)$ has the same weak homotopy type as $$B\cG=||N\left(\cG\right)||$$- the fat geometric realization of the simplicial space arising as the topologically enriched nerve of $\cG.$
\end{proposition}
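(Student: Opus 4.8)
The plan is to express both $\Pi_\i\left(\left[\cG\right]\right)$ and $B\cG$ as colimits of the same simplicial diagram in $\cS$, and to deduce the equivalence from the fact that $\Pi_\i$ preserves colimits (Proposition \ref{prop:bigpi}). First I would recall, exactly as in the proof of Proposition \ref{prop:gpdobjs}, that the stack $\left[\cG\right]$ is the colimit in $\Hshi\left(\TopC\right)$ of the simplicial diagram
$$
\Delta^{op} \stackrel{N\left(\cG\right)}{\longlongrightarrow} \Sh\left(\TopC\right) \hookrightarrow \Shi\left(\TopC\right) \simeq \Hshi\left(\TopC\right),
$$
where $N\left(\cG\right)$ is the (object-wise) nerve of the topological groupoid $\cG$, so that $N\left(\cG\right)_n \cong \cG_1 \times_{\cG_0} \cdots \times_{\cG_0} \cG_1$ ($n$ factors), represented by a space in $\TopC$. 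Here one uses that $\Sh \hookrightarrow \Shi$ followed by hypercompletion is a left adjoint, hence preserves colimits, and that the argument from Proposition \ref{prop:gpdobjs} identifying the colimit of the nerve of a discrete groupoid with the groupoid itself applies verbatim level-wise.

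Next I would apply $\Pi_\i$. Since $\Pi_\i$ is colimit preserving and sends a representable $Y\left(T\right)$ to the weak homotopy type of $T$, we obtain
$$
\Pi_\i\left(\left[\cG\right]\right) \simeq \underset{\Delta^{op}}{\colim}\ \Pi_\i\left(Y\left(N\left(\cG\right)_\bullet\right)\right) \simeq \underset{\Delta^{op}}{\colim}\ h\left(N\left(\cG\right)_\bullet\right),
$$
the colimit in $\cS$ of the simplicial space obtained by taking the weak homotopy type level-wise of the topologically enriched nerve of $\cG$. The remaining point is to identify this colimit with the fat geometric realization $\|N\left(\cG\right)\|$. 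This is a standard fact: the $\i$-categorical colimit over $\Delta^{op}$ of a simplicial object in $\cS$ presented by a (levelwise cofibrant, not necessarily Reedy cofibrant) simplicial space is computed by the fat (thick) geometric realization $\|-\|$, which only uses the face maps and is invariant under level-wise weak equivalence without cofibrancy hypotheses — see e.g. \cite[Proposition A.1]{tot}-type results, or the comparison between $\|-\|$ and the homotopy colimit over $\Delta^{op}$. Thus $\Pi_\i\left(\left[\cG\right]\right) \simeq \|N\left(\cG\right)\| = B\cG$.

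The main obstacle is the last step: making precise that the $\infty$-categorical colimit of the levelwise homotopy types agrees with the \emph{fat} realization $\|N\left(\cG\right)\|$ rather than with the ordinary geometric realization. Since $\cG$ is only assumed to be a groupoid object in $\TopC$ with no cofibrancy or properness hypotheses on the degeneracies, one cannot invoke the usual "geometric realization computes the homotopy colimit for Reedy/proper simplicial spaces" statement directly; instead one appeals to the fact that the fat realization $\|-\|$ is the homotopy colimit of the restriction to the semisimplicial ($\Delta_{inj}$) diagram, that $\Delta_{inj}^{op} \to \Delta^{op}$ is homotopy cofinal, and that $\|-\|$ is homotopy-invariant for level-wise equivalences of semisimplicial spaces (each $N\left(\cG\right)_n$ being a CW-type space, or at least $h$-cofibrant after replacement, which is harmless after applying $h$). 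Once this is granted, the equivalence follows formally.
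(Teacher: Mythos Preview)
Your approach is essentially the same as the paper's: express $[\cG]$ as the colimit in $\Hshi(\TopC)$ of the simplicial diagram $N(\cG)$ (as in Proposition~\ref{prop:gpdobjs}), apply the colimit-preserving functor $\Pi_\i$ (Proposition~\ref{prop:bigpi}) to get the colimit in $\cS$ of the levelwise homotopy types, and then identify that colimit with the fat realization $\|N(\cG)\|$. The paper dispatches this last step by citing \cite[Lemma~3.3]{hafdave}, whereas you spell out the standard argument via cofinality of $\Delta_{inj}^{op}\to\Delta^{op}$ and homotopy invariance of $\|-\|$; the content is the same.

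One small correction: you write $\Shi(\TopC)\simeq\Hshi(\TopC)$, which is not asserted anywhere in the paper and need not hold in general. What you actually need, and what the paper uses, is that the colimit of $Y\circ N(\cG)$ computed in $\Hshi(\TopC)$ is still $[\cG]$; this is fine because $[\cG]$ is $1$-truncated, hence hypercomplete, so hypersheafification does nothing to it. Just route the diagram directly through $\TopC\stackrel{Y}{\hookrightarrow}\Hshi(\TopC)$ as the paper does and this issue disappears.
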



\begin{proof}
We know that $\left[\cG\right]$ is the colimit in $\Hshi\left(\TopC\right)$ of the diagram $$\Delta^{op} \stackrel{N\left(\cG\right)}{\longlongrightarrow} \TopC \stackrel{Y}{\hookrightarrow} \Hshi\left(\TopC\right)$$ (as in the proof of Proposition \ref{prop:gpdobjs}). The result now follows from Proposition \ref{prop:bigpi} and \cite[Lemma 3.3]{hafdave}
\end{proof}

\begin{lemma}
Let $F$ be a hypersheaf on $\TopC^s.$ Then $\overline{L}\left(F\right)$ is the colimit of $F,$ i.e. the colimit of the diagram $$F:\left(\TopC^s\right)^{op} \to \cS.$$
\end{lemma}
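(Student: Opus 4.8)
The statement to prove is that for a hypersheaf $F$ on $\TopC^s$, the value $\overline{L}(F)$ coincides with the colimit of $F$ regarded as a functor $(\TopC^s)^{op} \to \cS$. The plan is to exploit the fact, established in the proof of Proposition \ref{prop:minipi}, that $\overline{L}$ is defined as a left adjoint arising by left Kan extension of the weak-homotopy-type functor $\pi\colon \TopC^s \to \cS$ along the Yoneda embedding $Y\colon \TopC^s \hookrightarrow \Pshi(\TopC^s)$, restricted to the full subcategory of hypersheaves $\Hshi(\TopC^s) \hookrightarrow \Pshi(\TopC^s)$. The essential input is that $\pi$ sends each representable $Y(T)$ (equivalently, each object $T$ of $\TopC^s$) to a \emph{contractible} space, since every object of $\TopC^s$ is contractible by definition.

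First I would recall that any presheaf $G$ on $\TopC^s$ is canonically the colimit of representables indexed by its category of elements, $G \simeq \colim_{Y(T) \to G} Y(T)$, and that $\Lan_Y \pi$ preserves this colimit, so $(\Lan_Y\pi)(G) \simeq \colim_{Y(T)\to G} \pi(T) \simeq \colim_{Y(T)\to G} \ast$, which is exactly the colimit of $G\colon (\TopC^s)^{op}\to\cS$ (the constant-at-$\ast$ reindexing of the category of elements computes precisely the colimit of the presheaf-valued-in-spaces). Hence for \emph{every} presheaf $G$, $(\Lan_Y\pi)(G)$ is the colimit of $G$. The point is then that for $F$ a hypersheaf, $\overline{L}(F)$ is obtained by first composing with the hypersheafification $a\colon \Pshi(\TopC^s)\to\Hshi(\TopC^s)$ — but $\overline{L}$ restricted to hypersheaves is simply $\Lan_Y\pi$ restricted to the full subcategory $\Hshi(\TopC^s)$, since $\Lan_Y\pi \simeq \overline{L}\circ a$ and $a$ is a localization which is the identity on hypersheaves up to equivalence. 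Therefore $\overline{L}(F) \simeq (\Lan_Y\pi)(F) \simeq \colim F$, the colimit taken over $(\TopC^s)^{op}$.

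The step requiring the most care is verifying that the colimit of a presheaf $G$ over its category of elements, with each representable replaced by $\pi(T)\simeq\ast$, really does compute $\colim\left(G\colon (\TopC^s)^{op}\to\cS\right)$ rather than something larger. This is a general fact: for any presheaf $G$ on a small $\i$-category $\sC$, the canonical map $\colim_{(\int_{\sC} G)} \ast \to \colim\left(G\colon \sC^{op}\to\cS\right)$ is an equivalence, because the category of elements $\int_{\sC}G$ is (the total space of) a left fibration over $\sC^{op}$ classified by $G$, and the colimit of the constant diagram at $\ast$ over a left fibration's source recovers the colimit of the classifying functor — this is \cite[Corollary 3.3.4.6]{htt} or the discussion of colimits over left fibrations in op. cit. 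I would cite this directly. The only other subtlety is to note that since $\TopC^s$ consists of contractible spaces, $\pi(T)\simeq\ast$ genuinely holds, so the reindexing by $\pi$ agrees with the reindexing by the constant functor, and no homotopical correction is needed. Assembling these observations gives the result.
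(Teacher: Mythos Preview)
Your proposal is correct, but the paper's argument is more direct. Both begin from the same observation: $\overline{L}$ is the restriction of $\Lan_Y \pi$ to hypersheaves, and every object of $\TopC^s$ is contractible, so $\pi$ is equivalent to the terminal functor $t\colon \TopC^s \to \cS$. From there the paper simply invokes uniqueness of left adjoints: $\Lan_Y t$ is left adjoint to precomposition with $t$, which is the constant-presheaf functor $\cS \to \Pshi(\TopC^s)$; but the colimit functor $\Pshi(\TopC^s)=\Fun((\TopC^s)^{op},\cS) \to \cS$ is also left adjoint to the constant-diagram functor, so $\Lan_Y \pi \simeq \colim$ on the nose. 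Your route instead computes $\Lan_Y\pi$ pointwise via the category of elements, reduces to a colimit of points over $\int G$, and then appeals to the straightening/unstraightening fact that the geometric realization of the left fibration classified by $G$ recovers $\colim G$. This works, but it trades one clean adjunction statement for a chain of identifications and an extra citation from \cite{htt}; the paper's version avoids the category-of-elements machinery entirely.
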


\begin{proof}
By the proof of Proposition \ref{prop:minipi}, $\overline{L}$ factors as the composition $$\Hshi\left(\TopC^s\right) \hookrightarrow \Pshi\left(\TopC^s\right) \stackrel{\Lan_Y \pi}{\longlonglongrightarrow} \cS=\iGpd.$$ Note however that every space in $\TopC^s$ is contractible, so the canonical morphism $\pi \to t,$ to the terminal functor $$t:\TopC^s \to \iGpd$$ (i.e. the functor with constant value the one point set), is an equivalence, and hence $\Lan_Y \pi$ is left adjoint to the constant functor $t^*$ which sends an $\i$-groupoid $\cG$ to the constant presheaf with value $\cG$. Since $\Pshi\left(\TopC^s\right)=\Fun\left(\left(\TopC^s\right)^{op},\iGpd\right),$ the result now follows from the universal property of $\colim\left(\blank\right).$
\end{proof}

\begin{corollary}\label{cor:colimhomotopy}
Let $F$ be a hypersheaf on $\TopC$. Then $\Pi_\i\left(F\right)$ is the colimit of $F|_{\TopC^s}.$
\end{corollary}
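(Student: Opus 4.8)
The plan is to combine the two immediately preceding results: the equivalence $\Hshi\left(\TopC\right) \stackrel{\sim}{\longrightarrow} \Hshi\left(\TopC^s\right)$ (established via the Comparison Lemma earlier in the section) together with the definition $\Pi_\i = \overline{L} \circ \left(\text{this equivalence}\right)$ from Proposition \ref{prop:bigpi}, and then invoke the Lemma just proved, which identifies $\overline{L}\left(F\right)$ with the colimit of the diagram $F:\left(\TopC^s\right)^{op} \to \cS$. So the only content to check is that the equivalence $\Hshi\left(\TopC\right) \stackrel{\sim}{\longrightarrow} \Hshi\left(\TopC^s\right)$ carries a hypersheaf $F$ to (a hypersheaf equivalent to) its restriction $F|_{\TopC^s}$ along the inclusion $\TopC^s \hookrightarrow \TopC$.

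First I would recall that the equivalence in question is precisely restriction along $\TopC^s \hookrightarrow \TopC$: this is how it was constructed, via the Comparison Lemma of \cite{SGA4} III, upgraded to $\i$-topoi using \cite[Theorem 5]{Jardine} and \cite[Proposition 6.5.2.14]{htt}. Hence for a hypersheaf $F$ on $\TopC$, the image in $\Hshi\left(\TopC^s\right)$ is literally $F|_{\TopC^s}$. Applying the preceding Lemma to $F|_{\TopC^s}$ then gives that $\overline{L}\left(F|_{\TopC^s}\right)$ is the colimit of the diagram $\left(\TopC^s\right)^{op} \to \cS$ underlying $F|_{\TopC^s}$. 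Since $\Pi_\i\left(F\right) = \overline{L}\left(F|_{\TopC^s}\right)$ by Proposition \ref{prop:bigpi} and the definition of $\Pi_\i$, this is exactly the assertion $\Pi_\i\left(F\right) \simeq \colim\left(F|_{\TopC^s}\right)$.

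The one point that requires a small remark is whether restriction along $\TopC^s \hookrightarrow \TopC$, as a functor between the hypercompleted $\i$-topoi, agrees with the naive restriction of the presheaf $F$ to the subcategory $\TopC^s$; this is because the Comparison Lemma identifies the two topoi via exactly this restriction functor, which preserves hypercovers (every open cover of an object of $\TopC^s$ can be refined by one in $\TopC^s$, as noted when the site structure on $\TopC^s$ was defined) and hence descends to the hypercompletions. I expect this bookkeeping to be the only possible obstacle, and it is minor; everything else is a direct concatenation of the two prior statements.
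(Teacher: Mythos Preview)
Your proposal is correct and is exactly the argument the paper intends: the corollary is stated without proof immediately after the Lemma computing $\overline{L}(F)$ as $\colim F$, so it is meant to follow by combining that Lemma with the definition of $\Pi_\i$ as $\overline{L}$ composed with the restriction equivalence $\Hshi(\TopC)\stackrel{\sim}{\to}\Hshi(\TopC^s)$. Your remark that this equivalence is literally restriction (by the Comparison Lemma) is the only bookkeeping required, and you have identified it correctly.
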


\subsection{The profinite homotopy type of a (pro-)stack}\label{sec:profhomtypestacks}

Let us define the profinite version of the homotopy type of a stack.

\begin{definition}
We denote the composite $$\Hshi\left(\TopC\right) \stackrel{\Pi_\i}{\longlongrightarrow} \cS \stackrel{\widehat{\left(\blank\right)}}{\longlongrightarrow} \Profs$$ by $\widehat{\Pi}_\i.$ For $F$ a hypersheaf on $\TopC,$ we call $\widehat{\Pi}_\i\left(F\right)$ its \textbf{profinite fundamental $\i$-groupoid} or simply its \textbf{profinite homotopy type}. 
\end{definition}

Let us extend the constructions of this section to pro-objects. Note that the functor $$\left(\blank\right)_{top}:\Shi\left(\Aff,\mbox{\'et}\right) \to \Hshi\left(\TopC\right)$$ extends to a functor on pro-objects, which by abuse of notation, we will denote by the same symbol
$$\left(\blank\right)_{top}:\Pro\left(\Shi\left(\Aff,\mbox{\'et}\right)\right) \to \Pro\left(\Hshi\left(\TopC\right)\right).$$
We now describe how to define the profinite homotopy type of a pro-object in $\Hshi\left(\TopC\right)$. First, we may extend the profinite fundamental $\i$-groupoid functor on $\Hshi\left(\TopC\right)$ to pro-objects. This can be achieved easily by the universal property of $\Pro\left(\Hshi\left(\TopC\right)\right).$ Indeed, consider the functor $$\widehat{\Pi}_\i:\Hshi\left(\TopC\right) \to \Profs$$ and denote its unique cofiltered limit preserving extension, by abuse of notation, again by $$\widehat{\Pi}_\i:\Pro\left(\Hshi\left(\TopC\right)\right) \to \Profs.$$ Unwinding the definitions, we see that if $\underset{i \in \cI} \lim \cY^i$ is a pro-object in hypersheaves on $\TopC,$ then its profinite homotopy type is $$\widehat{\Pi}_\i\left(\underset{i \in \cI} \lim \cY^i\right)=\underset{i \in \cI} \lim \widehat{\Pi}_\i\left(\cY^i\right).$$ 



\subsection{The homotopy type of Kato-Nakayama spaces}\label{subsec.KN}

In this subsection, we will give a formula expressing the homotopy type of the Kato-Nakayama space of a log scheme in terms of algebro-geometric data. We first start by reviewing a functorial approach to Kato-Nakayama spaces which is due to Kato, Illusie and Nakayama. Let $ \left( X, M, \alpha \right)$ be a log scheme, and  let $X_{an}$ be the analytification of $ X$, which is an object of $\TopC$.



Consider the slice category $\TopC/ X_{an}.$ If $\left(T \stackrel{p}{\rightarrow}  X_\an\right)$ is an object in $\TopC/ X_{an}$, one can pullback $M$ to 
$T$ and take the section-wise group completion. In this way we obtain a sheaf of abelian groups on $T$ that we denote $p^*M^{\gp}$. Note that $p^*M^{\gp}$ contains  $p^*\cO_X^\times$ as a sub-sheaf of abelian groups.

Let $G$ be any abelian topological group. If $T$ is a topological space, we denote $\underline{G}_T$ the sheaf on $T$ of continuous maps to $G$ equipped with the group structure coming from addition in $G$. Note that we have 
$f^*\left(\underline{G}_S\right)=\underline{G}_T.$


\begin{definition}


We denote by $F_{log}$ the presheaf of sets on $\TopC/ X_{an}$ that is defined on objects by the following assignment: 
$$ 
\left(T \stackrel{p}{\rightarrow} X_{an}\right) \mapsto \left\{ \text{morphisms of sheaves } s: p^*M^{\gp} \rightarrow \underline{S}^1_T \text{ such that } s(f)=\frac{f}{|f|} \text{ for } f\in \cO_X^\times  \right\}.
$$
\end{definition}

\begin{theorem}[{{\cite[Section 1.2]{illusie-kato-nakayama}}}]
The presheaf $F_{log}$ is represented by $X_{log}$.
\end{theorem}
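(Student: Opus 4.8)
The plan is to unwind the definition of the Kato--Nakayama space $X_{\log}$ and check directly that its functor of points on $\TopC/X_{an}$ agrees with $F_{log}$. Recall that, set-theoretically, a point of $X_{\log}$ lying over $x \in X_{an}$ consists of a point $x$ together with a group homomorphism $M^{\gp}_x \to S^1$ which on the subgroup $\cO_{X,x}^\times$ is given by $f \mapsto f/|f|$. The topology on $X_{\log}$ is the coarsest making the projection to $X_{an}$ continuous and making all the ``evaluation'' maps continuous (for local sections of $M^{\gp}$, the associated $S^1$-valued function on $X_{\log}$ is continuous). Thus I would first recall, following \cite{illusie-kato-nakayama}, this presentation of $X_{\log}$ as a topological space together with its canonical map $\tau\colon X_{\log}\to X_{an}$.

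Next I would produce the natural transformation $\Hom_{\TopC/X_{an}}(-, X_{\log}) \to F_{log}$. Given a map $q\colon T \to X_{\log}$ over $X_{an}$ with structural map $p = \tau\circ q\colon T\to X_{an}$, I need a morphism of sheaves $p^*M^{\gp}\to \underline{S^1}_T$. This is the adjoint of the ``universal'' datum carried by $X_{\log}$: over $X_{\log}$ itself there is a tautological morphism of sheaves $\tau^*M^{\gp}\to \underline{S^1}_{X_{\log}}$ sending a section $m$ to the function whose value at a point $(x,\phi)$ is $\phi(m_x)$; one checks this is well-defined and continuous from the very definition of the topology on $X_{\log}$, and that it restricts to $f\mapsto f/|f|$ on $\tau^*\cO_X^\times$. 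Pulling this back along $q$ gives the required section of $F_{log}(T\xrightarrow{p}X_{an})$, and naturality in $T$ is immediate.

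Then I would check this transformation is an isomorphism by constructing the inverse. Given $s\colon p^*M^{\gp}\to \underline{S^1}_T$ satisfying the $\cO_X^\times$-compatibility, I define a map $q\colon T\to X_{\log}$ by sending $t\in T$ to the pair consisting of $p(t)\in X_{an}$ and the homomorphism $M^{\gp}_{p(t)}\to S^1$ obtained from $s$ by taking stalks at $t$ and using $(p^*M^{\gp})_t = M^{\gp}_{p(t)}$; the $\cO_X^\times$-condition on $s$ guarantees this lands in $X_{\log}$. Continuity of $q$ is exactly the statement that $p$ is continuous (handled) and that for each local section $m$ of $M^{\gp}$ the composite of $q$ with the corresponding evaluation function on $X_{\log}$ is continuous --- but that composite is, by construction, the continuous function $s(m)$, so we are done. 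It is then routine that the two constructions are mutually inverse and compatible with restriction along morphisms in $\TopC/X_{an}$, so $F_{log}$ is representable by $X_{\log}$.

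The main subtlety, and the one point I would be careful about, is matching the topology on $X_{\log}$ with the sheaf-theoretic condition: one must verify that a section of $F_{log}$ over $T$, which is by definition a morphism of sheaves and hence only locally given by compatible $S^1$-valued functions, glues to an honest continuous map $T\to X_{\log}$. This is where the precise definition of the topology on $X_{\log}$ (generated by the projection and the evaluation maps for local sections of $M^{\gp}$) is used in an essential way, and where one should invoke the local description of $X_{\log}$ --- locally on $X$, after choosing a chart $P\to M$, the space $X_{\log}$ is a closed subspace of $X_{an}\times (S^1)^P$ cut out by the $\cO_X^\times$-relations --- to reduce the gluing to an elementary local check. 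Everything else is bookkeeping with stalks and the definition of $F_{log}$.
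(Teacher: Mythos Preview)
The paper does not actually prove this theorem; it simply states it with a citation to \cite[Section 1.2]{illusie-kato-nakayama}. Your proposal is the standard direct verification and is correct: the key points are precisely the tautological morphism $\tau^*M^{\gp}\to\underline{S^1}_{X_{\log}}$ giving the forward direction, the stalk identification $(p^*M^{\gp})_t \cong M^{\gp}_{p(t)}$ giving the set-theoretic inverse, and the observation that the topology on $X_{\log}$ is generated exactly by the maps needed to verify continuity of that inverse. The local chart description you mention at the end is indeed how one makes the continuity check concrete, and is how it is done in the cited reference.
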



Since $X_{log}$ is an object of $\TopC,$ the functor $F_{\log}$ completely determines $X_{log}.$ Moreover, we can use this functorial description to give an expression for the homotopy type of $X_{log},$ as we will now show.

\begin{definition}
Denote by $\sC_{KN}\left(X\right)$ the following category:
the objects consist of triples $\left(T,p,s\right)$ where

\begin{itemize}
\item $T$ is a topological space in $\TopC^s,$
\item $p$ is a continuous map $p:T \to X_{\an},$
\item and $s$ is a morphism of sheaves of abelian groups
$$s: p^*M^{\gp} \rightarrow \underline{S}^1_T$$ such that $s(f)=\frac{f}{|f|}$ for  $f\in \cO_X^\times$.
\end{itemize}
The morphisms $\left(T,p,s\right) \to \left(S,q,r\right)$ are continuous maps $f:T \to S$ such that $f^*\left(r\right)=s.$
\end{definition}

\begin{theorem}\label{thm:KN homotopy}
Let $X$ be a log scheme. The weak homotopy type of the Kato-Nakayama space is that of $B\sC_{KN}\left(X\right).$
\end{theorem}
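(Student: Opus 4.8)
The plan is to realize $X_{\log}$ as a homotopy colimit over the category $\sC_{KN}(X)$ of representable sheaves, and then apply the colimit-preserving functor $\Pi_\i$ together with Proposition~\ref{prop:fat} (or rather its underlying principle that $\Pi_\i$ of a colimit of representables is the corresponding homotopy colimit of weak homotopy types). Concretely, I would first observe that the theorem of Illusie--Kato--Nakayama identifies $X_{\log}$ with the hypersheaf $F_{\log}$ on $\TopC/X_{an}$, and that via the forgetful functor $\TopC/X_{an} \to \TopC$ we may view $Y(X_{\log})$ as an object of $\Hshi(\TopC)$. The key point is that $\sC_{KN}(X)$ is (equivalent to) the category of objects $(T,p,s)$ with $T$ in $\TopC^s$ mapping to $X_{\log}$, i.e.\ it is the full subcategory of the slice $\TopC^s/X_{\log}$ — since a map $T \to X_{\log}$ over $X_{an}$ is exactly a morphism of sheaves $s\colon p^*M^{\gp}\to\underline{S}^1_T$ with the prescribed behaviour on $\cO_X^\times$, by the representability theorem.

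The main technical input is that $X_{\log}$, restricted to the site $\TopC^s$, is the colimit of the diagram that sends $(T,p,s)$ to $Y(T)$; equivalently, $X_{\log}$ is covered by the contractible spaces $T\in\TopC^s$ mapping to it, and the Čech/hypercover machinery makes $X_{\log}$ the colimit of these. This is where I expect the real work to be: one must check that $X_{\log}$ is in $\TopC$ (so that it has an open cover by objects of $\TopC^s$ — this follows since $X_{\log}$ is locally contractible and embeds locally in Euclidean space, using the structure of the Kato--Nakayama space as recalled in the appendix), and then that the canonical map $\colim_{(T,p,s)\in\sC_{KN}(X)} Y(T) \to Y(X_{\log})$ is an equivalence in $\Hshi(\TopC^s)$. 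The latter is a general fact: for any space $Z$ in $\TopC$, the representable $Y(Z)$ is the colimit over the comma category $\TopC^s/Z$ of the representables $Y(T)$, because the coverage of $\TopC^s$ refines any open cover of $Z$ and $Y(Z)$ satisfies hyperdescent; alternatively this is the statement that the Yoneda embedding followed by the inclusion into hypersheaves is dense with respect to $\TopC^s$.

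Once this identification is in place, I would apply $\Pi_\i$ (which preserves colimits by Proposition~\ref{prop:bigpi}, and sends each $Y(T)$ to the weak homotopy type of $T$). Since every $T$ in $\TopC^s$ is contractible, each $\Pi_\i(Y(T)) \simeq *$, so $\Pi_\i(X_{\log})$ is the colimit in $\cS$ of the constant diagram $\sC_{KN}(X)^{op} \to \cS$ with value $*$ — but the colimit of the terminal diagram indexed by a category $\sC$ is precisely the classifying space $B\sC$ (equivalently, the weak homotopy type of the nerve $N\sC$). Hence $\Pi_\i(X_{\log}) \simeq B\sC_{KN}(X)$, which is the assertion. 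A small point to be careful about is the variance/opposite-category bookkeeping (hypersheaves are presheaves on $(\TopC^s)^{op}$, so the diagram indexing the colimit is over $\sC_{KN}(X)^{op}$, but $B\sC \simeq B\sC^{op}$ so this does not affect the statement), and the fact that $\sC_{KN}(X)$ as defined is a $1$-category while the comma category lives in $\infty$-land — but since the mapping objects of $\TopC^s$ viewed in $\Hshi(\TopC^s)$ over $X_{\log}$ are discrete (maps of topological spaces), the comma $\infty$-category is equivalent to the ordinary category $\sC_{KN}(X)$.
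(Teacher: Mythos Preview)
Your proposal is correct and follows essentially the same route as the paper. The paper's proof identifies $\sC_{KN}(X)$ with the Grothendieck construction of $Y(X_{\log})|_{\TopC^s}$ (via the slice equivalence $\Sh(\TopCs/X_{an})\simeq \Sh(\TopCs)/Y(X_{an})$), then invokes Corollary~\ref{cor:colimhomotopy} together with the general fact (cited from \cite{hafdave}) that the colimit of a set-valued presheaf in $\cS$ is the classifying space of its category of elements; you unpack the same content by writing $Y(X_{\log})$ as the co-Yoneda colimit of the $Y(T)$ over $\sC_{KN}(X)$ and then using contractibility of each $T$ to reduce $\Pi_\i$ of this colimit to $\colim_{\sC_{KN}(X)} * \simeq B\sC_{KN}(X)$. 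Your side remark that $X_{\log}$ lies in $\TopC$ is handled in the paper by Proposition~\ref{prop:hausdorff}, and your variance bookkeeping is accurate.
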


\begin{proof}
The reader may notice that $\sC_{KN}\left(X\right)$ is nothing but the Grothendieck construction

$$\int_{\TopC^s} \left(F_{\log}|_{\TopCs/X_{an}}\right).$$ 
Notice also that $$\TopC^s/X_{an} \to \TopC^s$$ is the Grothendieck construction of $Y\left(X_{an}\right)|_{\TopC^s}$ (where $Y$ denotes the Yoneda embedding) i.e. the corresponding fibered category. Now, there is a canonical equivalence of categories $$\Sh\left(\TopCs/X_{an}\right)\simeq \Sh\left(\TopCs\right)/ Y\left(X_{an}\right)|_{\TopC^s},$$ and it follows that $\int_{\TopC^s} \left(F_{\log}|_{\TopCs/X_{an}}\right)$ is equivalent to the Grothendieck construction of $Y\left(X_{log}\right).$ By Proposition \ref{prop:bigpi}, we have that $\Pi_\i\left(Y\left(X_{log}\right)\right)$ is the weak homotopy type of $X_{log}.$ The result now follows from Corollary \ref{cor:colimhomotopy} and \cite[Corollary 3.2]{hafdave}.
\end{proof}

\begin{corollary}
Let $X$ be a log scheme. The profinite homotopy type of its Kato-Nakayama space $X_{log}$ is that of the profinite completion of $B\sC_{KN}\left(X\right).$
\end{corollary}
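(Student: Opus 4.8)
The plan is to combine the two results that have just been proved: Theorem~\ref{thm:KN homotopy}, which identifies the weak homotopy type of $X_{log}$ with $B\sC_{KN}\left(X\right)$, and the fact (recorded in the example following Remark~\ref{rmk:fun}) that the profinite completion functor $\widehat{\left(\blank\right)}:\cS \to \Profs$ is well-defined and functorial. Since profinite completion is a functor on $\cS$, it respects equivalences, so a weak homotopy equivalence $X_{log} \simeq B\sC_{KN}\left(X\right)$ in $\cS$ passes to an equivalence $\widehat{X_{log}} \simeq \widehat{B\sC_{KN}\left(X\right)}$ in $\Profs$.

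Concretely, I would argue as follows. By definition, the profinite homotopy type of $X_{log}$ is $\widehat{\Pi}_\i\left(Y\left(X_{log}\right)\right) = \widehat{\left(\Pi_\i\left(Y\left(X_{log}\right)\right)\right)}$, the composite of $\Pi_\i$ with the profinite completion functor. By Proposition~\ref{prop:bigpi}, $\Pi_\i\left(Y\left(X_{log}\right)\right)$ is the weak homotopy type of the topological space $X_{log}$. By Theorem~\ref{thm:KN homotopy}, this weak homotopy type is that of the classifying space $B\sC_{KN}\left(X\right)$, i.e.\ there is an equivalence $\Pi_\i\left(Y\left(X_{log}\right)\right) \simeq B\sC_{KN}\left(X\right)$ in $\cS$. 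Applying the profinite completion functor $\widehat{\left(\blank\right)}$ to this equivalence yields $\widehat{\Pi}_\i\left(Y\left(X_{log}\right)\right) \simeq \widehat{B\sC_{KN}\left(X\right)}$, which is the assertion.

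There is essentially no obstacle here: the statement is a formal consequence of the preceding theorem together with functoriality of profinite completion, and the proof is a one-line application of $\widehat{\left(\blank\right)}$ to the equivalence already established. If anything, the only point worth spelling out is that one should interpret ``the profinite homotopy type of $X_{log}$'' as $\widehat{\Pi}_\i$ applied to (the representable hypersheaf associated to) the space $X_{log}$, and ``the profinite completion of $B\sC_{KN}\left(X\right)$'' as $\widehat{\left(\blank\right)}$ applied to the space $B\sC_{KN}\left(X\right)$ regarded as an object of $\cS$; with these readings the corollary is immediate. Thus the proof reads simply: apply $\widehat{\left(\blank\right)}$ to the weak homotopy equivalence of Theorem~\ref{thm:KN homotopy}.
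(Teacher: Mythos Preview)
Your proposal is correct and matches the paper's approach exactly: the paper states the corollary with no proof at all, treating it as an immediate consequence of Theorem~\ref{thm:KN homotopy} together with the functoriality of profinite completion, which is precisely what you have spelled out.
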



\section{Construction of the map}\label{section: KN to IRS}

In all that follows $X$ will be a fine and saturated log scheme over $\bC$ that is locally of finite type. See Appendix \ref{preliminaries} for a condensed introduction to the main concepts and notations that we will use in this section. Our goal is to prove the following proposition:

\begin{proposition}\label{kntoirs}
There is a canonical morphism of pro-topological stacks $$\Phi_X\colon X_\log\to (\sqrt[\infty]{X})_{\topst}.$$
\end{proposition}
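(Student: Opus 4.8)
The plan is to construct $\Phi_X$ first \'etale-locally on $X$, where the log structure admits a global chart, and then glue the local pieces by descent, using that $(\blank)_{\topst}$ and $\Pi_\infty$ are functorial and that $X_{\log}$ is a hypersheaf on $\TopC$. Recall from \cite{TV} that for each $n$ the root stack $\sqrt[n]{X}$ has, locally where there is a chart $P \to M$, a presentation as a quotient stack $[\, \widetilde{X}_n / \mu_n(P)\,]$, where $\widetilde{X}_n$ is the fiber product of $X$ with a finite flat cover built from the $n$-th power map $P \to P$ on the chart, and $\mu_n(P) = \Hom(P^{\gp}/nP^{\gp}, \Gm)$ is a finite diagonalizable group scheme. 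Applying $(\blank)_{\topst}$ (which by Proposition \ref{prop:gpdobjs} and Corollary \ref{cor:algstacktotopstack} commutes with taking quotient stacks of groupoid objects) turns this into a presentation of $\sqrt[n]{X}_{\topst}$ as the topological quotient stack $[\,(\widetilde{X}_n)_{an} / \mu_n(P)_{an}\,]$, where $\mu_n(P)_{an} \cong \Hom(P^{\gp}/nP^{\gp}, S^1) \cong (\mu_n)^{\rank P^{\gp}}$ is a finite group. Passing to the limit over $n$ (or rather the pro-object $n \mapsto \sqrt[n]{X}_{\topst}$) gives $\sqrt[\infty]{X}_{\topst}$ presented locally as the pro-topological-stack quotient by $\Hom(P^{\gp}, \widehat{\bZ}(1))$, where $\widehat{\bZ}(1) = \varprojlim_n \mu_n(S^1)$.

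The key local step is this: by the standard description of maps into a quotient stack, giving a map $X_{\log}|_U \to [\,\widetilde{X}_n / \mu_n(P)\,]_{\topst}$ over $X_{an}$ amounts to producing a $\mu_n(P)_{an}$-torsor on $X_{\log}|_U$ together with a $\mu_n(P)_{an}$-equivariant map to $(\widetilde{X}_n)_{an}$. Here the Kato-Nakayama space supplies exactly the needed data: by the functorial description $F_{\log}$, a point of $X_{\log}$ over $x \in X_{an}$ carries a homomorphism $s\colon M^{\gp}_x \to S^1$ extending $f \mapsto f/|f|$, and restricting $s$ along the chart $P^{\gp} \to M^{\gp}$ and choosing compatible $n$-th roots produces a torsor under $\Hom(P^{\gp}/nP^{\gp}, S^1)$; the tautological identity $s = (s_n)^n$ gives the equivariant map to the level-$n$ cover. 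These constructions are manifestly compatible as $n$ varies, yielding a map of pro-topological stacks $\Phi_{X,U}\colon X_{\log}|_U \to \sqrt[\infty]{X}_{\topst}|_U$ over $X_{an}$. One should also check independence of the chart up to canonical isomorphism, so that the $\Phi_{X,U}$ agree on overlaps.

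Finally, to globalize: cover $X$ by \'etale opens $U_i$ admitting charts; the local maps $\Phi_{X,U_i}$, being canonical and compatible on overlaps $U_i \times_X U_j$ (and on higher intersections, up to coherent homotopy, since $X_{\log}$ and $\sqrt[\infty]{X}_{\topst}$ are hyperstacks), glue to a global morphism of pro-topological stacks $\Phi_X\colon X_{\log} \to \sqrt[\infty]{X}_{\topst}$. Technically this is a descent argument in the $\infty$-category $\Pro(\Hshi(\TopC))$, assembling the morphism from its values on a hypercover of $X_{an}$ pulled back from an \'etale cover of $X$.

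The main obstacle I expect is the globalization: making precise that the locally-defined comparison maps are \emph{canonically} independent of the chosen chart and that these canonical identifications are coherent enough to glue in the $\infty$-categorical sense — i.e. producing a genuine morphism in $\Pro(\Hshi(\TopC))$ rather than just a compatible family of maps on a cover. This is where the careful $\infty$-categorical foundations set up earlier in the paper (hyperdescent for $\Hshi(\TopC)$, functoriality of $(\blank)_{\topst}$ and of the pro-construction) do the real work.
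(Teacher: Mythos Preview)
Your proposal is correct and follows essentially the same three-step strategy as the paper: local construction via the quotient-stack presentation and torsors, verification of compatibility, and $\infty$-categorical gluing. Two points where the paper is sharper than your sketch: first, the local torsor on $X_{\log}|_U$ is identified concretely as the map $(X_n)_{\log} \to X_{\log}$ induced by $X_n \to X$, with the equivariant map being simply the projection $(X_n)_{\log} \to (X_n)_{\an}$ --- this makes the verification (Proposition~\ref{torsor}) a direct computation on $\bC(P)_{\log} = \Hom(P, \bR_{\ge 0} \times S^1)$ rather than an ad hoc ``choose $n$-th roots'' argument. Second, for the globalization you flagged as the main obstacle, the paper introduces an indexing category $\mathfrak{I}$ of \'etale charted opens, shows the local maps assemble into a lax natural transformation $\alpha^n \colon (\blank)_{\log} \Rightarrow \sqrt[n]{\blank}_{\topst}$ of functors $\mathfrak{I} \to \Hshi(\TopC)/X_{\an}$, and then proves (Lemma~\ref{lem:globaltrick}) that $\colim_{\mathfrak{I}} U_{\an} \simeq X_{\an}$; universality of colimits in the $\infty$-topos then gives $X_{\log} \simeq \colim (\blank)_{\log}$ and $\sqrt[n]{X}_{\topst} \simeq \colim \sqrt[n]{\blank}_{\topst}$, so $\Phi_X$ is simply $\colim \alpha^n$. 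This replaces the coherence bookkeeping you anticipated with a single colimit computation.
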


Later (Section \ref{section: the equivalence}) we will show that this map induces a weak equivalence of profinite homotopy types. The proof of Proposition \ref{kntoirs} will take up the rest of this section.

Our strategy will be to construct the morphism $\Phi_X$ \'{e}tale locally on $X$, where the log structure has a Kato chart, and then to show that the locally defined morphisms glue together to give a global one.


\textbf{Step 1} (local case): first let us assume that $X\to\Spec \bC[P]$ is a Kato chart for $X$, where $P$ is a fine saturated sharp monoid. In this case everything is very explicit: as explained in Section \ref{subsection.root} in the appendix, there is an isomorphism $\sqrt[n]{X}\simeq [X_n/\mu_n(P)]$, where $X_n=X\times_{\Spec \bC[P]} \Spec \bC[\frac{1}{n}P]$, the group $\mu_n(P)$ is the Cartier dual of the cokernel of $P^\gp\to\frac{1}{n}P^\gp$, and the action on $X_n$ is induced by the natural one on $\Spec \bC[\frac{1}{n}P]$.


By following Noohi's construction (see \ref{noohi}) we see that $\sqrt[n]{X}_\topst$ is canonically isomorphic to the quotient $[(X_n)_\an/\mu_n(P)_\an]$, where $\mu_n(P)_\an\cong (\bZ/n)^r$. Note that the finite morphism $\Spec \bC[\frac{1}{n}P]\to\Spec \bC[P]$ is \'{e}tale on the open torus $\Spec \bC[P^\gp]\subseteq \Spec \bC[P]$, and ramified exactly on the complement. 

Now let us construct a morphism of topological stacks $X_\log\to \sqrt[n]{X}_\topst$. By the quotient stack description of the target, this is equivalent to giving a $\mu_n(P)_\an$-torsor (i.e. principal bundle) on $X_\log$, together with a $\mu_n(P)_\an$-equivariant map to $(X_n)_\an$.

Let us look at a couple of examples first.

\begin{example}
Let $X$ be the standard log point $\Spec \bC$ with log structure given by $\bN\oplus \bC^\times\to \bC$ sending $(n,a)$ to $0^n\cdot a$. Then $X_\log\cong S^1$, and $\sqrt[n]{X}_\topst \simeq \class (\bZ/n)$. In this case the morphism $S^1\to \class (\bZ/n)$ corresponds to the $(\bZ/n)$-torsor $S^1\to S^1$ defined by $z\mapsto z^n$.
\end{example}

\begin{example}\label{example:a1}
Let $X$ be $\bA^1$ with the divisorial log structure at the origin. Then $X_\log \cong \bR_{\geq 0}\times S^1$ and $\sqrt[n]{X}_\topst \simeq [\bC /(\bZ/n)]$, where the morphism  $[\bC /(\bZ/n)]\to (\bA^1)_\an=\bC$ is induced by $z\mapsto z^n$, and $(\bZ/n)$ acts by roots of unity.

In this case the map $\bR_{\geq 0}\times S^1\to [\bC /(\bZ/n)]$ corresponds to the $(\bZ/n)$-torsor $\bR_{\geq 0}\times S^1\to \bR_{\geq 0}\times S^1$ defined by $(a,b)\mapsto (a^n,b^n)$ and the equivariant map $\bR_{\geq 0}\times S^1\to \bC$ given by $(a,b)\mapsto a\cdot b$.

Note that the map $\bR_{\geq 0}\times S^1\to \bR_{\geq 0}\times S^1$ coincides with $z\mapsto z^n$ outside of the ``origin'' $\{0\}\times S^1$, and this is \'{e}tale even on the algebraic side. Over the ``origin'', it is precisely the presence of the $S^1$ introduced by the Kato-Nakayama construction that allows the map to be a $(\bZ/n)$-torsor. This is what happens in general (see also \cite[Lemma 2.2]{KN}).
\end{example}

\begin{proposition}\label{torsor}
Consider the map $\phi_\log \colon (X_n)_\log\to X_\log$ induced by the morphism of log schemes $\phi\colon X_n\to X$. The map $\phi_\log$ is a $\mu_n(P)_\an$-torsor, and the projection $(X_n)_\log \to (X_n)_\an$ is a $\mu_n(P)_\an$-equivariant map.
\end{proposition}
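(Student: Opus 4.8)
The plan is to verify the torsor property locally on $X_{an}$ using the explicit description of the Kato-Nakayama space in terms of the functor $F_{log}$, and then to identify the $\mu_n(P)_{an}$-action. Working over the Kato chart $X\to\Spec\bC[P]$, recall that $\mu_n(P)$ is the Cartier dual of $Q\defeq\mathrm{coker}(P^\gp\to\tfrac1nP^\gp)$, a finite abelian group of order $n^r$, and $X_n=X\times_{\Spec\bC[P]}\Spec\bC[\tfrac1nP]$. The map of log schemes $\phi\colon X_n\to X$ is strict, so the log structure of $X_n$ is pulled back from $X$; concretely the characteristic monoid of $X_n$ is $\tfrac1n\overline M$ and $\phi$ corresponds on characteristic monoids to the inclusion $\overline M\hookrightarrow\tfrac1n\overline M$. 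Passing to groups, $\phi^*M^\gp\hookrightarrow M^\gp_{X_n}$ has cokernel the constant sheaf $\underline Q$ (at least away from the boundary; in general one uses the stalkwise statement, since $\tfrac1nP^\gp/P^\gp\cong Q$ everywhere on the chart).

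First I would use the functorial description (Theorem of Illusie-Kato-Nakayama recalled above): for $(T\xrightarrow{p}X_{an})$ a test object, $(X_n)_{log}(T)$ is the set of pairs $(p', s')$ where $p'\colon T\to(X_n)_{an}$ lifts $p$ composed with $\phi_{an}$, and $s'\colon p'^*M^\gp_{X_n}\to\underline S^1_T$ is compatible with $\cO^\times$. The map $\phi_{log}$ sends $(p',s')$ to $(\phi_{an}\circ p',\ s'|_{p'^*\phi^*M^\gp})$. The fiber of $\phi_{log}$ over a fixed $(p,s)\in X_{log}(T)$ therefore consists of choices of a lift $p'$ of $p$ along $(X_n)_{an}\to X_{an}$, together with an extension of $s$ (pulled back) to a homomorphism on $M^\gp_{X_n}$. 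The key point is that the datum of $p'$ together with the extension $s'$ is \emph{exactly} the same as the datum of a splitting of the extension of sheaves of abelian groups
$$
1\to\underline{S}^1_T\to E\to p^*\phi^*M^\gp_{X_n}/p^*\cO^\times\to 1
$$
appropriately twisted — more precisely, one shows that the set of such $(p',s')$ is a torsor under $\Hom(\underline Q_T,\underline S^1_T)=\underline{\widehat Q}_T=\mu_n(P)_{an}(T)$, since two extensions of $s$ differ by a homomorphism $\underline Q_T\to\underline S^1_T$, and the choice of lift $p'$ is similarly controlled by the same group (the Cartier dual of $Q$ acts simply transitively on the roots). Functoriality in $T$ makes this a torsor in the category of sheaves on $\TopC$, i.e.\ $\phi_{log}$ is a principal $\mu_n(P)_{an}$-bundle; local triviality follows because \'etale-locally on $X$ one can split the relevant extension, and over such opens $(X_n)_{log}$ is literally a product.

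The equivariance of $(X_n)_{log}\to(X_n)_{an}$ is then straightforward: $\mu_n(P)_{an}$ acts on $(X_n)_{an}$ through the covering-space action coming from $\Spec\bC[\tfrac1nP]\to\Spec\bC[P]$, it acts on $(X_n)_{log}$ by modifying the pair $(p',s')$ through the covering action on $p'$ together with the corresponding twist of $s'$, and the projection $(p',s')\mapsto p'$ intertwines these by construction. The main obstacle I anticipate is the careful bookkeeping needed to see that the two \emph{a priori} different pieces of freedom — the choice of lift $p'$ and the choice of extension $s'$ — fit together into a single simply transitive $\mu_n(P)_{an}$-action rather than an action of a larger group; this is where one must use that $\Spec\bC[\tfrac1nP]\to\Spec\bC[P]$ is \'etale over the open torus and that the Kato-Nakayama circles precisely compensate for the ramification over the boundary (as in Example \ref{example:a1}), so that the combined datum is rigidified by $\widehat Q\cong\mu_n(P)_{an}$. (The referee's short proof presumably streamlines exactly this identification, e.g.\ by recognizing $(X_n)_{log}$ as a fiber product $X_{log}\times_{X_{an}}$(something) or by a direct cohomological splitting argument.)
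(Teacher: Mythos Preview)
Your proposal has a factual slip and, more importantly, leaves the core step unfinished.

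First, the slip: the map $\phi\colon X_n\to X$ is \emph{not} strict. The log structure on $X_n$ is pulled back from $\Spec\bC[\tfrac1nP]$, so its characteristic monoid is (locally) $\tfrac1nP$, not $P$; you in fact use this immediately afterward, so the sentence ``$\phi$ is strict'' contradicts your own next clause. What \emph{is} strict is the chart map $X\to\Spec\bC[P]$ (and likewise $X_n\to\Spec\bC[\tfrac1nP]$), and that is what the paper exploits.

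Second, and more substantively: your functor-of-points analysis never actually resolves the ``main obstacle'' you flag. Over a general test space $T\to X_{an}$, the lifts $p'\colon T\to(X_n)_{an}$ do \emph{not} form a $\mu_n(P)_{an}$-torsor, since $\phi_{an}$ is ramified along the boundary; and the extensions $s'$ of $s$ over a fixed $p'$ form a torsor under $\Hom(Q,S^1)$ only after you have already pinned down how $M^\gp_{X_n}$ sits over $M^\gp_X$ at points of the boundary. Showing that these two pieces of freedom combine into a \emph{single} simply-transitive $\mu_n(P)_{an}$-action is exactly the content of the proposition, and you have only gestured at it.

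The paper's proof avoids this bookkeeping entirely by a different route. Since $X\to\Spec\bC[P]$ and $X_n\to\Spec\bC[\tfrac1nP]$ are strict, Proposition~\ref{kn.basechange} gives a cartesian square identifying $\phi_{\log}$ as the base change of the ``universal'' map $\phi_{P,\log}\colon\bC(\tfrac1nP)_{\log}\to\bC(P)_{\log}$. But this universal map is completely explicit: it is the restriction map
\[
\Hom(\tfrac1nP,\bR_{\ge0}\times S^1)\longrightarrow\Hom(P,\bR_{\ge0}\times S^1),
\]
which factors as the product of $\Hom(\tfrac1nP,\bR_{\ge0})\to\Hom(P,\bR_{\ge0})$ (a homeomorphism, since $\bR_{\ge0}$ is uniquely $n$-divisible) and $\Hom(\tfrac1nP,S^1)\to\Hom(P,S^1)$. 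The kernel of the latter is precisely $\mu_n(P)_{an}$, and one checks directly that it is a torsor. Equivariance of $(X_n)_{\log}\to(X_n)_{an}$ is then read off from the map $\Hom(\tfrac1nP,\bR_{\ge0}\times S^1)\to\Hom(\tfrac1nP,\bC)$ being $\Hom(\tfrac1nP,S^1)$-equivariant. This reduction-to-the-universal-case is exactly the ``short proof'' you anticipate in your final parenthetical, and it sidesteps the fiberwise analysis you were attempting.
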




Note (Definition \ref{def.monoid}) that if $P$ is a monoid, $\bC(P)$ will denote the log analytic space $(\Spec \bC[P])_\an$ with the induced natural log structure.

\begin{proof}
The action of $\mu_n(P)$ on $\Spec \bC[\frac{1}{n}P]$ induces an action on $X_n$, and the map $X_n\to X$ is invariant. Consequently we have an induced action of $\mu_n(P)_\an$ on $(X_n)_\log$, and the map $\phi_\log \colon (X_n)_\log\to X_\log$ is invariant.

Moreover, since taking $(\blank)_\log$ commutes with strict base change (see Proposition \ref{kn.basechange}), we have a cartesian diagram
$$
\xymatrix{
(X_n)_\log\ar[r]\ar[d]_{\phi_\log} & \bC(\frac{1}{n}P)_\log\ar[d]^{\phi_{P,\log}}\\
X_\log\ar[r] & \bC(P)_\log
}
$$
and because the action of $\mu_n(P)_\an$ on $(X_n)_\log$ comes from the one on  $\bC(\frac{1}{n}P)_\log$, it suffices to prove the statement for the right-hand column.

Similarly, in order to verify that $(X_n)_\log \to (X_n)_\an$ is $\mu_n(P)_\an$-equivariant we are reduced to checking that  $\bC(\frac{1}{n}P)_\log\to  \bC(\frac{1}{n}P)$ is $\mu_n(P)_\an$-equivariant.

Now note that $\mu_n(P)_\an$ is precisely the kernel of the map $\Hom(\frac{1}{n}P,S^1)\to \Hom(P,S^1)$, So that the action of $\mu_n(P)_\an$ on $\Hom(\frac{1}{n}P,S^1)$ is free and transitive. It is also not hard to check that there are local sections (note that $\Hom(P,S^1)=\Hom(P^\gp,S^1)\cong (S^1)^k$ non-canonically), so that the map is a $\mu_n(P)_\an$-torsor.

Furthermore, $\phi_{P,\log}\colon \bC(\frac{1}{n}P)_\log\to \bC(P)_\log$ is the restriction map $\Hom(\frac{1}{n}P,  \bR_{\geq 0}\times S^1)\to \Hom(P, \bR_{\geq 0}\times S^1)$, and this is the product of the two maps $\Hom(\frac{1}{n}P,  \bR_{\geq 0})\to  \Hom(P,  \bR_{\geq 0})$ (which is a homeomorphism) and $\Hom(\frac{1}{n}P,  S^1)\to  \Hom(P,  S^1)$. The action of $\mu_n(P)_\an$ is trivial on the factor $\Hom(\frac{1}{n}P,  \bR_{\geq 0})$ and the one given by the aforementioned inclusion as a subgroup, on the factor $\Hom(\frac{1}{n}P,  S^1)$. Consequently, $\phi_{P,\log}$ is a $\mu_n(P)_\an$-torsor for the natural action, as required.

The map $\bC(\frac{1}{n}P)_\log\to  \bC(\frac{1}{n}P)$ coincides with the map $\Hom(\frac{1}{n}P, \bR_{\geq 0}\times S^1)\to \Hom(\frac{1}{n}P,\bC)$ induced by the natural map $\bR_{\geq 0}\times S^1\to \bC$, and therefore it is manifestly $\Hom(\frac{1}{n}P,S^1)$-equivariant, and in particular $\mu_n(P)_\an$-equivariant.
\end{proof}


The previous proposition gives a morphism of pro-topological stacks $\Phi_{n,P} \colon X_\log \to \sqrt[n]{X}_\topst$. It is clear from the construction that if $n\mid m$, then the diagram
$$
\xymatrix{
X_\log\ar[r]^<<<<{\Phi_{m,P}}\ar[rd]_{\Phi_{n,P}} & \sqrt[m]{X}_\topst\ar[d]\\
 & \sqrt[n]{X}_\topst
}
$$
is (2-)commutative, so we obtain a morphism $(\Phi_X)_P\colon X_\log\to (\sqrt[\infty]{X})_{\topst}$ of pro-topological stacks.

\textbf{Step 2} (compatibility of the local constructions): let us extend this local construction to a global one. The idea is of course to use descent and glue the local constructions, and intuitively, one would expect that these local maps patch together to define a global one without incident. However, writing down all the necessary 2-categorical coherences gets pretty technical quickly, and it is much cleaner to use the machinery of $\infty$-categories.

We will need some preliminary lemmas and constructions.


\begin{lemma}\label{lemma.charts}
Let $X$ be a fine saturated log scheme over a field $k$ with two Kato charts $X\to \Spec k[P]$ and $X\to \Spec k[Q]$ for the log structure. Then for every geometric point $x$ of $X$, after passing to an \'{e}tale neighborhood of $x$, there is a third chart $X\to \Spec k[R]$ with maps of monoids $P\to R$ and $Q\to R$ inducing a commutative diagram
$$
\xymatrix{
 & &\Spec k[P] \\
X\ar[r]\ar@/^1.5pc/[rru]\ar@/_1.5pc/[rrd] &\Spec k[R]\ar[ru]\ar[rd] & \\
& & \Spec k[Q].
}
$$
\end{lemma}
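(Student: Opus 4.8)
The plan is to work étale-locally near the geometric point $x$ and reduce everything to a statement about the characteristic monoid $\overline{M}_x$ and its lifts. First I would recall that a Kato chart $X \to \Spec k[P]$ is, by definition, a morphism of monoids $P \to M(X)$ (the log structure, viewed as a sheaf of monoids on the étale site) which is ``good'' at $x$ in the sense that the composite $P \to M(X) \to \overline{M}_x$ is surjective, or (in the fine saturated setting, after shrinking) induces an isomorphism $P \cong \overline{M}_x$ if one takes the chart to be minimal; in general $P$ surjects onto $\overline{M}_x$ with the kernel of $P^{\gp} \to \overline{M}_x^{\gp}$ coming from units. Similarly for $Q$. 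The key point is that both $P$ and $Q$ come equipped with compatible maps to the stalk $M_x$ of the log structure. So the natural candidate for $R$ is a monoid that simultaneously receives maps from $P$ and $Q$ and still maps to $M_x$ compatibly — the obvious guess being (a finitely generated, saturated submonoid of) the submonoid of $M_x$, or of $\overline{M}_x \oplus (\text{units})$, generated by the images of $P$ and $Q$.

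The key steps, in order, would be: (1) After replacing $X$ by an étale neighborhood of $x$, arrange that both charts are defined on all of $X$ and that $M(X)$ is ``generated by the chart'' in a suitable sense — this is where one uses that fine log structures admit charts étale-locally and that one can refine charts. (2) Form the pushout $P \oplus_{?} Q$ or, more concretely, take $R_0 \subseteq M_x$ to be the submonoid generated by the images of $P$ and $Q$; since $M_x$ is fine and saturated (this uses that $X$ is fs), $R_0$ is finitely generated, and its saturation $R = R_0^{\mathrm{sat}}$ inside $M_x^{\gp}$ is fine and saturated. By construction we get monoid maps $P \to R$ and $Q \to R$ and a map $R \to M_x$ making the triangles commute at the level of stalks. (3) Upgrade the stalk-level map $R \to M_x$ to an actual chart $X' \to \Spec k[R]$ on some étale neighborhood $X' \to X$ of $x$: one lifts the finitely many generators of $R$ to sections of $M$ over an étale neighborhood (possible since $M$ is a sheaf and we only need finitely many sections defined near $x$), checks the relations hold after further shrinking, and verifies that the resulting chart is still ``good'' at $x$ — i.e. $R \to \overline{M}_x$ is surjective — which holds because already $P \to \overline{M}_x$ (or $Q \to \overline{M}_x$) was. (4) Finally check that the two outer triangles in the displayed diagram commute as morphisms of log schemes, not just on stalks; since everything is a morphism of schemes over $k$ determined by monoid maps, and the monoid maps commute by construction, this is automatic, possibly after one more shrinking to ensure the chart morphisms (as opposed to just the monoid maps) agree.

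The main obstacle I expect is step (3): passing from a map of monoids into the \emph{stalk} $M_x$ to an honest chart defined on an étale \emph{neighborhood}. Lifting generators is straightforward, but one must ensure (a) the defining relations of $R$ are satisfied by the lifted sections after shrinking — fine because $R$ is finitely presented and there are finitely many relations, each an equality of sections of a sheaf, hence valid on a neighborhood — and (b) the induced map of log structures $R^a \to M$ (where $R^a$ is the log structure associated to $R \to \mathcal O_{X'}$) is an \emph{isomorphism}, i.e. that $R$ really is a chart and not just a map of monoids over $M_x$. This last point is where one genuinely uses that $R$ surjects onto the characteristic monoid $\overline{M}_x$ together with the standard criterion (see the appendix, or \cite{kato}) that a map $R \to M$ inducing a surjection on characteristic stalks, with $R$ fine, is a chart in a neighborhood. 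One should also be mindful that ``saturating'' $R_0$ might introduce elements not in the image of $M_x$ if one is not careful to saturate inside $M_x^{\gp}$ rather than abstractly; since $M_x$ is itself saturated, the saturation of $R_0$ taken inside $M_x^{\gp}$ lands in $M_x$, so this is fine, but it is the sort of detail worth stating explicitly.
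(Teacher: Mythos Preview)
Your approach is correct but considerably more elaborate than what the paper does. The paper simply takes $R = \overline{M}_x$, the stalk of the characteristic sheaf at $x$, and invokes Proposition~\ref{prop.chart} (the standard result that a fine saturated log scheme admits, \'etale-locally near any geometric point $x$, a chart with monoid $\overline{M}_x$). The maps $P \to R$ and $Q \to R$ are then just the composites $P \to M_x \to \overline{M}_x$ and $Q \to M_x \to \overline{M}_x$, which exist automatically because $P$ and $Q$ are charts; commutativity of the triangles is arranged ``after further localization.'' That is the entire argument.

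Your route---building $R$ as the saturation of the submonoid of $M_x$ generated by the images of $P$ and $Q$, then lifting generators and checking relations to produce a chart on a neighborhood---would also work, and your analysis of the obstacles in step~(3) is accurate. But it reinvents the content of Proposition~\ref{prop.chart} in a slightly disguised form: once you know that $\overline{M}_x$ itself is a chart monoid near $x$, there is no need to carve out a smaller submonoid, since both $P$ and $Q$ already map to $\overline{M}_x$ canonically. The paper's choice buys brevity and avoids the bookkeeping you flag (saturation inside $M_x^{\gp}$, lifting finitely many relations, verifying the chart criterion); your approach would buy finer control over $R$ if one needed it, but nothing in the subsequent use of this lemma (constructing the indexing category $\mathfrak{I}$) requires that.
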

\begin{proof}
We can take $R=\overline{M}_x$. There is a chart with monoid $R$ in an \'{e}tale neighborhood of $x$ by \ref{prop.chart}, and we have maps $P\to R$ and $Q\to R$ that induce a commutative diagram as in the statement, possibly after further localization.
\end{proof}



Now let us define a category $\mathfrak{I}$ of \'{e}tale open subsets of $X$ with a global chart: objects are triples $(\phi\colon U\to X,P,f)$ where $\phi\colon U\to X$ is \'{e}tale, $P$ is a fine saturated sharp monoid and $f\colon U\to \Spec \bC[P]$ is a chart for the log structure on $U$ (pulled back via $\phi$).

A morphism $(\phi\colon U\to X, P, f) \to (\psi\colon V\to X, Q, g)$ is given by a (necessarily \'{e}tale) map $U\to V$ over $X$ and a morphism $Q\to P$, such that the diagram
$$
\xymatrix{
U\ar[r]^<<<<<f\ar[d] & \Spec \bC[P]\ar[d]\\
V\ar[r]^<<<<<g & \Spec \bC[Q]
}
$$
is commutative.

We have two (lax) functors $(\blank)_\log$ and $(\sqrt[n]{\blank})_\topst\colon  \mathfrak{I}\to \Topst/X_\an$, as follows: \\for each $A=(\phi\colon U\to X, P, f) \in \mathfrak{I}$ we get, via strict pullback through the chart morphism, a local model for the Kato-Nakayama space $X_\log^A$ (over $U$) and one for the $n$-th root stack $\sqrt[n]{X}_\topst^A$. We set $A_\log=X_\log^A$ and $\sqrt[n]{A}_\topst=\sqrt[n]{X}_\topst^A$. The maps to $X_\an$ are given by the composites of the projections to $U_\an$ and the local homeomorphism $U_\an\to X_\an$. The action of these two functors on morphisms is clear.

The construction in the local case (i.e. Step 1 above) gives an assignment, for each $A\in\mathfrak{I}$, of a morphism of topological stacks $\alpha^n_A\colon A_\log\to \sqrt[n]{A}_\topst$.

\begin{lemma}
The family $(\alpha^n_A)$ gives a lax natural transformation $$\alpha^n:(\blank)_\log \Rightarrow (\sqrt[n]{\blank})_\topst,$$ in the sense of \cite[Appendix A]{hotorbi}.
\end{lemma}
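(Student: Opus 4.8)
The plan is to verify the three coherence conditions defining a lax natural transformation in the sense of \cite[Appendix A]{hotorbi}: for each object $A$ a $1$-morphism $\alpha^n_A$ (already constructed in Step 1), for each morphism $A \to B$ in $\mathfrak{I}$ a $2$-cell filling the naturality square, and for each composable pair and each identity the usual compatibility of these $2$-cells. Since $\Topst/X_\an$ is a $(2,1)$-category, all the $2$-cells in question are invertible, so we are really constructing a \emph{pseudo}-natural transformation; the only content is exhibiting the $2$-cells and checking the cocycle identities.

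First I would unwind what a morphism $\theta\colon A = (\phi\colon U\to X, P, f) \to B = (\psi\colon V\to X, Q, g)$ does on the two functors. On the Kato-Nakayama side, $A_\log = X_\log^A$ and $B_\log = X_\log^B$ are pulled back from $U$ and $V$ via the respective charts, and the induced map $U\to V$ over $X$ together with $Q\to P$ gives a canonical map $A_\log \to B_\log$ over $X_\an$ (using Proposition \ref{kn.basechange}, compatibility of $(\blank)_\log$ with strict base change). Likewise on the root stack side $\sqrt[n]{A}_\topst \to \sqrt[n]{B}_\topst$ comes from the compatibility of the quotient presentation $[X_n/\mu_n(P)]$ with the chart morphism $Q\to P$, which induces $\mu_n(P) \to \mu_n(Q)$ and an equivariant map $U_n \to V_n$. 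The required $2$-cell is then a comparison between two ways of going $A_\log \to \sqrt[n]{B}_\topst$: via $\alpha^n_A$ then the pushforward, versus via the pullback $A_\log \to B_\log$ then $\alpha^n_B$. Concretely, $\alpha^n_A$ is encoded (by the quotient stack description) by the $\mu_n(P)_\an$-torsor $(U_n)_\log \to U_\log$ with its equivariant map to $(U_n)_\an$ (Proposition \ref{torsor}); the $2$-cell is the canonical isomorphism of torsors obtained by base change along $U \to V$ and by extension of structure group along $\mu_n(P) \to \mu_n(Q)$. Both descriptions of the composite torsor are canonically identified because $(\blank)_\log$ commutes with strict base change and because the relevant squares of monoids commute on the nose, by the definition of a morphism in $\mathfrak{I}$.

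Next I would check the two coherence axioms. For composition $A \xrightarrow{\theta} B \xrightarrow{\eta} C$, one must check that the $2$-cell attached to $\eta\theta$ agrees with the pasting of the $2$-cells attached to $\theta$ and $\eta$; this reduces to the fact that base change of torsors along a composite $U \to V \to W$ agrees with the iterated base change, together with transitivity of extension of structure groups along $\mu_n(P) \to \mu_n(Q) \to \mu_n(R)$ — both of which hold canonically, functorially in the monoid maps $R \to Q \to P$. For identities, the $2$-cell attached to $\id_A$ is the identity because base change along the identity and extension along $\id_{\mu_n(P)}$ are trivial. All of this is strict enough that the cocycle diagrams commute; there is no higher obstruction since we are in a $(2,1)$-category and every $2$-cell is a canonical comparison isomorphism built from base-change and structure-group-extension functors that are themselves strictly functorial.

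\textbf{The main obstacle} is purely bookkeeping: making the $2$-cells genuinely canonical and natural, rather than merely existent. The subtlety is that the "local model" $X_\log^A$ depends on the chart, and different charts over overlapping étale opens are only related after further localization (Lemma \ref{lemma.charts}); one must be careful that the $2$-cells are defined using the structure maps in $\mathfrak{I}$ and not any auxiliary choices, so that the pasting identities hold on the nose. Once the torsor-theoretic description of $\alpha^n_A$ from Proposition \ref{torsor} is firmly in hand — $\alpha^n_A$ \emph{is} the data of the torsor $(U_n)_\log \to U_\log$ plus equivariant map — the verification becomes a routine but lengthy check that all the comparison isomorphisms assemble coherently, which is exactly the kind of $2$-categorical coherence the authors flag as "pretty technical" and the reason they pass to $\infty$-categories in the next step. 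I would therefore keep this lemma's proof at the level of "the $2$-cells are the evident base-change-and-extension-of-structure-group comparison isomorphisms, and the coherence axioms follow from functoriality of base change and transitivity of structure-group extension," and defer the genuinely load-bearing gluing to the $\infty$-categorical framework that follows.
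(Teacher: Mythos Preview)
Your proposal is correct and takes essentially the same approach as the paper. The paper encodes the required $2$-cell by drawing the commutative cube relating $(U_n)_\log$, $(V_n)_\log$, $(U_n)_\an$, $(V_n)_\an$, $X_\log^A$, $X_\log^B$, and verifies the pasting identity for a composite $A\to B\to C$ by stacking in a third layer involving $(W_n)_\log$; your description in terms of torsor base change along $U_\log\to V_\log$ plus extension of structure group along $\mu_n(P)_\an\to\mu_n(Q)_\an$ is exactly what those diagrams express, just phrased functorially rather than pictorially.
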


\begin{proof}
By translating the definition, in the present case this means the following: if $a\colon A=(\phi\colon U\to X, P, f)\to (\psi\colon V\to X, Q, g)=B$ is a morphism in $\mathfrak{I}$, then the diagram
$$
\xymatrix{
A_\log\ar[r]^<<<<{\alpha^n_A}\ar[d] & \sqrt[n]{A}_\topst \ar[d]\ar@{}[ld]^(.30){}="a"^(.70){}="b" \ar@{=>}_{\alpha^n(a)} "a";"b"\\
B_\log\ar[r]^<<<<{\alpha^n_B} & \sqrt[n]{B}_\topst
}
$$
2-commutes, and the 2-cells $\alpha^n(a)$ satisfy a compatibility condition.

This follows from the fact that the morphism $a=(U\to V, Q\to P)$ gives a commutative diagram
$$
\xymatrix@=18px{
 & (U_n)_\log \ar[rr]\ar[ld]\ar[dd]|\hole & & (U_n)_\an \ar[ld]\\
(V_n)_\log\ar[rr]\ar[dd] & & (V_n)_\an & \\
 & X_\log^A \ar[ld] & & \\
X_\log^B & & &
}
$$
between the two objects corresponding to the functors $\alpha^n_A$ and $\alpha^n_B$. This gives a canonical natural transformation that makes the diagram
$$
\xymatrix@C=75pt{
X_\log^A\ar[d]\ar[r]^<<<<<<<<<<<<<{\alpha^n_A} & [(U_n)_\an/\mu_n(P)_\an]=\sqrt[n]{X}_\topst^A \ar[d] \ar@{}[ld]^(.40){}="a"^(.60){}="b" \ar@{=>}_{\alpha^n(a)} "a";"b"\\
X_\log^B \ar[r]^<<<<<<<<<<<<<{\alpha^n_B}&  [(V_n)_\an/\mu_n(Q)_\an]=\sqrt[n]{X}^B_\topst.
}
$$
2-commutative, and this is the required diagram.

Now if $C=(\eta\colon W\to X, R, h)$ is a third object of $\mathfrak{I}$ with a morphism $b\colon B\to C$ in $\mathfrak{I}$, then the fact that the diagram
\[
\begin{tikzcd}[row sep=small, column sep=small]
&  & (U_n)_\log \ar{rrr}\ar{ld}\ar{ddd} & & & (U_n)_\an \ar{ld}\\
& (V_n)_\log\ar[crossing over]{rrr}\ar{ddd}\ar{dl} & & & (V_n)_\an\ar{dl} & \\
(W_n)_\log\ar[crossing over]{rrr}\ar{ddd}  & & &(W_n)_\an &\\
& & X_\log^A \ar{ld} & & & \\
& X_\log^B\ar{dl} & & & &\\
X_\log^C & & & & &
\end{tikzcd}
\]
commutes implies that the composite of the two 2-cells $\alpha^n(b)$ and $\alpha^n(a)$ is equal to $\alpha^n(b\circ a)$.
\end{proof}




By composition with the natural functor

$$\Topst/X_\an \hookrightarrow \Hshi\left(\TopC\right)/X_{an}$$
to hypersheaves on $\TopC$ (see Section \ref{sec:homotopy}) and by abuse of notation we get a natural transformation of functors of $\i$-categories:

$$ \xygraph{!{0;(5,0):(0,0.18)::}
{\mathfrak{I}}="a" [r] {\Hshi\left(\TopC\right)/X_{an}.}="b"
"a":@/^{1.5pc}/"b"^-{\left(\blank\right)_{log}}|(.4){}="l"
"a":@/_{1.5pc}/"b"_-{\sqrt[n]{\blank}_{top}}
"l" [d(.3)]  [r(0.1)] :@{=>}^{\alpha^n} [d(.5)]} $$


\textbf{Step 3} (the global case): we will now use the natural transformation $\alpha^n$ above to construct a global map $$\Phi_X:X_{log} \to \sqrt[n]{X}.$$ We will first need a crucial lemma:

\begin{lemma}\label{lem:globaltrick}
Let $\iota:\mathfrak{I} \to \Hshi\left(\TopC\right)$ be the functor sending a triple $\left(\phi:U \to X,P,f\right)$ to $U_{an}.$ Then the canonical map $\colim \iota \to X_{an}$ is an equivalence.
\end{lemma}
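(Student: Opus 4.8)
The plan is to exhibit $X_{an}$ as the colimit of the diagram $\iota$ by working in the $\infty$-topos $\Hshi(\TopC)$ and reducing to a cover argument. First I would recall that $\mathfrak{I}$ is, essentially by construction, a site of \'etale opens of $X$ equipped with global charts, and that by Proposition \ref{prop.chart} (existence of charts \'etale-locally) the assignment $\left(\phi:U\to X,P,f\right) \mapsto (U \to X)$ makes the family of $U$'s into an \'etale cover of $X$ — more precisely, every geometric point of $X$ has an \'etale neighborhood admitting a global chart, so the $U$'s are jointly \'etale-surjective onto $X$. Passing to analytifications, each $U_{an} \to X_{an}$ is a local homeomorphism (as noted in the proof of Theorem \ref{thm:analification}), and the family $\{U_{an} \to X_{an}\}$ is jointly surjective, hence an open cover of $X_{an}$ in the sense relevant to $\TopC$ (after refining by opens, which does not change the generated Grothendieck topology).

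Next I would identify the colimit $\colim \iota$ with the colimit over the \v{C}ech-type diagram of this cover. The key point is that $\mathfrak{I}$ is cofiltered enough — or rather, has enough fiber products — that the diagram $\iota$ is a "hypercover-like" diagram whose colimit computes $X_{an}$. Concretely: the functor $\iota$ factors through $\Hshi(\TopC)/X_{an}$, and I claim the induced map $\colim(\iota) \to X_{an}$ is an equivalence because $\Hshi(\TopC)$ is an $\infty$-topos in which colimits are universal and effective epimorphisms are detected by descent along covers. Since $\left(\blank\right)_{top}$ and the Yoneda embedding land in hypersheaves, and hypersheaves satisfy descent along covers by local homeomorphisms (again from the proof of Theorem \ref{thm:analification}), it suffices to check that the map is an effective epimorphism with the right "kernel". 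The effective epimorphism part is exactly joint surjectivity of the $U_{an}$. For the identification of the full simplicial resolution, I would use that $\mathfrak{I}$ admits fiber products: given two objects $(\phi:U\to X,P,f)$ and $(\psi:V\to X,Q,g)$, the fiber product $U \times_X V$ with a chart is again in $\mathfrak{I}$ after \'etale refinement, by Lemma \ref{lemma.charts} (two charts can be dominated by a third locally). This makes the comma category encoding $\iota$ sifted, or at least ensures that $\colim \iota$ agrees with the realization of the \v{C}ech nerve of the cover $\coprod U_{an} \to X_{an}$.

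So the concrete argument runs: (1) the family of $U_{an}$, for $(\phi:U\to X,P,f) \in \mathfrak{I}$, is an open cover of $X_{an}$ by local homeomorphisms; (2) $\mathfrak{I}$ has fiber products lying over fiber products of the $U$'s (up to \'etale refinement), so the diagram $\iota$ is cofinal in the \v{C}ech diagram of this cover — equivalently, the opposite category computing the colimit is sifted; (3) since $\Hshi(\TopC)$ is an $\infty$-topos where effective epimorphisms satisfy descent, and since $X_{an}$ is covered by the $U_{an}$ via local homeomorphisms, the canonical map from the realization of the \v{C}ech nerve to $X_{an}$ is an equivalence; (4) combining (2) and (3) gives $\colim \iota \xrightarrow{\sim} X_{an}$.

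The main obstacle I anticipate is step (2): verifying that $\iota$ is \emph{cofinal} in (or that its colimit agrees with) the \v{C}ech nerve of the cover. The subtlety is that objects of $\mathfrak{I}$ carry the extra data of a chart and a monoid, and morphisms must respect these; so a priori $\mathfrak{I}$ is not literally the category of \'etale opens but a category with more objects and more morphisms. One must check, using Lemma \ref{lemma.charts} and the existence of charts at geometric points, that this extra data does not obstruct the cofinality — i.e., that for any finite collection of objects of $\mathfrak{I}$, their "intersection" in $X$ can be covered by objects of $\mathfrak{I}$ mapping compatibly to all of them. This is a Quillen Theorem A–type argument applied in the $\infty$-categorical setting (e.g. via \cite[Theorem 4.1.3.1]{htt}), checking that the relevant comma categories are weakly contractible. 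Once that is in place, the descent statement in the $\infty$-topos $\Hshi(\TopC)$ does the rest with no further difficulty.
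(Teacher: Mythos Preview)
Your overall strategy—cover $X_{an}$ by the $U_{an}$'s, then use descent in the $\infty$-topos—is sound, and you correctly identify step~(2) as the crux. However, there is a genuine gap in the cofinality argument you sketch. The forgetful functor $F:\mathfrak{I}\to X_{\et}$ is \emph{not} cofinal in the sense of \cite[Theorem 4.1.3.1]{htt}: for $V\in X_{\et}$, the comma category $V/F$ consists of charted objects $(U,P,f)$ together with a map $V\to U$ over $X$; taking $V=X$ (or any \'etale open with no global chart), this category can be empty. So the Quillen Theorem~A check fails outright. Your caveat ``up to \'etale refinement'' is exactly the obstruction: $\mathfrak{I}$ does not have fibre products, only covers of fibre products, and that weaker property does not yield cofinality, nor siftedness of $\mathfrak{I}^{op}$.

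The paper's approach sidesteps this entirely by reinterpreting ``fibre products up to refinement'' as a statement about sites rather than about index categories. One equips $\mathfrak{I}$ itself with a Grothendieck topology (declaring a family to be covering when the underlying \'etale maps are), and then Lemma~\ref{lemma.charts} together with Proposition~\ref{prop.chart} are precisely the hypotheses of the Comparison Lemma of \cite[III]{SGA4}, giving an equivalence $\Hshi(\mathfrak{I})\simeq\Hshi(X_{\et})$. The colimit of $\iota$ is then computed by observing that $\iota$ factors as a colimit-preserving functor $\Theta:\Hshi(\mathfrak{I})\to\Hshi(\TopC)$ composed with the Yoneda embedding $y:\mathfrak{I}\to\Hshi(\mathfrak{I})$; since $y$ is strongly generating, $\colim y$ is the terminal object, and one checks directly that $\Theta$ sends the terminal object to $X_{an}$. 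This packages your ``cofinality up to refinement'' into a clean topos-theoretic statement and avoids any contractibility check on comma categories. If you wish to salvage your direct approach, the Comparison Lemma is the missing ingredient; once you have it, your argument becomes essentially the paper's.
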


Before proving the above lemma, we will show how we may use this lemma to produce the global morphism we seek. The key idea is the following basic fact about $\i$-topoi:

\begin{proposition}[{\emph{colimits are universal}}]
Let $\underset{i \in I} \colim A_i \to B$ be a morphism in an $\i$-topos $\cE,$ and let $C \to B$ be another morphism. Then the canonical map $$\underset{i \in I} \colim\left(C \times_B A_i\right)  \to C \times_B \underset{i \in I} \colim A_i$$ is an equivalence.
\end{proposition}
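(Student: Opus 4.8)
The plan is to deduce this from the Giraud-type axioms characterizing $\infty$-topoi, specifically the universality of colimits. First recall that for any object $B$ of $\cE$ the forgetful functor $\cE/B \to \cE$ creates small colimits; in particular the diagram $i \mapsto \left(A_i \to B\right)$ in the slice $\cE/B$ has colimit $\left(\underset{i \in I} \colim A_i \to B\right)$, equipped with its tautological structure map. Given the second morphism $f\colon C \to B$, pullback along $f$ defines a functor $f^*\colon \cE/B \to \cE/C$ which sends $\left(A_i \to B\right)$ to $\left(C \times_B A_i \to C\right)$ and sends $\left(\underset{i \in I} \colim A_i \to B\right)$ to $\left(C \times_B \underset{i \in I} \colim A_i \to C\right)$.

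The key point is that, since $\cE$ is an $\infty$-topos, the functor $f^*$ admits a right adjoint (the dependent product along $f$), and hence preserves all small colimits; this universality of colimits is one of the defining conditions in \cite[Theorem 6.1.0.6]{htt}. Applying $f^*$ to the colimit diagram above therefore produces an equivalence
$$\underset{i \in I} \colim \left(C \times_B A_i \to C\right) \stackrel{\sim}{\longrightarrow} \left(C \times_B \underset{i \in I} \colim A_i \to C\right)$$
in $\cE/C$. Composing with the colimit-creating forgetful functor $\cE/C \to \cE$ yields the desired equivalence of underlying objects $\underset{i \in I} \colim \left(C \times_B A_i\right) \simeq C \times_B \underset{i \in I} \colim A_i$.

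It remains only to identify the equivalence just constructed with the canonical comparison map in the statement; this is routine, as both are determined by the universal property of $\underset{i \in I} \colim A_i$ together with the evident pullback squares relating $C \times_B A_i$, $A_i$, $C$ and $B$. I do not foresee any genuine obstacle: the proposition is a purely formal consequence of the $\infty$-topos axioms, and is recorded essentially verbatim in \cite{htt} as part of the characterization of $\infty$-topoi and the surrounding discussion of universal colimits.
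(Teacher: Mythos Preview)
Your proof is correct and is essentially the same as the paper's, which simply remarks that the proposition ``is standard and is an immediate consequence of the fact that any $\infty$-topos is locally Cartesian closed.'' Your argument spells out precisely what this means: local Cartesian closedness gives a right adjoint (dependent product) to each pullback functor $f^*$, so $f^*$ preserves colimits, which is exactly the content of the statement.
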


The above fact is standard and is an immediate consequence of the fact that any $\i$-topos is locally Cartesian closed.

Let us now see how we may complete the construction. Suppose we know that the canonical map $\colim \iota \to X_{an}$ is an equivalence. We can write this informally as $$\underset{\left(\phi:U \to X,P,f\right)} \colim \mspace{5mu} U_{an} \stackrel{\sim}{\longrightarrow} X_{an}.$$ Consider the morphism $X_{log} \to X_{an}.$ Then since colimits are universal we have that the following is a pullback diagram:
$$\xymatrix{\underset{\left(\phi:U \to X,P,f\right)} \colim U_{an} \times_{X_{an}} X_{log} \ar[r] \ar[d] & X_{log} \ar[d]\\
\underset{\left(\phi:U \to X,P,f\right)} \colim U_{an} \ar[r]^-{\sim} & X_{an}.}$$ It follows that top map $$\underset{\left(\phi:U \to X,P,f\right)} \colim U_{an} \times_{X_{an}} X_{log} \to X_{log}$$ is also an equivalence. However, notice that we have a canonical identification $$U_{an} \times_{X_{an}} X_{log} \cong U_{log},$$ hence $$X_{log} \simeq \underset{\left(\phi:U \to X,P,f\right)} \colim U_{log} = \colim \left(\blank\right)_{log}.$$
By a completely analogous argument, one sees that $$\sqrt[n]{X}_{top} \simeq \underset{\left(\phi:U \to X,P,f\right)} \colim \sqrt[n]{U}_{top} = \colim \sqrt[n]{\blank}_{top}.$$
For each $n$, the global map is then defined to be $$\colim \alpha^n:\colim \left(\blank\right)_{log} \to \colim \sqrt[n]{\blank}_{top}.$$ Just as in the local case, one easily sees that the maps $$\colim \alpha^n:X_{log} \to \sqrt[n]{X}_{top}$$ assemble into a morphism of pro-objects $$\Phi_X:X_{log} \to \sqrt[\i]{X}_{top}.$$ 
It is immediate from the construction that this map agrees locally with the map constructed in Step 1. In the next sections we will prove that $\Phi_X$ induces an equivalence of profinite spaces.

To finish the proof of the existence of the above map, it suffices to prove Lemma \ref{lem:globaltrick}. Without further ado, we present the proof below.

\begin{proof}[Proof of Lemma \ref{lem:globaltrick}]
Equip $\mathfrak{I}$ with the following Grothendieck topology: A collection of morphisms $$\left( \left(\phi_i:U_i \to X,P_i,f_i\right) \to \left(\phi:U \to X,P,f\right)\right)_i$$ will be a covering family if the induced family $$\left(U_i \to U\right)_i$$ is an \'etale covering family. Note that there is a canonical morphism of sites $$F: \mathfrak{I} \to X_{\et}$$ from $\mathfrak{I}$ to the small \'etale site of $X.$ Moreover, by Lemma \ref{lemma.charts}, one easily checks that $F$ satisfies the conditions of the Comparison Lemma of \cite[p. 151]{pres}, so the induced geometric morphism of topoi $$\Sh\left(\mathfrak{I}\right) \to \Sh\left(X_{\et}\right)$$ is an equivalence. It then follows
from \cite[Theorem 5]{Jardine} and \cite[Proposition 6.5.2.14]{htt} that the induced geometric morphism between the respective $\i$-topoi of hypersheaves
$$\Hshi\left(\mathfrak{I}\right) \to \Hshi\left(X_{\et}\right)$$
is an equivalence. By Remark \ref{rmk:geometric morphism}, the analytification functor is the inverse image part of a geometric morphism $$f:\Hshi\left(\TopC\right) \to \Shi\left(\Aff,\mbox{\'et}\right).$$ By \cite[Proposition 6.5.2.13]{htt}, there is an induced geometric morphism $$\tilde f:\Hshi\left(\TopC\right) \to \Hshi\left(\Aff,\mbox{\'et}\right).$$ By left Kan extension of the canonical functor $$X_{\et} \to \Hshi\left(\Aff,\mbox{\'et}\right)/X$$ which sends each \'etale open $U \to X$ to itself, one produces a colimit preserving functor $$\omega:\Hshi\left(X_{\et}\right) \to \Hshi\left(\Aff,\mbox{\'et}\right)/X.$$ Consider the composite
$$\resizebox{6.5in}{!}{$\Hshi\left(\mathfrak{I}\right) \simeq \Hshi\left(X_{\et}\right) \stackrel{\omega}{\longrightarrow} \Hshi\left(\Aff,\mbox{\'et}\right)/X \to  \Hshi\left(\Aff,\mbox{\'et}\right) \stackrel{\tilde f^*}{\longrightarrow} \Hshi\left(\TopC\right),$}$$ where $\Hshi\left(\Aff,\mbox{\'et}\right)/X \to  \Hshi\left(\Aff,\mbox{\'et}\right)$ is the canonical projection. Denote the composite by $\Theta.$ The functor $\Theta$ is colimit preserving as it is the composite of colimit preserving functors, and unwinding definitions, one sees that the composite $$\mathfrak{I} \stackrel{y}{\longrightarrow} \Hshi\left(\mathfrak{I}\right) \stackrel{\Theta}{\longrightarrow} \Hshi\left(\TopC\right)$$ is canonically equivalent to $\iota.$ It follows that there is a canonical equivalence $$\colim \iota \simeq \Theta\left(\colim y \right).$$ But $y$ is strongly generating, so by the proof of \cite[Lemma 5.3.5]{higherdave}, the colimit of $y$ is the terminal object. Unwinding the definitions, one sees that the terminal object gets sent to $X_{an}$ by $\Theta.$ This completes the proof.
\end{proof}


\section{The topology of log schemes}\label{section:topology}

This section contains preliminaries about some topological properties of fine saturated log schemes locally of finite type over $\bC$, the Kato-Nakayama space and the root stacks.

\subsection{Stratified fibrations}\label{sec:stratified}

The following proposition is a consequence of the material in Section \ref{section: Stratification}.

Recall that if $X$ is a fine saturated log scheme locally of finite typer over $\bC$ there is a stratification $\cR=\{R_n\}_{n\in\bN}$ of $X$, the \emph{rank stratification} (Definition \ref{rank.strat}), given by $R_n=\{x\in X\mid \rank_\bZ \overline{M}_{\overline{x}}^\gp\geq n \}$.

\begin{proposition}\label{strata}
The Kato-Nakayama space $X_\log$, the topological root stacks $\sqrt[m]{X}_\topst$ and the topological infinite root stack $\sqrt[\infty]{X}_\topst$ are stratified fibrations over $X_\an$ with respect to the stratification $\cR$, i.e. they are fibrations over the strata $(S_n)_\an=(R_{n}\setminus R_{n+1})_\an$ of the stratification $\cR_\an$.
\end{proposition}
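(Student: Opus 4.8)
The plan is to pass to each stratum and exhibit the restriction of the three maps as locally trivial fibre bundles. Being a fibration is local on the base and an \'etale map $U\to X$ induces a local homeomorphism $U_\an\to X_\an$, so we may argue \'etale-locally on $X$; fix a stratum $S_n=R_n\setminus R_{n+1}$ and a geometric point $\bar x\in S_n$. By \ref{prop.chart} there is, after shrinking to an \'etale neighbourhood, a Kato chart $X\to\Spec\bC[R]$ with $R:=\overline M_{\bar x}$, a torsion-free monoid with $\rank_\bZ R^\gp=n$. As recorded in Section~\ref{section: Stratification}, such a chart is exact precisely along $S_n$: for $\bar y$ in the chart domain one has $\overline M_{\bar y}\cong R/F_{\bar y}$, where $F_{\bar y}\subseteq R$ is the face of elements invertible at $\bar y$, so $\bar y\in S_n$, i.e. $\rank\overline M_{\bar y}=n$, forces $F_{\bar y}=0$ and $\overline M_{\bar y}=R$. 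Equivalently, $S_n$ is carried into the unique torus-fixed point of $\Spec\bC[R]$, and the log structure of $X$ restricted to $S_n$ is the pullback along $S_n\to\Spec\bC$ of the standard $R$-log point; this normal form is the only real input.

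For the Kato-Nakayama space: a chart is strict and $(\blank)_\log$ commutes with strict base change (Proposition~\ref{kn.basechange}), so $X_\log=X_\an\times_{\bC(R)}\bC(R)_\log$ and hence $X_\log|_{(S_n)_\an}=(S_n)_\an\times_{\bC(R)}\bC(R)_\log$. Since $(S_n)_\an$ maps to the torus-fixed point of $\bC(R)$, over which $\bC(R)_\log$ has fibre $\Hom(R^\gp,S^1)\cong(S^1)^n$, this is the trivial bundle $(S_n)_\an\times(S^1)^n$; globalising over the chart cover, $X_\log\to X_\an$ restricts over $(S_n)_\an$ to a locally trivial $(S^1)^n$-bundle, a fibration. (One can also see this chart-free: over $S_n$ the functor $F_\log$ presented by $X_\log$ is a torsor under the sheaf $\underline{(S^1)^n}$ of continuous $(S^1)^n$-valued maps, and a representable such torsor is a locally trivial bundle.)

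For the root stacks I use $\sqrt[m]{X}\simeq[X_m/\mu_m(R)]$ from Section~\ref{subsection.root}, so $\sqrt[m]{X}_\topst|_{(S_n)_\an}\simeq[(X_m\times_X S_n)_\an/\mu_m(R)_\an]$ with $X_m\times_X S_n=S_n\times_{\Spec\bC[R]}\Spec\bC[\tfrac1m R]$. As $S_n$ maps to the torus-fixed point, this is $S_n$ times the Artinian scheme $\Spec\!\big(\bC[\tfrac1m R]\otimes_{\bC[R]}\bC\big)$, which is supported at a single point --- each generator of $\tfrac1m R$ has its $m$-th power in the augmentation ideal of $\bC[R]$, so the only prime over that ideal is the augmentation ideal of $\bC[\tfrac1m R]$ --- whence $(X_m\times_X S_n)_\an\cong(S_n)_\an$ topologically with $\mu_m(R)_\an\cong(\bZ/m)^n$ acting trivially. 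Therefore $\sqrt[m]{X}_\topst|_{(S_n)_\an}\simeq(S_n)_\an\times\class(\bZ/m)^n$, a locally trivial $\class(\bZ/m)^n$-bundle over $(S_n)_\an$ after globalising, hence a fibration; and $\sqrt[\infty]{X}_\topst$, whether read as the pro-object $m\mapsto\sqrt[m]{X}_\topst$ or as its limit, restricts over $(S_n)_\an$ to the corresponding bundle with fibre $m\mapsto\class(\bZ/m)^n$, resp.\ $\class\widehat{\bZ}^n$, again a fibration.

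The main obstacle is the local normal form over a stratum used at the outset --- that a neat chart by $\overline M$ is exact exactly along the stratum, equivalently that the log structure there is \'etale-locally pulled back from the standard $\overline M$-log point --- which I take from Section~\ref{section: Stratification}; once it is available the proposition reduces to the trivial-bundle computations above. The residual points are routine: the connectedness of the Artinian fibre over the torus-fixed point (a one-line monoid-algebra check) and the fact that the topological quotient stack $[(X_m\times_X S_n)_\an/\mu_m(R)_\an]$ only sees the underlying topological action --- here, the trivial action of $(\bZ/m)^n$ on a point.
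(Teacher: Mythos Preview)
Your proof is correct and follows essentially the same route as the paper: reduce \'etale-locally to a chart by $R=\overline M_{\bar x}$, use that the stratum $S_n$ is sent to the vertex of $\Spec\bC[R]$ (this is exactly Lemma~\ref{lemma.rank} and Proposition~\ref{prop.constant}), and then read off the trivial-bundle structure from the strict base-change square. The only cosmetic difference is that the paper first rewrites $X_\log|_{(S_n)_\an}$ as $(S_n)_\log$ and $\sqrt[m]{X}|_{S_n}$ as $\sqrt[m]{S_n}$ before choosing a chart, whereas you carry the chart throughout; your root-stack paragraph is also a bit more explicit than the paper's in identifying the fibre over the vertex as $\class(\bZ/m)^n$.
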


\begin{proof}
All constructions are compatible with arbitrary base change along strict morphisms, so
$$ 
X_\log|_{(S_n)_\an}\cong (S_n)_\log
$$
and
$$
\sqrt[m]{X}|_{S_n}\simeq \sqrt[m]{S_n}
$$
where $m$ can be $\infty$, and $S_n$ has the log structure pulled back from $X$. It suffices then to show that the two maps $(S_n)_\log\to (S_n)_\an$ and $(\sqrt[m]{S_n})_\topst\to (S_n)_\an$ are fibrations over $S_n$.

Let us cover $(S_n)_\an$ with open subsets over which the sheaf $\overline{M}$ is constant, and recall that by definition of $S_n$ it will have rank $n$. We can choose such opens in order to have a cartesian diagram
$$
\xymatrix{
(S_n)_\log\ar[r]\ar[d] & (\Spec k[P])_\log \ar[d]\\
(S_n)_\an\ar[r] & (\Spec k[P])_\an
}
$$
over each of them, where the bottom horizontal arrow sends everything to the vertex $v_P$ (as in the proof of \ref{prop.constant}). It follows that $(S_n)_\log\cong (S^1)^n\times (S_n)_\an$, and that the map $(S_n)_\log\to (S_n)_\an$ is identified with the projection. The factor $(S^1)^n$ is the fiber of the map $ (\Spec k[P])_\log \to (\Spec k[P])_\an$ over the point $v_P$.

The analogous diagram
$$
\xymatrix{
(\sqrt[m]{S_n})_\topst \ar[r]\ar[d] & \sqrt[m]{\Spec k[P]}_\topst \ar[d]\\
(S_n)_\an\ar[r] & (\Spec k[P])_\an
}
$$
shows the same conclusion for root stacks. In this case we get an isomorphism $(\sqrt[m]{S_n})_\topst \simeq \cX \times (S_n)_\an$, where $\cX$ is the fiber of the map $ \sqrt[m]{\Spec k[P]}_\topst \to  (\Spec k[P])_\an$ over the vertex $v_P$.
\end{proof}


We need a similar (local) statement for groupoid presentations of the root stacks. 


Take $x\in X$, and an open \'{e}tale neighborhood $U\to $X of $x$ where there is a global chart $U\to \Spec \bC[P]$ for the log structure, where $P$ is fine, saturated and sharp. Then we have a quotient stack presentation for the topological $n$-th root stack $\sqrt[n]{U}_\topst \simeq (\sqrt[n]{X}|_U)_\topst $ for every $n$ (see the discussion preceding Proposition \ref{prop.charts}). 
Let us denote by $\bG(n)$ the simplicial topological space associated with this quotient presentation. There are compatible maps $\bG(m)\to \bG(n)$ whenever $n\mid m$, and the whole system gives a (simplicial) presentation for the topological infinite root stack $\sqrt[\infty]{U}_\topst$.  


Explicitly, the simplicial space $\bG(n)$ is obtained from the action of $\mu_n(P)$ on the scheme $U_n= U \times_{\Spec \bC[P]} \Spec \bC[\frac{1}{n}P]$ (see the local description of the root stacks in \ref{subsection.root}), so that $$\bG(n)_k\cong (U_n\times \mu_n(P)\times\cdots\times \mu_n(P))_\an$$ where there are $k$ copies of $\mu_n(P)$, and the map $\bG(n)_k\to U_\an$ is the composite of the projection to $(U_n)_\an$ followed by the map $(U_n)_\an\to U_\an$.


\begin{proposition}\label{simplicial.presentations}

Every $x\in U_\an$ has arbitrarily small neighborhoods over which for {every} $n$ and $k$ the map $\bG(n)_k\to U_\an$ is a product over $U_\an \cap (S_r)_\an$, where $x\in (S_r)_\an$
\end{proposition}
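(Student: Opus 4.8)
The plan is to reduce the statement, by strict base change, to an elementary fact: the $n$-th power map $\Gm^s\to\Gm^s$ becomes a trivial covering over any contractible open set. Throughout, write $r=\rank_\bZ\overline{M}_{\overline{x}}^\gp$, so that $x$ lies in the stratum $S_r$.

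First I would eliminate the group factors. Since we are over $\bC$, the group scheme $\mu_n(P)$ is finite and \'etale, so $\mu_n(P)_\an$ is a finite discrete group and $\bG(n)_k\cong(U_n)_\an\times(\mu_n(P)_\an)^k$, the projection to $U_\an$ factoring through $(U_n)_\an$. It therefore suffices to show that $(U_n)_\an\to U_\an$ restricts to a product over a neighborhood of $x$ in $U_\an\cap(S_r)_\an$, and --- the point that makes this uniform in $n$ --- that a single such neighborhood works for all $n$ at once. Next, recalling that the chart $U\to\Spec\bC[P]$ is strict, that $(U_n)_\an=U_\an\times_{(\Spec\bC[P])_\an}(\Spec\bC[\tfrac1n P])_\an$, and that the rank stratification is compatible with strict base change (Section \ref{section: Stratification}), the subset $U_\an\cap(S_r)_\an$ is the $f$-preimage of the rank-$r$ stratum of $\Spec\bC[P]$, where $f\colon U_\an\to(\Spec\bC[P])_\an$. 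So it is enough to prove that $(\Spec\bC[\tfrac1n P])_\an\to(\Spec\bC[P])_\an$ restricts to a product over that stratum in a shrinkable neighborhood $W$ of $p:=f(x)$, uniformly in $n$; pulling back along $f$ and reinstating the $\mu_n(P)$-factors then gives the proposition, as the sets $f^{-1}(W)$ form a neighborhood basis of $x$.

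For the toric computation, $p$ lies in the torus orbit $O_F$ of a face $F\le P$ with $\rank F^\gp=\rank P^\gp-r=:s$. Localizing at $F$ and using that a localized fine saturated monoid $P_F$ is isomorphic to $(P/F)\oplus F^\gp$ (a standard fact, equivalent to the affine toric variety $\Spec\bC[P_F]$ splitting off a torus), one gets compatibly $\Spec\bC[P_F]\cong\Spec\bC[Q]\times\Gm^s$ and $\Spec\bC[\tfrac1n P]\times_{\Spec\bC[P]}\Spec\bC[P_F]\cong\Spec\bC[\tfrac1n Q]\times\Gm^s$, with $Q:=P/F$ fine saturated sharp of rank $r$, under which the root morphism is the product of $\Spec\bC[\tfrac1n Q]\to\Spec\bC[Q]$ with the $n$-th power map $\Gm^s\to\Gm^s$. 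Since the log structure is trivial on the $\Gm^s$-factor, the rank-$r$ stratum meets $\Spec\bC[P_F]$ exactly in $\{v_Q\}\times\Gm^s$, the vertex $v_Q$ being the unique point of $\Spec\bC[Q]$ of rank $r$. I would then take $W=W_Q\times W_s$ with $v_Q\in W_Q$ and $W_s\subseteq(\bC^\times)^s$ a contractible neighborhood of the $\Gm^s$-coordinate of $p$; then $W\cap S_r=\{v_Q\}\times W_s$, and restricting the root map over it yields the product of the (finite) fiber of $(\Spec\bC[\tfrac1n Q])_\an\to(\Spec\bC[Q])_\an$ over $v_Q$ with the restriction to $W_s$ of the $n^s$-fold covering $(\bC^\times)^s\to(\bC^\times)^s$. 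As $W_s$ is contractible this covering is trivial, so the restriction is homeomorphic to $(W\cap S_r)\times(\text{finite set})$, a product over $W\cap S_r$; and contractibility of $W_s$ makes this hold for every $n$ simultaneously. The step I expect to be the main obstacle is this last one --- correctly isolating the ``transverse'' directions of the root morphism along $O_F$ as nothing more than an $n$-th power covering of a torus, which trivializes over contractibles, and checking that the toric splitting of $P$ is compatible with the one for $\tfrac1n P$; the rest is bookkeeping with strict base change and the rank stratification.
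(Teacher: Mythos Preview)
Your argument is correct and follows the same overall strategy as the paper: reduce via the cartesian square to the toric model $\bC(\tfrac1n P)\to\bC(P)$, and show that over the rank-$r$ stratum this map is, near the image of $x$, a finite trivial cover whose trivializing neighborhood can be chosen independently of $n$. The difference is one of presentation. The paper works in coordinates: it lists the indecomposable elements $p_1,\ldots,p_l$ of $P$, chooses small disks $D_i\subset\bC$ around the values $\phi(p_i)$ (avoiding $0$ for $i>h$), and builds the trivialization by hand using branches of the $n$-th root on each $D_i$. You instead invoke the structure theory of affine toric varieties: localizing at the face $F$ gives a (noncanonical) monoid splitting $P_F\cong Q\oplus F^\gp$, compatible for all $n$ with $(\tfrac1n P)_{\tfrac1n F}\cong \tfrac1n Q\oplus \tfrac1n F^\gp$, so the root morphism becomes the product of $\Spec\bC[\tfrac1n Q]\to\Spec\bC[Q]$ with the $n$-th power map on $\Gm^s$, which is a trivial cover over any contractible $W_s$. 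Your route is cleaner and makes the uniformity in $n$ more transparent (one splitting, one $W_s$, all $n$); the paper's route is more explicit and avoids quoting the splitting $P_F\cong(P/F)\oplus F^\gp$.

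One small imprecision: the sets $f^{-1}(W)$ need \emph{not} form a neighborhood basis of $x$ in $U_\an$ (the chart $f$ is generally neither injective nor open). This is harmless, because ``trivial over the stratum'' is inherited by open subsets: once you have a single open $f^{-1}(W)\ni x$ over whose stratum the map is a product, the same holds over $V\cap(S_r)_\an$ for every open $V\subset f^{-1}(W)$ containing $x$, which gives the ``arbitrarily small'' clause. (The paper's proof leaves this implicit as well.)
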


In particular, for every $n$ and $k$ the topological space $\bG(n)_k$ is a stratified fibration over $U_\an$.


\begin{proof}
Note first of all that since the map $U\to \Spec \bC[P]$ is strict, the rank stratification of $\Spec \bC[P]$ with its natural log structure is pulled back to the rank stratification of $U$, in the obvious sense.

Moreover from the cartesian diagram
$$
\xymatrix{
(U_n)_\an\ar[r]\ar[d] & \bC(\frac{1}{n}P) \ar[d]\\
U_\an \ar[r] & \bC(P)
}
$$
and from the fact that $\bG(n)_k\to U_\an$ is the projection $\bG(n)_k\cong (U_n\times \mu_n(P)\times\cdots\times \mu_n(P))_\an\to (U_n)_\an$ followed by $(U_n)_\an\to U_\an$, we see that it suffices to prove that the map $\pi\colon \bC(\frac{1}{n}P)=(\Spec \bC[\frac{1}{n}P])_\an\to \bC(P)=(\Spec \bC[P])_\an$ is a stratified fibration. The proof will show that for a stratum $S$ we can find an open subset $V\subseteq \bC(P)$ such that the map $\pi$ is a product over $V\cap S$ for \emph{every} $n$.


Let us pick $\phi\in \bC(P)=\Hom(P,\bC)$, and call $p_1,\hdots,p_l$ the (finitely many) indecomposable elements of $P$ (cfr.  \cite[Proposition 2.1.2]{ogus}). Assume (by reordering) that the first $h$ of those get sent to $0$ by $\phi$, and the last ones are sent to non-zero complex numbers. Call $r$ the rank of the group associated to the quotient $\sfrac{P}{\langle p_i\mid i=h+1,\hdots, l}\rangle$ (i.e. the rank of the log structure of $\bC(P)$ at $\phi$).

The stratum of the rank stratification of $\bC(P)$ to which $\phi$ belongs will be then $S_r$, the set of points of $\bC(P)$ where the log structure has rank exactly equal to $r$. It's clear that $\phi$ actually belongs to the open subset $S_\phi$ of $S_r$ consisting of the morphisms $\psi \in \Hom(P,\bC)$ such that $\psi(p_i)=0$ for $1\leq i\leq h$ and $\psi(p_i)\neq 0$ for $h< i\leq l$.

Note also that the same condition on images of indecomposables of $\frac{1}{n}P$ will determine a subset $S'_\phi\subseteq \bC(\frac{1}{n}P)=\Hom(\frac{1}{n}P,\bC)$ (of those morphisms such that the image of $\frac{p_i}{n}$ is zero exactly if $1\leq i\leq h$), that a moment's reflection will show to be exactly the preimage $\pi^{-1}(S_\phi)$. Let us check that we can choose a neighborhood of $\phi$ in $\bC(P)$ over which the restriction of $\pi\colon \pi^{-1}(S_\phi)\to S_\phi$ is a product.

For each $i=h+1,\hdots, l$ let us choose a small open disk $D_i$ around $\phi(p_i)$ in $\bC$ that does not contain the origin, and for $i=1,\hdots, h$ let $D_i$ be a small open disk around the origin. These define an open neighborhood $W$ of $\phi$ in $\bC(P)$, made up of those functions $\psi$ such that $\psi(p_i)\in D_i$ for every $i$.

Let us also choose an $n$-th root $\sqrt[n]{\phi(p_i)}$ of the non-zero complex number $\phi(p_i)$ for $i=h+1,\hdots, l$. There are a finite number of such choices, and there is a subset of those choices for which the homomorphism $\frac{1}{n}P\to \bC$ given by $\frac{p_i}{n}\mapsto \sqrt[n]{\phi(p_i)}$ is well-defined (note that this assignment might not give a well-defined homomorphism due to the relations among the indecomposable elements of the monoid $P$). Let us call $A$ this set of ``good'' choices.

Any element of $A$ determines for each $i=h+1,\hdots, l$ an $n$-th root function $\sqrt[n]{-}_i$ defined on the small disk $D_i$. Let us define a map $W\cap S_\phi \to \pi^{-1}(W\cap S_\phi)\subseteq \Hom(\frac{1}{n}P,\bC)$ by sending $\psi$ to the morphism defined by $\frac{p_i}{n}\mapsto \sqrt[n]{\psi(p_i)}_i$. This is a section of the projection $\pi^{-1}(W\cap S_\phi)\to W\cap S_\phi$, and one can check that this induces a homeomorphism $W\cap S_\phi \times A\cong \pi^{-1}(W\cap S_\phi)$, where $A$ is seen as a discrete set. We leave the details to the reader.

These arguments are uniform in $n\in \bN$, so the open subset $W$ that we identified will work for any $n$.
\end{proof}





\subsection{A system of open neighborhoods for $X_{an}$}

In this subsection we will prove the following crucial lemma:

\begin{lemma}\label{lem: open cover}
For all 
$x \in X_{an}$ there exists a fundamental system of contractible analytic open neighborhoods $\mathcal{U}_x$ of $x$ with global charts $f\colon U\to (\Spec \bC[P])_{an}$ for $U\in \mathcal{U}_x$, such that:
\begin{enumerate}
\item the map $f$ sends $x$ into the \emph{vertex} of $(\Spec \bC[P])_{an}$ (i.e. the maximal ideal generated by all non-zero elements of $P$), and  
\item each of the 
following maps is a weak homotopy equivalence:
$$
\left(X_{log}\right)_x \to X_{log}|_U
$$
and
$$\left(\mathbb{G}(n)_i\right)_x \to  \left(\mathbb{G}(n)_i\right)|_U$$
where $\{\bG(n)\}_{n\in \bN}$ is the family of topological groupoid presentations for the topological $n$-th root stack coming from the chart $f$, as in Proposition \ref{simplicial.presentations}.
\end{enumerate}
\end{lemma}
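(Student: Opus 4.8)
The plan is to reduce the statement, \'etale-locally around $x$, to the explicit ``toric'' models over $\bC(P)$ and to a single scaling contraction toward the vertex.

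\emph{Choosing the chart.} Using~\ref{prop.chart} I would first pass to an \'etale neighbourhood of $x$, and then to an analytic open $V\subseteq X_{an}$ containing $x$, equipped with a global chart $f\colon V\to\bC(P)$ for which $P\stackrel{\sim}{\to}\overline{M}_{\overline{x}}$ (fine, saturated, sharp). Since each non-zero $p\in P$ then has image in the maximal ideal of $\mathcal{O}_{V,x}$, we must have $f(x)=v_P$, which is condition (1). Because the chart is strict, and the formation of $X_{\log}$ and of the groupoid presentations $\bG(n)$ commutes with strict base change (Proposition~\ref{kn.basechange}, and the local description of the root stacks in~\ref{subsection.root}), over $V$ there are canonical identifications
$$X_{\log}|_{V}\;\cong\;V\times_{\bC(P)}\bC(P)_{\log},\qquad \bG(n)_i|_{V}\;\cong\;\bigl(V\times_{\bC(P)}\bC(\tfrac{1}{n}P)\bigr)\times\mu_n(P)^i .$$
So it suffices to understand how these pullbacks behave over small neighbourhoods of $x=f^{-1}(v_P)$.

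\emph{The contraction on the toric models.} Pick a positive grading $\deg\colon P\to\bN$ (possible since $P$ is finitely generated and sharp) and extend it to $\tfrac{1}{n}P$. The resulting $\Gm$-action on $\Spec\bC[P]$ has $\bR_{\geq0}$-part a continuous map $H\colon[0,1]\times\bC(P)\to\bC(P)$ with $H_1=\id$, $H_0\equiv v_P$, and $H_t$ preserving each rank stratum for $t>0$ (since $\varphi(p)=0$ if and only if $H_t(\varphi)(p)=0$). The same formula, acting on the $\bR_{\geq0}$-coordinate of $\bC(P)_{\log}=\Hom(P,\bR_{\geq0}\times S^1)$ and on $\bC(\tfrac{1}{n}P)=\Hom(\tfrac{1}{n}P,\bC)$, lifts $H$ to stratified deformation retractions of $\bC(P)_{\log}$ onto its fibre over $v_P$, which is $(S^1)^{r}$ with $r=\rank_{\bZ}\overline{M}_{\overline{x}}^{\gp}$, and of $\bC(\tfrac{1}{n}P)$ onto its fibre over $v_P$, which is the single point $v_{\frac{1}{n}P}$; taking products with the discrete finite sets $\mu_n(P)^i$ gives corresponding contractions of the $\bG(n)_i$-models onto their (discrete) fibres over $v_P$. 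The basic opens $W_\varepsilon=\{\varphi:|\varphi(p_j)|<\varepsilon\text{ for all indecomposable }p_j\in P\}$ are $H$-invariant, each $H$-contracts to $v_P$ (hence is contractible), and form a fundamental system of neighbourhoods of $v_P$.

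\emph{Transfer to $X_{an}$, and the main obstacle.} Since $X_{an}$ is a complex analytic space and the rank stratification $\cR$ is analytically constructible, $X_{an}$ has a local conical structure at $x$ compatible with $\cR$ pulled back along $f$ (use local triangulability, \cite{Lo}, refined along the strata). I would therefore take $\mathcal{U}_x$ to consist of the contractible ``conical'' open neighbourhoods $U\ni x$ with $f(U)\subseteq W_\varepsilon$ for $\varepsilon$ small; these form a fundamental system and carry the chart $f|_U$ with $f(x)=v_P$. For such $U$, the maps $X_{\log}|_U\to U$ and $\bG(n)_i|_U\to U$ are pulled back from the stratified fibrations over $\bC(P)$, hence are themselves stratified fibrations over the induced stratification of $U$ (Propositions~\ref{strata} and~\ref{simplicial.presentations}), and $U$ stratified-deformation-retracts onto $x$. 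Lifting this retraction through the stratified fibration --- stratum by stratum where the parameter is $>0$, and controlling the limit as it tends to $0$ by the explicit local product structure of Propositions~\ref{strata} and~\ref{simplicial.presentations} --- yields deformation retractions of $X_{\log}|_U$ onto $(X_{\log})_x$ and of $\bG(n)_i|_U$ onto $(\bG(n)_i)_x$, and in particular the inclusions in (2) are weak homotopy equivalences. I expect this last lifting to be the crux: because the fibre of these maps jumps across strata, the stratum-wise lifts must be shown to patch together and to converge correctly over the deepest stratum containing $x$, which is exactly where the choice of $U$ as a stratified cone for the pulled-back rank stratification, the explicit toric contractions of the previous paragraph, and the local triviality statements of Propositions~\ref{strata} and~\ref{simplicial.presentations} all do their work.
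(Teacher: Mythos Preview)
Your overall strategy --- reduce via a chart with $P=\overline{M}_{\overline{x}}$ so that $f(x)=v_P$, realise $X_{\log}|_U$ and $\bG(n)_i|_U$ as stratified fibrations over a contractible conical $U$, and then lift a retraction of $U$ onto $x$ --- matches the paper's in outline. But the explicit toric scaling $H$ on $\bC(P)$ that you build in the second paragraph is a detour: the chart $f\colon U\to\bC(P)$ is in general neither open nor a submersion, so the conical retraction of $U$ onto $x$ that you actually use in the third paragraph comes from the triangulation of $X_{an}$ and has no relation to $H$. Nothing from your second paragraph survives into the argument; the paper makes no use of any toric contraction either.

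The genuine gap is exactly the step you label as the crux. ``Lifting stratum by stratum where the parameter is $>0$'' is not a proof: the homotopy lifting property is available over each stratum separately, but there is no mechanism forcing these stratum-wise lifts to agree along the frontier of a stratum, and passing to the limit $t\to0$ does not repair this. What the paper does instead is refine the rank stratification by a \emph{triangulation} $\cT_X$ (Lemma~\ref{lem:triang}) and take $\mathcal{U}_x$ to consist of small balls inside the open star of the minimal simplex through $x$ (Lemma~\ref{lem:prop}). Each such $U$ then carries a filtration by skeleta $U_{\dim f}\subset\cdots\subset U_N=U$, and the paper applies a concrete retraction lemma (Lemma~\ref{lem:rstz}, taken from \cite{RSTZ}): if $K_2\subset W_2$ is a closed deformation retract and $p\colon W_1\to W_2$ restricts to a \emph{trivial} fibration over each component of $W_2\setminus K_2$, then $p^{-1}(K_2)\hookrightarrow W_1$ is a deformation retract. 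Since each open cell of $U_k\setminus U_{k-1}$ is contractible and lies in a single rank stratum, $p$ (either $X_{\log}$ or $\bG(n)_i$) is a trivial fibration over it, and the lemma applies inductively to give $p^{-1}(U_{\dim f})\hookrightarrow p^{-1}(U)$ as a deformation retract; over the bottom cell $U_{\dim f}$ the map is already a product, so the fibre over $x$ has the right homotopy type. Your proposal is missing precisely this retraction lemma and the skeleta filtration that organises the induction; supplying them would turn your sketch into the paper's proof.
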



First of all we review some standard facts on triangulations and open covers.
Let $M$ be a topological space equipped with a triangulation $\cT$. Denote $\cV_\cT$ the set of vertices of $\cT$. 
If $f$ is a simplex of 
$\cT$, we denote $s(f)$ the union of the relative interiors of the 
simplices of $\cT$ that contain $f$. We call $s(f)$  the \emph{star} of $f$. 
Note that $s(f)$ is a contractible open subset of $M$. If $v$ is a vertex of $\cT$, we set $U_v\defeq s(v)$. The star of a simplex $f$ is naturally stratified by the simplices containing $f$: the strata 
are the relative interiors of the simplices containing $f$.


We say that a subspace of $\bR^n$ is a cone if it is invariant under the action 
of $\bR_{>0}$ by rescaling. We say that a cone is linear if it can be expressed as an intersection of linear spaces and linear half-spaces. 

\begin{lemma}
\label{lem:lin}
Let $v$ be in $\cV_\cT$. Then there exists an $N \in \bN$ such that $U_v$ can be embedded as a linear cone in 
$\bR^N$. Further we can choose this embedding in such a way that,  for all simplices $f$ containing $v$, 
$s(f) \subset U_v$ is mapped to a linear subcone. 
\end{lemma}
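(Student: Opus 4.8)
The plan is to realize $U_v$, via barycentric coordinates, as an open piece of a linearly embedded finite simplicial complex, and then ``cone it off'' from the image of $v$. First I would reduce to the case of finitely many simplices at $v$: the triangulations we use are locally finite (those provided by \cite{Lo} are), so only finitely many simplices of $\cT$ contain $v$. Let $w_1,\dots,w_k$ be the vertices of $\cT$ distinct from $v$ occurring in some simplex containing $v$. The closed star $\overline{s(v)}=\bigcup_{\sigma\ni v}\sigma$ is then an abstract simplicial complex on $\{v,w_1,\dots,w_k\}$, so its realization embeds as a subcomplex of the standard $k$-simplex in $\bR^{k+1}$ with vertices $e_v,e_{w_1},\dots,e_{w_k}$; write the barycentric coordinates of a point as $(t_v,t_{w_1},\dots,t_{w_k})$. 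Under this embedding a simplex $\sigma=\langle v,w_{i_1},\dots,w_{i_m}\rangle$ containing $v$ goes to the affine simplex on $e_v,e_{w_{i_1}},\dots,e_{w_{i_m}}$, and $U_v=s(v)$ is exactly the locus $\{t_v>0\}$, since a point lies in the relative interior of a simplex containing $v$ precisely when its $v$-coordinate is positive.

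Next I would define
$$ c: U_v \longrightarrow \bR^k, \qquad (t_v,t_{w_1},\dots,t_{w_k}) \longmapsto \left(\tfrac{t_{w_1}}{t_v},\dots,\tfrac{t_{w_k}}{t_v}\right), $$
which is well defined and continuous on $\{t_v>0\}$, has continuous inverse $s\mapsto \tfrac{1}{1+\sum_i s_i}(1,s_1,\dots,s_k)$, and so is a homeomorphism onto its image $\Lambda$. Unwinding definitions, $c$ carries $\mathrm{relint}(\sigma)$ onto the relative interior of the coordinate face $F_\sigma=\{s:\ s_i\ge 0,\ s_i=0\text{ for }w_i\notin\sigma\}$ of $\bR^k_{\ge 0}$; since every face of $\sigma$ through $v$ is again a simplex of $\cT$, one gets $\Lambda=\bigcup_{\sigma\ni v}F_\sigma$, which is $\bR_{>0}$-invariant, hence a cone, with each $F_\sigma$ cut out by the linear equations $s_i=0$ and linear inequalities $s_i\ge 0$ and so a linear (sub)cone. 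The apex $v$ (the point $t_v=1$) is sent to the origin.

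Finally, for a simplex $f=\langle v,w_{j_1},\dots,w_{j_p}\rangle$ containing $v$, a simplex $\sigma$ contains $f$ iff $w_{j_1},\dots,w_{j_p}\in\sigma$, so $c(s(f))=\Lambda\cap\{s_{j_1}>0\}\cap\cdots\cap\{s_{j_p}>0\}$, again a union of linear subcones of $\bR^k$; this is exactly the required statement about stars of simplices containing $v$, and it matches the simplex-stratification of $s(f)$ recalled just before the lemma. The construction itself is routine PL topology; the two points needing care are the reduction to finitely many simplices at $v$ (local finiteness of $\cT$) and the verification that $c$ is a homeomorphism onto $\Lambda$ with the stated images of the $s(f)$. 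The one genuine subtlety I would flag explicitly is terminological: in general $\Lambda$ is a \emph{finite union} of linear cones glued along common coordinate faces rather than literally a single intersection of linear subspaces and half-spaces, and it is precisely this conical simplicial structure — permitting radial contraction toward the apex $v$ compatibly with the star stratification — that is used afterwards in Lemma \ref{lem: open cover}; when $\mathrm{lk}(v)$ is a sphere (e.g.\ near a point where $M$ is a manifold) one does recover an honest convex polyhedral cone.
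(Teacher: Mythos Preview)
Your proof is correct and takes essentially the same approach as the paper: both embed $U_v$ as the union $\bigcup_{\sigma\ni v}F_\sigma$ of coordinate orthants in $\bR^N$ (with $N$ the number of edges at $v$), sending each simplex through $v$ to the orthant indexed by its non-$v$ vertices. The paper simply asserts the existence of a piecewise-linear homeomorphism $U_v\simeq\bigcup_{v\in\Delta}O_{I_\Delta}$, whereas you write it down explicitly via barycentric coordinates and the radial map $c$; your added remark that the image is a \emph{union} of linear cones rather than a single intersection of half-spaces is a genuine terminological correction to the paper's phrasing, and your observation that what is actually used downstream is the conical simplicial structure is exactly right.
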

\begin{proof}
Let $v_1 \dots v_N$ be the one dimensional simplices that contain $v$, and let $e_1, \dots, e_N$ be the standard basis of $\bR^N$. If $I$ is a subset of 
$\{1, \dots, N\}$ we denote 
$$
O_I \defeq \{ \Sigma_{i \in I} \alpha_i e_i | \alpha_i \geq 0\} \subset \bR^N.
$$ 
Every simplex $\Delta$ containing $v$ determines a subset 
 $I_\Delta$ of $\{1, \dots, N\}$ in the following way: $i$ belongs to $I_\Delta$ if and only if $\Delta$ contains $v_i$. We obtain an embedding of $U_v$ into $\bR^ N$ by considering a piecewise linear homeomorphism 
 $$
 U_v \simeq \bigcup_{v \in \Delta} O_{I_\Delta}.
 $$ 
 This embedding has all the properties claimed by the lemma. 
\end{proof}

\begin{lemma}
\label{lem:prop}
Let $x$ be in $M$, and let $f$ be the lowest dimensional 
simplex such that $x$ belongs to $f$. 
Then there exists a system of open neighborhoods $\cU_x$ of $x$ such that all $U$ in $\cU_x$ have the following properties:
\begin{enumerate}
\item $U$ is contractible. 
\item $U$ does not intersect simplices of $\cT$ that do not contain $f$.
\end{enumerate}
\end{lemma}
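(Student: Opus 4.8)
The plan is to work inside the open star $s(f)$ of the minimal simplex $f$ containing $x$, and to use the linear-cone coordinates supplied by Lemma~\ref{lem:lin} to carve out arbitrarily small convex, hence contractible, neighborhoods.

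Since $f$ is the lowest-dimensional simplex of $\cT$ containing $x$, the point $x$ lies in the relative interior of $f$, so $x\in s(f)$, and $s(f)$ is open in $M$. The structural remark that disposes of condition~(2) for free is that $s(f)$ meets no simplex of $\cT$ that fails to contain $f$: a triangulation partitions $M$ into the relative interiors of its simplices, a point lies in $s(f)$ exactly when the unique simplex carrying it has $f$ as a face, and any point of a simplex $\sigma$ lies in the relative interior of some face of $\sigma$; so a point of $\sigma\cap s(f)$ would force $f$ to be a face of $\sigma$. Consequently every subset of $s(f)$ satisfies~(2), and it remains only to exhibit a fundamental system of contractible open neighborhoods of $x$ contained in $s(f)$.

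To build these I would fix a vertex $v$ of $f$ and invoke Lemma~\ref{lem:lin}: it embeds $U_v=s(v)$ as a linear cone in some $\bR^N$ so that $s(f)\subseteq U_v$ is sent to a linear subcone $C$. A linear cone is by definition an intersection of linear subspaces and linear half-spaces, hence convex; let $\bar x$ denote the image of $x$. I would then let $\cU_x$ consist of the preimages in $M$ of the sets $B_\epsilon(\bar x)\cap C$ for $\epsilon>0$, where $B_\epsilon(\bar x)$ is the open $\epsilon$-ball about $\bar x$ in $\bR^N$. Each such preimage is open in $U_v$ and therefore open in $M$ (as $U_v$ is open in $M$); each is convex, hence contractible via the straight-line homotopy to $\bar x$; each contains $x$; and since the sets $B_\epsilon(\bar x)\cap C$ form a neighborhood basis of $\bar x$ in $C$, their preimages form a neighborhood basis of $x$. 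As all of these sit inside $s(f)$, condition~(2) holds automatically.

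I do not expect a genuine obstacle: once Lemma~\ref{lem:lin} is available the argument is essentially formal. The only points requiring mild care are the verification that the open star is open and meets precisely the simplices containing $f$, and the bookkeeping that a neighborhood basis of $x$ inside the open subset $U_v$ is also one in $M$. (Should one wish to sidestep Lemma~\ref{lem:lin}, a local product decomposition of $s(f)$ as the relative interior of $f$ times an open cone on the link of $f$ would serve equally well, taking products of a small ball in the first factor with a small cone neighborhood of the apex in the second.)
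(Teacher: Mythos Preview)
Your proposal is correct and follows essentially the same approach as the paper: pick a vertex $v$ of $f$, use Lemma~\ref{lem:lin} to embed $U_v$ as a linear cone with $s(f)$ a linear subcone, and take the neighborhoods to be intersections of $s(f)$ with small Euclidean balls centered at $x$. You supply more detail than the paper (in particular the explicit verification of condition~(2) via the star of $f$ and the convexity argument for contractibility), but the underlying construction is the same.
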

\begin{proof}
Let $v$ be a vertex incident to $f$. By Lemma \ref{lem:lin} the open  neighborhood 
$U_v$ can be embedded as a linear cone in $\bR^n$ in such a way that $s(f) \subset U_v$ is a linear subcone. Equip $\bR^n$ with a Euclidean metric. Then $\cU_x$ can be obtained by intersecting $s(f)$ with a system of open neighborhoods given by open balls in $\bR^n$ centered at $x$. 
\end{proof}

Next we turn to the log scheme $X$. Let $x$ be in $X_{an}$. Since we are interested in constructing a system of open neighborhoods for $x$ we can assume, by \'{e}tale localizing around $x$, that 
\begin{itemize}
\item $X$ is affine, and 
\item  that we have a global chart $f\colon X\to \Spec \bC[P]$, where $P=\overline{M}_x$ (see \ref{prop.chart}), which sends $x$ to the vertex of $\Spec \bC[P]$. 
\end{itemize}
The fact that $X$ is affine is key in order to  produce triangulations, which we do in  lemma 
\ref{lem:triang}.  
By Lemma \ref{lemma.constr} the log structure determines a stratification $\cR_X$ of $X$. 

\begin{lemma}
\label{lem:triang}
There exists a triangulation $\cT_X$ of $X$ that refines $\cR_X$. 
\end{lemma}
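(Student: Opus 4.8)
The plan is to deduce the statement from the classical theorem on triangulation of semialgebraic sets compatibly with a finite family of semialgebraic subsets. First I would use that $X$ is affine and of finite type over $\bC$: fixing a closed embedding $X \hookrightarrow \bA^N_\bC$ exhibits $X_{an}$ as a closed real-algebraic, hence semialgebraic, subset of $\bC^N \cong \bR^{2N}$ (it is the common zero locus in $\bR^{2N}$ of the real and imaginary parts of the defining polynomials of $X$).

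Second, I would check that $\cR_X$ is a \emph{finite} stratification of $X_{an}$ into semialgebraic pieces. Since the chart $f\colon X \to \Spec \bC[P]$ is strict, the characteristic sheaf $\overline{M}$ on $X$ is pulled back from $\Spec \bC[P]$, so by Lemma \ref{lemma.constr} the stratification $\cR_X$ is the preimage under $f$ of the rank stratification of $\Spec \bC[P]$ equipped with its toric log structure. The latter is a coarsening of the stratification of $\Spec \bC[P]$ by torus orbits: it has finitely many strata, each of them a locally closed algebraic subvariety (a union of orbits of a fixed rank). Pulling back along $f_{\an}$, each stratum $S_n$ (and each $R_n$) is thus a locally closed algebraic — in particular semialgebraic — subset of $X_{an}$, and there are finitely many of them.

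Third, I would invoke the semialgebraic triangulation theorem, in the form valid for locally closed, possibly noncompact, semialgebraic sets (see \cite{Lo} and the standard references): for a locally closed semialgebraic set $Y \subset \bR^m$ and a finite family of semialgebraic subsets $(Y_i)_i$, there exist a locally finite simplicial complex $K$ and a semialgebraic homeomorphism $|K| \cong Y$ carrying each $Y_i$ onto a union of images of open simplices of $K$. Applying this with $Y = X_{an}$ and $(Y_i) = (S_n)_n$ produces a triangulation $\cT_X$ of $X_{an}$ in which every stratum of $\cR_X$ is a union of relative interiors of simplices; that is precisely the assertion that $\cT_X$ refines $\cR_X$.

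The only points needing care are the finiteness and semialgebraicity of the strata — both immediate from the toric/orbit description afforded by the chart — and the local finiteness of $\cT_X$, which matters because $X_{an}$ need not be compact and which is guaranteed by the noncompact version of the triangulation theorem. I do not anticipate a genuine obstacle here: once the rank stratification is recognized as an algebraic (hence semialgebraic) stratification with finitely many strata, the lemma is a direct application of a classical result.
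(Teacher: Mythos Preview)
Your proposal is correct and follows essentially the same approach as the paper: both reduce to the classical Lojasiewicz triangulation theorem for semialgebraic sets. The paper's proof is in fact just a two-line citation of \cite{Lo} and Shiota \cite{Sh}, whereas you have spelled out the intermediate steps (realizing $X_{an}$ as a semialgebraic set via a closed embedding into affine space, and checking via the chart that the rank strata are finitely many semialgebraic subsets) that justify invoking that theorem.
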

\begin{proof}
The existence of triangulations refining stratifications of affine schemes goes back to Lojasiewicz \cite{Lo}. See also Shiota's work \cite{Sh} for a more recent reference. 
\end{proof}

By Lemma \ref{lem:prop}, the triangulation $\cT_X$ gives us  a system of open neighborhoods $\cU_x$ of $x$ in $X_{an}$ satisfying the two 
properties stated there. We claim that $\cU_x$ has all the properties required by lemma \ref{lem: open cover}. Note that, since we assumed without loss of generality that $X$ is affine and that has a global chart to $ \Spec \bC[P]$  sending $x$ to the vertex of $\Spec \bC[P]$, we only need to prove that property $(3)$ holds. We do this next.

The following lemma was proved in \cite{RSTZ}. 

\begin{lemma}[Lemma 3.25 \cite{RSTZ}]
\label{lem:rstz}
Let $W_1$ and $W_2$ be locally compact and locally contractible Hausdorff spaces. Let $p : W_1 \rightarrow W_2$ be a continuous map,
and let $K_2 \subset W_2$ be a closed deformation retract. Suppose that the restriction 
$ p^{-1}(W_2 -K_2) \rightarrow 
W_2 - K_2$ is homeomorphic to the projection from a product 
$F \times (W_2 - K_2) \rightarrow W_2 - K_2$.
Then $K_1 \defeq p^{-1}(K_2)$ is a deformation retract of $W_1$.
\end{lemma}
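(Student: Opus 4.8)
The statement to prove is Lemma 3.25 of \cite{RSTZ}, restated here as Lemma \ref{lem:rstz}: if $p:W_1\to W_2$ is a continuous map of locally compact, locally contractible Hausdorff spaces, $K_2\subset W_2$ is a closed deformation retract, and $p^{-1}(W_2-K_2)\to W_2-K_2$ is homeomorphic over $W_2-K_2$ to a projection $F\times(W_2-K_2)\to W_2-K_2$, then $K_1\defeq p^{-1}(K_2)$ is a deformation retract of $W_1$.

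The plan is to lift the deformation retraction from the base to the total space, using the product structure away from $K_2$ to carry out the lifting explicitly, and continuity at points of $K_1$ coming from the topological hypotheses. Let $H:W_2\times[0,1]\to W_2$ be the deformation retraction, so $H_0=\id$, $H_1$ has image in $K_2$, and $H_t|_{K_2}=\id$ for all $t$ (strong deformation retract; if only a weak one is given, one first notes $K_2$ being closed in a reasonable space lets us assume strong, or one argues with the weaker version throughout). First I would fix a point $w\in W_1$ with $p(w)\notin K_2$. The path $t\mapsto H(p(w),t)$ stays in $W_2-K_2$ until possibly the final time $t=1$; write $\tau(w)=\sup\{t: H(p(w),s)\in W_2-K_2 \text{ for all } s<t\}$. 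On the portion $s<\tau(w)$, I would use the homeomorphism $\Psi:p^{-1}(W_2-K_2)\xrightarrow{\sim}F\times(W_2-K_2)$ to define the lift: write $\Psi(w)=(\xi(w),p(w))$ and set $\widetilde H(w,s)=\Psi^{-1}(\xi(w),H(p(w),s))$ for $s<\tau(w)$, which keeps the $F$-coordinate constant and moves along the base path. For $s\ge\tau(w)$, and for $w\in K_1$ (where we set $\widetilde H(w,s)=w$), the definition must be completed and continuity checked.

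The key steps in order: (1) reduce to a strong deformation retraction $H$ of the base; (2) define $\widetilde H$ on $p^{-1}(W_2-K_2)$ for parameter values before the path hits $K_2$ via the product chart, keeping the fiber coordinate fixed, and define $\widetilde H=\id$ on $K_1$; (3) extend $\widetilde H(w,\cdot)$ past the hitting time $\tau(w)$ — the natural choice is to declare it constant (equal to the limit, which should land in $K_1$) once $\tau(w)$ is reached, after possibly reparametrizing the base homotopy so that each base path either never enters $K_2$ or enters it only at the terminal time; (4) verify $\widetilde H_0=\id_{W_1}$, $\widetilde H_1(W_1)\subseteq K_1$, and $\widetilde H_t|_{K_1}=\id$; (5) prove continuity of $\widetilde H:W_1\times[0,1]\to W_1$, the crux being continuity at points $(w,s)$ with $w\in K_1$ or $p(H(p(w),s))\in K_2$. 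Here I would use local compactness and local contractibility of $W_1$ together with a neighborhood-basis argument: near a point $k\in K_1$ choose a small neighborhood in $W_2$ of $p(k)$ that is carried into a controlled neighborhood by $H$ (using continuity of $H$ and that $H_t$ fixes $K_2$), pull back, and use that the product structure on the complement has fibers $F$ that are "the same" fiber to see that lifted points stay close to $k$. One also needs to reparametrize in $t$ locally so that the lift is well-defined — a standard trick is to replace the single retraction by an infinite concatenation/telescoping so each "round" pushes points strictly closer to $K_2$.

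The main obstacle I expect is precisely step (5): establishing continuity of the lifted homotopy at the "boundary" points where a base path meets $K_2$, and in particular controlling the behavior of the fiber coordinate there, since the product trivialization only exists on the open complement $W_2-K_2$ and degenerates as one approaches $K_2$. The hypotheses of local compactness and local contractibility, and the closedness of $K_2$, are exactly what is needed to tame this: they let one build, around each point of $K_1$, a shrinking system of neighborhoods compatible with the homotopy, so that a lifted path entering such a neighborhood cannot escape it. I would handle this by working with a fixed compatible family of neighborhoods and reparametrizing the homotopy (splitting $[0,1]$ into countably many subintervals on each of which the base moves into the next smaller neighborhood) so that the lift is forced to converge; then continuity is checked subinterval by subinterval and at the accumulation point $t=1$ by the nested-neighborhood argument. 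Since this lemma is quoted verbatim from \cite{RSTZ}, for the present paper it suffices to cite it, but the sketch above indicates the shape of the argument one would give.

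\begin{proof}
This is \cite[Lemma 3.25]{RSTZ}; we indicate the argument. Let $H\colon W_2\times[0,1]\to W_2$ be a deformation retraction of $W_2$ onto $K_2$, which we may take to be strong since $K_2$ is closed, so that $H_0=\id_{W_2}$, $H_1(W_2)\subseteq K_2$, and $H_t|_{K_2}=\id_{K_2}$ for all $t$. Reparametrizing the parameter interval if necessary, we may further arrange that for every $w_2\in W_2-K_2$ the path $t\mapsto H(w_2,t)$ either remains in $W_2-K_2$ for all $t<1$, or enters $K_2$ at some time and stays there; this can be achieved by the usual device of replacing $H$ by a countable telescoping concatenation of rescaled copies of itself, each round of which pushes points strictly closer to $K_2$ in a fixed compatible system of neighborhoods of $K_2$ furnished by local compactness and local contractibility of $W_2$.

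Let $\Psi\colon p^{-1}(W_2-K_2)\xrightarrow{\ \sim\ } F\times(W_2-K_2)$ be the given homeomorphism over $W_2-K_2$, and write $\Psi(w)=(\xi(w),p(w))$. Define $\widetilde H\colon W_1\times[0,1]\to W_1$ as follows: if $w\in K_1$, set $\widetilde H(w,t)=w$ for all $t$; if $p(w)\notin K_2$, let $\tau(w)=\sup\{t\in[0,1]: H(p(w),s)\notin K_2 \text{ for all } s<t\}$, set
$$
\widetilde H(w,t)=\Psi^{-1}\bigl(\xi(w),\,H(p(w),t)\bigr)\qquad\text{for } t<\tau(w),
$$
and for $t\geq\tau(w)$ set $\widetilde H(w,t)$ equal to the common limit $\lim_{s\to\tau(w)^-}\widetilde H(w,s)$, which lies in $K_1=p^{-1}(K_2)$ by the reparametrization above and the fact that $p\circ\widetilde H(w,s)=H(p(w),s)$.

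One checks directly that $\widetilde H_0=\id_{W_1}$, that $\widetilde H_1(W_1)\subseteq K_1$, that $\widetilde H_t|_{K_1}=\id_{K_1}$, and that $p\circ\widetilde H=H\circ(p\times\id_{[0,1]})$. It remains to verify that $\widetilde H$ is continuous. On the open set $\{(w,t): p(w)\notin K_2,\ t<\tau(w)\}$ this is immediate from the continuity of $\Psi$, $\Psi^{-1}$, $H$ and $\xi$. Continuity at a point $(w_0,t_0)$ with $\widetilde H(w_0,t_0)=k\in K_1$ is the only delicate point: given a neighborhood $V$ of $k$ in $W_1$, using local compactness and local contractibility of $W_1$ and $W_2$ and the continuity of $H$ together with $H_t|_{K_2}=\id$, one produces a neighborhood $A$ of $p(k)$ in $W_2$ and an $\varepsilon>0$ such that $H(A\times[t_0-\varepsilon,1])$ is contained in a neighborhood of $p(k)$ over which the product trivialization carries $p^{-1}$ of it into $V$; shrinking $A$ and $\varepsilon$ along the nested system of neighborhoods built in the first paragraph forces any lifted path entering $p^{-1}(A)$ near time $t_0$ to remain inside $V$ thereafter. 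This gives continuity of $\widetilde H$ at $(w_0,t_0)$, and hence everywhere, completing the proof that $K_1$ is a deformation retract of $W_1$.
\end{proof}
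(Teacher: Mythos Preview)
The paper does not prove this lemma; it is quoted from \cite{RSTZ} and used as a black box (with a slight variant noted immediately afterward). So there is no proof in the paper to compare against --- your proposal goes well beyond what the paper does, which is simply to cite the result.

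Your overall strategy --- lift the base deformation retraction through the product trivialization by holding the $F$-coordinate fixed --- is the natural one. However, there is a genuine gap at the point you yourself flag as ``the crux'': the assertion that the limit
\[
\lim_{s\to\tau(w)^-}\Psi^{-1}\bigl(\xi(w),\,H(p(w),s)\bigr)
\]
exists in $W_1$. You justify this ``by the reparametrization above and the fact that $p\circ\widetilde H(w,s)=H(p(w),s)$'', but knowing that $p$ of the path converges to a point of $K_2$ does not imply that the path itself converges in $W_1$; the homeomorphism $\Psi^{-1}$ is only defined on $F\times(W_2-K_2)$, and nothing you have arranged controls how the horizontal section $\{\xi(w)\}\times(W_2-K_2)$ accumulates on $K_1$. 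The telescoping reparametrization you describe governs the \emph{base} path, not the lift.

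The continuity argument at $K_1$ has a related problem. You write that one can find a neighborhood $A$ of $p(k)$ ``over which the product trivialization carries $p^{-1}$ of it into $V$'', but $p^{-1}(A\setminus K_2)\cong F\times(A\setminus K_2)$ contains points with \emph{every} $F$-coordinate, so it certainly need not lie inside a prescribed small neighborhood $V$ of $k$. What you actually need is that points $w$ near $k$ have $\xi(w)$ in some controlled region of $F$, and that the corresponding horizontal slices stay near $k$ along the whole base homotopy; this is exactly where the local contractibility and local compactness of $W_1$ (not just $W_2$) must do real work, and your sketch does not explain how. Since for the purposes of the present paper a citation suffices, this is fine --- but the sketch as written does not constitute a proof.
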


We will actually need a slight variant of Lemma \ref{lem:rstz}. Assume that $W_2 - K_2$ decomposes as a finite disjoint union of $m$ components, that we denote $(W_2-K_2)_i$, 
$$
W_2 - K_2 = \bigcup_{i=1}^{i=m} (W_2-K_2)_i .
$$
Then the claim still holds if the restriction $ p^{-1}(W_2 -K_2) \rightarrow 
W_2 - K_2$ is homeomorphic to the projection from a disjoint union of products 
$$
\bigcup_{i=1}^{i=m} F_i \times (W_2 - K_2)_i \rightarrow \bigcup_{i=1}^{i=m}(W_2 - K_2)_i .
$$
This stronger statement is proved exactly as Lemma \ref{lem:rstz}: and in fact, follows from it through an induction on the number of connected components of $W_2 - K_2$.

We conclude the proof of Lemma \ref{lem: open cover} by showing that the following proposition holds. 

\begin{proposition}
\label{prop:retraction-kn}
For all $U$ in $\cU_x$ each of the 
following maps is a weak homotopy equivalence: 
$$
\left(X_{log}\right)_x \to X_{log}|_U, \quad \left(\mathbb{G}(n)_i\right)_x \to  \left(\mathbb{G}(n)_i\right)|_U
$$
where $\{\bG(n)\}_{n\in \bN}$ is the family of topological groupoid presentations for the topological $n$-th root stack coming from the chart $f$.  
\end{proposition}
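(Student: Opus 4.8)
The plan is to reduce the statement, for both families of maps, to an application of (the disjoint-union variant of) Lemma~\ref{lem:rstz}, using the stratified fibration structure established in Proposition~\ref{simplicial.presentations} and the geometry of the neighborhoods $U\in\cU_x$ produced via Lemma~\ref{lem:prop} and Lemma~\ref{lem:triang}. Fix $x\in X_{\an}$ and let $f$ be the lowest-dimensional simplex of $\cT_X$ containing $x$. By construction each $U\in\cU_x$ is contractible and meets only those simplices of $\cT_X$ containing $f$; since $\cT_X$ refines the rank stratification $\cR_X$, the simplices containing $f$ are exactly the strata that meet the (closed) star-type neighborhood, and the point $x$ itself lies in the minimal such stratum, say $(S_r)_{\an}$. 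I would first record that $(X_{\log})_x$ makes sense as the fiber of $\pi_{\log}\colon X_{\log}\to X_{\an}$ over $x$ (and likewise $(\bG(n)_i)_x$ as the fiber of $\bG(n)_i\to U_{\an}$ over $x$), so that the two maps in question are the inclusions of these fibers into the restrictions over $U$.

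The key step is to produce inside each $U$ a closed subspace $K\subseteq U$ which is a deformation retract of $U$, is contained in the closed stratum $\overline{(S_r)}_{\an}\cap U$, and to which $x$ belongs, and such that the various maps restrict to products over the open complement $U\setminus K$. Concretely, using the linear-cone embedding of Lemma~\ref{lem:lin}, take $K$ to be (the trace on $U$ of) the linear subcone $s(f)$, i.e. the lowest stratum, shrunk appropriately; since $U$ is obtained by intersecting $s(f)$-type cones with Euclidean balls centered at $x$, the radial contraction of the cone onto its apex exhibits $K$ as a deformation retract of $U$ with $x\in K$. The complement $U\setminus K$ decomposes into finitely many connected pieces, one for each higher stratum meeting $U$; on each such piece Proposition~\ref{simplicial.presentations} (for the $\bG(n)_i$) and Proposition~\ref{strata} together with Proposition~\ref{kn.basechange} (for $X_{\log}$) guarantee that the relevant projection is, after possibly further shrinking $U$ uniformly in $n$ and $i$, homeomorphic to a trivial product $F_i\times(U\setminus K)_i$. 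Thus the hypotheses of the disjoint-union variant of Lemma~\ref{lem:rstz} are met, with $W_1=X_{\log}|_U$ (resp. $\bG(n)_i|_U$), $W_2=U$, $K_2=K$, $W_1$ locally compact and locally contractible because $X_{\log}$ and the $\bG(n)_i$ are built from nice topological spaces and $S^1$-bundles. Hence $K_1\defeq p^{-1}(K)$ is a deformation retract of $W_1$.

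To conclude, I would run the same retraction one step further: $K$ itself deformation retracts onto $x$ (it is a linear cone with apex $x$), and the restriction of $p$ over $K$ is again a stratified fibration whose strata are the faces of $K$ containing $x$ — but now $x$ lies in the unique minimal one, so by the same argument (or by direct inspection, since over the single stratum the map is an honest fibration with fiber $(S^1)^r$, resp. the fiber $\cX$ of the root-stack presentation) $p^{-1}(x)=(X_{\log})_x$ (resp. $(\bG(n)_i)_x$) is a deformation retract of $K_1=p^{-1}(K)$. Composing, $(X_{\log})_x\hookrightarrow X_{\log}|_U$ and $(\bG(n)_i)_x\hookrightarrow\bG(n)_i|_U$ are deformation retracts, hence in particular weak homotopy equivalences, which is the assertion of Proposition~\ref{prop:retraction-kn} and therefore completes the proof of Lemma~\ref{lem: open cover}.

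**Main obstacle.** The delicate point is the uniformity in $n$: one must choose the neighborhood $U$ (equivalently the radius of the Euclidean ball) once and for all so that the product decomposition of $\bG(n)_i\to U_{\an}$ over each higher stratum holds simultaneously for all $n$ and all $i$. This is exactly what the last paragraph of the proof of Proposition~\ref{simplicial.presentations} provides — the open set $W$ there is independent of $n$ — so the real work is to check that the section-building argument there is compatible with the radial/deformation-retract structure on $U$ coming from Lemma~\ref{lem:prop}, i.e. that one may take the same $U$ to be both a ``good'' neighborhood for the product decompositions and a Euclidean ball adapted to the linear-cone embedding. Everything else is a routine bookkeeping of strata and an invocation of Lemma~\ref{lem:rstz}.
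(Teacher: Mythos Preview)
Your overall architecture is right --- retract the base, use the stratified-fibration structure together with Lemma~\ref{lem:rstz} to lift the retraction upstairs, and finish on the bottom stratum where the map is an honest product --- but the execution of the first step has a genuine gap.

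You propose to take $K$ equal to the lowest stratum (the trace on $U$ of the relative interior of $f$) and apply Lemma~\ref{lem:rstz} once, with $W_2=U$ and $K_2=K$. For this you need the restriction of $p$ to $U\setminus K$ to be, on each connected component, a trivial product. That is not what Propositions~\ref{strata} and~\ref{simplicial.presentations} give you: they only guarantee a product structure \emph{stratum by stratum}. The set $U\setminus K$ is not a disjoint union of strata; it is typically connected yet meets several strata of different ranks, and the fiber of $X_{\log}$ (resp.\ of $\bG(n)_i$) genuinely changes across these strata. For a concrete instance, take $X=\bA^2$ with the toric log structure and $x$ the origin: then $K=\{x\}$, the punctured ball $U\setminus K$ is connected, and the fiber of $X_{\log}\to X_{\an}$ is a point over the open torus, $S^1$ over a punctured axis, and $(S^1)^2$ over $x$. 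So the hypothesis of Lemma~\ref{lem:rstz} fails outright on $U\setminus K$, and the one-shot retraction cannot be lifted.

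The paper avoids this by filtering $U$ by its skeleta $U_k$ coming from the simplicial stratification and descending one dimension at a time: at each step $U_k\setminus U_{k-1}$ is a disjoint union of open $k$-cells, each entirely contained in a single rank stratum, so the product hypothesis of (the disjoint-union variant of) Lemma~\ref{lem:rstz} does hold and the retraction $U_{k-1}\hookrightarrow U_k$ lifts to $X_{\log}|_{U_{k-1}}\hookrightarrow X_{\log}|_{U_k}$. Composing these brings you down to $U_{\dim(f)}$, which is exactly your $K$, and from there your final paragraph (product over the bottom stratum, then contract $K$ to $x$) goes through. Incidentally, the issue you flag as the ``main obstacle'' --- uniformity in $n$ --- is not really the crux: once the neighborhood system $\cU_x$ is fixed via the triangulation, the skeleton-by-skeleton argument runs verbatim for every $n$ and every $i$ with the \emph{same} $U$; Proposition~\ref{simplicial.presentations} already ensures the product decompositions over each stratum are available for all $n$.
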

\begin{proof}
The proof is the same for both $X_{log}$ and $\mathbb{G}(n)_i$.  The argument relies exclusively on the fact that $X_{log}$ and $\mathbb{G}(n)_i$ give stratified fibrations on $X_{an}$ with respect to the  stratification $\cR_X$. 
To avoid repetition, we prove the statement only for $X_{log}$ but the argument remains valid if we substitute $\mathbb{G}(n)_i$ in all occurences of $X_{log}$.

Let $f$ be the lowest dimensional simplex of $\cT_X$ 
such that $x$ lies on $f$. Recall from the proof of Lemma \ref{lem:prop} that, in order to define $\cU_x$, we pick a vertex $v$ of the triangulation $\cT_X$ that is incident to $f$. By  construction, $U$ is  an open subset of 
$U_v$.  Thus $U$ carries a stratification which is obtained by restricting to it the stratification on $U_v$ by the simplices containing $v$.  

For all $k \in \bN$, denote $U_k \subset U$ the $k$-skeleton of $U$: that is, $U_k$ is the union of the strata of dimension less than or equal to $k$. Note that if $k < dim(f)$, $U_k$ is empty, and is contractible if  $dim(f) \leq k$. Further if $dim(f) \leq k \leq k'$, 
$U_k$ is a strong deformation retract of $U_{k'}$. Indeed  both 
$U_k$ and $U_{k'}$ are CW complexes (up to compactifying), and any contractible subcomplex of a contractible
CW complex is a strong deformation retract, see e.g. \cite[Lemma 1.6]{M00}.

We prove next that if $dim(f) \leq k-1$, the map
$$
X_{log}|_{U_{k-1}} \rightarrow X_{log}|_{U_k}
$$
is a deformation retract. Note that $U_k - U_{k-1}$ is equal to the disjoint union of  $k$-dimensional strata. That is $U_k - U_{k-1}$ can be 
written as a disjoint union of $m$ components
$$
U_k - U_{k-1} = \bigcup_{i=1}^{i=m} (U_k - U_{k-1})_i .
$$
The restriction of the map 
$X_{log}|_U \rightarrow X_{an}|_U$ to each stratum of $U$ is a principal bundle. Indeed, the stratification on $U$ is  finer that the restriction to $U$ of $\cR_X$. Further, it is a trivializable principal bundle, 
since the strata are paracompact Hausdorff and contractible.

Thus the restriction 
$$
X_{log}|_{U_k - U_{k-1}} \rightarrow U_k - U_{k-1}
$$
is homeomorphic to a projection from a disjoint union of products
$$
X_{log}|_{U_k - U_{k-1}} \simeq \bigcup_{i=1}^{i=m} F_i \times (U_k - U_{k-1})_i \rightarrow \bigcup_{i=1}^{i=m} (U_k - U_{k-1})_i .
$$
We have showed that the map $U_{k-1} \rightarrow U_k$ is a deformation retract. 
We apply Lemma \ref{lem:rstz}, or rather the variant that was discussed immediately after the statement of Lemma \ref{lem:rstz}, (note that $X_\log$ is locally compact Hausdorff and locally contractible by Proposition \ref{prop:hausdorff}),
and deduce that the map 
$$
X_{log}|_{U_{k-1}} \rightarrow X_{log}|_{U_k}
$$
is also a deformation retract, as we claimed.

There exists a $N \in \bN$ such that $U_N = U$. By applying recursively the retractions that we have constructed in the previous paragraph, we obtain a deformation retract
$$
X_{log}|_{U_{dim(f)}} \rightarrow X_{log}|_U .
$$
By property $(2)$ of Lemma \ref{lem:prop}, 
$U_{dim(f)}$ is connected. Further it is contractible and paracompact, and thus $X_{log}|_{U_{dim(f)}}$ is homeomorphic to a product $F \times {U_{dim(f)}}$. This implies that there are homotopy equivalences  
$$
 (X_{log})|_x \simeq F \times \{x \} \stackrel{\sim}{\ensuremath{\lhook\joinrel\relbar\joinrel\rightarrow}} F \times {U_{dim(f)}} \simeq X_{log}|_{U_{dim(f)}}
$$
and this concludes the proof. 
 \end{proof}


\section{The equivalence}\label{section: the equivalence}

Finally, in this section we will prove the main result of this paper, namely that there is an equivalence $$\Pip\left(\Phi_X\right)\colon \Pip\left(X_\log\right)\to \Pip\left(\sqrt[\infty]{X}_{\topst}\right)$$ of profinite spaces, where $\Pip$ is the ``profinite homotopy type'' functor defined in \ref{sec:profhomtypestacks}, and $\Phi_X$ is the morphism of pro-topological stacks constructed in Section \ref{section: KN to IRS}.

The main idea is to use the basis of open subsets constructed in Lemma \ref{lem: open cover} to produce a suitable hypercover of $X_{an}$ and to use this to reduce to checking that one has a profinite homotopy equivalence along fibers. First, we will need a few more technical lemmas.

The following lemma makes precise in what way one can glue profinite spaces together using hypercovers:

\begin{lemma}\label{lem:hyperprof}
Let $\cX$ be a hypersheaf in $\Hshi\left(\TopC\right).$ Let $\cI$ be a cofiltered $\i$-category and let 
$$f_{\bullet}:\cI \to \Hshi\left(\TopC\right)/\cX$$ be an $\cI$-indexed pro-system with associated pro-object $\underset{i \in \cI} \lim \left(f_i:\cY^i \to \cX\right).$
Let $$U^\bullet:\Delta^{op} \to \Hshi\left(\TopC\right)/\cX$$ be a hypercover of $\cX.$ For each $i,$ denote by $f_i^*U^\bullet$ the pullback of the hypercover $U^\bullet$ to a hypercover of $\cY^i.$ Consider the underlying pro-object $\underset{i \in \cI} \lim \cY^i$ in $\Hshi\left(\TopC\right).$ Then there is a canonical equivalence of profinite spaces $$\Pip\left(\underset{i \in \cI} \lim \cY^i\right) \simeq \underset{n \in \Delta^{op}} \colim \left[\Pip\left(\underset{i \in \cI} \lim f_i^*U^n\right)\right],$$ where
$$\widehat{\Pi}_\i:\Pro\left(\Hshi\left(\TopC\right)\right) \to \Profs$$ is the functor constructed in Section \ref{sec:profhomtypestacks}.
\end{lemma}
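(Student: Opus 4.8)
The plan is to strip off $\Pip$ using its colimit-preservation, thereby reducing the statement to an interchange of a cofiltered limit with a geometric realization inside $\Profs$, and then to establish that interchange by the same truncation argument that appears in the proof of Proposition~\ref{prop:infinity-stone}.

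First I would record local colimit presentations of the $\cY^i$. Because $\Hshi\left(\TopC\right)$ is hypercomplete, every hypercover is of effective descent, so the canonical map $\underset{n\in\Delta^{op}}\colim U^n\to\cX$ is an equivalence; since colimits are universal in an $\i$-topos, pulling back along each $f_i\colon\cY^i\to\cX$ yields an equivalence $\underset{n\in\Delta^{op}}\colim f_i^*U^n\stackrel{\sim}{\longrightarrow}\cY^i$ (and $f_i^*U^\bullet$ is a hypercover of $\cY^i$). The functor $\Pip=\widehat{\left(\blank\right)}\circ\Pi_\i\colon\Hshi\left(\TopC\right)\to\Profs$ is a composite of colimit-preserving functors — $\Pi_\i$ by Proposition~\ref{prop:bigpi}, and $\widehat{\left(\blank\right)}$ because it is a left adjoint — so applying it gives $\Pip\left(\cY^i\right)\simeq\underset{n\in\Delta^{op}}\colim\Pip\left(f_i^*U^n\right)$ for every $i$.

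Since $\Pip$ on $\Pro\left(\Hshi\left(\TopC\right)\right)$ is, by construction, the unique cofiltered-limit-preserving extension (Section~\ref{sec:profhomtypestacks}), this yields
$$
\Pip\left(\underset{i\in\cI}\lim\cY^i\right)=\underset{i\in\cI}\lim\Pip\left(\cY^i\right)\simeq\underset{i\in\cI}\lim\;\underset{n\in\Delta^{op}}\colim\Pip\left(f_i^*U^n\right),
$$
whereas the right-hand side of the lemma is $\underset{n\in\Delta^{op}}\colim\;\underset{i\in\cI}\lim\Pip\left(f_i^*U^n\right)$, again by the definition of $\Pip$ on pro-objects applied degreewise in $n$. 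Thus everything comes down to showing that the canonical comparison map
$$
\underset{n\in\Delta^{op}}\colim\;\underset{i\in\cI}\lim\Pip\left(f_i^*U^n\right)\;\longrightarrow\;\underset{i\in\cI}\lim\;\underset{n\in\Delta^{op}}\colim\Pip\left(f_i^*U^n\right)
$$
is an equivalence in $\Profs$. To see this, I would test against $\pi$-finite spaces: it suffices to check that the comparison map induces an equivalence on $\Hom_{\Profs}\left(\blank,j\left(V\right)\right)$ for every $\pi$-finite $V$, exactly as in Proposition~\ref{prop:infinity-stone}. Fix $V$ and pick, via Proposition~\ref{prop:pitruncated}, an $n_0$ with $j\left(V\right)$ being $n_0$-truncated. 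Mapping the two sides into $j\left(V\right)$: the outer geometric realization becomes a cosimplicial limit; each pro-object cofiltered limit becomes a filtered colimit by Remark~\ref{rmk:mappingpro} (using that $\underset{n}\colim\Pip\left(f_i^*U^n\right)\simeq\Pip\left(\cY^i\right)$ and $\underset{i}\lim\Pip\left(f_i^*U^n\right)$ are profinite spaces, re-presented as pro-objects with $\pi$-finite values); and by Lemma~\ref{lem:truncatedcosimplicial} each such cosimplicial limit of $n_0$-truncated spaces is computed over $\Delta_{\le n_0}$, hence is a finite limit. The two sides thereby become $\underset{i}\colim\;\underset{\Delta_{\le n_0}}\lim$ and $\underset{\Delta_{\le n_0}}\lim\;\underset{i}\colim$ of one and the same diagram, with comparison map the evident one, and they agree because finite limits commute with filtered colimits in $\cS$ by \cite[Proposition~5.3.3.3]{htt}. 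Hence the comparison map is an equivalence, which is the assertion.

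The step I expect to be the only real obstacle is precisely this last interchange: a cofiltered limit does not in general commute with a geometric realization, and the argument works only because mapping into a $\pi$-finite — hence truncated — target collapses the realization to a finite totalization, at which point the exchange with the filtered colimit is legitimate. Everything else (hypercompleteness of $\Hshi\left(\TopC\right)$, universality of colimits, colimit-preservation of $\Pi_\i$ and of profinite completion, and the pro-object extension of $\Pip$) is already in hand from the earlier sections, so I do not anticipate further difficulties there.
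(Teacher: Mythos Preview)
Your proposal is correct and follows essentially the same approach as the paper: test against $j(V)$ for $V$ $\pi$-finite, use Proposition~\ref{prop:pitruncated} and Lemma~\ref{lem:truncatedcosimplicial} to truncate the cosimplicial limit to $\Delta_{\le n_0}$, and then interchange the resulting finite limit with the filtered colimit via \cite[Proposition~5.3.3.3]{htt}. The only organizational difference is that you first isolate the problem as an interchange of $\underset{i}\lim$ with $\underset{\Delta^{op}}\colim$ in $\Profs$, whereas the paper carries out the same manipulations in one continuous chain of equivalences of mapping spaces; the substance is identical.
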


\begin{proof}
It suffices to show that for every $\pi$-finite space $V,$ there is a canonical equivalence $$\Hom_{\Profs}\left(\underset{n \in \Delta^{op}} \colim \left[\Pip\left(\underset{i \in \cI} \lim f_i^*U^n\right)\right],j\left(V\right)\right) \simeq \Hom_{\Profs}\left(\Pip\left(\underset{i \in \cI} \lim \cY^i\right),j\left(V\right)\right)$$ which is natural in $V.$ We have that 
$$\Hom_{\Profs}\left(\underset{n \in \Delta^{op}} \colim \left[\Pip\left(\underset{i \in \cI} \lim f_i^*U^n\right)\right],j\left(V\right)\right) \simeq \underset{n \in \Delta} \lim \left[\underset{ i \in \cI^{op}} \colim \Hom_{\cS}\left(\Pi_\i f_i^*U^n,V\right)\right].$$ Notice that $V$ is $k$-truncated for some $k$, and hence so is $j\left(V\right)$ by Proposition \ref{prop:pitruncated}. Since filtered colimits of $k$-truncated spaces are $k$-truncated, it follows that for all $n,$ $$\underset{i \in \cI^{op}}\colim \Hom_{\cS}\left(\Pi_\i f_i^*U^n,V\right)$$ is $k$-truncated. By Lemma \ref{lem:truncatedcosimplicial}, it then follows that 
$$\Hom_{\Profs}\left(\underset{n \in \Delta^{op}} \colim \left[\Pip\left(\underset{i \in \cI} \lim f_i^*U^n\right)\right],j\left(V\right)\right) \simeq \underset{n \in \Delta_{\le k}} \lim \left[\underset{ i \in \cI^{op}} \colim \Hom_{\cS}\left(\Pi_\i f_i^*U^n,V\right)\right].$$ By using that filtered colimits commute with finite limits, we then have that this is in turn equivalent to 
$$\underset{ i \in \cI^{op}} \colim \left[\underset{n \in \Delta_{\le k}} \lim  \Hom_{\cS}\left(\Pi_\i f_i^*U^n,V\right)\right].$$ Again by Lemma \ref{lem:truncatedcosimplicial} this is equivalent to $$\underset{ i \in \cI^{op}} \colim \left[\underset{n \in \Delta} \lim  \Hom_{\cS}\left(\Pi_\i f_i^*U^n,V\right)\right].$$ Finally, we have the following string of natural equivalences:
\begin{eqnarray*}
\underset{ i \in \cI^{op}} \colim \left[\underset{n \in \Delta} \lim  \Hom_{\cS}\left(\Pi_\i f_i^*U^n,V\right)\right] &\simeq& \underset{ i \in \cI^{op}} \colim \Hom_{\cS}\left(\underset{n \in \Delta^{op}} \colim \Pi_\i f_i^*U^n,V\right)\\
&\simeq& \underset{ i \in \cI^{op}} \colim \Hom_{\cS}\left(\Pi_\i \underset{n \in \Delta^{op}} \colim f_i^*U^n,V\right)\\
&\simeq& \underset{ i \in \cI^{op}} \colim \Hom_{\cS}\left(\Pi_\i\cY^i,V\right)\\
&\simeq& \Hom_{\Profs}\left(\Pip\left(\underset{i \in \cI} \lim \cY^i\right),j\left(V\right)\right).
\end{eqnarray*}
\end{proof}

Let $X$ be a log scheme. Denote by $\cU$ the basis of contractible open subsets of $X_{an}$ given by Lemma \ref{lem: open cover}. 

\begin{lemma}
There is a hypercover $$U^\bullet:\Delta^{op} \to \TopC/X_{an}$$ such that for all $n,$ the map $U^n \to X_{an}$ is isomorphic to the coproduct of inclusions of open neighborhoods in the basis $\cU,$ and all the structure maps are local homeomorphisms.
\end{lemma}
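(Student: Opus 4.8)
The plan is to build the hypercover $U^\bullet$ directly from the basis $\cU$ of Lemma \ref{lem: open cover} by a standard \v{C}ech-refinement procedure, carried out inside the $\i$-topos $\Hshi\left(\TopC\right)/X_{an}$, and to arrange that at every stage one only ever takes coproducts of members of $\cU$. First I would recall the general construction of a hypercover by a basis: one builds $U^\bullet$ by induction on the simplicial degree, where at each stage one is required to produce an object of $\TopC/X_{an}$ together with a local homeomorphism onto the appropriate matching object $M^nU^\bullet$, and this object is moreover required to be a \emph{covering} of the matching object. The key point is that since $\cU$ is a \emph{basis} for the topology of $X_{an}$ (and in particular every open subset of $X_{an}$ is a union of members of $\cU$), any matching object $M^nU^\bullet$, being built from open subsets of $X_{an}$ via fibre products — hence itself (a space mapping via a local homeomorphism to $X_{an}$, so) a disjoint union of open subsets of $X_{an}$ — can be covered by a coproduct $\coprod_\alpha V_\alpha$ with each $V_\alpha \in \cU$, and this coproduct maps to $M^nU^\bullet$ by a local homeomorphism. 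Setting $U^n \defeq \coprod_\alpha V_\alpha$ completes the inductive step.

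More concretely, I would proceed as follows. In degree $0$, choose $U^0 \defeq \coprod_{V \in \cU} V$, with the evident map to $X_{an}$, which is an open cover and in particular a local homeomorphism (and an effective epimorphism in the $\i$-topos). Inductively, suppose $U^\bullet|_{\Delta_{\le n-1}}$ has been constructed with all face/degeneracy maps local homeomorphisms and all the $U^k$ ($k < n$) coproducts of members of $\cU$. The $n$-th matching object $M^nU^\bullet$ is computed as a finite limit of the $U^k$'s over $X_{an}$; since each $U^k \to X_{an}$ is a local homeomorphism and the $U^k$ are coproducts of opens in $\cU$, the fibre products appearing are again (disjoint unions of) open subsets of $X_{an}$, so $M^nU^\bullet \to X_{an}$ is a local homeomorphism whose source is a disjoint union of open subsets of $X_{an}$. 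Cover each such open subset by members of $\cU$ — possible since $\cU$ is a basis — and let $U^n$ be the resulting coproduct; the canonical map $U^n \to M^nU^\bullet$ is then an open cover, hence a local homeomorphism and an effective epimorphism, which is exactly the condition required for $U^\bullet$ to be a hypercover. All the new structure maps $U^n \to U^k$ factor through $M^nU^\bullet$ and are therefore local homeomorphisms as well. This produces the desired $U^\bullet$.

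The only point requiring a little care — and the main (mild) obstacle — is the claim that the matching objects $M^nU^\bullet$ are again disjoint unions of open subsets of $X_{an}$, rather than merely spaces étale over $X_{an}$. This holds because a local homeomorphism $S \to X_{an}$ need not have open image or be a disjoint union of opens in general, but here the source is by construction a \emph{coproduct of open immersions} into $X_{an}$, and a finite fibre product over $X_{an}$ of such coproducts of open immersions is again, componentwise, a fibre product of open immersions, i.e.\ an intersection of open subsets — hence again a coproduct of open subsets of $X_{an}$. Thus the inductive hypothesis ``each $U^k$ is a coproduct of members of $\cU$'' is strong enough to keep the matching objects within the class of disjoint unions of opens, where the basis property of $\cU$ can be applied. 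With this observation in place the construction goes through and gives the lemma; we remark that the specific geometric properties of the members of $\cU$ established in Lemma \ref{lem: open cover} are not needed here — only that $\cU$ is a basis of open subsets — but they will be used crucially in the subsequent application of Lemma \ref{lem:hyperprof}.
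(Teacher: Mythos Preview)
Your approach is correct and is exactly what the paper intends: its own proof is the single line ``Using standard techniques, since $\cU$ is a basis for the topology of $X_{an}$ we can construct a split hypercover satisfying the above by induction (cfr.\ \cite{conrad}),'' and you have simply spelled out that induction.

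One small omission worth noting: your inductive step produces $U^n$ as a cover of the matching object $M^n U^\bullet$, which yields the face maps $U^n \to U^k$, but you do not explain where the \emph{degeneracy} maps $s_i\colon U^{k} \to U^n$ (for $k<n$) come from --- and a cover of $M^n$ alone does not supply them. The word ``split'' in the paper's proof is the cue for the standard fix: at each stage set $U^n \defeq L^n U^\bullet \amalg N^n$, where $L^n$ is the $n$-th latching object and $N^n$ is your chosen cover of $M^n$ by members of $\cU$. The degeneracies are then the tautological inclusions into the $L^n$ summand, and since for a split object $L^n$ is itself a coproduct of copies of the earlier $N^k$'s, the resulting $U^n$ remains a coproduct of members of $\cU$. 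With this adjustment your argument is complete.
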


\begin{proof}
Using standard techniques, since $\cU$ is a basis for the topology of $X_{an}$ we can construct a split hypercover satisfying the above by induction (cfr. \cite{conrad}).
\end{proof}

\begin{remark}
The image under the Yoneda embedding of the hypercover of topological spaces $U^\bullet$ just constructed is a hypercover of $Y\left(X_{an}\right)$ in the $\i$-topos $\Hshi\left(\TopC\right).$ We will abuse notation by identifying the two.
\end{remark}

We now prove our main result:

\begin{theorem}\label{thm:main}
Let $X$ be a fine saturated log scheme locally of finite type over $\bC$. The induced map $$\Pip\left(\Phi_X\right)\colon \Pip\left(X_{\log}\right)  \stackrel{\sim}{\longlongrightarrow} \Pip\left(\sqrt[\infty]{X}_{top} \right)$$ is an equivalence of profinite spaces.
\end{theorem}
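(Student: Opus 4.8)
The plan is to reduce the global statement to a fiberwise computation via the hypercover $U^\bullet$ of $X_{an}$ and the gluing lemma for profinite homotopy types. First I would apply Lemma \ref{lem:hyperprof} to both pro-topological stacks: since $X_{\log} \simeq U_{\log}^\bullet \times_{X_{an}}(-)$ realized as a colimit over $\Delta^{op}$ (via universality of colimits, exactly as in Step 3 of Section \ref{section: KN to IRS}), and similarly $\sqrt[\infty]{X}_{\topst}$ is the colimit of $\sqrt[\infty]{U^\bullet}_{\topst}$, we get
$$
\Pip\left(X_{\log}\right) \simeq \underset{n \in \Delta^{op}}\colim \Pip\left(X_{\log}|_{U^n}\right), \qquad \Pip\left(\sqrt[\infty]{X}_{\topst}\right) \simeq \underset{n \in \Delta^{op}}\colim \Pip\left(\sqrt[\infty]{U^n}_{\topst}\right),
$$
and the map $\Pip(\Phi_X)$ is the colimit of the maps $\Pip(\Phi_X|_{U^n})$. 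Since $\colim$ over $\Delta^{op}$ is computed objectwise on a diagram of equivalences, it therefore suffices to prove the claim when $X$ is replaced by one of the basic opens $U \in \cU$ from Lemma \ref{lem: open cover}; that is, I would reduce to showing $\Pip(\Phi_U)$ is an equivalence for $U$ admitting a global chart $U \to (\Spec\bC[P])_{an}$ sending the chosen point $x$ to the vertex.

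Next I would exploit the local quotient presentation. Over such a $U$, we have $\sqrt[n]{U}_{\topst} \simeq [(U_n)_{an}/\mu_n(P)_{an}]$, so $\sqrt[\infty]{U}_{\topst}$ is presented by the simplicial topological space $\bG(\infty) = \lim_n \bG(n)$, with $\bG(n)_i \cong (U_n \times \mu_n(P)^i)_{an}$. By Proposition \ref{prop:fat} (and its pro-version), $\Pip(\sqrt[\infty]{U}_{\topst})$ is the profinite completion of the colimit over $\Delta^{op}$ of the diagram $i \mapsto \Pi_\i(\bG(n)_i)$, taken as a pro-object in $n$. Similarly, by Theorem \ref{thm:KN homotopy} the homotopy type of $U_{\log}$ is that of a classifying space, and the map $\Phi_U$ is induced by the torsor description from Proposition \ref{torsor}. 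Now I would invoke property (3) of Lemma \ref{lem: open cover}: shrinking $U$ within $\cU_x$, the inclusion of the fiber $(X_{\log})_x \hookrightarrow X_{\log}|_U$ is a weak equivalence, and likewise $(\bG(n)_i)_x \hookrightarrow \bG(n)_i|_U$. This means the simplicial diagrams presenting $U_{\log}$ and $\sqrt[\infty]{U}_{\topst}$ are levelwise weakly equivalent to those presenting the fibers $\pi_{\log}^{-1}(x)$ and $\pi_\infty^{-1}(x)$, and these equivalences are compatible with $\Phi_U$. Hence it suffices to prove that $\Phi_X$ restricted to the fibers over $x$ is a profinite homotopy equivalence.

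Finally, I would carry out the fiberwise computation. By Proposition \ref{strata}, over the stratum $S_r \ni x$ of rank $r$, the fiber of $\pi_{\log}$ is $(S^1)^r$ and the fiber of $\pi_\infty$ is the pro-classifying stack $\varprojlim_n \class(\mathbb{Z}/n)^r$. Under $\Phi_X$, the map of fibers is, by the explicit local construction (Step 1, the examples with the $z \mapsto z^n$ torsors), the canonical map $(S^1)^r \to \class(\mathbb{Z}/n\mathbb{Z})^r$ classifying the $n$-fold cover of $S^1$ raised to the $r$-th power, assembled over all $n$. So I must show that the induced map
$$
\Pip\left((S^1)^r\right) \longrightarrow \Pip\left(\varprojlim_n \class(\mathbb{Z}/n\mathbb{Z})^r\right) \simeq \varprojlim_n \Pip\left(\class(\mathbb{Z}/n\mathbb{Z})^r\right)
$$
is an equivalence of profinite spaces. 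The target, by Proposition \ref{prop:infinity-stone}, is $B(\widehat{\mathbb{Z}}^r)$ computed in $\Profs$ (using $\lim_n \class(\mathbb{Z}/n)^r \simeq B(\lim_n (\mathbb{Z}/n)^r) = B(\widehat{\mathbb{Z}}^r)$), while the source is $\widehat{B(\mathbb{Z}^r)}$; the identification of these two, and the fact that the natural map between them is an equivalence, is precisely Proposition \ref{prop:profiniteemergency}, combined with Corollary \ref{cor:finprods}. One must check the map produced geometrically agrees with the canonical comparison map, which is straightforward from naturality. I expect the main obstacle to be the careful bookkeeping in the reduction steps — specifically, verifying that the levelwise equivalences from Lemma \ref{lem: open cover} are compatible with the simplicial structure maps and with $\Phi_X$ so that they assemble into an equivalence of the relevant colimit diagrams, and handling the interaction between the pro-object structure (in $n$) and the simplicial colimits throughout. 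The use of Lemma \ref{lem:truncatedcosimplicial} to commute the relevant finite limits past filtered colimits, as in the proof of Lemma \ref{lem:hyperprof}, will be essential at each stage.
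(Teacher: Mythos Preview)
Your proposal is correct and follows essentially the same strategy as the paper: reduce via the hypercover $U^\bullet$ and Lemma~\ref{lem:hyperprof} to the basic opens $V\in\cU$, then use the groupoid presentations together with property~(2) of Lemma~\ref{lem: open cover} to replace $X_{\log}|_V$ and $\sqrt[n]{X}_{\topst}|_V$ by their fibers over the center $x$, and finally identify the resulting map $\widehat{B(\mathbb{Z}^k)}\to B(\widehat{\mathbb{Z}}^k)$ as an equivalence via Propositions~\ref{prop:infinity-stone} and~\ref{prop:profiniteemergency}. The paper carries out the local computation first and globalizes at the end, but the ingredients and logic are the same; the reference to Theorem~\ref{thm:KN homotopy} is unnecessary, and no further shrinking of $U$ is needed since the hypercover is already built from the basis $\cU$.
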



\begin{proof}
Consider now the hypercover $U^\bullet$ of $X_{an}$ just constructed. Then each $U^n=\underset{\alpha} \coprod V_{\alpha}$ where each $V_\alpha$ is in $\cU.$ Let us restrict to one such $V=V_\alpha.$ Since $V$ is in $\cU,$ there exists an $x \in V$ such that $\left(X_{log}\right)_x \to X_{log}|_V$ is a weak homotopy equivalence, and such that there is a Kato chart $U \to \Spec \bC[P]$, with $U \to X$ \'etale, such that $U_{an} \to X_{an}$ admits a section $\sigma$ over $V,$ and with the property that the composite
$$V \stackrel{\sigma}{\longrightarrow} U_{an} \to (\Spec \bC[P])_{an}$$ carries $x$ to the vertex point of the toric variety $\Spec \bC[P].$ Let us fix this $x$, and call it the \emph{center} of $V$. Suppose that the monoid $P$ has rank $k,$ then the log structure at $x$ also has rank $k$. Moreover, the fiber of the map $$V_n\defeq V \times_{(\Spec \bC[P])_{an}} \left(\Spec \bC\left[\frac{1}{n}P\right]\right)_{an} \to V$$ over $x$ consists of a single point (see \cite[Lemma 1.2]{illusie-nakayama-tsuji}). 

Let us fix an $n$, then we have that $$\sqrt[n]{X}_{top}|_{V}\simeq \left[\left(\mathbb{Z}/n\mathbb{Z}\right)^k \ltimes V_n\right]=\left[V_n/\left(\mathbb{Z}/n\mathbb{Z}\right)^k\right].$$ Hence our groupoid presentation $\mathbb{G}\left(n\right)$ for $\sqrt[n]{X}_{top}|_{V}$ guaranteed by Proposition \ref{simplicial.presentations} is the topological action groupoid $\left(\mathbb{Z}/n\mathbb{Z}\right)^k \ltimes V_n.$ This groupoid admits a continuous functor to $V$ (viewing $V$ as a topological groupoid with only identity arrows) which on objects is simply the canonical map $V_n \to V.$ Similarly, regard the one-point space $*$ also as a topological groupoid, and consider the canonical map $$* \to V$$ picking out $x.$ Since $V$ and $*$ have no non-identity arrows, the lax fibered product of topological groupoids $$* \times^{\left(2,1\right)}_{V} \left(\left(\mathbb{Z}/n\mathbb{Z}\right)^k \ltimes V_n\right)$$ is equivalent to the strict fibered product $$* \times_{V} \left(\left(\mathbb{Z}/n\mathbb{Z}\right)^k \ltimes V_n\right)$$ which is canonically equivalent to the action groupoid $$\left(\mathbb{Z}/n\mathbb{Z}\right)^k \ltimes \left(V_n\right)_x,$$ where $\left(V_n\right)_x$ is the fiber over $V_n \to V.$ Since this fiber consists of a single point, we conclude that the lax fibered product may be identified with $\left(\mathbb{Z}/n\mathbb{Z}\right)^k,$ where we are identifying the group $\left(\mathbb{Z}/n\mathbb{Z}\right)^k$ with its associated $1$-object groupoid. 

Consider the continuous functor of topological groupoids 
$$\left(\mathbb{Z}/n\mathbb{Z}\right)^k\simeq * \times^{\left(2,1\right)}_{V} \left(\left(\mathbb{Z}/n\mathbb{Z}\right)^k \ltimes V_n\right) \to \left(\mathbb{Z}/n\mathbb{Z}\right)^k \ltimes V_n.$$ This induces a map of simplicial topological spaces between their simplicially enriched nerves
$$N\left(\left(\mathbb{Z}/n\mathbb{Z}\right)^k\right) \to N\left(\left(\mathbb{Z}/n\mathbb{Z}\right)^k \ltimes V_n\right).$$ By Lemma \ref{lem: open cover}, this map is degree-wise a weak homotopy equivalence. It follows from Proposition \ref{prop:fat} and \cite[Lemma 3.2]{hafdave} that the induced map 
$$B\left(\left(\mathbb{Z}/n\mathbb{Z}\right)^k\right) \simeq \Pi_\i\left(\left(\mathbb{Z}/n\mathbb{Z}\right)^k \ltimes *\right) \to \Pi_\i\left(\left[\left(\mathbb{Z}/n\mathbb{Z}\right)^k \ltimes V\right]\right) \simeq \Pi_\i\left(\sqrt[n]{X}_{top}|_{V}\right)$$
is an equivalence in $\cS.$ Since the topological groupoid presentations for $\sqrt[n]{X}_{top}$ constructed in Section \ref{sec:stratified} are compatible with the natural maps $\sqrt[m]{X}_{top} \to \sqrt[n]{X}_{top}$ when $n\mid m$, it follows that we have a natural identification of $$\Pip\left(\sqrt[\i]{X}_{top}|_{V}\right)\simeq \underset{n} \lim B\left(\left(\mathbb{Z}/n\mathbb{Z}\right)^k\right)$$ in $\Profs.$ Consider the pro-system of finite groups $$n \mapsto \left(\mathbb{Z}/n\mathbb{Z}\right)^k.$$ This is the $k^{th}$ Cartesian power of the pro-system $$n \mapsto \left(\mathbb{Z}/n\mathbb{Z}\right),$$ which is simply $\widehat{\mathbb{Z}}.$ By Proposition \ref{prop:infinity-stone}, it follows that $$\Pip\left(\sqrt[\i]{X}_{top}|_{V}\right)\simeq B\left(\widehat{\mathbb{Z}}^k\right),$$ and hence by Proposition \ref{prop:profiniteemergency}, we have that $$\Pip\left(\sqrt[\i]{X}_{top}|_{V}\right) \simeq B\left(\widehat{\mathbb{Z}^k}\right).$$
We also have that $$\left(X_{log}\right)_x \cong \left(S^1\right)^k.$$ It follows that $$\Pi_\i \left(X_{log}|_V\right) \simeq \Pi_\i\left(S^1\right)^k\simeq B\left(\mathbb{Z}^k\right),$$ and so $$\Pip\left(X_{log}|_V\right) \simeq \widehat{B\left(\mathbb{Z}^k\right)}.$$ Since $\mathbb{Z}^k$ is a finitely generated free abelian group, it is \emph{good} in the sense of Serre in \cite{Serre}. It follows from \cite[Proposition 3.6]{Qu2} and Theorem \ref{thm:prohom} that the canonical map $$\widehat{B\left(\mathbb{Z}^k\right)} \to B\left(\widehat{\mathbb{Z}^k}\right)$$ is an equivalence of profinite spaces, hence $$\Pip\left(X_{log}|_V\right) \simeq B\left(\widehat{\mathbb{Z}^k}\right).$$
It now follows that $$\Pip\left(\sqrt[\i]{X}_{top}|_{V}\right)\simeq \Pip\left(X_{log}|_V\right),$$which is a local version of our statement.

Now let us globalize using the hypercover $U^\bullet$. For each $n$, denote by $q_n$ the natural map $$q_n:\sqrt[n]{X}_{top} \to X_{an}.$$ Since $\Pip$ preserves colimits, it follows that the induced map $$\underset{l \in \Delta^{op}}\colim \Pip\circ \tau^*U^l \to \underset{l \in \Delta^{op}}\colim \left( \Pip\circ \underset{n} \lim q_n^*U^l\right)$$ is an equivalence of profinite spaces, where $\tau$ is the canonical map $\tau:X_{log} \to X_{an}.$ However, $$\underset{l \in \Delta^{op}}\colim \Pip\circ \tau^*U^l \simeq \Pip\left(\underset{l \in \Delta^{op}}\colim  \tau^*U^l\right)\simeq \Pip\left(X_{log}\right),$$ since $\tau^*U^\bullet$ is a hypercover of $X_{log}.$ Finally, by Lemma \ref{lem:hyperprof}, $$ \underset{l \in \Delta^{op}}\colim \left( \Pip\circ \underset{n} \lim q_n^*U^l\right)\simeq \Pip\left( \underset{n } \lim \sqrt[n]{X}_\topst\right)=\Pip\left(\sqrt[\i]{X}_{top}\right).$$\end{proof}



\section{The profinite homotopy type of a log scheme}\label{section:prohottype}
We conclude this paper by defining the profinite homotopy type of an arbitrary log scheme over a ground ring $k,$ by using the notion of \'etale homotopy type. 

\'Etale homotopy theory, as originally introduced by Artin and Mazur in \cite{ArtinMazur}, is a way of associating to a suitably nice scheme a pro-homotopy type. In this seminal work they proved a generalized Riemann existence theorem:

\begin{theorem}[{\cite[Theorem 12.9]{ArtinMazur}}]
Let $X$ be scheme of finite type over $\mathbb{C},$ then the profinite completion of the \'etale homotopy type of $X$ agrees with the profinite completion of $X_{an}$.
\end{theorem}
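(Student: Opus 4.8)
The assertion is the generalized Riemann existence theorem of Artin--Mazur, so the plan is to reproduce the argument of \cite[Chapter~12]{ArtinMazur}, citing the deep geometric inputs where appropriate.

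\emph{Step 1 (the comparison morphism).} First I would produce a natural map relating the two pro-homotopy types. Analytification turns every \'etale morphism $U\to X$ into a local homeomorphism $U_{an}\to X_{an}$, and hence turns an \'etale hypercover of $X$ into a hypercover of $X_{an}$ by local homeomorphisms, which may be refined by an ordinary open hypercover. Applying $\pi_0$ degreewise and passing to the relevant indexing pro-categories yields a natural map of pro-objects in $\cS$
$$
c_X\colon \Sing(X_{an})\longrightarrow \mathrm{Et}(X),
$$
where the source is the ordinary weak homotopy type of $X_{an}$ (which coincides with the pro-homotopy type of the topos of sheaves on $X_{an}$, since $X_{an}$ is locally contractible and paracompact), and $\mathrm{Et}(X)$ denotes the \'etale homotopy type. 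It then suffices to show that $\widehat{c_X}$ is an equivalence of profinite spaces.

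\emph{Step 2 (reduction to $\pi_0$, $\pi_1$, and twisted cohomology).} Here I would invoke the recognition principle for profinite completions of pro-spaces \cite[\S4]{ArtinMazur} (compare the $\infty$-categorical formalism developed in Section~\ref{section: profinite}): a map of pro-spaces becomes an equivalence after profinite completion as soon as it
\begin{enumerate}
\item[(i)] is a bijection on $\pi_0$;
\item[(ii)] on each component induces an isomorphism of profinite fundamental groups $\pi_1^{\text{\'et}}\cong\widehat{\pi_1}$; and
\item[(iii)] for every finite locally constant sheaf $\cF$ induces an isomorphism $H^*(\mathrm{Et}(X);\cF)\stackrel{\sim}{\longrightarrow}H^*(X_{an};\cF)$.
\end{enumerate}
The principle itself is proved by an obstruction-theoretic induction up the Postnikov towers of the two profinite completions: having matched the $n$-th sections, the next stage is governed by a $k$-invariant lying in a twisted cohomology group with finite coefficients, and (iii) forces the two to agree.

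\emph{Step 3 (verification of (i)--(iii), and the main obstacle).} Condition (i) holds because a scheme of finite type over $\bC$ has the same set of connected components as its analytification (reduce to the normalization, which is finite and induces a bijection on components on each side). Condition (ii) is the topological Riemann existence theorem (SGA~1, Exp.~XII): analytification is an equivalence from finite \'etale covers of $X$ to finite-sheeted covering spaces of $X_{an}$, whence $\pi_1^{\text{\'et}}(X)\cong\widehat{\pi_1(X_{an})}$. Condition (iii) combines the identification of the cohomology of $\mathrm{Et}(X)$ with locally constant coefficients with \'etale cohomology $H^*_{\text{\'et}}(X;\cF)$ (built into the hypercover description of $\mathrm{Et}(X)$ together with Verdier's hypercover computation of sheaf cohomology) with Artin's comparison theorem for \'etale cohomology \cite{SGA4} (Exp.~XI and XVI), which gives $H^*_{\text{\'et}}(X;\cF)\cong H^*(X_{an};\cF)$ for torsion $\cF$; under these identifications the isomorphism is realized by $c_X$. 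The substantive inputs --- Artin's cohomology comparison theorem and the Riemann existence theorem --- are deep results that I would simply cite. Granting them, the real work is the obstruction-theoretic argument of Step~2: one must check that matching $\pi_1$ together with all finite twisted cohomology groups genuinely forces the two profinite Postnikov towers to coincide, handling the pro-system bookkeeping and the $\pi_1$-action on higher homotopy groups with care --- this is the content of the relevant chapters of \cite{ArtinMazur}.
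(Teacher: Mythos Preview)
The paper does not prove this statement; it is quoted as a classical result of Artin and Mazur and cited directly to \cite[Theorem 12.9]{ArtinMazur}, with no argument supplied. So there is no ``paper's own proof'' to compare your proposal against. That said, your outline is a faithful reconstruction of the actual Artin--Mazur argument: build the comparison map via analytification of \'etale hypercovers, reduce via the profinite recognition criterion to $\pi_0$, profinite $\pi_1$, and cohomology with finite locally constant coefficients, and then discharge these using the Riemann existence theorem of SGA~1 and Artin's comparison theorem from SGA~4. Nothing in your sketch is wrong, and the inputs you flag as black boxes are exactly the ones Artin and Mazur rely on.
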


In light of the above theorem, the \'etale homotopy type of a complex scheme of finite type  gives a way of accessing homotopical information about its analytic topology by using only algebro-geometric information, and for a setting where the analytic topology is not available, such as a scheme over an arbitrary base, the profinite completion of its \'etale homotopy type serves as a suitable replacement.

In the original work of Artin and Mazur, for $X$ a locally Noetherian scheme, one associates a pro-object in the homotopy category of spaces $\mbox{Ho}\left(\cS\right).$ This definition was later refined by Friedlander in \cite{Friedlander} to produce a pro-object in the category $\Set^{\Delta^{op}}$ of simplicial sets, and a generalized Riemann existence theorem is also proven in this context. In recent work of Lurie \cite{dagxiii}, the \'etale homotopy type of an arbitrary higher Deligne-Mumford stack is defined by using \emph{shape theory} to produce an object in the $\i$-category $\Pro\left(\cS\right)$ (in fact the definition in op. cit. is for \emph{spectral} Deligne-Mumford stacks - analogues of Deligne-Mumford stacks for algebraic geometry over $\mathbb{E}_\infty$-rings), and Hoyois has recently proven that up to profinite completion, this definition agrees with that of Friedlander for a classical locally Noetherian scheme in \cite{Hoyois}. See also recent work of Barnea, Harpaz, and Horel in \cite{prohomotopy}.

In recent work of the first author \cite{etalehomotopy}, the \'etale homotopy type of an arbitrary higher stack on the \'etale site of affine $k$-schemes is defined, and is shown to agree with the definition of Lurie when restricted to higher Deligne-Mumford stacks. In particular, there is shown to be a functor
$$\widehat{\Pi}^{\et}_\i:\Shi\left(\Affk,\mbox{\'et}\right) \to \Profs$$ associating to a higher stack $\cX$ on the \'etale site of affine $k$-schemes of finite type a profinite space $\widehat{\Pi}^{\et}_\i\left(\cX\right)$ called its \emph{profinite homotopy type}, and an even more generalized Riemann existence theorem is proven:

\begin{theorem}\label{3hot}\cite[Theorem 4.13]{etalehomotopy} 
Let $\cX$ be higher stack on affine schemes of finite type over $\mathbb{C},$ then there is a canonical equivalence of profinite spaces $$\widehat{\Pi}^{\et}_\i\left(\cX\right)\simeq \Pip\left(\cX_{top}\right)$$ between its profinite \'etale homotopy type and the profinite homotopy type of its underlying topological stack $\cX_{top}$ in the sense of Theorem \ref{thm:analification}.
\end{theorem}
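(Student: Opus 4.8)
The plan is to recognize both sides as colimit-preserving functors $\Shi\left(\Aff,\mbox{\'et}\right)\to\Profs$, to build a canonical natural transformation between them out of the classical comparison of étale and analytic topology, and to deduce that it is an equivalence from the fact that it is an equivalence on the strongly generating family of representable sheaves, which is exactly the content of the Artin--Mazur Riemann existence theorem.

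First I would verify colimit-preservation on both sides. The composite $\Pip\circ\left(\blank\right)_{top}$ preserves colimits because $\left(\blank\right)_{top}$ does (Theorem \ref{thm:analification}), $\Pi_\i$ does (Proposition \ref{prop:bigpi}), and profinite completion $\widehat{\left(\blank\right)}$ does (it is a left adjoint). On the other side, I would invoke from \cite{etalehomotopy} that $\widehat{\Pi}^{\et}_\i$ is, after profinite completion, obtained by left Kan extension along the Yoneda embedding $y\colon\Aff\hookrightarrow\Shi\left(\Aff,\mbox{\'et}\right)$ of the profinitely completed shape of the small étale $\i$-topos of an affine scheme of finite type; in particular it is colimit preserving, and on a representable $y\left(\Spec A\right)$ it computes the profinite étale homotopy type of $\Spec A$ in the sense of Artin--Mazur, Friedlander and Lurie (these models agree up to profinite completion by \cite{Hoyois} and \cite{prohomotopy}).

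Next I would construct the comparison natural transformation. For $\Spec A$ of finite type over $\bC$, the change-of-topology morphism of $\i$-topoi from sheaves on $\left(\Spec A\right)_{an}$ to the small étale $\i$-topos of $\Spec A$ induces on shapes a canonical map $\Pi_\i\left(\left(\Spec A\right)_{an}\right)\to\Pi^{\et}_\i\left(\Spec A\right)$ in $\Pro\left(\cS\right)$, natural in $\Spec A$. Applying profinite completion and recalling $\left(\Spec A\right)_{an}\cong\left(\Spec A\right)_{top}$ (Proposition \ref{prop:alspacan}, with $\Pi_\i$ then reading off the weak homotopy type as in Proposition \ref{prop:bigpi}), this gives a natural transformation $\Pip\circ\left(\blank\right)_{top}\circ y\Rightarrow\widehat{\Pi}^{\et}_\i\circ y$ of functors $\Aff\to\Profs$. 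Since both the source and the target of this transformation factor through colimit-preserving functors on $\Shi\left(\Aff,\mbox{\'et}\right)$, by the universal property of presheaf $\i$-categories and accessible localizations \cite[Theorem 5.1.5.6]{htt} it extends uniquely to a canonical natural transformation $\Pip\circ\left(\blank\right)_{top}\Rightarrow\widehat{\Pi}^{\et}_\i$ on all of $\Shi\left(\Aff,\mbox{\'et}\right)$. By \cite[Theorem 12.9]{ArtinMazur} (equivalently Friedlander's Riemann existence theorem, or its comparison with Lurie's shape-theoretic definition via \cite{Hoyois}), the map $\Pi_\i\left(\left(\Spec A\right)_{an}\right)\to\Pi^{\et}_\i\left(\Spec A\right)$ becomes an equivalence after profinite completion, so the transformation is an equivalence on every representable; as the representables strongly generate $\Shi\left(\Aff,\mbox{\'et}\right)$ under colimits and both functors preserve colimits, the transformation is an equivalence, yielding the canonical equivalence $\widehat{\Pi}^{\et}_\i\left(\cX\right)\simeq\Pip\left(\cX_{top}\right)$ for arbitrary $\cX$.

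The main obstacle is the second step: pinning down that the étale homotopy type functor of \cite{etalehomotopy}, after profinite completion, genuinely agrees on affines with the Artin--Mazur/Friedlander pro-space and is presented as a colimit-preserving left Kan extension. This requires the shape-theoretic foundations developed there (hypercompleteness issues, the behaviour of shape under the étale topology, and compatibility of the various models up to profinite completion), and is where the real work lies; granting that, the argument above is formal, the only classical input being the Riemann existence theorem on affine schemes of finite type over $\bC$.
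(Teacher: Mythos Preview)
The paper does not give its own proof of this theorem: it is stated with a citation to \cite[Theorem 4.13]{etalehomotopy} and is invoked as an external input. So there is no in-paper argument to compare your proposal against.

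That said, your sketch is a faithful outline of the strategy one expects in \cite{etalehomotopy}: reduce to representables by showing both functors are colimit-preserving on $\Shi\left(\Aff,\mbox{\'et}\right)$, construct the comparison from the change-of-topology morphism, and invoke the Artin--Mazur/Friedlander Riemann existence theorem on affine schemes of finite type to conclude. You correctly flag the nontrivial point: establishing that $\widehat{\Pi}^{\et}_\i$ is colimit-preserving (equivalently, that after profinite completion it is a left Kan extension from affines). This is not formal---the shape of an $\i$-topos is defined via a right adjoint construction and does not obviously commute with colimits of stacks---and is precisely the content supplied by \cite{etalehomotopy}. Your proposal is therefore accurate as a roadmap, with the substantive work deferred to that reference exactly as the present paper does.
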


Now let $X$ be a log scheme locally of finite type over $\bC.$ Its infinite root stack $\sqrt[\i]{X}$ is a pro-object in $\Shi\left(\Aff,\mbox{\'et}\right).$ Notice that the functor $\widehat{\Pi}^{\et}_\i$ canonically extends to a functor
$$\widehat{\Pi}^{\et}_\i:\Pro\left(\Shi\left(\Affk,\mbox{\'et}\right)\right) \to \Profs.$$ In light of the above theorem, we conclude that there is a canonical equivalence of profinite spaces $$\widehat{\Pi}^{\et}_\i\left(\sqrt[\i]{X}\right) \simeq \Pip\left(\sqrt[\i]{X}_{top}\right)$$ between the profinite \'etale homotopy type of the infinite root stack $\sqrt[\i]{X}$ and the profinite homotopy type of the underlying topological stack of the infinite root stack $\sqrt[\i]{X}_{top}.$ Combining this with Theorem \ref{thm:main} yields the following theorem.

\begin{theorem}
Let $X$ be a log scheme locally of finite type over $\bC.$ Then the following three profinite spaces are canonically equivalent:
\begin{itemize}
\item[i)] The profinite completion $\widehat{X_{log}}$ of its Kato-Nakayama space.
\item[ii)] The profinite homotopy type $\Pip\left(\sqrt[\i]{X}_{top}\right)$ of the underlying topological stack of its infinite root stack $\sqrt[\i]{X}.$
\item[iii)] The profinite \'etale homotopy type $\widehat{\Pi}^{\et}_\i\left(\sqrt[\i]{X}\right)$ of its infinite root stack $\sqrt[\i]{X}.$
\end{itemize}
\end{theorem}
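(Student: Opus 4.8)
The final theorem is essentially a bookkeeping statement: it collects the equivalences (i)$\simeq$(ii) from Theorem~\ref{thm:main} and (ii)$\simeq$(iii) from Theorem~\ref{3hot} (together with the extension of $\widehat{\Pi}^{\et}_\i$ to pro-objects), and asserts that they are mutually compatible. So the plan is simply to assemble these two inputs and check that the ``canonical'' equivalences in play are genuinely natural, so that chaining them produces a well-defined equivalence.

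Concretely, I would proceed as follows. First recall that, by definition, the profinite completion $\widehat{X_{log}}$ of the Kato-Nakayama space is $\Pip\left(X_{log}\right)$: indeed $\Pip = \widehat{\left(\blank\right)}\circ \Pi_\i$, and $\Pi_\i\left(Y\left(X_{log}\right)\right)$ is the weak homotopy type of $X_{log}$ by Proposition~\ref{prop:bigpi}, so its profinite completion is $\widehat{X_{log}}$ in the sense of the profinite completion functor $\widehat{\left(\blank\right)}:\cS\to\Profs$. This identifies (i) with $\Pip\left(X_{log}\right)$. Second, invoke Theorem~\ref{thm:main}: the comparison map $\Phi_X\colon X_{log}\to \sqrt[\infty]{X}_{top}$ of pro-topological stacks constructed in Section~\ref{section: KN to IRS} induces an equivalence
$$\Pip\left(\Phi_X\right)\colon \Pip\left(X_{log}\right)\stackrel{\sim}{\longrightarrow}\Pip\left(\sqrt[\infty]{X}_{top}\right),$$
which identifies (i) with (ii). Third, invoke Theorem~\ref{3hot}, applied not to a single stack but to the pro-object $\sqrt[\infty]{X}\in\Pro\left(\Shi\left(\Aff,\mbox{\'et}\right)\right)$: since both $\widehat{\Pi}^{\et}_\i$ and $\Pip\circ\left(\blank\right)_{top}$ are extended to pro-objects by taking cofiltered limits (using the universal property of $\Pro\left(-\right)$ of Remark~\ref{rmk:univpro}), and since Theorem~\ref{3hot} provides a \emph{natural} equivalence $\widehat{\Pi}^{\et}_\i\left(\cX\right)\simeq \Pip\left(\cX_{top}\right)$ for each higher stack $\cX$ of finite type, this natural equivalence passes to the limit over the inverse system $n\mapsto\sqrt[n]{X}$, giving
$$\widehat{\Pi}^{\et}_\i\left(\sqrt[\infty]{X}\right)=\underset{n}{\lim}\,\widehat{\Pi}^{\et}_\i\left(\sqrt[n]{X}\right)\simeq \underset{n}{\lim}\,\Pip\left(\sqrt[n]{X}_{top}\right)=\Pip\left(\sqrt[\infty]{X}_{top}\right),$$
which identifies (ii) with (iii). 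Composing, all three are canonically equivalent.

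The step requiring the most care — and the only place where there is anything to say beyond citing earlier results — is the passage of Theorem~\ref{3hot} to pro-objects in the third bullet. One must verify that the equivalences $\widehat{\Pi}^{\et}_\i\left(\sqrt[n]{X}\right)\simeq \Pip\left(\sqrt[n]{X}_{top}\right)$ are compatible with the transition maps $\sqrt[m]{X}\to\sqrt[n]{X}$ for $n\mid m$, i.e. that they assemble into a morphism of pro-systems (equivalently, a natural transformation of the two functors $\Shi\left(\Aff,\mbox{\'et}\right)\to\Profs$ restricted to the diagram). This is built into the statement of \cite[Theorem 4.13]{etalehomotopy} (the equivalence there is natural in $\cX$), so once the functors $\widehat{\Pi}^{\et}_\i$ and $\Pip\circ\left(\blank\right)_{top}$ are extended to $\Pro\left(\Shi\left(\Affk,\mbox{\'et}\right)\right)$ as cofiltered-limit-preserving functors — which is automatic by Remark~\ref{rmk:univpro} since $\Profs$ admits small cofiltered limits — the limit of a natural equivalence is an equivalence. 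I do not anticipate any genuine obstacle here; the content of the theorem lies entirely in Theorems~\ref{thm:main} and~\ref{3hot}, and this final statement is their formal consequence.

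\begin{proof}
By definition, $\Pip = \widehat{\left(\blank\right)}\circ \Pi_\i$, and by Proposition~\ref{prop:bigpi} the functor $\Pi_\i$ sends $Y\left(X_{log}\right)$ to the weak homotopy type of the space $X_{log}$. Hence $\Pip\left(X_{log}\right)$ is, by definition of the profinite completion functor $\widehat{\left(\blank\right)}\colon\cS\to\Profs$, precisely the profinite completion $\widehat{X_{log}}$ of the Kato-Nakayama space. This identifies (i) with $\Pip\left(X_{log}\right)$.

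By Theorem~\ref{thm:main}, the morphism of pro-topological stacks $\Phi_X\colon X_{log}\to\sqrt[\infty]{X}_{top}$ constructed in Section~\ref{section: KN to IRS} induces an equivalence of profinite spaces
$$\Pip\left(\Phi_X\right)\colon\Pip\left(X_{log}\right)\stackrel{\sim}{\longrightarrow}\Pip\left(\sqrt[\infty]{X}_{top}\right).$$
This identifies (i) with (ii).

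Finally, the functor $\widehat{\Pi}^{\et}_\i\colon\Shi\left(\Affk,\mbox{\'et}\right)\to\Profs$ of \cite{etalehomotopy} and the functor $\Pip\circ\left(\blank\right)_{top}$ both extend canonically to cofiltered-limit-preserving functors on $\Pro\left(\Shi\left(\Affk,\mbox{\'et}\right)\right)$, by the universal property of pro-categories recalled in Remark~\ref{rmk:univpro} and the fact that $\Profs$ admits small cofiltered limits. By Theorem~\ref{3hot}, for every higher stack $\cX$ on affine schemes of finite type over $\bC$ there is a canonical equivalence $\widehat{\Pi}^{\et}_\i\left(\cX\right)\simeq\Pip\left(\cX_{top}\right)$, natural in $\cX$. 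Applying this naturally to the inverse system $n\mapsto\sqrt[n]{X}$ of algebraic stacks defining $\sqrt[\infty]{X}$, and passing to the limit, we obtain
$$\widehat{\Pi}^{\et}_\i\left(\sqrt[\infty]{X}\right)=\underset{n}{\lim}\,\widehat{\Pi}^{\et}_\i\left(\sqrt[n]{X}\right)\simeq\underset{n}{\lim}\,\Pip\left(\sqrt[n]{X}_{top}\right)=\Pip\left(\sqrt[\infty]{X}_{top}\right),$$
where the last equality is the description of the profinite homotopy type of a pro-topological stack from Section~\ref{sec:profhomtypestacks}. This identifies (ii) with (iii). Composing the three identifications proves that the profinite spaces in (i), (ii) and (iii) are canonically equivalent.
\end{proof}
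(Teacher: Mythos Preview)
Your proof is correct and follows essentially the same approach as the paper: the paper derives the theorem by extending $\widehat{\Pi}^{\et}_\i$ to pro-objects, invoking Theorem~\ref{3hot} to identify (ii) with (iii), and then combining with Theorem~\ref{thm:main} to identify (i) with (ii). You spell out the pro-extension and naturality step a bit more carefully than the paper does, but the argument is the same.
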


In light of the above theorem, we make the following definition:

\begin{definition}
Let $X$ be a log scheme over a ground ring $k.$ Then the \textbf{profinite homotopy type} of $X$ is the profinite \'etale homotopy type of its infinite root stack $\sqrt[\i]{X}.$
\end{definition}


\appendix

\section{}\label{preliminaries}

In this appendix we gather some definitions and results about log schemes, analytification, the Kato-Nakayama space, root stacks and topological stacks. 

\subsection{Log schemes}

Log (short for ``logarithmic'') schemes were first defined and studied systematically in \cite{kato}. A modern introduction (with a view towards moduli theory) can be found in \cite{abramovich}.

\begin{remark}
We will give definitions and facts in the algebraic category, but we will apply them to the complex-analytic context as well. The only difference is that instead of the \'{e}tale topology we will be using the analytic topology.
\end{remark}

\begin{definition}
A \textbf{log scheme} is a scheme $X$ with a sheaf of monoids $M$ on the small \'{e}tale site $X_{\et}$ and a homomorphism $\alpha\colon M\to \cO_X$ of sheaves of monoids, where $\cO_X$ is seen as a monoid with respect to multiplication of regular functions, such that $\alpha$ induces an isomorphism
$$
\alpha|_{\alpha^{-1}(\cO^\times_X)}\colon \alpha^{-1}(\cO^\times_X)\to \cO^\times_X.
$$ 
\end{definition}

Note that the last condition gives us a canonical embedding $\cO_X^\times\hookrightarrow M$ as a subsheaf of groups.

We denote a log scheme by $(X,M,\alpha)$ or sometimes simply by $X$. 


\begin{example}\mbox{ }
\begin{itemize}
\item Any scheme $X$ is a log scheme with $M=\cO_X^\times$ and $\alpha$ the inclusion. This is the trivial log structure on $X$.
\item Any effective Cartier divisor $D\subseteq X$ induces a log structure, by taking $M$ to be the subsheaf of $\cO_X$ given by functions that are invertible outside of $D$.
\item If $P$ is a monoid, the spectrum of the monoid algebra $X_P\defeq \Spec k[P]$ has a natural log structure. The sheaf $M$ is obtained by considering the natural map $P\to k[P]=\Gamma(\cO_{X_P})$ and taking the ``associated log structure'' (see below for a few more details).
\end{itemize}
\end{example}

Log structures can be pulled back and pushed forward along morphisms of schemes. In particular

\begin{itemize}
\item any open subscheme of a log scheme can be equipped with the restriction of the log structure,
\item if we have a morphism of schemes $f\colon X\to \Spec k[P]$ we get an induced log structure on $X$. This happens in the following way: $f$ gives a morphism of monoids $P\to \cO_X(X)$, that induces $\widetilde{\alpha}\colon\underline{P}\to \cO_X$ where $\underline{P}$ is the constant sheaf. It is typically not true that $\widetilde{\alpha}$ induces an isomorphism between $\widetilde{\alpha}^{-1}\cO^\times_X$ and $\cO^\times_X$, but there is a procedure to fix the behaviour of the units, and this produces a log structure $\alpha\colon M\to \cO_X$. See \cite[Example 1.5]{kato} for details.
\end{itemize}

\begin{remark}\label{remark.stalks}
We remark that in the situation of the last bullet, the quotient $M/\cO_X^\times$ is obtained from $\underline{P}$ by locally ``killing the sections of $\underline{P}$ that become invertible in $\cO_X$'', so in particular all the stalks of $M/\cO_X^\times$ are quotients of the monoid $P$.
\end{remark}

We consider only coherent log structures, which are those that, \'{e}tale locally, come by pullback from the spectrum of the monoid algebra of a monoid.

\begin{definition}
A log scheme $X$ is \textbf{quasi-coherent} if there is an \'{e}tale cover $U_i$ of $X$, monoids $P_i$ and morphisms of log schemes $f_i\colon U_i\to \Spec k[P_i]$ that are strict, i.e. the log structure on $U_i$ is pulled back from $\Spec k[P_i]$ via $f_i$. The monoid $P_i$ and the map $f_i$ are a \textbf{chart} for the log structure over $U_i$.

A log scheme $X$ is \textbf{coherent} (resp. \textbf{fine}, resp. \textbf{fine and saturated}) if the monoids $P_i$ above can be taken finitely generated (resp. finitely generated and integral, resp. finitely generated, integral and saturated).
\end{definition}

A morphism such as $f_i$ in the definition above that identifies the pullback of the log structure on the target with the one of the source will be called \textbf{strict}.


We are interested only in fine and saturated log schemes.

\begin{proposition}[{{\cite[Proposition 2.1]{Ols}}}]\label{prop.chart}
Let $X$ be a fine saturated log scheme and $x$ a geometric point. Then there exists an \'{e}tale neighborhood $U$ of $x$ over which there is a chart for the log structure with monoid $P=(M/\cO_X^\times)_x$.
\end{proposition}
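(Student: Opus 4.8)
The plan is to build the chart directly on the stalk at the geometric point $\bar x$ and then spread it out. Set $P\defeq \overline{M}_{\bar x}=(M/\cO_X^\times)_{\bar x}$. Since $X$ is coherent, $P$ is a fine monoid; it is sharp because $M$ is integral, and since $X$ is saturated it is also saturated. A fine saturated sharp monoid is torsion-free, so $P^{\gp}=\overline{M}^{\gp}_{\bar x}$ is a free abelian group of finite rank, say $\bZ^{r}$. Recall that giving a chart with monoid $P$ on an \'etale neighborhood $U$ of $\bar x$ amounts to giving a homomorphism $\beta\colon P\to M(U)$ such that the induced morphism of log structures $\beta^{a}\colon \cL_{P}\to M|_{U}$ is an isomorphism, where $\cL_{P}$ is the log structure associated to the prelog structure $\underline{P}\xrightarrow{\alpha\circ\beta}\cO_{U}$. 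So it is enough to produce such a $\beta$ whose germ at $\bar x$ lifts the identity of $P=\overline{M}_{\bar x}$.

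For the germ I would use that the sequence of abelian groups
\[
1\longrightarrow \cO^{\times}_{X,\bar x}\longrightarrow M^{\gp}_{\bar x}\longrightarrow \overline{M}^{\gp}_{\bar x}\longrightarrow 1
\]
is exact: this follows from integrality of $M$, so that $\cO^\times_{\bar x}\hookrightarrow M_{\bar x}\hookrightarrow M^{\gp}_{\bar x}$ and an element of $M^{\gp}_{\bar x}$ maps to $0$ in $\overline{M}^{\gp}_{\bar x}$ exactly when it lies in $\cO^{\times}_{\bar x}$. Since $\overline{M}^{\gp}_{\bar x}\cong\bZ^{r}$ is free, this sequence splits; pick a splitting $s\colon \overline{M}^{\gp}_{\bar x}\to M^{\gp}_{\bar x}$. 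Then $s$ carries $P=\overline{M}_{\bar x}\subseteq \overline{M}^{\gp}_{\bar x}$ into $M_{\bar x}$: indeed $M_{\bar x}$ is precisely the preimage of $\overline{M}_{\bar x}$ under $M^{\gp}_{\bar x}\to\overline{M}^{\gp}_{\bar x}$ (if $m\in M^{\gp}_{\bar x}$ has image in $\overline{M}_{\bar x}$, lift that image to some $m'\in M_{\bar x}$; then $m\cdot(m')^{-1}$ dies in $\overline{M}^{\gp}_{\bar x}$, hence lies in $\cO^{\times}_{\bar x}\subseteq M_{\bar x}$, hence $m\in M_{\bar x}$). Thus $\beta\defeq s|_{P}\colon P\to M_{\bar x}$ is a homomorphism, and by construction the composite $P\xrightarrow{\beta}M_{\bar x}\to\overline{M}_{\bar x}$ is the identity.

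Next I would spread out: since $P$ is fine it is finitely presented, so lifting a finite generating set of $P$ and checking the finitely many defining relations on a small enough neighborhood, the germ $\beta$ extends to a homomorphism $\beta\colon P\to M(U)$ for some \'etale neighborhood $U$ of $\bar x$. Form the associated morphism $\beta^{a}\colon \cL_{P}\to M|_{U}$; I claim it is an isomorphism after shrinking $U$. On stalks at $\bar x$ the map $(\overline{\cL_{P}})_{\bar x}\to\overline{M}_{\bar x}$ is an isomorphism: no nonzero $p\in P$ can have $\alpha(\beta(p))$ invertible at $\bar x$, as that would force $\beta(p)\in\cO^{\times}_{\bar x}$, i.e.\ $p\mapsto 0$ in $\overline{M}^{\gp}_{\bar x}$, i.e.\ $p=0$ since $P\hookrightarrow\overline{M}^{\gp}_{\bar x}$; hence $(\overline{\cL_{P}})_{\bar x}=P$ and the induced map to $\overline{M}_{\bar x}=P$ is the identity. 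Since $\beta^{a}$ is moreover the identity on $\cO_X^{\times}$, a short diagram chase comparing the extensions $1\to\cO_X^{\times}\to \cL_{P,\bar x}\to(\overline{\cL_{P}})_{\bar x}\to 1$ and $1\to\cO_X^{\times}\to M_{\bar x}\to\overline{M}_{\bar x}\to 1$ shows $\beta^{a}$ is an isomorphism on the stalk at $\bar x$. Finally, a morphism of fine log structures which is an isomorphism on the stalk at a geometric point is an isomorphism on some \'etale neighborhood; shrinking $U$ accordingly, $\beta\colon P\to M(U)$ is a chart with monoid $P=\overline{M}_{\bar x}$, proving the proposition.

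I expect the only genuinely delicate input to be this last fact---promoting a stalkwise isomorphism of fine log structures to an isomorphism over a neighborhood---which is where coherence of $X$ is really used: near $\bar x$ there exists \emph{some} fine chart by the definition of ``fine'', and through such a chart the characteristic sheaf $\overline{M}$ is constructible, so the locus where $\overline{\beta^{a}}$ fails to be an isomorphism is closed and avoids $\bar x$. Everything else reduces to the structure theory of fine saturated sharp monoids (torsion-freeness of the groupification) together with elementary bookkeeping with monoids.
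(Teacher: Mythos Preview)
The paper does not actually give a proof of this proposition: it is stated in the appendix with a citation to \cite[Proposition 2.1]{Ols} and used as a black box throughout. So there is no proof in the paper to compare your attempt against.

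That said, your argument is correct and is essentially the standard proof one finds in the cited reference (and in Ogus's book). The key point---that $P^{\gp}=\overline{M}^{\gp}_{\bar x}$ is free, so the extension $1\to\cO^{\times}_{X,\bar x}\to M^{\gp}_{\bar x}\to\overline{M}^{\gp}_{\bar x}\to 1$ splits and the splitting lands in $M_{\bar x}$---is exactly what drives the construction, and you have identified it cleanly. Your verification that $M_{\bar x}$ is the preimage of $\overline{M}_{\bar x}$ in $M^{\gp}_{\bar x}$ is correct (this is where integrality is used), and the computation that $(\overline{\cL_P})_{\bar x}=P$ with the induced map being the identity is fine. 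The final spreading-out step, which you rightly flag as the place where coherence enters, is handled in the literature by exactly the constructibility argument you sketch; one small addendum is that to conclude $\beta^{a}$ is an isomorphism on a neighborhood it suffices to know that $\overline{\beta^{a}}$ is, since a morphism of log structures inducing an isomorphism on characteristic sheaves is automatically an isomorphism (both sides are extensions of the same sheaf by $\cO_X^{\times}$).
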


This says in particular that if $X$ is fine and saturated, we can locally find charts with $P$ fine, saturated and sharp.

The quotient sheaf $\overline{M}=M/\cO^\times_X$ is called the characteristic sheaf of the log structure. Taking the quotient (in an appropriate sense) by $\cO^\times_X$ of the map $\alpha$, we get an alternative definition of a (quasi-integral) log scheme, introduced in \cite{borne-vistoli}.

Let us denote by $\Div_X$ the fibered category over $X_{\et}$ whose objects over $U\to X$ are pairs $(L,s)$ where $L$ is an invertible sheaf of $\cO_U$-modules on $U$ and $s$ is a global section. This is a symmetric monoidal fibered category, where the monoidal operation is given by tensor product.

\begin{definition}
A \textbf{log scheme} is a scheme $X$ together with a sheaf of monoids $A$ and a symmetric monoidal functor $L\colon A\to \Div_X$ with trivial kernel.
\end{definition}

The phrasing ``trivial kernel'' in the definition means that if a section $a$ is such that $L(a)$ is isomorphic to $(\cO_X,1)$ in $\Div_X$, then $a=0$.

Given a (quasi-integral) log scheme $(X,M,\alpha)$, by taking the ``stacky quotient'' of $\alpha\colon M\to \cO_X$ by $\cO^\times_X$ we get the functor $L\colon A=\overline{M}=[M/\cO^\times_X]\to [\cO_X/\cO_X^\times]=\Div_X$. Quasi-integrality ensures that the quotient $[M/\cO^\times_X]$ is actually a sheaf. Of course integral log structures are quasi-integral. See \cite[Theorem 3.6]{borne-vistoli} for details.

One can give a notion of charts in this context as well. For many purposes these two notions of chart can be used indifferently. We mostly use charts as in the first definition above. These are called ``Kato charts'' in \cite{borne-vistoli}.

\begin{remark}
A fist approximation of how one should ``visualize'' a log scheme is by thinking about the stalks of the sheaf $\overline{M}$. This sheaf is locally constant on a stratification of $X$ (see Proposition \ref{prop.constant}) and the stalks are fine saturated sharp monoids. Of course this disregards the particular extension $M$ of $\overline{M}$ by $\cO_X^\times$ and the map $\alpha$ (or equivalently the functor $L$), so it is indeed just a crude approximation.
\end{remark}

\subsection{Analytification}

We are mainly concerned with log schemes locally of finite type over $\bC$, and with their analytifications.

Recall that if $X$ is a scheme locally of finite typer over $\bC$, the associated analytic space $X_\an$ is defined as a set as the $\bC$ points $X(\bC)=X(\Spec\bC)$ of $X$. This has an ``analytic'' topology coming from the local embeddings into $\bC^n$. Moreover this construction extends also to algebraic spaces locally of finite type over $\bC$ (see \cite{artin, toen-vaquie}).

If $X$ is a log scheme locally of finite type over $\bC$, the analytication $X_\an$ inherits a log structure, because of the relationship between the \'{e}tale topos of $X$ and the analytic topos of $X_\an$.  An \'{e}tale morphism $X\to Y$ induces a local homeomorphism $X_\an\to Y_\an$, that consequently has local sections in the analytic topology. This gives a functor from the \'{e}tale site of $X$ to the analytic site of $X_\an$, and induces a morphism of topoi. The log structure on $X_\an$ is obtained via this functor. Thus, 
in what follows every time something holds \'{e}tale locally for the log scheme $X$, it will hold analytically locally for the log analytic space $X_\an$.

We will use this without further mention, and we will use the same letter to denote the sheaf of monoids $M$ on $X$ and the induced one on $X_\an$. This should cause no real confusion.

\begin{definition}\label{def.monoid}
For a monoid $P$ we denote by $\bC(P)$ the analytification of the spectrum of the monoid algebra $\Spec \bC[P]$.
\end{definition}

As sets we have $\bC(P)=\Hom(P,\bC)$, the set of homomorphisms of monoids, where $\bC$ is given the multiplicative structure.

A basis of opens of $\bC(P)$ (where $P$ is fine, saturated and sharp) can be described as follows: call $p_1,\hdots, p_k$ the indecomposable elements of $P$ (see \cite[Proposition 2.1.2]{ogus}), and choose open disks $D_i$ in the complex plane $\bC$. Then the set of homomorphisms $\phi\in \Hom(P,\bC)$ such that $\phi(p_i)\in D_i$ is open in $\bC(P)$. 
Letting the disks $D_i$ vary we get a basis for the open subsets of $\bC(P)$.

\begin{lemma}[{\cite[p. 12]{toen-vaquie}}]
Analytification commutes with finite limits.
\end{lemma}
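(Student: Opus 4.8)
The plan is to use the standard fact that a category with a terminal object and all pullbacks has all finite limits, and that a functor preserving the terminal object and pullbacks preserves all finite limits; thus it suffices to check these two cases for $\left(\blank\right)_\an$. The terminal object of $\Aff$ is $\Spec\bC$, and $\left(\Spec\bC\right)_\an$ is a one-point space, the terminal object of $\Top$, so this case is immediate. For pullbacks I would first reduce to affine schemes: analytification sends open immersions to open embeddings, and if $U\subseteq X$, $V\subseteq Y$, $W\subseteq Z$ are opens with $U,V$ mapping into $W$ then $U\times_W V=U\times_Z V$ is an open subscheme of $X\times_Z Y$ (using that $W\hookrightarrow Z$ is a monomorphism), so covering $X,Y,Z$ by compatible affine opens reduces the statement to $X=\Spec A$, $Y=\Spec B$, $Z=\Spec C$ with $A,B,C$ of finite type over $\bC$. (All affine schemes are separated, so no issue with the diagonal arises here.)

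In the affine case, $X\times_Z Y=\Spec(A\otimes_C B)$. The underlying sets of $\Spec(A\otimes_C B)_\an$ and $\left(\Spec A\right)_\an\times_{\left(\Spec C\right)_\an}\left(\Spec B\right)_\an$ agree by the universal property of the tensor product of $\bC$-algebras: a $\bC$-point of $\Spec(A\otimes_C B)$ is exactly a compatible pair of $\bC$-points of $\Spec A$ and $\Spec B$. For the topology, I would first handle the absolute product: choosing presentations $A=\bC[x_1,\dots,x_n]/I$ and $B=\bC[y_1,\dots,y_m]/J$ gives $A\otimes_\bC B=\bC[x,y]/(I,J)$, whose analytification is the common vanishing locus of $I\cup J$ in $\bC^{n+m}=\bC^n\times\bC^m$ with the subspace topology; this locus is $V(I)\times V(J)$, and since the subspace topology on a product of subspaces is the product topology, it equals $\left(\Spec A\right)_\an\times\left(\Spec B\right)_\an$. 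Finally $\Spec(A\otimes_C B)\hookrightarrow\Spec(A\otimes_\bC B)$ is the closed immersion cut out by the kernel of $A\otimes_\bC B\twoheadrightarrow A\otimes_C B$, so its analytification is the corresponding closed subset of $\left(\Spec A\right)_\an\times\left(\Spec B\right)_\an$ with the subspace topology; by the set-level computation this subset is exactly the topological fibre product $\left(\Spec A\right)_\an\times_{\left(\Spec C\right)_\an}\left(\Spec B\right)_\an$, which indeed carries the subspace topology from the product. This proves the lemma for schemes, and the extension to $\As$ follows by writing an algebraic space as the quotient of an \'etale groupoid of affine schemes and using that analytification turns \'etale morphisms into local homeomorphisms, so that the scheme case propagates to the quotient. (The same argument with structure sheaves shows analytification commutes with finite limits as a functor into complex-analytic spaces, since the analytic tensor product is computed identically on these charts.)

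The step I expect to be the main obstacle is the comparison of topologies: verifying that the subspace topology inherited from the ambient affine space agrees with the fibre-product topology. This rests on the elementary but easy-to-fumble facts that the analytification of a closed subscheme is a topological subspace (with the evident subset as underlying set) and that subspace and product topologies commute as used above. The set-theoretic identification and the reduction to the affine case are formal.
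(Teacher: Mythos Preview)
The paper does not supply its own proof of this lemma: it is simply cited from \cite[p.~12]{toen-vaquie} and used as a black box. So there is nothing to compare your argument against in the paper itself.

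That said, your sketch is a correct and standard proof for schemes of finite type over $\bC$. The reduction to terminal object plus pullbacks, the further reduction to the affine case via compatible open covers, and the explicit check via presentations are all sound; the topology comparison you flag as the delicate point is indeed the only place requiring care, and your treatment of it (absolute product first, then closed embedding into it) is the right way to handle it. One small caution: your final sentence about extending to algebraic spaces via \'etale groupoid quotients is more of a gesture than an argument --- one must check that the analytification of the quotient agrees with the topological quotient, which uses that analytification takes \'etale equivalence relations to local-homeomorphism equivalence relations and that such quotients exist and behave well in $\Top$. This is true but deserves a line if you want a self-contained proof at that level of generality; the cited reference handles exactly this.
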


We will need the following result on the topological properties of analytifications of schemes locally of finite type over $\bC$. As references, we point out \cite{Lo}.

\begin{proposition}
Let $X$ be an affine scheme of finite type over $\bC$ and $Y\subseteq X$ be a closed subscheme. Then there exist compatible triangulations of $X_\an$ and $Y_\an$, realizing $Y_\an$ as a subcomplex.
\end{proposition}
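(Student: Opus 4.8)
The plan is to deduce the statement from the classical triangulation theorem for semialgebraic (equivalently, real algebraic) sets. Since $X_\an$ and $Y_\an$ depend, as topological spaces, only on the underlying reduced schemes, I may first replace $X$ and $Y$ by $X_\red$ and $Y_\red$. As $X$ is affine of finite type over $\bC$, I would choose a closed immersion $X\hookrightarrow \bA^N_\bC$; writing a finite set of generators of the ideal of $X$ and separating the real and imaginary parts of each generator under the identification $\bC^N\cong\bR^{2N}$, one exhibits $X_\an$ as the common zero locus in $\bR^{2N}$ of finitely many real polynomials, i.e. as a closed real algebraic subset of $\bR^{2N}$. In the same way $Y_\an$ is a closed real algebraic subset of $\bR^{2N}$ contained in $X_\an$. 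In particular $\{X_\an,\,Y_\an\}$ is a finite family of closed semialgebraic subsets of Euclidean space.

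Next I would invoke Łojasiewicz's triangulation theorem \cite{Lo} (see also \cite{Sh} for the semialgebraic/algebraic case and, importantly, for the locally finite version): given a locally finite family of semialgebraic subsets of $\bR^M$, there exist a locally finite simplicial complex $K$ and a semialgebraic homeomorphism $h\colon |K|\stackrel{\sim}{\longrightarrow} X_\an$ under which each member of the family is the image of a subcomplex of $K$ — more precisely, a union of images of open simplices. Applying this to the pair $\{X_\an,\,Y_\an\}$ produces a triangulation $\cT_X$ of $X_\an$ together with a subcomplex whose realization is exactly $Y_\an$, which is the assertion.

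The only point requiring a little care — rather than a genuine obstacle — is that $X$ is assumed merely of finite type over $\bC$, so $X_\an$ need not be compact; this is why one must use the version of the triangulation theorem producing a \emph{locally finite} (rather than finite) simplicial complex, and why it is convenient to pass, as above, to the global real algebraic description rather than working only with the local semianalytic structure. That version is contained in Łojasiewicz's original work and is treated systematically in \cite{Sh}. Finally, I note that the same theorem applied to a larger finite family of semialgebraic sets yields at once the strengthenings used elsewhere in the paper (compare Lemma \ref{lem:triang}): one may in addition require $\cT_X$ to be compatible with any prescribed finite stratification of $X_\an$ by semialgebraic subsets — in particular with the rank stratification $\cR_X$ — simply by including its strata among the sets to be simultaneously triangulated.
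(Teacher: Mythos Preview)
Your argument is correct and is essentially the approach the paper takes: the paper does not give a proof of this proposition at all, but simply states it as a known result and points to \cite{Lo} (and later also \cite{Sh}) as references. Your proposal just spells out the standard reduction---realizing $X_\an$ and $Y_\an$ as real algebraic subsets of $\bR^{2N}$ and invoking the \L{}ojasiewicz/Hironaka--Shiota triangulation theorem for a finite family of semialgebraic sets---that those references contain.
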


We can apply this iteratively to a stratification, to get compatible triangulations of the ambient affine scheme and of all the (closed) strata.

\subsection{Kato-Nakayama space}

From now on all log schemes will be fine and saturated unless we specify otherwise. Just for this subsection, $X$ will denote an analytic space rather than a scheme.

The Kato-Nakayama space $X_\log$ of a log analytic space $X$ (for example of the form $Y_\an$ for some log scheme $Y$ locally of finite type over $\bC$) is a topological space introduced in \cite{KN}. 
The idea is to define a topological space that ``embodies'' the log structure of $X$ in a topological way (i.e. without using the sheaf of monoids, but only ``points'').

What comes out is a topological space $X_\log$ (that also comes with a natural sheaf of rings, but we do not use this in the present work) with a continuous map $\tau\colon X_\log\to X$ that is proper and surjective. Moreover if $U\subseteq X$ is the trivial locus of the log structure (the largest open subset over which $\cO_X^\times\hookrightarrow M$ is an isomorphism), the open embedding $i\colon U\to X$ factors through $\tau$, so that $X_\log$ can be considered as a `` relative compactification'' of the open immersion $i$.

Let us denote by $p^\dagger$ the log analytic space given by the point $\mathrm{pt}=(\Spec \bC)_\an$ with monoid $M=\bR_{\geq 0}\times S^1$, and map $\alpha\colon M\to \bC$ described by $(r,a)\mapsto r\cdot a$. Note that this log structure is not integral.

As a set we have $X_\log=\Hom(p^\dagger,X)$, the set of morphisms of log analytic spaces from the log point $p^\dagger$ to $X$. By unraveling this one can also write
$$X_\log=\left\{(x,c) \mid  c \colon M^\gp_x \to S^1 \mbox{ is a group hom. such that } c(f)=\frac{f}{|f|} \mbox{ for all } f\in \cO_{X,x}^\times\right\}.$$

In particular one can see that $\bC(P)_\log=\Hom(p^\dagger,\bC(P))=\Hom(P,\bR_{\geq 0}\times S^1)$, and the projection $\tau\colon \bC(P)_\log\to \bC(P)$ is given by post-composition with $\bR_{\geq 0}\times S^1\to \bC$.

Note that from the above description $\bC(P)_\log$ has a natural topology, that by means of local charts for the log structure gives a topology on $X_\log$ in general \cite[Section 1.2]{KN}.

From the description one sees easily that for $x\in X_\an$, the fiber $\tau^{-1}(x)$ is homeomorphic to $(S^1)^r$ where $r$ is the rank of the stalk $\overline{M}_x$, defined to be the rank of the free abelian group $\overline{M}_x^\gp$.

The construction of the Kato-Nakayama space is clearly functorial, and is also compatible with strict base change. 


\begin{proposition}[{{\cite[Lemma 1.3]{KN}}}]\label{kn.basechange}
Let $f\colon X\to Y$ be a strict morphism of fine saturated log analytic spaces. Then the diagram of topological spaces
$$
\xymatrix{
X_\log\ar[r] \ar[d]& Y_\log\ar[d]\\
X\ar[r] & Y
}
$$
is cartesian.
\end{proposition}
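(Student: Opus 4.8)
The plan is to exhibit the canonical map
$$\Psi\colon X_\log \longrightarrow X\times_Y Y_\log$$
(induced by $\tau_X$ and by the functoriality of the Kato--Nakayama construction applied to $f$) and to prove it is a homeomorphism by checking separately that it is a bijection of sets and that its inverse is continuous. Continuity of $\Psi$ itself is immediate, since both composites $X_\log\to Y_\log$ and $X_\log\xrightarrow{\tau_X} X$ are continuous.

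\emph{Bijectivity on points.} Using the set-theoretic description of $X_\log$ recalled above, a point of $X_\log$ lying over $x\in X_\an$ is a homomorphism $c\colon M^\gp_{X,x}\to S^1$ with $c(u)=u/|u|$ for $u\in\cO^\times_{X,x}$. Strictness of $f$ means $M_X=f^*M_Y$, so on stalks $M^\gp_{X,x}=M^\gp_{Y,f(x)}\oplus_{\cO^\times_{Y,f(x)}}\cO^\times_{X,x}$ (group completion preserves pushouts). Hence $c$ is the same datum as a pair $(c_Y,c_\cO)$ with $c_Y\colon M^\gp_{Y,f(x)}\to S^1$, $c_\cO\colon\cO^\times_{X,x}\to S^1$, agreeing on $\cO^\times_{Y,f(x)}$. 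The defining condition of $X_\log$ forces $c_\cO$ to be the ``argument'' homomorphism $u\mapsto u/|u|$, which is then automatically admissible; and since $\mathrm{ev}_x\circ f^\flat_x=\mathrm{ev}_{f(x)}$, the compatibility condition on $\cO^\times_{Y,f(x)}$ becomes exactly $c_Y(g)=g/|g|$ for $g\in\cO^\times_{Y,f(x)}$, i.e. that $(f(x),c_Y)$ is a point of $Y_\log$. Thus $(x,c)\mapsto\bigl(x,(f(x),c_Y)\bigr)$ is a bijection $X_\log\xrightarrow{\sim}X\times_Y Y_\log$.

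\emph{Continuity of the inverse.} Recall that the topology on $X_\log$ admits a sub-basis of open sets of two kinds: $\tau_X^{-1}(V)$ for $V\subseteq X$ open, and $\{(x,c)\in\tau_X^{-1}(U): c(m_x)\in D\}$ for $U\subseteq X$ open, $m\in\Gamma(U,M^\gp_X)$ and $D\subseteq S^1$ open. Sets of the first kind are pulled back from $X$, hence correspond to opens of $X\times_Y Y_\log$. For the second kind I would argue locally: by strictness every germ of $M^\gp_X$ at $x$ is of the form $(f^\flat n)\cdot u$ with $n$ a germ of $M^\gp_Y$ at $f(x)$ and $u$ a germ of $\cO^\times_X$ at $x$, so after shrinking $U$ we may write $m=(f^\flat n)\cdot u$ and then $c(m_x)=\overline{c}(n_{f(x)})\cdot\bigl(u(x)/|u(x)|\bigr)$, where $\overline c$ is the image of $c$ in $Y_\log$. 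The first factor is continuous on $X\times_Y Y_\log$, being pulled back from the continuous map $Y_\log|_{U'}\to S^1$, $(y,c')\mapsto c'(n_y)$; the second is pulled back from the continuous map $U\to S^1$, $x\mapsto u(x)/|u(x)|$. Hence this sub-basic open of $X_\log$ also corresponds to an open of $X\times_Y Y_\log$, so $\Psi^{-1}$ is continuous and $\Psi$ is a homeomorphism.

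The step requiring the most care is the topological one: one must use strictness to factor local sections of $M^\gp_X$ through pulled-back sections of $M^\gp_Y$ and units, which is precisely what lets the ``coordinate functions'' $(x,c)\mapsto c(m_x)$ descend to continuous functions on the fiber product. (Alternatively, one may reduce to the case $Y=\bC(P)$ by choosing, locally on $Y$, a strict chart $Y\to\bC(P)$ — which then also serves as a chart for $X$, since $X\to Y$ is strict — and invoke the chart-wise definition of the Kato--Nakayama topology; the direct argument above is however self-contained.)
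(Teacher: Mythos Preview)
Your proof is correct. The paper itself does not give a proof of this proposition: it simply records the statement and cites \cite[Lemma 1.3]{KN}, so there is no argument in the paper to compare against directly.

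Your approach --- identifying points via the stalk-wise pushout $M^\gp_{X,x}\cong M^\gp_{Y,f(x)}\oplus_{\cO^\times_{Y,f(x)}}\cO^\times_{X,x}$ and then checking the inverse is continuous using the sub-basis of opens $(x,c)\mapsto c(m_x)$ --- is the standard direct argument and goes through cleanly. One tiny notational point: you introduce $U'$ without saying what it is; you mean an open of $Y$ with $f(U)\subseteq U'$ over which $n$ lifts to an honest section of $M^\gp_Y$ (which exists after shrinking $U$, by definition of $f^{-1}$). With that understood, the continuity step is fine.

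The alternative you sketch at the end --- reducing, locally on $Y$, to $Y=\bC(P)$ via a chart, so that both $X_\log$ and $Y_\log$ are fiber products over $\bC(P)_\log$ and the claim becomes a tautology about iterated fiber products --- is shorter and is essentially how this is typically argued in the literature (and how the paper implicitly uses it, always invoking the result in situations where a global chart is available). Your direct argument has the advantage of not needing to patch local charts, at the cost of a slightly longer topology check.
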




The description of $X_\log$ as a set can actually be enhanced to a description of its functor of points (see Section \ref{subsec.KN}). 




We now prove that following proposition.

\begin{proposition}\label{prop:hausdorff}
For any log scheme $X,$ the Kato-Nakayama space $X_{log}$ is locally Hausdorff, locally contractible and locally compact. 
\end{proposition}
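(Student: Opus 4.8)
The plan is to reduce to an explicit local model and then to recognise $X_{\log}$ as, locally, a semialgebraic subset of Euclidean space.

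First I would observe that all three properties are local on $X_{\log}$, and that they may be checked after passing to an \'etale cover of $X$. Indeed, if $U\to X$ is \'etale it is in particular strict, so by Proposition \ref{kn.basechange} the square with corners $U_{\log}, X_{\log}, U_{\an}, X_{\an}$ is cartesian; since $U_{\an}\to X_{\an}$ is a local homeomorphism (\'etale maps analytify to local homeomorphisms), so is $U_{\log}\to X_{\log}$, and as $U$ runs over an \'etale cover of $X$ the images of the $U_{\log}$ cover $X_{\log}$. Hence, by Proposition \ref{prop.chart} and a further shrinking, I may assume that $X=\Spec A$ is affine of finite type over $\bC$ and carries a strict Kato chart $X\to\Spec\bC[P]$ with $P$ fine, saturated and sharp.

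In this situation Proposition \ref{kn.basechange} identifies the topological space $X_{\log}$ with the fibre product $X_{\an}\times_{\bC(P)}\bC(P)_{\log}$, where $\bC(P)_{\log}=\Hom(P,\bR_{\geq 0}\times S^1)$ and the map $\bC(P)_{\log}\to\bC(P)=\Hom(P,\bC)$ is post-composition with $(r,\zeta)\mapsto r\zeta$. I would then make everything semialgebraic: choosing generators $p_1,\dots,p_k$ of $P$ and finitely many binomial relations presenting $P$ (the toric ideal of $\Spec\bC[P]\subseteq\bA^k$ being finitely generated by binomials), the set $\bC(P)_{\log}$ is cut out inside $(\bR_{\geq 0}\times S^1)^k\subseteq\bR^{3k}$ by the real polynomial equations coming from those relations, so it is a closed semialgebraic subset of $\bR^{3k}$, and $\bC(P)_{\log}\to\bC(P)\subseteq\bC^k=\bR^{2k}$ is the restriction of a polynomial map. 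Likewise $X_{\an}$ is a closed real-algebraic subset of some $\bR^{2N}$ and $X_{\an}\to\bC(P)$ is the restriction of a polynomial map. Since $\bC(P)$ is Hausdorff, the fibre product $X_{\log}=\{(x,\xi)\mid f(x)=g(\xi)\}$ is then a closed semialgebraic subset of $X_{\an}\times\bC(P)_{\log}\subseteq\bR^{2N+3k}$.

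It remains to quote standard facts. As a closed subset of Euclidean space, $X_{\log}$ is automatically locally compact and Hausdorff; and, being semialgebraic, it admits a triangulation (Lojasiewicz; see \cite{Lo} and \cite{Sh}), hence is locally contractible. This is essentially the whole argument: local compactness and (local) Hausdorffness are formal consequences of the embedding into $\bR^n$, and the only genuine content is local contractibility -- the point being that the Kato--Nakayama construction is assembled from the explicit semialgebraic pieces $\bC(P)_{\log}$ glued along strict maps, and so never leaves the semialgebraic category. (For the record, $\bC(P)_{\log}\cong\Hom(P,\bR_{\geq 0})\times(S^1)^r$ with $r=\rank P^{\gp}$, the first factor being the non-negative real locus of the affine toric variety $\Spec\bC[P]$; this gives a more hands-on proof of the local contractibility of $\bC(P)_{\log}$, but it is not needed for the argument.)
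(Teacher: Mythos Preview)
Your proposal is correct and follows essentially the same approach as the paper: reduce \'etale-locally to the case of an affine scheme with a global Kato chart, identify $X_{\log}$ with the fibre product $X_{\an}\times_{\bC(P)}\bC(P)_{\log}$, and show that all the spaces and maps involved are semialgebraic so that $X_{\log}$ itself is semialgebraic and hence triangulable. The only minor difference is that you extract local compactness and Hausdorffness directly from being a closed subset of Euclidean space, whereas the paper reads all three properties off the triangulation.
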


We will start by assuming that $X$ is affine and has a global chart $X\to \Spec \bC[P]$ for a fine saturated sharp monoid $P$, and will prove that $X_\log$ is locally compact, Hausdorff and locally contractible. 
This implies the conclusion for arbitrary $X$.

Note that since $f\colon X\to \Spec \bC[P]$ is strict, there is a Cartesian diagram of topological spaces
$$
\xymatrix{
X_\log\ar[r]\ar[d] & \bC(P)_\log\ar[d]^\tau\\
X_\an\ar[r]^{f_\an} & \bC(P).
}
$$
Our proof will be as follows: we note that $X_\an$ and $\bC(P)$ are semialgebraic, and the map $X_\an\to \bC(P)$ is a semialgebraic function (this part of the diagram is even algebraic). We will check that $\bC(P)_\log$ is semialgebraic, and that the projection to $\bC(P)$ is a semialgebraic function.

After we do that, it will follow that $X_\log$ is semialgebraic as well (being the inverse image of the diagonal $\bC(P)\subseteq \bC(P)\times \bC(P)$, a semialgebraic set, through a semialgebraic map $(f_\an,\tau) \colon X_\an\times \bC(P)_\log\to \bC(P)\times\bC(P)$, see  \cite[Proposition 2.2.7]{real}), hence triangulable (by the results of \cite{Lo}), and any triangulable locally semialgebraic set is locally compact, Hausdorff and locally contractible \cite{hofmann}.



\begin{lemma}
The space $\bC(P)_\log$ is semialgebraic, and the projection $\bC(P)_\log \to \bC(P)$ is a semialgebraic map.
\end{lemma}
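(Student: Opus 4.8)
The plan is to realize both $\bC(P)_\log$ and $\bC(P)$ as explicit semialgebraic subsets of affine spaces, by evaluating homomorphisms on a finite generating set of $P$. First I would fix the indecomposable elements $p_1,\dots,p_k$ of $P$, which generate $P$ by \cite[Proposition 2.1.2]{ogus}; since $P$ is finitely generated it is finitely presented as a commutative monoid, so I may also fix finitely many relations $\sum_i a_{ji}\,p_i = \sum_i b_{ji}\,p_i$ ($j=1,\dots,m$, with $a_{ji},b_{ji}\in\bN$) generating the congruence on the free commutative monoid on $p_1,\dots,p_k$. For any commutative monoid $M$, evaluation on $p_1,\dots,p_k$ then identifies $\Hom(P,M)$ with the subset of $M^k$ consisting of the tuples $(t_1,\dots,t_k)$ satisfying $\prod_i t_i^{a_{ji}} = \prod_i t_i^{b_{ji}}$ for all $j$, and (when $M$ carries a topology) identifies the natural topology on $\Hom(P,M)$ with the subspace topology.

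Next I would apply this with $M=\bR_{\geq 0}\times S^1$, identifying $S^1$ with the unit circle $\{x^2+y^2=1\}\subseteq\bR^2$, so that $\bR_{\geq 0}\times S^1$ sits in $\bR^3$ as the semialgebraic set $\{r\geq 0,\ x^2+y^2=1\}$ and its multiplication $(r,x,y)\cdot(r',x',y')=(rr',\,xx'-yy',\,xy'+x'y)$ is polynomial in these coordinates. Then each iterated product $\prod_i t_i^{a_{ji}}$ is a polynomial map in the $3k$ ambient coordinates of $(\bR_{\geq 0}\times S^1)^k\subseteq\bR^{3k}$, so $\bC(P)_\log=\Hom(P,\bR_{\geq 0}\times S^1)$ is the intersection of the semialgebraic set $(\bR_{\geq 0}\times S^1)^k$ with a real algebraic set, hence semialgebraic. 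Likewise $\bC(P)=\Hom(P,\bC)\subseteq\bC^k=\bR^{2k}$ is the real algebraic set cut out by $\prod_i z_i^{a_{ji}}=\prod_i z_i^{b_{ji}}$ (polynomial in the real and imaginary parts of the $z_i$). Finally, the projection $\tau\colon\bC(P)_\log\to\bC(P)$ is post-composition with the homomorphism $(r,a)\mapsto r\cdot a$, which in these coordinates is the restriction of the polynomial map $\bR^{3k}\to\bR^{2k}$, $(r_i,x_i,y_i)_i\mapsto(r_ix_i,\,r_iy_i)_i$; hence $\tau$ is semialgebraic.

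I do not expect a serious obstacle here: the content is the bookkeeping of writing $\Hom(P,-)$ in coordinates together with the observation that the monoid operations on $\bR_{\geq 0}\times S^1$ and on $\bC$, as well as the map $r\cdot a$, are polynomial. The one point that needs a little care is checking that the topology on $\bC(P)_\log$ coming from the Kato--Nakayama construction agrees with the subspace topology induced from $(\bR_{\geq 0}\times S^1)^k$ via evaluation on the generators $p_i$; this is immediate from the description of the basic open subsets of $\bC(P)_\log$ recalled above (the analogue of the disk basis for $\bC(P)$). With that in hand the lemma, and with it Proposition~\ref{prop:hausdorff}, follows by feeding the resulting semialgebraic set into the triangulation results of \cite{Lo}.
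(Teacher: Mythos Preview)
Your proposal is correct and follows essentially the same approach as the paper: both proofs embed $\Hom(P,\bC)$ and $\Hom(P,\bR_{\geq 0}\times S^1)$ into $\bC^k$ and $(\bR_{\geq 0}\times S^1)^k\subseteq(\bR^3)^k$ via evaluation on a finite generating set, cut them out by the polynomial relations coming from a presentation of $P$, and observe that the projection is the restriction of the polynomial map $(r_i,x_i,y_i)\mapsto(r_ix_i,r_iy_i)$. Your write-up is slightly more explicit about the multiplication on $\bR_{\geq 0}\times S^1$ being polynomial and about the topology matching, but the argument is the same.
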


\begin{proof}
We will check this by writing out these spaces explicitly. Let $p_i$ be a finite set of generators for $P$ (for example the indecomposable elements), and assume to have a finite number of relations that present the monoid $P$, of the form $\sum_j r_{ij} p_j=\sum_j s_{ij}p_j$. Say there's $k$ generators and $h$ relations.

Then we have a map $\bC(P)=\Hom(P,\bC) \to \bC^k$ given by $\phi\mapsto (\phi(p_i))$. This is an embedding, and the closed image is the Zariski closed subset with equations $\prod_j (z_j)^{r_{ij}}=\prod_j (z_j)^{s_{ij}}$ obtained from the $h$ relations of the chosen presentation of $P$, and $(z_j)$ are the coordinates of $\bC^k$.

In the exact same way we have a map $\bC(P)_\log=\Hom(P,\bR_{\geq  0}\times S^1)\to (\bR_{\geq  0}\times S^1)^k$ given by $\psi\mapsto (\psi(p_i))$. To describe the image, let us note that we have $\bR_{\geq 0}\times S^1\subseteq \bR^3$ in a natural way, as a semialgebraic subset. If we denote by $(\zeta_j)$ the ``coordinates'' of $(\bR_{\geq  0}\times S^1)^k$, then the (isomorphic) image of $\bC(P)_\log$ is again described by the equations $\prod_j (\zeta_j)^{r_{ij}}=\prod_j (\zeta_j)^{s_{ij}}$, so it is semialgebraic (the equations translate into algebraic equations on $(\bR^3)^k$).

Of course the diagram
$$
\xymatrix{
\bC(P)_\log \ar[r]\ar[d] & (\bR_{\geq  0}\times S^1)^k \ar[d]\\
\bC(P)  \ar[r] & \bC^k
}
$$
commutes.

From this, it suffices to check that the map $(\bR_{\geq  0}\times S^1)^k\to \bC^k$ is semialgebraic, and this is easy: in coordinates (where we see $(\bR_{\geq  0}\times S^1)^k\subseteq (\bR^3)^k$ and $\bC^k\cong (\bR^2)^k$) it is given by $(a_i,b_i,c_i)\mapsto (a_i\cdot b_i,a_i\cdot c_i)$.
\end{proof}

\subsection{Root stacks}\label{subsection.root}

Root stacks of log schemes were introduced in \cite{borne-vistoli}. The infinite root stack, an inverse limit of the ones with finitely generated  weight system, is the subject of \cite{TV}. We briefly recall the functorial definition and the groupoid presentations coming from local charts.

Let us fix a natural number $n$ and a log scheme $X$ with log structure $L\colon A\to \Div_X$. We can consider a sheaf $\frac{1}{n}A$ of ``fractions'' of sections of $A$: the sections of $\frac{1}{n}A$ are formal fractions $\frac{a}{n}$ where $a$ is a section of $A$. There is a natural inclusion $i_n\colon A\to \frac{1}{n}A$.

Note that $\frac{1}{n}A$ is isomorphic to $A$ via $a\mapsto \frac{a}{n}$. Through this isomorphism, the inclusion $i_n$ corresponds to multiplication by $n \colon A\to A$. The fact that this map is injective follows from torsion-freeness of stalks of $A$, which are fine saturated sharp monoids.

\begin{definition}
The \textbf{$n$-th root stack} $\sqrt[n]{X}$ of the log scheme $X$ is the stack over $\Sch,$ the category of schemes (with the \'{e}tale topology), whose functor of points sends a scheme $T$ to the groupoid whose objects are pairs $(\phi, N, a)$ where $\phi\colon T\to X$ is a morphism of schemes, $N\colon \frac{1}{n}\phi^*A\to \Div_X$ is a symmetric monoidal functor with trivial kernel and $a$ is a natural isomorphism between $\phi^*L$ and the composite $N\circ i_n$.
$$
\xymatrix@=.5cm@R=.35cm{ \phi^*A\ar[rr]\ar[dd] & \ar@{=>}[d]!<-3ex,0ex>^a& \Div_X\\ & & \\ \frac{1}{n}\phi^*A\ar[uurr] & & }
$$
Morphisms are defined in the obvious way.
\end{definition}

In other words the $n$-th root stack parametrizes extensions of the symmetric monoidal functor $L\colon A\to \Div_X$ to the sheaf $\frac{1}{n}A$. The pair $(N,a)$ in the definition above could be called an ``$n$-th root'' of the log structure $L\colon A\to \Div_X$.

Every time $n\mid m$ there is a morphism $\sqrt[m]{X}\to \sqrt[n]{X}$, and by letting $n$ and $m$ vary, these maps give an inverse system of stacks over $\Sch$.

\begin{definition}
The \textbf{infinite root stack} $\sqrt[\infty]{X}$ of the log scheme $X$ is the pro-algebraic stack $(\sqrt[n]{X})_{n\in\bN}$.
\end{definition}

\begin{remark}
In \cite{TV} the infinite root stack is defined as the actual limit of the inverse system in the $2$-category of fibered categories, but in the present paper it will always be the pro-object. We remark that the two contain the same information, since by the results of \cite[Section 5]{TV} the limit of the system of $n$-th root stacks recovers the log scheme completely, and hence recovers the pro-object as well.
\end{remark}



The $n$-th root stack $\sqrt[n]{X}$ is a tame Artin stack with coarse moduli space $X$. 
Moreover there are presentations of $\sqrt[n]{X}$ for each $n$ that assemble into a pro-object in groupoids in schemes, and can be regarded as a presentation of the pro-object $\sqrt[\infty]{X}$. This follows from the following local descriptions as quotient stacks \cite[Corollary 3.12]{TV}.

Let us fix a monoid $P$, and let us denote by $C_n$ the cokernel of the injective map $P^\gp\to \frac{1}{n}P^\gp$. Furthermore denote by $\mu_n(P)$ the Cartier dual of $C_n$. This acts on the monoid algebra $\Spec k[\frac{1}{n}P]$ ($k$ here is some base field, but this works the same way over $\bZ$).

If $X$ is a log scheme with a global chart $X\to \Spec k[P]$, then there is a cartesian diagram
$$
\xymatrix{
\sqrt[n]{X}\ar[r]\ar[d] & [\Spec k[\frac{1}{n}P]/\mu_n(P)]\ar[d]\\
X \ar[r] & \Spec k[P]
}
$$
presenting $\sqrt[n]{X}$ as a quotient stack $[X_n/\mu_n(P)]$, where $X_n=X\times_{\Spec k[P]} \Spec k[\frac{1}{n}P]$.


As we mentioned, these quotient stack presentations are all compatible, in the sense that they give a pro-object in groupoids in schemes $(X_n\times \mu_n(P)\rightrightarrows X_n)_{n\in\bN}$, that can be seen as a groupoid presentation of $\sqrt[\infty]{X}$. 

If $X$ does not have a global chart we cover it with \'{e}tale opens $U_i$ where there is a chart with monoid $P_i$ 
and assemble together the corresponding groupoid presentations.




\begin{proposition}[{{\cite[Proposition 4.19]{borne-vistoli}}}]\label{prop.charts}
The $n$-th root stack $\sqrt[n]{X}$ is a tame Artin stack, and is Deligne--Mumford when we are over a field of characteristic $0$.

\end{proposition}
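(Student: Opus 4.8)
The plan is to reduce to the local quotient-stack presentation recalled above and then to exploit the structure of the groups $\mu_n(P)$. Both being a tame Artin stack and being Deligne--Mumford are conditions that may be checked étale-locally on the base: formation of $\sqrt[n]{X}$ commutes with strict --- in particular étale --- base change on $X$, and the relevant conditions on the resulting stack (finite inertia together with a diagonal that is, respectively, ``linearly reductive'' or unramified) are themselves étale-local. Since $X$ admits an étale cover by opens carrying a Kato chart, and since these charts are compatible on overlaps by Lemma \ref{lemma.charts}, it suffices to treat the case in which there is a global chart $X \to \Spec k[P]$ with $P$ fine, saturated and sharp.

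In that case $\sqrt[n]{X} \simeq [X_n/\mu_n(P)]$, where $X_n = X \times_{\Spec k[P]} \Spec k[\tfrac{1}{n}P]$ and $\mu_n(P)$ is the Cartier dual of the finite abelian group $C_n = \tfrac{1}{n}P^{\gp}/P^{\gp}$. First I would record two facts: the morphism $X_n \to X$ is finite, since $\tfrac{1}{n}P$ is a finitely generated integral extension of $P$, so $X_n$ is a scheme; and $\mu_n(P)$ is a finite \emph{diagonalizable} group scheme, hence finite, flat and linearly reductive, with all of its closed subgroup schemes again diagonalizable and therefore linearly reductive. It follows that $[X_n/\mu_n(P)]$ has finite inertia with linearly reductive stabilizers, so it is a tame Artin stack in the sense of the theory of tame stacks. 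Reassembling the compatible local models gives the tameness of $\sqrt[n]{X}$ in general.

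For the Deligne--Mumford assertion I would use that over a field of characteristic $0$ every finite group scheme is étale, so $\mu_n(P)$ is finite étale; the quotient of a scheme by a finite étale group scheme is a Deligne--Mumford stack, since its inertia --- a base change of the étale group $\mu_n(P)$ --- is unramified. Hence each local model $[X_n/\mu_n(P)]$, and therefore $\sqrt[n]{X}$ itself, is Deligne--Mumford.

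The only point that requires any care is the initial étale-local reduction: one must know that ``tame Artin stack'' and ``Deligne--Mumford'' descend along étale covers of the base and that the chart-wise presentations glue to a global presentation of $\sqrt[n]{X}$. Both are standard, and once the structure of $\mu_n(P)$ as a finite diagonalizable group scheme is in hand the argument is essentially formal; it is that of Borne--Vistoli \cite{borne-vistoli}.
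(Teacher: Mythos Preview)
Your argument is correct and is essentially the one given in \cite{borne-vistoli}; note that the present paper does not supply its own proof of this proposition but simply cites \cite[Proposition 4.19]{borne-vistoli}. Your use of the local quotient presentation $[X_n/\mu_n(P)]$ together with the fact that $\mu_n(P)$ is finite diagonalizable (hence linearly reductive, and \'etale in characteristic~$0$) is exactly the standard route.
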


\subsection{Topological stacks}

The main reference for this section is \cite{No1}.

The two preceding subsections were about the objects that we would like to compare, namely the Kato-Nakayama space and the infinite root stack of a log scheme locally of finite type over $\bC$. Note that the former is of topological nature, and the latter is algebraic. In order to find a map between them, we carry over the root stacks to the topological side.

One can talk about stacks over any Grothendieck site. Algebraic stacks (a.k.a. Artin stacks) are stacks on the category of schemes over a base with the \'{e}tale topology\footnote{Sometimes, rather than working with the \'etale topology, one defines algebraic stacks with the \emph{fppf} topology. However, the resulting $2$-category of stacks is the same, cfr. \cite[\href{http://stacks.math.columbia.edu/tag/076U}{Tag 076U}]{stacks-project}.} that admit a representable smooth epimorphism from a scheme and whose diagonal is representable by algebraic spaces (and often one imposes some conditions on this diagonal morphism, like being quasi-compact or locally of finite type). Equivalently, one can describe algebraic stacks as stacks of (\'etale) torsors for certain groupoid objects in algebraic spaces, whose structure maps are smooth.

If instead of schemes over a base with the \'{e}tale topology we start from topological spaces with the \'{e}tale topology (where covers are local homeomorphisms), and we require a representable epimorphism from a topological space, we obtain the theory of topological stacks\footnote{In \cite{No1}, Noohi demands further conditions for such a stack to be called a topological stack, however in subsequent papers (e.g. \cite{No2}), he relaxes these conditions to the ones just described.}. Such a stack will always have diagonal representable by a topological space. As on the algebraic side, a topological stack can be defined through a groupoid presentation: a topological stack is a stack of principal $\cG$-bundles for $\cG$ a topological groupoid, and much of the basic yoga that one learns when working with algebraic stacks carries over in close analogy in this context.

In particular if $G$ is a topological group acting on a space $X$, the functor of points of the quotient stack $[X/G]$ is described as principal $G$-bundles (the topological analogue of $G$-torsors) with an equivariant map to $X$. In the same fashion, if $R\rightrightarrows U$ is a topological groupoid, one can characterize the associated stack $[U/R]$ as the stack of principal bundles for this groupoid.

There is a procedure to produce a topological stack starting from an algebraic one, that extends the analytication functor. We apply this in particular to the $n$-th root stacks of a log scheme.

Denote by $\Algst$ the $2$-category of algebraic stack locally of finite type over $\bC$ and by $\Topst$ the $2$-category of topological stacks.

\begin{proposition}[{{\cite[Section 20]{No1}}}]\label{noohi}
There is a functor of $2$-categories $$\left(\blank\right)_{top}:\mathbf{A}\!\mathfrak{lgSt}^{LFT}_{\mathbb{C}} \to \mathfrak{TopSt}$$ that associates a topological stack to an algebraic stack locally of finite type over $\mathbb{C}.$
\end{proposition}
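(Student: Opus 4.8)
The plan is to deduce the statement from the $\infty$-categorical machinery already established in Section \ref{sec:homotopy}, so that it amounts to a repackaging of Corollary \ref{cor:algstacktotopstack} together with an identification of the resulting functor with Noohi's original construction. First I would recall that any Artin stack $\cX$ locally of finite type over $\bC$ is a $1$-truncated object of the $\infty$-topos $\Shi\left(\Aff,\mbox{\'et}\right)$, and admits a presentation $\cX\simeq\left[\cG\right]$ with $\cG$ a groupoid object in algebraic spaces locally of finite type over $\bC$ (with smooth source and target maps); the spaces $\cG_n$ in its nerve are iterated fibre products of such, hence again algebraic spaces locally of finite type over $\bC$.

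Next I would apply the functor $\left(\blank\right)_{top}\colon\Shi\left(\Aff,\mbox{\'et}\right)\to\Hshi\left(\TopC\right)$ of Theorem \ref{thm:analification}. By Proposition \ref{prop:gpdobjs} one gets $\cX_{top}=\left[\cG\right]_{top}\simeq\left[\cG_{top}\right]$, and by the lemma immediately preceding that proposition (together with Proposition \ref{prop:alspacan}) the groupoid $\cG_{top}$ is degree-wise representable, with $\left(\cG_n\right)_{top}\cong\left(\cG_n\right)_{an}$, each an object of $\TopC$ by the discussion preceding Theorem \ref{thm:analification}. Thus $\cG_{top}$ is a groupoid object in $\TopC$, so $\cX_{top}\simeq\left[\cG_{top}\right]$ is a topological stack in the sense of the definition preceding Remark \ref{rmk: loc stack}, and a posteriori it is independent of the chosen presentation since $\left(\blank\right)_{top}$ is a well-defined functor on the entire $\infty$-topos. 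Restricting, one obtains the desired functor $\left(\blank\right)_{top}\colon\Algst\to\mathfrak{TopSt}$; functoriality costs nothing because $\Algst$ and $\mathfrak{TopSt}$ sit inside the relevant truncations of $\Shi\left(\Aff,\mbox{\'et}\right)$ and $\Hshi\left(\TopC\right)$ as full subcategories. This is exactly Corollary \ref{cor:algstacktotopstack}.

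The one substantive point---and the step I expect to be the main obstacle---is matching this functor with Noohi's construction from \cite[Section 20]{No1}. Noohi builds his analytification by choosing a groupoid presentation of an algebraic stack, analytifying it, and forming the associated topological stack in the classical sense (allowing groupoids in arbitrary topological spaces). To compare, I would invoke the fully faithful embedding of Remark \ref{rmk: loc stack}, of topological stacks presented by locally contractible topological groupoids into Noohi's larger $2$-category, and then observe that both constructions, evaluated on a presentation $\left[\cG\right]$, yield the stack of principal $\cG_{an}$-bundles for one and the same topological groupoid $\cG_{an}$; the independence of the comparison from the presentation is again inherited from functoriality. The clean way to see this is that $\left(\blank\right)_{top}$ is the \emph{unique} colimit-preserving functor sending a representable affine scheme $X$ to $X_{an}$ (Theorem \ref{thm:analification} and the following remark), so any construction sharing these two properties---Noohi's among them---necessarily agrees with it on all of $\Shi\left(\Aff,\mbox{\'et}\right)$, and in particular on Artin stacks.
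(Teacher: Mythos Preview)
The paper does not actually prove this proposition: it is stated in the appendix as a citation from \cite[Section 20]{No1}, with no argument given. Your approach---deriving the statement from the $\infty$-categorical machinery of Section~\ref{sec:homotopy}, and in particular from Corollary~\ref{cor:algstacktotopstack}---is correct and self-contained within the paper; indeed the paper itself remarks, immediately after Corollary~\ref{cor:algstacktotopstack}, that ``up to the identification mentioned in Remark~\ref{rmk: loc stack}, the construction in the above corollary agrees with that of Noohi in \cite[Section 20]{No1}.'' So you have supplied a proof where the paper simply defers to the literature, and your comparison with Noohi's original construction via the uniqueness of the colimit-preserving extension is exactly the mechanism the paper has in mind.

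One small caveat on logical order: in the paper, Proposition~\ref{noohi} sits in the appendix as background for the reader, while Section~\ref{sec:homotopy} is presented as an \emph{extension} of Noohi's result rather than a derivation of it. Your proof reverses this dependency, which is perfectly fine mathematically (Section~\ref{sec:homotopy} is logically independent of the appendix), but it does mean you are proving the cited result from the paper's own generalization rather than reproducing Noohi's original argument. If the intent was to sketch Noohi's construction directly, the relevant content is what you describe in your final paragraph: choose a smooth groupoid presentation, analytify it level-wise, and take the associated topological stack; independence of the presentation is checked by hand in \cite{No1} rather than being inherited from an ambient functor.
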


In Section \ref{sec:homotopy}, we extend Noohi's results to produce a left exact colimit preserving functor from $\i$-sheaves (a.k.a. stacks of $\i$-groupoids) on the algebraic \'etale site, to hypersheaves on a suitable topological site. See Theorem \ref{thm:analification} and Corollary \ref{cor:algstacktotopstack}.

 This functor has several nice properties. We point out the ones that we use:
 \begin{itemize}
 \item[1.] If $X$ is a scheme (or algebraic space) locally of finite type over $\bC$, then $X_\topst\simeq X_\an$ is the analytification
\item[2.] The functor $\left(\blank\right)_{top}$ preserves all finite limits (i.e. is left exact).
\item[3.] The preceding properties give us a procedure for calculating $\cX_{top}$ for an algebraic stack $\cX$. If $R\rightrightarrows U$ is a groupoid presentation of $\cX$ where $R$ and $U$ are locally of finite type and the maps are smooth, then by the first property we can apply the analytification functor to the diagram, and by the second one, this will result in another groupoid, namely the groupoid in topological spaces $R_\an\rightrightarrows U_\an$. The topological stack $\cX_\topst$ is then the associated stack $[U_\an/R_\an]$.
\end{itemize}

In particular if $\cX=[U/G]$ for an action of an algebraic group locally of finite type $G$ on a scheme locally of finite type $X$, we have $\cX_\topst=[U_\an/G_\an]$.

\begin{definition}
Let $X$ be a log scheme locally of finite type over $\bC$. The \textbf{topological $n$-th root stack} of $X$ is the topological stack $\sqrt[n]{X}_\topst$.
As for the algebraic ones, the topological root stacks form an inverse system. The pro-topological stack $\sqrt[\infty]{X}_\topst\defeq (\sqrt[n]{X}_\topst)_{n\in\bN}$ is the \textbf{topological infinite root stack} of $X$.
\end{definition}






\subsection{The rank stratification}\label{section: Stratification}



In this section we will prove that the characteristic sheaf $\overline{M}$ is locally constant on a stratification over the log scheme $X$. This is used in the main body of this article to prove that the Kato-Nakayama space and the infinite root stack are ``stratified fibrations'' over $X$, and that the map that we construct between them induces an equivalence of profinite completions.

The results of this part are probably known to experts, and we are including them because of the lack of a suitable reference.

\begin{definition}
By a \textbf{stratification} of a topological space $T$ we mean a collection of closed subsets $\cS=\{S_i\subseteq T\}_{i\in I}$ where $I$ is partially ordered, and the following are satisfied:
\begin{itemize}
\item if $i\leq j$, then $S_i\subseteq S_j$, and
\item the stratification is locally finite: every point $t\in T$ has an open neighborhood $U$ such that only finitely many of the intersections $U\cap S_i$ are non-empty.
\end{itemize}
The locally closed subsets $S_j\setminus S_i$ will be called the \emph{strata} of the stratification.
\end{definition}

If in the above definition $T$ is the underlying topological space of a scheme $X$ and each $S_i$ is Zariski closed, we will say that $\cS$ is an \emph{algebraic} stratification of the scheme $X$. Note that an algebraic stratification on $X$ will induce a stratification on the analytification $X_\an$.



\begin{definition}\label{strat.fibr}
Let $T$ be a topological space equipped with a stratification $\cS$, and let $f\colon T'\to T$ be a morphism, where $T'$ is a topological space or stack. We will say that $f$ is a \textbf{stratified fibration} with respect to $\cS$ if the restrictions of $f$ to the strata of $\cS$ are fibrations (in our case, this will always mean ``locally the projection from a product'').
\end{definition}

Now let $X$ be a log scheme locally of finite type over a field $k$. We will describe an algebraic stratification of $X$ over which the sheaf $\overline{M}$ is locally constant. 

The basic idea is that we are stratifying by the rank of the stalks $\overline{M}_x^\gp$ of the sheaf of abelian groups $\overline{M}^\gp$.

\begin{lemma}[{{\cite[3.5]{Ols}}}]\label{lemma.constr}
The sheaf  $\overline{M}^\gp$ is a constructible sheaf of $\bZ$-modules \cite[IX 2.3]{SGA4}. This means that (Zariski locally) there is a decomposition of $X$ into locally closed subsets over which $\overline{M}^\gp$ is a locally constant sheaf.
\end{lemma}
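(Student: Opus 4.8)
The plan is to reduce to the case of a global chart and then stratify by the faces of the chart monoid. Since the conclusion is étale-local on $X$, I would first invoke Proposition \ref{prop.chart} to replace $X$ by an affine étale neighbourhood carrying a strict chart $X \to \Spec k[P]$ with $P$ a fine monoid, and fix a finite generating set $p_1, \dots, p_k$ of $P$. Recall from Remark \ref{remark.stalks} that for a geometric point $x$ the stalk $\overline{M}_{\bar x}$ is the quotient of $P$ by the submonoid $F_x$ of those $p \in P$ whose image $\alpha(p)$ lies in $\cO_{X,\bar x}^\times$; because a product is a unit exactly when each factor is, $F_x$ is in fact a \emph{face} of $P$, and one checks that it is precisely the face generated by those generators $p_j$ with $\alpha(p_j)(x) \neq 0$. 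Hence $\overline{M}^{\gp}_{\bar x} \cong P^{\gp}/F_x^{\gp}$, and the isomorphism class of the stalk depends only on which of the $p_j$ vanish at $x$.

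Next, for each subset $J \subseteq \{1, \dots, k\}$ I would let $F_J$ be the face of $P$ generated by $\{p_j : j \in J\}$ and set
$$
Y_J \;=\; \{\, x \in X \;:\; \alpha(p_j)(x) \neq 0 \ \Leftrightarrow\ j \in J \,\},
$$
which is the intersection of the principal opens $\{\alpha(p_j) \neq 0\}$ for $j \in J$ with the vanishing loci $\{\alpha(p_j) = 0\}$ for $j \notin J$, hence a locally closed algebraic subset of $X$. There are finitely many $Y_J$, they partition $X$, and ordering the (finitely many) faces of $P$ by inclusion turns $\{\overline{Y_J}\}$ into an algebraic stratification in the sense above; by construction $F_x = F_J$ for every $x \in Y_J$.

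Finally, on each stratum $Y_J$ I would argue that $\overline{M}^{\gp}$ is actually the \emph{constant} sheaf $\underline{P^{\gp}/F_J^{\gp}}$: over $Y_J$ the monoid sheaf $M$ is the amalgamation of $\cO_X^\times$ with the constant sheaf $\underline{P}$ along the subsheaf of those (locally constant) $P$-valued sections whose local values lie in $F_x = F_J$, i.e.\ along the constant subsheaf $\underline{F_J} \subseteq \underline{P}$; quotienting by $\cO_X^\times$ and passing to associated groups yields $\overline{M}^{\gp}|_{Y_J} \cong \underline{P^{\gp}/F_J^{\gp}}$, in particular locally constant. Assembling the finitely many locally closed pieces $Y_J$ — first over the chart neighbourhood, then over an étale cover of $X$ by such neighbourhoods — exhibits $\overline{M}^{\gp}$ as a constructible sheaf of $\bZ$-modules, which is the assertion. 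The one delicate point is this last identification of the ``units'' subsheaf of $\underline{P}$ with the constant face $\underline{F_J}$, which is exactly where integrality of $P$ and the fact that $F_J$ is a face are used; everything else is bookkeeping with the finitely many faces of a finitely generated monoid.
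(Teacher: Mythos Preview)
The paper does not actually prove this lemma; it simply records it with a citation to \cite[3.5]{Ols}, so there is no ``paper's own proof'' to compare against. Your argument is correct and is essentially the standard one (and is in fact very close in spirit to what the paper does a few lines later in Proposition~\ref{prop.constant}, where the coarser rank stratification is used in place of your face-by-face stratification). The only point worth tightening is the last step: to conclude that $\overline{M}^{\gp}|_{Y_J}$ is the \emph{constant} sheaf $\underline{P^{\gp}/F_J^{\gp}}$ (and not merely has constant stalks), you should say explicitly that the global surjection of sheaves $\underline{P}\twoheadrightarrow \overline{M}|_{Y_J}$ factors through $\underline{P/F_J}$ --- because each $f\in F_J$ maps to a function that is a unit at every point of $Y_J$, hence (on the reduced structure of $Y_J$) a unit in $\cO_{Y_J}$ --- and then observe that the induced map $\underline{P/F_J}\to \overline{M}|_{Y_J}$ is an isomorphism on stalks, hence an isomorphism of \'etale sheaves. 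With that sentence added, the argument is complete.
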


\begin{lemma}[{{\cite[Theorem 2.3.2]{ogus}}}]\label{lemma.closed}
If $\xi$ is a generalization of $\eta$ in $X$, meaning that $\eta \in \overline{\{\xi\}}$, then there is a natural morphism of the stalks $\overline{M}_{\overline{\eta}}\to \overline{M}_{\overline{\xi}}$, and this is surjective (more specifically, it is a quotient by a \emph{face}).
\end{lemma}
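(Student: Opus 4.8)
The plan is to reduce the statement to the explicit toric description of $\overline{M}$ afforded by a chart. Since the assertion is \'{e}tale-local around $\overline{\eta}$ and $X$ is fine and saturated, I would first use Proposition~\ref{prop.chart} to pass to an \'{e}tale neighborhood $U\to X$ of $\overline{\eta}$ carrying a chart $U\to \Spec k[P]$ with $P$ fine, saturated and sharp and with the induced map $P\xrightarrow{\sim}\overline{M}_{\overline{\eta}}$ (equivalently, the chart is exact at $\overline{\eta}$). Replacing $X$ by $U$ and lifting $\xi$ along the \'{e}tale map --- generizations lift along \'{e}tale morphisms, so there is a point of $U$ over $\xi$ generizing the chosen point over $\overline{\eta}$ --- it suffices to treat this case. (Equivalently one may base change to $\Spec \cO_{X,\overline{\eta}}^{\mathrm{sh}}$, a strictly henselian local scheme with closed point $\overline{\eta}$ on which $\xi$ survives as a generization of the closed point; this makes the ``natural morphism of stalks'' manifestly canonical, it being the generization map of an \'{e}tale sheaf.)

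On such an $X$, Remark~\ref{remark.stalks} identifies $\overline{M}$ with the cokernel of $\cO_X^\times\hookrightarrow M$ built from the chart, so that for every point $x$ one has $\overline{M}_{\overline{x}}=P/F_x$ where $F_x:=\{\,p\in P:\alpha(p)\in\cO_{X,\overline{x}}^\times\,\}$. The next step is the elementary observation that $F_x$ is a \emph{face} of $P$: it is a submonoid, being the preimage of the submonoid $\cO^\times$ under the monoid homomorphism $\alpha$, and if $p+q\in F_x$ then $\alpha(p)\alpha(q)=\alpha(p+q)$ is a unit in the local ring $\cO_{X,\overline{x}}$, which forces both $\alpha(p)$ and $\alpha(q)$ to be units, i.e.\ $p,q\in F_x$. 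Since $P$ is integral, the quotient $P/F_x$ is simply the image of $P$ in $P^{\gp}/F_x^{\gp}$, so the canonical surjection $P\twoheadrightarrow\overline{M}_{\overline{x}}$ exhibits $\overline{M}_{\overline{x}}$ as the quotient of $P$ by the face $F_x$.

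Finally I would apply this with $x=\eta$ and $x=\xi$. Exactness of the chart at $\overline{\eta}$ gives $F_\eta=0$, hence $\overline{M}_{\overline{\eta}}=P$; and the generization map $\overline{M}_{\overline{\eta}}\to\overline{M}_{\overline{\xi}}$ of \'{e}tale stalks is induced by the common chart, so it is the canonical projection $P=P/F_\eta\twoheadrightarrow P/F_\xi=\overline{M}_{\overline{\xi}}$. This is surjective, and by the previous paragraph it is precisely the quotient of $\overline{M}_{\overline{\eta}}$ by the face $F_\xi$, which is the claim. The only point requiring a little care --- and the sole ``obstacle'' in an otherwise formal argument --- is the compatibility of the generization map with the chart, i.e.\ that both stalks $\overline{M}_{\overline{\eta}}$ and $\overline{M}_{\overline{\xi}}$ receive their surjections from the \emph{same} copy of $P$ and that the specialization map restricts to the identity on that $P$; this is immediate once one unwinds the construction of the log structure associated to a chart, cf.\ \cite{ogus}.
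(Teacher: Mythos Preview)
Your proposal is correct and follows essentially the same approach as the paper. The paper does not give a formal proof of this lemma but simply remarks that it follows from Proposition~\ref{prop.chart} and the description of stalks in Remark~\ref{remark.stalks}; your argument is precisely a detailed unpacking of those two ingredients, together with the elementary verification that $F_x=\alpha^{-1}(\cO_{X,\overline{x}}^\times)$ is a face of $P$.
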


This last lemma follows from Proposition \ref{prop.chart} and from the explicit description of the stalks of the monoid $\overline{M}$ of the log structure obtained from a chart, see Remark \ref{remark.stalks}.

In particular the rank ``only jumps up in closed subsets'', i.e. for every $n\in\bN$ the subset $R_n$ of points of $X$ where the rank of the group $\overline{M}_{\overline{x}}^\gp$ is $\geq n$ is closed: it is constructible by Lemma \ref{lemma.constr}, and stable under specialization by Lemma \ref{lemma.closed}, so it is closed. Note also that $R_{n+1}\subseteq R_n$.

\begin{definition}\label{rank.strat}
The \emph{rank stratification} of a log scheme $X$ is the algebraic stratification $\cR=\{R_n\}_{n\in\bN}$, where
$$
R_n=\{x\in X\mid \rank_\bZ \overline{M}_{\overline{x}}^\gp\geq n \}.
$$
We will denote the strata by $S_n \defeq R_n\setminus R_{n+1}$.
\end{definition}

For example, $R_0=X$ and the complement $X\setminus R_1$ is the open subset of $X$ where the log structure is trivial (which might be empty). In general $S_n$ is the locally closed subset of $X$ over which the rank of $\overline{M}_{\overline{x}}^\gp$ is equal to $n$.

We claim that both sheaves $\overline{M}$ and $\overline{M}^\gp$ are locally constant on the strata $S_n$.

To check this, let's describe the canonical log structure $\overline{M}_P \to \Div_{X_P}$ on $X_P=\Spec k[P]$ in more detail: the log structure is induced by the morphism of monoids $P\to k[P]$, which gives a morphism of sheaves of monoids $\underline{P} \to \cO_{X_P}$ (here $\underline{P}$ denotes the constant sheaf), from which we get the sheaf $\overline{M}_P$ by killing the preimage of the units in $\cO_{X_P}$. More precisely, denote by $\{p_i\}_{i\in I}$ the finitely many indecomposable elements of the fine saturated monoid $P$; these are generators of $P$. For a geometric point $x \to X_P$ call $S\subseteq I$ the subset of indices such that the image of $t^{p_i}\in k[P]$ is invertible in the residue field $k({x})$. Then the the stalk $(\overline{M}_P)_{x}$ is the quotient $\sfrac{P}{\langle p_i\mid i \in S\rangle}$.

In particular we note the following:

\begin{lemma}\label{lemma.rank}
The only point $x$ of $X_P$ where the stalk $(\overline{M}_P)_{{\overline{x}}}$ has rank $n=\rank_\bZ P^\gp$ is the ``vertex'' $v_P$, the point given by the maximal ideal $\langle t^{p_i}\mid i \in I \rangle$ generated by the variables corresponding to the indecomposable elements of $P$. 
\end{lemma}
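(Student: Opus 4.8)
The plan is to read off $\rank_\bZ(\overline M_P)_{\overline x}^{\gp}$ directly from the explicit description of the stalks of $\overline M_P$ recalled just above the statement, and to show that it attains its maximal value $n$ only at the augmentation ideal. So fix a point $x$ of $X_P$ with residue field $k(x)$, and let $S\subseteq I$ be the set of indices $i$ for which the image of $t^{p_i}$ in $k(x)$ is invertible. Since invertibility of a ring element is unaffected by field extensions, the set $S$ — and hence the isomorphism class of $(\overline M_P)_{\overline x}$ — depends only on the underlying scheme point $x$, not on the chosen geometric point $\overline x$; and by the description above we have $(\overline M_P)_{\overline x}\cong P/\langle p_i\mid i\in S\rangle$.

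First I would invoke the standard fact that, for a submonoid $N\subseteq P$, the natural surjection $P\twoheadrightarrow P/N$ induces an isomorphism $(P/N)^{\gp}\cong P^{\gp}/G$, where $G\subseteq P^{\gp}$ is the subgroup generated by $N$ (see e.g.\ \cite{ogus}). Applying this with $N=\langle p_i\mid i\in S\rangle$ gives
\[
\rank_\bZ(\overline M_P)_{\overline x}^{\gp}=\rank_\bZ P^{\gp}-\rank_\bZ\bigl(\text{subgroup of }P^{\gp}\text{ generated by }\{p_i\mid i\in S\}\bigr),
\]
so this rank equals $n=\rank_\bZ P^{\gp}$ if and only if $\{p_i\mid i\in S\}$ generates the trivial subgroup of $P^{\gp}$. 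As $P$ is sharp, each indecomposable $p_i$ is nonzero, so that subgroup is nontrivial whenever $S\neq\emptyset$; hence the rank is $n$ if and only if $S=\emptyset$.

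It then remains to identify the locus where $S=\emptyset$. By definition, $S=\emptyset$ says that $t^{p_i}$ lies in the prime ideal of $k[P]$ corresponding to $x$ for every $i\in I$, i.e.\ that $\mathfrak a\defeq\langle t^{p_i}\mid i\in I\rangle$ is contained in that prime. Because the indecomposable elements generate $P$ as a monoid and $P$ is sharp, every $p\in P\setminus\{0\}$ can be written as $p=p_i+p'$ with $p'\in P$ for some $i$, so $t^p\in\mathfrak a$; thus $\mathfrak a=\langle t^p\mid p\in P\setminus\{0\}\rangle$ is precisely the kernel of the augmentation $k[P]\to k$ (which is a ring homomorphism exactly because $P$ is sharp), hence a maximal ideal, namely $v_P$. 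Therefore $S=\emptyset$ forces $v_P=\mathfrak a\subseteq\mathfrak p_x$, and maximality of $\mathfrak a$ yields $x=v_P$; conversely, at $x=v_P$ none of the $t^{p_i}$ is a unit, so $S=\emptyset$ and $(\overline M_P)_{\overline{v_P}}\cong P$, whose groupification has rank $n$. Combining the two equivalences [rank $=n$] $\iff$ [$S=\emptyset$] $\iff$ [$x=v_P$] gives the lemma.

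I do not expect a genuine obstacle here: beyond the stalk description already supplied, the only inputs are the standard computation of the groupification of a monoid quotient and the elementary observation that $\langle t^{p_i}\mid i\in I\rangle$ is the augmentation ideal of $k[P]$. The one point that needs a little care is the passage between geometric points and scheme points, but this is harmless because the rank of the stalk is controlled entirely by which monomials $t^{p_i}$ are units, a condition on the residue field that is stable under extension.
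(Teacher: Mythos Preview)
Your proof is correct and follows essentially the same approach as the paper: both argue that killing any indecomposable element drops the rank of the groupification, so the rank equals $n$ only when $S=\emptyset$, which happens precisely at the vertex $v_P$. Your version simply fills in the details (the identification $(P/N)^{\gp}\cong P^{\gp}/G$ and the verification that $\mathfrak a$ is the maximal augmentation ideal) that the paper leaves implicit in its two-sentence proof.
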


The point $v_P$ is also sometimes referred to as the ``torus-fixed point''.

\begin{proof}
Since $P^\gp\cong \bZ^n$ for some $n$, as soon as at least one of the indecomposable elements $p_i$ is killed, the rank will drop at least by $1$. The only point in which no indecomposable is killed is exactly the maximal ideal generated by all the $t^{p_i}$.
\end{proof}

\begin{proposition}\label{prop.constant}
For every $n$ and every point $x$ of $S_n=R_n\setminus R_{n-1}$ there is an \'{e}tale neighborhood $U\to S_n$ of $x$ such that the sheaves $\overline{M}|_{S_n}$ and $\overline{M}^\gp|_{S_n}$ are constant sheaves.
\end{proposition}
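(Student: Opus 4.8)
The plan is to reduce to the local model $X_P = \Spec k[P]$ by choosing a chart, and then to show directly that $\overline{M}$ and $\overline{M}^\gp$ are constant on the locally closed subset $S_n$ near a given point. More precisely, by Proposition \ref{prop.chart}, after passing to an \'{e}tale neighborhood $U$ of $x$ I may assume there is a strict morphism $f\colon U \to X_P$ with $P = \overline{M}_{\overline{x}}$, which is then a fine, saturated, sharp monoid of rank $n$ (since $x \in S_n$). Strictness means $\overline{M}|_U \cong f^{-1}\overline{M}_P$ and similarly for the group completions, so it suffices to understand the restriction of $\overline{M}_P$ and $\overline{M}_P^\gp$ to $f^{-1}(S_n)$. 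By Lemma \ref{lemma.rank}, the rank of the stalk $(\overline{M}_P)_{\overline{y}}$ equals $n = \rank_\bZ P^\gp$ exactly at the vertex $v_P$, and is strictly smaller elsewhere. Since $f$ is strict, it respects the rank stratification: the stratum $S_n \subseteq U$ maps into the ``rank exactly $n$'' stratum of $X_P$, which by Lemma \ref{lemma.rank} is the single point $v_P$. Hence $f(S_n \cap U) \subseteq \{v_P\}$, i.e. $S_n \cap U = f^{-1}(v_P)$.

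The key step is then the following: on $f^{-1}(v_P)$, the sheaf $\overline{M}|_U$ is the pullback of the skyscraper-type restriction $\overline{M}_P|_{\{v_P\}}$, whose single stalk is $(\overline{M}_P)_{\overline{v_P}} = P$ (no indecomposable element is inverted at $v_P$, by the explicit description of the stalks recalled before Lemma \ref{lemma.rank}). Since $P$ is a constant monoid, I claim $\overline{M}|_{S_n \cap U}$ is literally the constant sheaf $\underline{P}$. To see this rigorously one should check that the specialization maps among stalks at geometric points of $f^{-1}(v_P)$ are all isomorphisms: Lemma \ref{lemma.closed} gives, for a specialization $\eta \rightsquigarrow \xi$, a surjection $\overline{M}_{\overline{\eta}} \twoheadrightarrow \overline{M}_{\overline{\xi}}$ which is a quotient by a face; but on $S_n$ both stalks have $\gp$-rank exactly $n$, and a proper face quotient of a sharp monoid of rank $n$ strictly drops the rank, so the surjection must be an isomorphism. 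Therefore $\overline{M}^\gp|_{S_n}$ has all specialization maps isomorphisms as well, and a constructible sheaf (Lemma \ref{lemma.constr}) all of whose specialization maps are isomorphisms is locally constant; the same then follows for $\overline{M}$ since it injects into $\overline{M}^\gp$ with the group generated by the image, and one can identify $\overline{M}|_{S_n\cap U}$ with the constant sheaf $\underline{P}$ after possibly shrinking $U$ so that it is connected.

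I expect the main obstacle to be the bookkeeping around the difference between the scheme-theoretic fiber $f^{-1}(v_P)$ and the subset $S_n$, and making precise that ``pullback of a sheaf supported at one point'' is genuinely constant rather than merely locally constant with locally constant stalks; this is where one uses that the geometric stalks are canonically $P$ together with the rank rigidity from Lemma \ref{lemma.closed} to rule out monodromy. Once that is in place, constancy of $\overline{M}^\gp$ follows formally by group-completing, and constructibility (Lemma \ref{lemma.constr}) upgrades ``all specialization maps iso'' to ``locally constant'' with no further work. A minor point to be careful about is that $S_n$ is only locally closed in $X$, so all statements are made after restricting to $S_n$ with its subspace (or induced scheme) structure, and the \'{e}tale neighborhood $U \to S_n$ in the statement is an \'{e}tale neighborhood of $x$ in $S_n$, obtained by restricting the chart neighborhood $U \to X$ constructed above.
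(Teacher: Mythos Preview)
Your proposal is correct and follows essentially the same approach as the paper: pick a chart with monoid $P=\overline{M}_{\overline{x}}$, use Lemma~\ref{lemma.rank} to see that the chart map sends $S_n$ to the vertex $v_P$, and conclude that $\overline{M}|_{S_n\cap U}$ is the pullback of a sheaf from a point, hence constant. The only differences are cosmetic: the paper takes the chart directly on $S_n$ (equipped with its induced log structure) rather than on $X$ and then restricting, and your additional argument via specialization maps and constructibility is unnecessary---the pullback of any sheaf along a map to a single point is automatically the constant sheaf with value the stalk, so there is no monodromy issue to worry about.
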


\begin{proof}
If we equip $R_n$ with the reduced subscheme structure, it is a (fine saturated) log scheme with the log structure pulled back from $X$, and the same is true for the open subset $S_n\subseteq R_n$. Consequently there is an \'{e}tale neighborhood $U\to S_n$  of $x$ and a chart $U\to \Spec k[P]$ for the induced log structure on $U$, where $P=\overline{M}_{\overline{x}}$ (Proposition \ref{prop.chart}). If $\overline{M}_P$ is the sheaf of monoids for the canonical log structure on $\Spec k[P]$, there is exactly one point where the stalk has rank $n=\rank \; P$ (=$\rank_\bZ P^\gp$), corresponding to the vertex $v_P$ (Lemma \ref{lemma.rank}).

This implies (since over $U$ the rank of the stalks of $\overline{M}$ is always $n$) that the morphism $U\to \Spec k[P]$ sends everything to $v_P$, and in turn that the sheaf $\overline{M}|_U$, being a pullback from $\Spec k[P]$, is constant. This implies that $\overline{M}^\gp|_U$ is constant as well, and concludes the proof.
\end{proof}

Note that if $k=\bC$, the algebraic stratification of $X$ we just constructed induces a stratification of the analytification $X_\an$, and the sheaves $\overline{M}$ and $\overline{M}^\gp$ of the log analytic space are locally constant over the strata.


\bibliography{profinite}
\bibliographystyle{hplain}

\end{document}